\documentclass[11 pt]{amsart}
\usepackage{amssymb,amsfonts,amsthm}
\usepackage{graphicx, color}
\usepackage{bm}
\usepackage[nocompress]{cite}
\topmargin=0pt \oddsidemargin=0pt \evensidemargin=0pt
\textwidth=15cm \textheight=22cm \raggedbottom


\newcommand{\Z}{\mathbb{Z}}

\newtheorem{thm}{Theorem}[section]
\newtheorem{cor}[thm]{Corollary}
\newtheorem{lem}[thm]{Lemma}

\theoremstyle{definition}

\theoremstyle{remark}

\numberwithin{equation}{section}
\begin{document}
\title[NESS]{Non-equilibrium steady state for a three-mode energy cascade model}
\author[Hani, Li, Nahmod, Staffilani]{Zaher Hani, Yao Li, Andrea Nahmod, and Gigliola Staffilani}

\maketitle

\begin{abstract} Motivated by the central phenomenon of energy cascades in wave turbulence theory, we construct non-equilibrium statistical steady states (NESS), or invariant measures, for a simplified model derived from the nonlinear Schr\"odinger (NLS) equation with external forcing and dissipation. This new perspective to studying energy cascades, distinct from traditional analyses based on kinetic equations and their cascade spectra, focuses on the underlying statistical steady state that is expected to hold when the cascade spectra of wave turbulence manifest. 

In the full generality of the (infinite dimensional) nonlinear Schr\"odinger equation, constructing such invariant measures is more involved than the rigorous justification of the Kolmogorov-Zakharov (KZ) spectra, which itself remains an outstanding open question despite the recent progress on mathematical wave turbulence. Since such complexity remains far beyond the current knowledge (even for much simpler chain models), we confine our analysis to a three-mode reduced system  that captures the resonant dynamics of the NLS equation, offering a tractable framework for constructing the NESS. For this, we introduce a novel approach based on solving an elliptic Feynman-Kac equation to construct the needed Lyapunov function.
\end{abstract}

\section{Introduction}
The mechanisms of energy transfers between the modes of a nonlinear Hamiltonian system is a fundamental question for many fields of science, from fluids to oceanography, all the way to climate science. It is often studied in the context of out-of-equilibrium statistical physics, where one allows for external forcing and dissipation to act on some modes of the system. These non-Hamiltonian components are kept absent or negligible on the part of the system of interest, called the \emph{inertial range}, where the energy flux between the modes is essentially intrinsic. 

For instance, for systems that model wave-type phenomena, this theory of nonequilibrium statistical physics goes by the name of \emph{wave (or weak) turbulence theory}, in which the energy transfer happens between the different frequency (or Fourier) modes of the nonlinear wave system. This corresponds to an energy transfer between the spatial scales of the wave system. Wave turbulence theory studies this energy dynamics through a kinetic equation, called the \emph{wave kinetic equation} (WKE), which models the square of the amplitude of the Fourier modes. Through this kinetic equation, the statistics of energy cascades were (heuristically) derived by physicists starting with the work of Zakharov in the 1960s, see for example \cite{Z12, Nazarenko} for a summary. They corresponded to stationary solutions of the wave kinetic equation that are of power-law type, called the Kolmogorov-Zakharov (Z-K) cascade spectra (since they mirror Kolmogorov's spectra in hydrodynamic turbulence). For example, in the case of the nonlinear Schr\"odinger (NLS) equation
\begin{equation}\label{CNLS}
-i\partial_t v(t, x)+\Delta v (t,x)=\pm |v(t,x)|^2 v(t,x)
\end{equation}
the wave kinetic equation takes the form (under appropriate homogeneity assumptions)
\begin{equation}\label{WKE}
\partial_t n(t, k)=\mathcal C(n, n, n),
\end{equation}
where $\mathcal C$ is a cubic collision kernel. The quantity $n(k)$ describes the dynamics of the mass density $\mathbb E|\widehat v(t, k)|^2$ of \eqref{CNLS} with respect to an assigned  probability measure to the initial data. In other words,
$$
n(t, k) \approx \mathbb E|\widehat v(t, k)|^2,
$$
in an appropriate limit of large domain and small nonlinearities. The nonequilibrium cascade spectra are the following stationary solutions of \eqref{WKE} (up to constant multiplicative factors)
\begin{equation}\label{KZ}
n(k)=|k|^{-d}\qquad n(k)=|k|^{-d+2/3}.
\end{equation}
The first (respectively second) solution, called the \emph{direct (resp. inverse) cascade spectrum}, stands for a nonequilibrium steady state in which the energy of the system migrates at a constant flux from low to high frequencies (resp. high to low). From a practical standpoint, such spectra are observed in systems with a source and sink of energy present at the two ends of an inertial range of frequencies over which the cascade spectra is exhibited. 

While the rigorous derivation of the wave kinetic equation has featured substantial progress over the past few years, especially for the NLS  \cite{H1,HD1, HD2, HD3, HD4}, our rigorous understanding of the cascade spectra remains very poor. In fact, even the exact sense that those solutions satisfy the kinetic equation is not fully understood. 

In this paper, we attempt to understand the energy cascade statistics with a different perspective from the one described above. Rather than go through the kinetic equation and the cascade spectra, we would like to construct \emph{nonequilibrium statistical steady states (NESS)} or invariant measures for a (very!) reduced model coming from the nonlinear Schr\"odinger equation in the presence of forcing and dissipation. Note that the cascade spectra mentioned above can be understood to correspond to such NESS, where the observable given by the mass density $|a_k|^2$ has a power-law expectation as in \eqref{KZ}. Unfortunately, the problem of constructing such nonequilibrium invariant measures for the forced-dissipated NLS system is not any easier than that of rigorously justifying the KZ-spectra. This explains why we perform all our analysis on a three-mode reduced system for the resonant dynamics of the NLS equation. 

\subsection{Derivation of the reduced system}
The reduced model on which we will conduct our mathematical experiments appeared before (without forcing or dissipation) in the work of Colliander, Keel, Staffilani, Takaoka, and Tao \cite{CKSTT}, who attempted to understand the energy cascade problematic from yet another perspective, namely the construction of deterministic solutions of \eqref{CNLS} defined on a unit square torus, and exhibiting growth of Sobolev norms (see \cite{GG, HPTV, HPSW, CS} for more on this perspective). In Fourier space, the NLS equation takes the form:

$$
- i \partial_t a_K - 4\pi^2K^2 a_K =\sum_{(K_1,K_2,K_3) \in \mathcal{S}_{K}} a_{K_1}(t) \overline{a_{K_2}(t)} a_{K_3}(t)
$$
where $\mathcal{S}_{K} = \{ ((K_1,K_2,K_3) \in (\Z^d)^3 \; : \; K_1-
K_2 + K_3 =K \}$. A first reduction is obtained by restricting the latter set of nonlinear interactions to the subfamily of \emph{resonant interactions} defined by the following subset of $\mathcal S(K)$

$$
  \mathcal{R}(K) = \mathcal{S}(K) \cap \{ |K_{1}|^{2} - |K_{2}|^{2} +
  |K_{3}|^{2} = |K|^{2} \} \,.
  $$
 Restricting the NLS on the resonant family (i.e. restricting the sum over $\mathcal S_K$ to $\mathcal R_K$ in the above equation), we obtain the resonant
 NLS system, which approximates the dynamics of the full NLS in an appropriate sense for sufficiently small solutions. Next, a finite dimensional reduction is  performed in \cite{CKSTT} which restricts the resonant NLS system to a finite set of modes $\Lambda
\subset \mathbb{Z}^{2}$ of the form
$$
\Lambda=\Lambda_1 \cup \Lambda_2 \ldots \cup \Lambda_n\qquad \Lambda_j
\subset \Z^2 \,,
$$
such that the value of $a_K$ is uniform on each $\Lambda_j$. Roughly speaking, one can understand $\Lambda_1$ to be a set dominated (in a specific quantitative sense) by low frequencies, whereas $\Lambda_n$ is one dominated by high frequencies. Denoting by $c_j(t)$ the common value of $a_K(t)$ for $K \in \Lambda_j$, the resonant NLS system on $\Lambda$ reduces to the following ODE model (referred to as the toy model in \cite{CKSTT}):
\begin{equation}\label{ODE model}
{-i\dot{c}_j= 2(\sum_{k}|c_k|^2)c_j -|c_j|^2 c_j +2(c_{j-1}^2+c_{j+1}^2) \overline{c_j}\quad j=1, \ldots, n}.
\end{equation}
Equation \eqref{ODE model} is a Hamiltonian system with Hamiltonian
\begin{equation}\label{Hamiltonian prime}
H (c)=
\frac12(\sum_{j}|c_j|^2)^2-\frac{1}{4}\sum_{j}|c_j|^4+\frac12\sum_{j=1}^n
(c_{j-1}^2\overline{c_j}^2+\overline{c_{j-1}}^2{c_j}^2) \,.
\end{equation}
In action angle coordinates, $(I_j, \varphi_j):=(|c_j|^2, \arg c_j)$, the Hamiltonian has the form:
\begin{equation}\label{HamIphi prime}
H(I, \varphi)=\frac12(\sum_{j=1}^n I_j)^2-\frac14\sum_{j=1}^n  I_j^2 + \sum_{j=1}^n \, I_j I_{j-1}\cos 2(\varphi_{j}-\varphi_{j-1}),
\end{equation}
%
and the equations of motion take the form:
\begin{equation}\label{eqprime}
\begin{split}
\dot I_j=&-\frac{\partial H}{\partial \varphi_j}=2I_j I_{j-1}\sin2(\varphi_j-\varphi_{j-1})+2I_j I_{j+1}\sin 2(\varphi_j-\varphi_{j+1})\\
\dot \varphi_j=&\frac{\partial H}{\partial I_j}=\sum_{\ell}I_\ell-\frac{1}{2}I_j+I_{j-1}\cos 2(\varphi_j-\varphi_{j-1})+I_{j+1}\cos 2(\varphi_{j}-\varphi_{j+1}).
\end{split}
\end{equation}

The seminal paper \cite{CKSTT} shows
that there exists an ``energy cascade'' solution for (NLS) whose energy is
concentrated at time $t=0$ on the low frequency region $\Lambda_1$, and cascades in time to become concentrated at time $t = T$ on the high frequency region $\Lambda_N$. This energy cascade for NLS is exhibited by proving energy diffusion for the ODE model \eqref{ODE model}, namely by constructing a solution thereof such that at time $t=0$, the energy is mostly concentrated on the first mode (with the others being very small), and at time $t=T$ it is concentrated on the last mode (with the remaining modes being very small). 

\medskip

In this work, we take a statistical approach to understanding the energy cascade for the ODE model defined in \eqref{ODE model}. Specifically, we aim to construct nonequilibrium invariant measures for this model, analogous to those developed for heat conduction systems, which have been extensively studied in the literature \cite{lepri2003thermal, eckmann1999non, eckmann2006nonequilibrium, rey2002exponential}. As noted above, these nonequilibrium measures are motivated by and are expected to be closely connected to the Kolmogorov-Zakharov cascade spectra, as both describe steady states characterized by a flux of energy across different modes. It is a fruitful direction for future research, which we do not pursue in this work, to investigate such connections more deeply (see \cite{Bedrossian23} for some discussions in this direction).


It is well known that to exhibit such nonequilibrium invariant measures, one must inject energy into the lowest mode and extract it from the highest mode. More precisely, we couple the lowest and highest modes to stochastic forcing terms and introduce damping terms to absorb energy from the system. This setup closely resembles the stochastic heat baths used in many microscopic models of heat conduction \cite{eckmann1999non, eckmann1999entropy}. Our objective is to establish the existence, uniqueness, and ergodicity of the nonequilibrium steady state (NESS) of the modified system, which is commonly viewed as the first step in studying the properties of the NESS. It is important to note that the existence of a unique NESS is not guaranteed a priori; significant analysis is required to understand mechanisms that prevent the system from ``overheating" or ``freezing," both of which are crucial to establishing the desired properties of NESS. In this paper, we only consider the case when $n = 3$, where there's only one mode (the middle one)
that is not connected to a heat bath. We believe that longer chains can be studied, but given how challenging it is to deal with the three-mode case, we do not attempt this generalization here. We also remark that the NESS in energy cascade models remains poorly understood; to date, even for a two-mode system (with a different forcing setup), only a formal analysis has been carried out \cite{deville2007nonequilibrium}.

\subsection{Statement of the result}

Setting $\theta_{1} =
2(\varphi_{1} - \varphi_{2})$ and $\theta_{3} = 2 (\varphi_{3} -
\varphi_{3})$, equation \eqref{eqprime} becomes a closed equation in $(I_1, I_2, I_3, \theta_1, \theta_2)$ when $n = 3$.
We will connect the first and third modes to heat baths and
damping forces, which converts the equation into the following 
stochastic differential equation:
\begin{align}
  \label{bigsystem}
  \mathrm{d}I_{1}& = [2 I_{1} I_{2} (\sin \theta_{1} -
  \gamma) + \gamma(T_{1} - I_{1}^{3}) ] \mathrm{d}t + \sqrt{\gamma
  T_{1} I_{1}/2} \mathrm{d}B^{(1)}_{t} \\\nonumber
  \mathrm{d}I_{2} &= - 2 I_{2} (I_{1} \sin \theta_{1} + I_{3} \sin
  \theta_{3}) \mathrm{d}t\\\nonumber
  \mathrm{d}I_{3} &= [2 I_{2}I_{3}(\sin \theta_{3} - \gamma) +
  \gamma(T_{3} - I_{3}^{3})] \mathrm{d}t + \sqrt{\gamma T_{3}I_{3}/2}
  \mathrm{d}B^{(2)}t\\\nonumber
  \mathrm{d} \theta_{1}& =  I_{2}(1 + 2 \cos \theta_{1}) - I_{1} (1 +
  2 \cos \theta_{1}) - 2 I_{3} \cos \theta_{3} \mathrm{d}t +
  \sqrt{\gamma} g(I_{2}, \theta_{1}) \mathrm{d}B^{(3)}_{t}\\\nonumber
  \mathrm{d} \theta
  _{3} &= I_{2} (1 + 2 \cos \theta_{3}) - I_{3} (1 + 2 \cos
  \theta_{3}) - 2 I_{1}\cos \theta_{1} \mathrm{d}t + \sqrt{\gamma } g(I_{2}, \theta_{3})\mathrm{d}B^{(4)}_{t}\nonumber
\end{align}
where $B^{(i)}_t, i = 1, 2, 3, 4$ are independent Wiener processes, $0 < \gamma \ll 1$ is the damping strength, $T_{1}$ and $T_{3}$ are
the boundary temperatures, and the weight function $g$ will be
described in assumption {\bf (H)} later on (see Section \ref{highenergy}). Note that when $\gamma = 0$, the stochastic system is equal to the deterministic system \eqref{eqprime}. 
The exact choices of the forcing and dissipation and the choice of $g$ will be justified later in detail as we discuss the challenges involved in proving a result of this type (see Section 3). 
We allowed ourselves the liberty of designing this particular type of external forcing and dissipation at the modes $I_1, I_3$ (and their associated angles $\theta_1, \theta_3$), since this simplification does not alter the main challenge in proving the existence of a NESS for a system like \eqref{bigsystem}, which is to show that the internal modes that are not connected to the heat baths ($I_2$ in our case) do not undergo ``overheating" (energy being stuck in $I_2$) or ``freezing" (energy being blocked at $I_2$).

Roughly speaking, our main result states that when the temperature of two
heat baths are significantly different, equation \eqref{bigsystem}
admits a unique nonequilibrium steady state. In addition, we also prove
that the speed of convergence of the density towards the steady state is at least power-type in time.  
More precisely, denote by 
$\mathbf{x} = (I_{1}, I_{2}, I_{3}, \theta_{1}, \theta_{3}) \in
\mathbb{R}^{3}_{+} \times \mathbb{T}^{2} := \Omega$ be a state of the
system. Denote the Borel $\sigma$-algebra on $\Omega$ by $\mathcal{B}(\Omega)$ and define the transition kernels $P^{t}(\mathbf{x},
\cdot): \Omega \times \mathcal{B}(\Omega) \mapsto [0,1]$ by the relation
$$
  P^{t}(\mathbf{x}, A) = \mathbb{P}[\mathbf{x}_{t} \in A \,|\, \mathbf{x}_{0} = \mathbf{x}] \,,
$$
where $A\in \mathcal B(\Omega)$. Then the following holds:

\medskip

{\bf Theorem 1. } 
Let $\beta_{0} > 1$, $\gamma > 0$, and $0 < T_{1} < \infty$ be constants. Assume $T_3$ is sufficiently large, then there exist a function $g(I,\theta)$ and a Lyapunov function $\mathcal V$ satisfying $\mathcal V(\mathbf{x}) \to \infty$ as $|\mathbf{x}|\to \infty$, such that 
\begin{itemize}
  \item Equation \eqref{bigsystem} admits an invariant probability measure $\pi$
    satisfying
    
$$
  \int_{\Omega} \mathcal{V}( \mathbf{x}) \pi ( \mathrm{d} \mathbf{x})
  < \infty
  $$
\item There exists  constants $C_1, C_2, C_3$ such that for every $\mathbf{x}, \mathbf{y} \in \Omega$, 
$$
  \| P^{t}(\mathbf{x}, \cdot) - P^{t}( \mathbf{y}, \cdot) \|_{TV} \leq
  C_1( \mathcal{V}( \mathbf{x}) + \mathcal{V}( \mathbf{y}))(t+1)^{-\beta_{0}}
  $$
  
  and 
    $$
    \| P^{t}(\mathbf{x}, \cdot) - \pi\|_{TV} \leq
  C \mathcal{V}( \mathbf{x})(t+1)^{-\beta_{0}} + C (t+1)^{1 - \beta_{0} }
    $$
    where $\|\cdot\|_{TV}$ is the total variation norm.
\end{itemize}

\bigskip

\subsection{Remarks on the result} 

\begin{itemize}

\item  It is well known that the Gibbs measure is an equilibrium state when the system is not connected to a heat bath. Under our choice of forcing and dissipation, however, the Gibbs measure is no longer an invariant probability measure, even when the two heat baths are at the same temperature— in contrast to classical heat conduction models \cite{eckmann1999entropy, eckmann1999non, eckmann2006nonequilibrium}. This is a technical choice made to enable a rigorous proof. On the other hand, unlike microscopic heat conduction, it is less clear what type of heat bath best captures the physics of an energy cascade involving infinitely many modes. Moreover, recent results show that in the presence of thermal reservoirs, the Gibbs measure generally fails to be invariant for a broad class of systems \cite{carlen2025stationary}. Even when the invariance of the Gibbs measure can be established by explicit calculation, uniqueness and ergodicity typically require additional justification. In contrast, a nonequilibrium steady state (NESS) generally does not admit an explicit form, making the analysis of its existence, uniqueness, and ergodicity a serious mathematical challenge.

\item While there is an extensive body of work on the ergodicity of NESS for nonlinear chain models and nonlinear networks modeling microscopic heat conduction, existing techniques do not apply to our energy cascade model \eqref{bigsystem}. In classical heat conduction, energy naturally flows from the high-temperature bath to the low-temperature one. In our energy cascade model, however, the mechanism of energy transfer between modes is much less explicit. It is not a priori clear what prevents energy from accumulating or blowing up at the interior mode $I_2$, or from depleting entirely and thereby disrupting energy transport. A careful examination of system \eqref{bigsystem} reveals that energy transfer to or from the middle of the chain, when it has too little or too much energy, respectively, depends crucially on the phase variables being “correct.” This sensitivity complicates the construction of a Lyapunov function, as natural candidates may behave improperly at the “wrong” phase, causing the function to move to an undesirable direction.
 
\item To overcome the aforementioned problem that phases have to be properly aligned to facilitate the energy transfer between modes, we develop a novel Feynman-Kac-Lyapunov method. This approach is partially motivated by the idea of ``stabilization by noise" proposed in \cite{herzog2015noise}. It introduces a pre-factor to the Lyapunov function, determined by solving a Feynman-Kac equation. After incorporating this pre-factor, the Lyapunov function assigns larger values to ``wrong phases", so that during the evolution, even if the energy temporarily moves in an undesirable direction, the overall value of the Lyapunov function decreases as the phase variables move toward the correct configuration. Prior to the development of our method, the only available approach was to study the change of a Lyapunov function over long time intervals, as described in \cite{hening2018coexistence, bedrossian2024stationary}, relying on the idea that, eventually, energy would drift in the correct direction once the phases align. However, when the system requires a Lyapunov function that behaves differently in different regions to control the full dynamics, significant technical difficulties arise. Our Feynman-Kac-Lyapunov method provides a more elegant and flexible tool to control such dynamics and to establish stochastic stability for high-dimensional stochastic differential equations. 

\item High- and infinite-dimensional stochastic differential equations (SDEs) arising from the study of nonlinear partial differential equations have attracted significant attention in the past decade. Notable examples include the stochastic Navier–Stokes equations \cite{hairer2006ergodicity,bedrossian2022almost}, degenerate energy transfer SDEs \cite{bedrossian2024stationary, bedrossian2024existence}, and shell models \cite{mattingly2007simple}. A common difficulty in these problems is managing the nonlinear interactions among many variables. The Feynman-Kac-Lyapunov function developed in this paper provides a valuable tool for understanding how a small subsystem can stabilize the entire system. Thus, it holds considerable potential for application to a broader class of problems.

\end{itemize}

\subsection{Future Perspectives}

The results and techniques developed in this paper open several directions for future study, of which we highlight the following:

\begin{itemize}
\item In order to complete the proof, we simplified the heat bath and its coupling with the 3-mode chain, resulting, as mentioned above,  in a system where the Gibbs measure is not invariant even when the two baths have the same temperature. Since the main difficulty lies in understanding stabilization in the out-of-equilibrium setting, this simplification does not change the essential challenges addressed in this work. Nevertheless, it would be very interesting to study what heat baths best describe the physics of the energy cascade, as well as the existence of a NESS under more ``natural" heat baths and couplings. We do not expect the outcome to change qualitatively, and we intend to pursue this direction in future work. This investigation may require numerically verifiable assumptions, such as the existence of an invariant probability measure for a reduced ``broken" system when $I_2 = 0$. Furthermore, although we only prove here that the convergence rate to the NESS is at least polynomial, we expect it to be exponential.

\item The original motivation of this study is to understand the energy cascade phenomenon in nonlinear wave systems and to offer a new perspective on it via NESS. It would be highly interesting to draw more explicit connections between this viewpoint and the kinetic perspective provided by wave kinetic theory. As mentioned above,  the wave kinetic equation admits formal stationary solutions corresponding to nonequilibrium energy cascades, known as the Kolmogorov-Zakharov spectra. The rigorous understanding of these solutions, and their true implications for energy cascades, remains very incomplete (see \cite{CDG24, GLM24} for some recent progress). From this perspective, our work can be seen as one of the first rigorous results shedding light on the statistical description of cascades and fluxes in nonlinear wave systems. Further studies of the existence, statistical properties, and energy fluxes of NESS for longer chain models could enable detailed comparisons with the scaling laws predicted by Kolmogorov-Zakharov theory. Even a numerical investigation of these questions would be of considerable interest.

\end{itemize}

\subsection{Organization of the paper} In Section 2, we review some important notions and results stochastic differential equations. In Section 3, we explain our novel strategy to constructing the Lyapunov functions and the invariant measure for our stochastic differential equation. In Section 4, we construct the component of the Lyapunov function on high energy regions, and in Section 5, we deal with the low energy ones. 

\subsection{Acknowledgements} Over the past decade, many colleagues have contributed through discussions and feedback on various aspects of this project. In particular, we would like to thank Luc Rey-Bellet, Jonathan Mattingly, and Zhongwei Shen for their valuable suggestions and insights.

The first, third, and forth authors were partially supported by a Simons Simons Foundation Collaboration Grant on Wave Turbulence. Also, the first author was partially supported by NSF grant DMS-235024. The second author was partially supported by NSF DMS-2108628. The third author was partially supported by NSF DMS-2052740, NSF DMS-2400036. The fourth author was partially supported by NSF DMS-2052651, NSF DMS-2306378.

\section{Probability preliminary}
\bigskip
In this section we collect a series of known results about stochastic differential equations that will be used in this paper. Consider a stochastic differential equation 
\begin{equation}
  \label{SDE}
  \mathrm{d} X_{t} = b(X_{t}) \mathrm{d}t + \sigma (X_{t}) \mathrm{d} B_{t} \,,
\end{equation}
where $b: \mathbb{R}^d \rightarrow \mathbb{R}^d$ is a vector field in $\mathbb{R}^d$, $\sigma : \mathbb{R}^d \rightarrow \mathbb{R}^{d\times n}$ is a matrix-valued function, and $B_t$ is the Wiener process in $\mathbb{R}^n$. The infinitesimal generator of equation \eqref{SDE} is
$$
  \mathcal{L} = \sum_{i,j = 1}^{d} a_{ij}(x) \frac{\partial ^{2}}{\partial
    x_{i} \partial x_{j}} + \sum_{i = 1}^{d} b_{i}
  \frac{\partial}{\partial x_{i}}  \,,
$$

We will need the following results regarding equation \eqref{SDE}. 

\medskip
{\bf A. Stochastic stability.}
Denote the Borel $\sigma$-algebra on $\mathbb{R}^{d}$ by $\mathcal{B}(\mathbb{R}^{d})$. Let $P^{t}(x,
\cdot): \mathbb{R}^{d}\times \mathcal{B}(\mathbb{R}^{d}) \mapsto \mathbb{R}$ be the transition kernel of equation \eqref{SDE} such that
$$
  P^{t}(x, A) = \mathbb{P}[X_{t} \in A \,|\, X_{0} = x] \,,
$$
where $t > 0$, $x \in \mathbb{R}^d$, and $A \in \mathcal{B}(\mathbb{R}^d)$. Assume the following assumptions hold:

{\bf (A1)} There exists a {\it small set} $C \in \mathcal{B}(\mathbb{R}^{d})$, constants $T > 0$, $\eta > 0$, and a probability
measure $\nu$ supported by $C$, such that
$$
  P^{T}(x, A) \geq \eta \nu (A) \quad \forall A \in \mathcal{B}(
  \mathbb{R}^{d}), \quad x \in C \,.
$$

{\bf (A2G)} There exists a {\it Lyapunov function} $V: \mathbb{R}^{d}
\rightarrow [1, \infty)$ that takes the value
$\infty$ as $x \rightarrow \infty$ and constants $a > 0$, $d <
\infty$, such that
$$
  \mathcal{L} V \leq -a V + d \,.
  $$

We refer to \cite{meyn2012markov} for additional discussion of the small set and the drift condition. The following known result in \cite{mattingly2002ergodicity} is well known.

\begin{thm}
  \label{HMSthm25}
  Let $X_{t}$ be a stochastic differential equation that satisfies
  assumptions {\bf (A2G)} and {\bf (A1)} with set $C$ given by
$$
  C = \left\{ x \,|\, V(x) \leq \frac{2d}{a( \gamma - e^{-a T})}\right\} 
  $$
  for some $\gamma \in (e^{-a T/2}, 1)$. Let $\hat{X}_{n} = X_{nT}$ be an 
  embedded chain of $X_{t}$. Then there exists a unique
  invariant probability measure $\pi$ and constants $r(\gamma) \in (0,
  1)$, $\kappa(\gamma) \in (0, \infty)$, such that for all measurable
  function $f$ with $|f| \leq V$, we have
  
$$
  |\mathbb{E}_{X_{0}} f(\hat{X}_{n}) - \pi (f) | \leq \kappa r^{n} V(\hat{X}_{0}) \,.
$$
\end{thm}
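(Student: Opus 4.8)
\emph{Proof strategy.} The plan is to reduce the continuous-time statement to a discrete Harris-type theorem for the embedded chain $\hat{X}_n = X_{nT}$, which is the route taken in \cite{mattingly2002ergodicity} (and which can also be carried out by the Hairer--Mattingly coupling proof of Harris' theorem). First I would upgrade the infinitesimal drift condition \textbf{(A2G)} to a one-step Foster--Lyapunov inequality for the kernel $P := P^{T}$. Applying Dynkin's formula to $t \mapsto e^{at} V(X_t)$ on the interval $[0,\, t \wedge \tau_m]$, where $\tau_m = \inf\{s : V(X_s) \ge m\}$, then letting $m \to \infty$ and using Fatou together with $V \ge 1$ (the latter also yielding non-explosion via Markov's inequality), one obtains $\mathbb{E}_x V(X_t) \le e^{-at} V(x) + \tfrac{d}{a}(1 - e^{-at})$ for all $t \ge 0$, and in particular
$$
PV(x) \le \alpha V(x) + \beta, \qquad \alpha := e^{-aT} \in (0,1), \quad \beta := \tfrac{d}{a}(1 - e^{-aT}) \le \tfrac{d}{a}.
$$
This is the step that needs the most care: $V$ is unbounded and only a pointwise (distributional) generator bound is assumed, so one must localize via $\tau_m$, have enough regularity of $V$ to apply Itô, and control the passage to the limit.

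Next I would check that the prescribed small set $C = \{x : V(x) \le R\}$ with $R = \frac{2d}{a(\gamma - e^{-aT})}$ is exactly the region off which $P$ strictly contracts $V$: if $V(x) > R$ then $\beta \le \tfrac{d}{a} < \tfrac12 (\gamma - \alpha) V(x)$, hence
$$
PV(x) \le \alpha V(x) + \beta < \tfrac12(\alpha + \gamma) V(x) < \gamma V(x),
$$
using that $\alpha = e^{-aT} < e^{-aT/2} < \gamma$. Together with \textbf{(A1)}, the chain $\hat{X}_n$ therefore enjoys a uniform minorization on the $V$-bounded set $C$, while $PV \le \gamma V$ on $\Omega \setminus C$ and $PV \le \alpha R + \beta =: K < \infty$ on $C$.

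Finally I would invoke the discrete geometric ergodicity (Harris / Meyn--Tweedie) theorem. Concretely: introduce the $V$-weighted norm $\|\mu\|_V := \sup_{|f| \le V} |\mu(f)|$ and, for a suitably small $\delta > 0$, the Harris metric $\rho_\delta(x,y) = (1 + \delta V(x) + \delta V(y)) \mathbf{1}_{\{x \ne y\}}$. Using the contraction-outside-$C$ bound to shrink the $V$-part of $\rho_\delta$ when at least one point lies outside $C$, and the minorization to gain a uniform total-variation contraction when both points lie in $C$, one shows that for suitable $\delta$ there is $r = r(\gamma) \in (0,1)$ with $\mathcal{W}_{\rho_\delta}(P\mu, P\nu) \le r\, \mathcal{W}_{\rho_\delta}(\mu, \nu)$, where $\mathcal{W}_{\rho_\delta}$ is the Wasserstein-$1$ distance built from $\rho_\delta$. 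Banach's fixed point theorem, applied on the set of probability measures with finite $V$-moment (left invariant by $P$ thanks to Step 1), produces the unique invariant measure $\pi$, and iterating the contraction gives $\mathcal{W}_{\rho_\delta}(P^n \delta_x, \pi) \le r^n \mathcal{W}_{\rho_\delta}(\delta_x, \pi) \le \kappa(\gamma)\, r^n V(x)$, where $\pi(V) < \infty$ follows from Step 1 by truncation ($\pi(V \wedge m) \le \alpha\, \pi(V \wedge m) + \beta$). Since $\rho_\delta$ dominates $\delta \|\cdot\|_V$, this is precisely the asserted bound $|\mathbb{E}_{X_0} f(\hat{X}_n) - \pi(f)| \le \kappa r^n V(\hat{X}_0)$ for every measurable $f$ with $|f| \le V$, with the constants $r, \kappa$ depending on $\gamma$ (equivalently on $\alpha, \beta, \eta, R$), matching the notation $r(\gamma), \kappa(\gamma)$. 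An alternative to this last step is to quote the corresponding statement from \cite{meyn2012markov} directly; I expect the only genuinely delicate point of the whole argument to remain the justification of the discrete drift inequality in Step 1.
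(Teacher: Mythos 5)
Your proposal is correct, and it reconstructs essentially the standard argument behind this result: the paper itself offers no proof, quoting it directly from \cite{mattingly2002ergodicity}, where the proof proceeds exactly as you outline (Dynkin plus localization to get $P^{T}V \leq e^{-aT}V + \tfrac{d}{a}(1-e^{-aT})$, contraction of $V$ off the level set $C$ with the stated radius, then minorization on $C$ and a Harris-type coupling/weighted-norm contraction for the sampled chain). The only point to keep honest in a full write-up is the localization step you already flag, i.e.\ justifying It\^o/Dynkin for the unbounded $V$ and passing to the limit in $m$.
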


Note that according to Lemma 2.3 in \cite{mattingly2002ergodicity}, Assumption {\bf (A1)} is implied by  the following more
verifiable assumption {\bf (A1')}.

\medskip
{\bf (A1')}: There exists a compact set $C \in
\mathcal{B}(\mathbb{R}^{d})$ such that for some $y$ in the interior of
$C$, there is, for any $\delta > 0$, a $t_{1} = t_{1}(\delta)$ such
that
$$
  P^{t_{1}}(x, \mathcal{B}_{\delta}(y) ) > 0 \qquad \forall x \in C \,,
$$
where $\mathcal{B}_{\delta}(y)$ is the open ball of radius $\delta$
centered at $y$. In addition, the transition kernel possesses a
jointly continuous density function on $C$ such that
$$
  P^{t}(x, A) = \int_{A} p^{t}(x,y) \mathrm{d}y \quad \forall x \in C,
  A \in \mathcal{B}(\mathbb{R}^{d}) \cap \mathcal{B}(C) \,.
$$

Assumption {\bf (A2G)} gives the criterion of geometric ergodicity. A
power-law ergodicity requires a weaker condition of the Lyapunov
function, described in the following Assumption {\bf (A2P)}: 

\medskip
{\bf (A2P)} There exists a Lyapunov function $V: \mathbb{R}^d \rightarrow [1, \infty)$ with precompact sublevel sets such that  
$$
\mathcal{L} V \leq K - V^\alpha
$$
for a constant $0<\alpha < 1$.

We have the following known result modified from Theorem 4.1 of \cite{hairer2010convergence} by replacing $\phi(V)$ in the original theorem by $V^\alpha$.

\begin{thm}
\label{hairernote41}
    Let $X_t$ be a stochastic differential equation that satisfies assumption {\bf (A2P)}. Let $\| \cdot \|_{TV}$ denote the total variation norm. Assume for any constant $C > 0$, the sublevel set $\{ V \leq C\}$ satisfies assumption {\bf (A1)}. Then 
    \begin{itemize}
        \item There exists an invariant probability measure $\pi$ satisfying 
        $$
        \int V(x)^\alpha \pi (\mathrm{d}x) < \infty
        $$
        \item There exists a constant $C$ such that for every $x, y \in \mathbb{R}^d$,
        $$
        \|P^{t}(x, \cdot) - P^{t}(y, \cdot)\|_{TV} \leq C (V(x) + V(y)
        ) (t+1)^{-1/(1-\alpha)} \,.
        $$
      \item There exists a constant $C$ such that for every $x \in \mathbb{R}^{d}$,
        $$
                  \| P^{t}(x, \cdot) - \pi \|_{TV} \leq C V(x)
                  (t+1)^{-1/(1 - \alpha)} + C (t+1)^{-\alpha/(1-
                    \alpha)}  \,.
                  $$
    \end{itemize}
  \end{thm}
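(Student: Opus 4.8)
The plan is to recognize the stated result as the specialization, to the rate function $\phi(v)=v^{\alpha}$, of the general subgeometric (polynomial) ergodicity theorem of \cite[Theorem~4.1]{hairer2010convergence}: one checks that {\bf (A2P)} and the assumed minorization on sublevel sets are exactly the drift and small-set hypotheses of that theorem, invokes it, and then converts its abstract rates — expressed through $H_{\phi}(v)=\int_{1}^{v}\phi(u)^{-1}\,\mathrm{d}u$ — into the explicit powers of $t$ in the statement. It is convenient to work with the time-$T$ skeleton chain $\widehat{X}_{n}:=X_{nT}$ for a fixed $T>0$ and to transfer back to continuous time at the end.

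The first task is to promote the infinitesimal inequality $\mathcal{L}V\le K-V^{\alpha}$ to a discrete Foster--Lyapunov condition for $\widehat{X}_{n}$. By Dynkin's formula, with a localization that is legitimate because the sublevel sets of $V$ are precompact and $\mathcal{L}V\le K$ forbids explosion in finite time, one obtains $\mathbb{E}_{x}V(X_{t})\le V(x)+Kt$ and $P^{T}V(x)\le V(x)+KT-\int_{0}^{T}\mathbb{E}_{x}[V(X_{s})^{\alpha}]\,\mathrm{d}s$. To bound the dissipation integral below by a multiple of $V(x)^{\alpha}$, let $\tau$ be the first time $V(X_{s})$ drops below $V(x)/2$: since $\tfrac{\mathrm{d}}{\mathrm{d}s}\mathbb{E}_{x}V(X_{s})\le K$ the process cannot halve $V$ too quickly, so $\mathbb{E}_{x}[\tau\wedge T]$ is bounded below by a positive constant once $V(x)$ is large, while on $\{s<\tau\}$ one has $V(X_{s})^{\alpha}\ge 2^{-\alpha}V(x)^{\alpha}$. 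This yields $P^{T}V(x)\le V(x)-c\,V(x)^{\alpha}+b$ for suitable constants $c,b>0$, i.e.\ the subgeometric drift with rate $\phi(v)=v^{\alpha}$. The small sets appearing in the abstract theorem are sublevel sets $\{V\le R\}$ (with $R$ chosen as in the formula for $C$ in Theorem~\ref{HMSthm25}, adapted to the subgeometric drift), and the hypothesis that each such set satisfies {\bf (A1)} — equivalently, via Lemma~2.3 of \cite{mattingly2002ergodicity}, the more verifiable {\bf (A1')} — supplies precisely the required uniform minorization.

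Feeding the discrete drift and the minorization into \cite[Theorem~4.1]{hairer2010convergence} then produces a unique invariant measure $\pi$ with $\int V^{\alpha}\,\mathrm{d}\pi<\infty$ (the standard moment by-product of a subgeometric drift), a weighted-total-variation contraction for $\widehat{X}_{n}$, and convergence of $\widehat{X}_{n}$ to $\pi$. To make the exponents explicit one computes, for $\phi(v)=v^{\alpha}$, $H_{\phi}(v)=\frac{v^{1-\alpha}-1}{1-\alpha}$ and hence $H_{\phi}^{-1}(t)\asymp\big((1-\alpha)t\big)^{1/(1-\alpha)}$; the two-point rate $1/H_{\phi}^{-1}$ then becomes $(n+1)^{-1/(1-\alpha)}$, and the companion ``distance-to-$\pi$'' term furnished by the theorem becomes $(n+1)^{-\alpha/(1-\alpha)}$ — one power of $t$ slower, since $\tfrac{\alpha}{1-\alpha}=\tfrac{1}{1-\alpha}-1$. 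Writing $t=nT+s$ with $s\in[0,T)$ and using $\sup_{s\le T}\mathbb{E}_{x}V(X_{s})\le V(x)+KT$ together with Chapman--Kolmogorov transfers all three bounds to continuous time $t$, at the cost of adjusting constants and replacing $n+1$ by a comparable multiple of $t+1$.

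The point requiring care — rather than a genuinely new difficulty, since this is the specialization of a known result — is the interface: matching the precise minorization/irreducibility hypotheses used in \cite[Theorem~4.1]{hairer2010convergence} to the combination of {\bf (A1)} on all sublevel sets, precompactness of those sets, and the density regularity in {\bf (A1')} (the skeleton-chain reformulation is what makes this matching clean), and then carrying the subgeometric \emph{rate}, not merely qualitative ergodicity, through the discrete-to-continuous interpolation. If instead a self-contained proof were wanted, the same estimates follow from the standard coupling argument in which the drift condition yields a moment of order $1/(1-\alpha)$ on the return time of the coupled process to a small set and the minorization yields a coupling probability there; the two advertised exponents are then exactly the coupling-time tail exponent and the $V(x)$-weighted transient exponent.
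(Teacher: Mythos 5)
Your proposal is correct and is essentially the paper's own treatment: the paper gives no independent proof of Theorem \ref{hairernote41}, but simply quotes Theorem 4.1 of \cite{hairer2010convergence} specialized to $\phi(V)=V^{\alpha}$, and your computation $H_{\phi}(v)=\frac{v^{1-\alpha}-1}{1-\alpha}$, $H_{\phi}^{-1}(t)\asymp t^{1/(1-\alpha)}$, $\phi\circ H_{\phi}^{-1}(t)\asymp t^{\alpha/(1-\alpha)}$ recovers exactly the stated exponents. One caveat: the skeleton-chain detour is unnecessary, since the cited theorem is already formulated in continuous time with precisely the generator condition {\bf (A2P)} and small sublevel sets, and its one shaky step---inferring that $V$ cannot be halved quickly from $\tfrac{\mathrm{d}}{\mathrm{d}s}\mathbb{E}_{x}V(X_{s})\le K$, which only rules out fast increase, not fast decrease---should either be dropped or replaced by Jensen's inequality (concavity of $v\mapsto v^{\alpha}$) together with comparison to the ODE $u'=K-u^{\alpha}$, which yields the discrete drift $P^{T}V\le V-cV^{\alpha}+b$ directly.
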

\bigskip

  {\bf B. Multiplicative ergodic theorem.}

The ordinary stochastic stability of a Markov chain is about the
``additive ergodicity''. In contrast, the multiplicative ergodicity theorem for Markov processes estimates
the integral
$$
  \exp\left ( \int_{0}^{t} F( X_{s}) \mathrm{d}s\right )
$$
of an observable $F$. Obviously multiplicative ergodicity is a
stronger stochastic stability result that requires stronger
conditions. We have the following result from \cite{kontoyiannis2003spectral} for continuous-time
Markov processes.

\medskip

\begin{thm}
 \label{MET}
Let $X_{t}$ be a continuous Markov process on $\Omega$. Assume
\begin{itemize}
\item[(1)] The Markov process $X_{t}$ is geometrically ergodic with an
  invariant probability measure $\pi$ and Lyapunov function $V: X\rightarrow [1, \infty )$ such that $\pi(
  V^{2}) < \infty$, and 
  \item[(2)] There exists a measurable function $F: \Omega \rightarrow [-1, 1]$ has
    zero mean $\pi(F) = 0$ and nontrivial asymptotic variance
    
$$
  \sigma^{2} = \lim_{t \rightarrow \infty} \mbox{Var}_{x}\left[
  \frac{\int_{0}^{t} F( X_{s}) \mathrm{d}s}{\sqrt{t}}\right] > 0 \,.
  $$
Let $\mathcal{L}$ be the infinitesimal generator of $X_{t}$. Then for
any complex number $\alpha = a + i \omega$ with sufficiently small
$|a|$ and $|\omega|$, the operator $\mathcal{L} + \alpha F$ admits a
maximal and isolated eigenvalue $\Lambda(\alpha)$ and an eigenfunction
$\hat{f}_{\alpha}(x)$, such that
$$
  \left| \mathbb{E}_{x}\left[ \exp \left ( \alpha \int_{0}^{t} F( X_{s})
    \mathrm{d}s  - t \Lambda (\alpha) \right )\right] - \hat{f}_{\alpha}(x)
  \right| \leq B_{0} |\alpha| e^{-b_{0}t} V(x)
  $$
  for constants $b_{0} > 0$ and $B_{0} < \infty$. 
\end{itemize}
\end{thm}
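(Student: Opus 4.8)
The plan is to recover this statement, as in \cite{kontoyiannis2003spectral}, from the analytic perturbation theory of the twisted (Feynman--Kac) semigroup; I describe the steps in the order I would carry them out. First I would work in the weighted sup-norm space $\mathcal{X}_V := \{ f : \Omega \to \C \ \text{with} \ \|f\|_V := \sup_{x} |f(x)|/V(x) < \infty \}$ and, for $\alpha \in \C$, introduce
$$ (Q^t_\alpha f)(x) := \mathbb{E}_x\Big[ \exp\Big( \alpha \int_0^t F(X_s)\,\mathrm{d}s \Big) f(X_t) \Big], \qquad t \ge 0 , $$
which is a semigroup with (formal) generator $\mathcal{L} + \alpha F$ and equals the Markov semigroup $Q^t_0 = P^t$ when $\alpha = 0$. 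Since $|F| \le 1$ and hypothesis (1), together with $\pi(V)<\infty$ (which follows from $\pi(V^2)<\infty$ since $V\ge 1$), gives $\|P^t V\|_V \le C_1$ and hence $\mathbb{E}_x V(X_t) \le C_1 V(x)$ uniformly in $t$, each $Q^t_\alpha$ is bounded on $\mathcal{X}_V$ with $\|Q^t_\alpha\|_V \le C_1 e^{|\alpha| t}$, and expanding the exponential in a power series shows $\alpha \mapsto Q^t_\alpha$ is analytic as a bounded-operator-valued map. The key structural input is that $V$-geometric ergodicity, $\|P^t f - \pi(f)\mathbf{1}\|_V \le C_0 e^{-\lambda_0 t} \|f\|_V$, says precisely that $Q := Q^1_0$ decomposes as $Q = \Pi_0 + R$ with $\Pi_0 f = \pi(f)\mathbf{1}$ a rank-one projection onto the eigenvalue $1$ and $R = Q - \Pi_0$ of spectral radius $\le e^{-\lambda_0} < 1$; so $1$ is a simple, isolated, maximal-modulus eigenvalue of $Q$ on $\mathcal{X}_V$.

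I would then apply Kato's analytic perturbation theory to the analytic family $\alpha \mapsto Q^1_\alpha$ at this simple isolated eigenvalue: there exist $\alpha_0 > 0$ and analytic maps $\mu(\alpha) \in \C$ and rank-one projections $\Pi_\alpha$ on $\{|\alpha| < \alpha_0\}$ with $\mu(0) = 1$, $\Pi_0 f = \pi(f)\mathbf{1}$, $Q^1_\alpha \Pi_\alpha = \mu(\alpha) \Pi_\alpha$, and, for any fixed $r_1 \in (e^{-\lambda_0}, 1)$ and $\alpha_0$ small enough, $\|(Q^1_\alpha(I - \Pi_\alpha))^n\|_V \le C_2 r_1^n$ \emph{uniformly} for $|\alpha| \le \alpha_0/2$ (via a Dunford contour representation of the complementary part and compactness in $\alpha$). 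Setting $\Lambda(\alpha) := \log \mu(\alpha)$ (principal branch, $\Lambda(0) = 0$) and $\hat f_\alpha := \Pi_\alpha \mathbf{1}$: since $\{Q^t_\alpha\}$ commutes with $Q^1_\alpha$ it leaves the one-dimensional range of $\Pi_\alpha$ invariant, so $Q^t_\alpha \hat f_\alpha = c_t(\alpha) \hat f_\alpha$ for a measurable multiplicative factor $c_t$ with $c_1 = \mu(\alpha)$, forcing $c_t(\alpha) = e^{t \Lambda(\alpha)}$ and $(\mathcal{L} + \alpha F)\hat f_\alpha = \Lambda(\alpha) \hat f_\alpha$; being the pullback of the maximal-modulus isolated eigenvalue of $Q^1_\alpha$, $\Lambda(\alpha)$ is the maximal isolated eigenvalue of $\mathcal{L} + \alpha F$ claimed in the statement.

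For the quantitative bound I would write $Q^t_\alpha = e^{t\Lambda(\alpha)} \Pi_\alpha + S^t_\alpha$ with $S^t_\alpha := Q^t_\alpha(I - \Pi_\alpha)$; for integer $t$ this is a power of $Q^1_\alpha(I - \Pi_\alpha)$, and for general $t$ one factors $S^t_\alpha = Q^{t - \lfloor t \rfloor}_\alpha S^{\lfloor t \rfloor}_\alpha$ and uses $\|Q^s_\alpha\|_V \le C_1 e^{|\alpha|}$ for $s \in [0,1]$, so that $\|S^t_\alpha\|_V \le C_3 e^{-\beta_1 t}$ with $\beta_1 := -\log r_1 > 0$, uniformly for $|\alpha| \le \alpha_0/2$. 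Applying both sides to $\mathbf{1}$ and dividing by $e^{t\Lambda(\alpha)}$,
$$ \mathbb{E}_x\Big[ \exp\Big( \alpha \int_0^t F(X_s)\,\mathrm{d}s - t\Lambda(\alpha) \Big) \Big] - \hat f_\alpha(x) = e^{-t\Lambda(\alpha)} (S^t_\alpha \mathbf{1})(x) . $$
Two observations finish it: (i) $S^t_0 \mathbf{1} = P^t(\mathbf{1} - \mathbf{1}) = 0$, so $\alpha \mapsto S^t_\alpha \mathbf{1}$ is analytic, vanishes at $\alpha = 0$, and is bounded by $C_3 e^{-\beta_1 t}$ in $\|\cdot\|_V$ on $|\alpha| \le \alpha_0/2$, whence a Cauchy/maximum-modulus estimate gives $\|S^t_\alpha \mathbf{1}\|_V \le (2/\alpha_0)\,|\alpha|\,C_3\, e^{-\beta_1 t}$ for $|\alpha| \le \alpha_0/2$; and (ii) since $\Lambda$ is continuous with $\Lambda(0) = 0$, shrinking $\alpha_0$ ensures $|\mathrm{Re}\,\Lambda(\alpha)| \le \beta_1/2$, hence $|e^{-t\Lambda(\alpha)}| \le e^{\beta_1 t/2}$. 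Combining these yields the asserted inequality with $b_0 = \beta_1/2$ and $B_0 = 2C_3/\alpha_0$.

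I expect the real obstacle to lie in the functional-analytic translation of the first two steps, not in the final bookkeeping: converting $V$-uniform geometric ergodicity into a genuine spectral gap for the bounded operator $P^1$ on $\mathcal{X}_V$, verifying the operator-norm analyticity and uniform bounds for the twisted family $Q^t_\alpha$, and — the most delicate point — extracting from Kato's theory a spectral gap for $Q^1_\alpha$ whose constants are \emph{uniform} over a disk $|\alpha| \le \alpha_0/2$ rather than merely continuous at $\alpha = 0$; this is also where one checks the mild Feller/time-continuity regularity of $X_t$ needed to pass from the time-one skeleton to the continuous-time generator. The remaining hypotheses play a supporting role: $\pi(V^2) < \infty$ keeps the twisted kernels, the normalized eigenfunctions $\hat f_\alpha$, and the dual eigen-measures within the required integrability classes and (with $\pi(F) = 0$) supplies the expansion $\Lambda(\alpha) = \tfrac{1}{2}\sigma^2 \alpha^2 + O(\alpha^3)$, while $\sigma^2 > 0$ makes this expansion nondegenerate; neither is needed for the displayed estimate beyond placing us in the setting of \cite{kontoyiannis2003spectral}.
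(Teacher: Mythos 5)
The paper does not actually prove Theorem \ref{MET}: it is imported verbatim from \cite{kontoyiannis2003spectral}, so there is no internal proof to compare against. Your reconstruction — the twisted semigroup $Q^t_\alpha$ on the $V$-weighted sup-norm space, the spectral gap of the time-one map deduced from $V$-uniform geometric ergodicity, Kato perturbation of the simple isolated eigenvalue $1$, and the Schwarz-lemma/Cauchy estimate to extract the factor $|\alpha|$ — is sound in outline and follows essentially the same spectral route as that reference, so it is consistent with the proof the paper is relying on. The points you flag as delicate (uniformity of the complementary spectral bound over a disk in $\alpha$, and the passage from the time-one skeleton to the continuous-time eigenvalue of $\mathcal{L}+\alpha F$) are indeed where the cited machinery does the real work, and your remark that hypothesis (2) is not needed for the displayed estimate but only for the expansion of $\Lambda(\alpha)$ matches how the paper later uses the theorem in the appendix.
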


For discrete Markov processes, under stronger drift conditions, the
eigenvalue $\Lambda (\alpha)$ is analytical with respect to
$\alpha$. In other words, we have $\Lambda (\alpha) = O( \alpha^{2})$
when $|\alpha| \ll 1$. See \cite{kontoyiannis2005large} for more details. However, to the best of our knowledge, the
corresponding result for continuous-time Markov processes has not been made 
available in the literature. Instead, we can only use the eigenvalue perturbation theory in \cite{kato2013perturbation} to show that $\Lambda(\alpha)$ is $o(\alpha)$ small, hence $\lim_{\alpha \rightarrow 0} \Lambda(\alpha) = 0$.

\bigskip

{\bf C. Stochastic representation of PDE solutions.}
Consider an elliptic operator in a bounded domain $D \subset
\mathbb{R}^{d}$ of the form
$$
  L_{D} := \sum_{i,j = 1}^{d} a_{ij}(x) \frac{\partial ^{2}}{\partial
    x_{i} \partial x_{j}} + \sum_{i = 1}^{d} b_{i}
  \frac{\partial}{\partial x_{i}} + c(x) \,,
  $$
  where the matrix $A = \{ a_{ij}\}$ is non-degenerate and there exist $\Lambda_{1}, \Lambda_{2}>0$ such that 
  
$$
  \Lambda_{1} |\eta|^{2} \leq \sum_{i,j = 1}^{d} a_{ij}(x)
  \eta_{i}\eta_{j} \leq \Lambda_{2} |\eta|^{2}, \quad \forall x \in D,
  \eta \in \mathbb{R}^{d} \,,
  $$
  all coefficients of $L_D$ are Lipschitz continuous and bounded in $D$,
  and $\partial D \in C^{2, \beta}$. Note that $\partial D \in C^{2, \beta}$ already implies that the exterior cone condition is satisfied by every $y \in \partial D$. Hence, $D$ is a regular bounded domain in the sense of \cite{freidlin1985functional}. 

  In addition, assume that the matrix $A = \{ a_{ij}\}$ can be represented in
  the form $A = \sigma \sigma ^{T}$ for a matrix-valued function
  $\sigma$ with Lipschitz continuous elements.
  Then if $c = 0$, $L_{D}$ is equal to the
  infinitesimal generator of the stochastic differential equation \eqref{SDE}.


  Consider the following Cauchy-Dirichlet problem. 
  
\begin{align}
  \label{cauchy}
  & L_{D} u(x)= f(x), \quad  x \in D\\\nonumber
  & u(x_{0}) = \psi(x_{0}), \quad  x_{0} \in \partial D \nonumber\,.
\end{align}
It is relatively well-known that if equation \eqref{cauchy} admits a
solution, then it must have a stochastic representation up to the
first exit time
$$
  \tau_{x}^{D} = \inf\{ t > 0 \,|\, X_{0} \ x, X_{t} \in \partial D \}
$$
The following Theorem comes from \cite{freidlin2012random}.

\begin{thm}
  \label{freidlinPDE}
  Suppose
$$
  \mathbb{E}\left[\int_{0}^{\tau_{x}^{D}} \exp\left ( \int_{0}^{t}c(X_{s})
    \mathrm{d}s\right) \mathrm{d}t \right] < \infty 
  $$
  for all $X_{0} = x \in D$. If $u(x)$ is a bounded $C^{2}$ solution of
  equation \eqref{cauchy}, then
$$
  u(x) = - \mathbb{E}\left[\int_{0}^{\tau_{x}^{D}} f(X_{t})\exp\left ( \int_{0}^{t}c(X_{s})
    \mathrm{d}s\right) \mathrm{d}t\right] + \mathbb{E}\left[
  \psi(X_{\tau_{x}^{D}})\exp\left ( \int_{0}^{\tau_{x}^{D}}c(X_{s})
    \mathrm{d}s\right)\right] \,.
$$
\end{thm}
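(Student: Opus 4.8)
The plan is to establish the representation in Theorem~\ref{freidlinPDE} by applying It\^o's formula to the exponentially weighted process
$$
Z_t \;:=\; u(X_t)\,Y_t, \qquad Y_t \;:=\; \exp\left(\int_0^t c(X_s)\,\mathrm{d}s\right),
$$
where $X_t$ is the diffusion \eqref{SDE} started at $X_0=x\in D$, and then letting the argument run up to the first exit time $\tau_x^D$. Since $Y_t$ has finite variation with $\mathrm{d}Y_t=c(X_t)Y_t\,\mathrm{d}t$, and $u\in C^2$, the product rule together with It\^o's formula gives
$$
\mathrm{d}Z_t \;=\; Y_t\bigl(\mathcal L_0 u + c\,u\bigr)(X_t)\,\mathrm{d}t \;+\; Y_t\,\nabla u(X_t)^{T}\sigma(X_t)\,\mathrm{d}B_t,
$$
where $\mathcal L_0=\sum_{i,j}a_{ij}\partial_i\partial_j+\sum_i b_i\partial_i=L_D-c$. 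Using the equation $L_Du=f$, the drift term collapses to $Y_t f(X_t)\,\mathrm{d}t$. Thus, modulo the stochastic integral being a genuine martingale, Dynkin's identity will read
$$
\mathbb E\bigl[Z_{\tau}\bigr]-u(x) \;=\; \mathbb E\left[\int_0^{\tau} Y_t f(X_t)\,\mathrm{d}t\right]
$$
for a suitable stopping time $\tau$, and the theorem follows once $\tau$ can be taken to be $\tau_x^D$ and the boundary condition $u=\psi$ on $\partial D$ is invoked on the left-hand side.

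To make this rigorous I would localize twice. First, since $u$ is only assumed $C^2$ in the open set $D$ (so $\nabla u$ and the coefficients need not be bounded up to $\partial D$), take an exhaustion $D_k\uparrow D$ by smooth subdomains with $\overline{D_k}\subset D$ and set $\tau_k:=\tau_x^{D_k}\wedge n$ for a fixed $n\in\mathbb N$. On $[0,\tau_k]$ everything in sight is bounded, so the stochastic integral is centered and Dynkin's identity holds with $\tau=\tau_k$. Letting $k\to\infty$ one has $\tau_k\uparrow\tau_x^D\wedge n$; since $f$ is bounded and $\mathbb E\bigl[\int_0^{\tau_x^D}Y_t\,\mathrm{d}t\bigr]<\infty$ by hypothesis, dominated convergence handles the right-hand side, while on the left $|Z_{\tau_k}|\le\|u\|_\infty e^{\|c\|_\infty n}$ is uniformly bounded, so $\mathbb E[Z_{\tau_k}]\to\mathbb E\bigl[Z_{\tau_x^D\wedge n}\bigr]$. (If one reads $C^2$ as $C^2(\overline D)$, this step may be skipped and one works directly with $\tau_x^D\wedge n$.) It then remains to send $n\to\infty$. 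Here I would use that uniform ellipticity of $A$ on the bounded domain $D$ forces $\tau_x^D<\infty$ almost surely — indeed with exponential moments, via a barrier $h(x)=-e^{\lambda x_1}$ satisfying $\mathcal L_0 h\le -c_0<0$ on $D$ for $\lambda$ large, or Khas'minskii's lemma — so $\tau_x^D\wedge n\uparrow\tau_x^D$. On the right-hand side, dominated convergence with dominating function $\|f\|_\infty\int_0^{\tau_x^D}Y_t\,\mathrm{d}t$ produces the $f$-integral in the claimed formula. On the left-hand side, on the event $\{\tau_x^D\le n\}$ — whose probability tends to $1$ — the path has hit $\partial D$, so the boundary condition gives $u(X_{\tau_x^D\wedge n})=\psi(X_{\tau_x^D})$; combined with $|u|\le\|u\|_\infty$ and the integrability of $Y_{\tau_x^D}\le e^{\|c\|_\infty\tau_x^D}$, dominated convergence gives $\mathbb E\bigl[Z_{\tau_x^D\wedge n}\bigr]\to\mathbb E\bigl[\psi(X_{\tau_x^D})\exp(\int_0^{\tau_x^D}c(X_s)\,\mathrm{d}s)\bigr]$, and rearranging yields precisely the stated representation.

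The It\^o/Dynkin computation and the inner ($k$) localization are routine. The step I expect to be the main obstacle — or at least the one requiring the most care — is the outer limit $n\to\infty$, and specifically the control of the boundary term $Y_{\tau_x^D}u(X_{\tau_x^D})$: one needs both the stated integrability hypothesis $\mathbb E[\int_0^{\tau_x^D}\exp(\int_0^t c(X_s)\,\mathrm{d}s)\,\mathrm{d}t]<\infty$ and enough integrability of $\tau_x^D$ itself (coming from the uniform non-degeneracy of $A$ on the bounded domain $D$) to make $\exp(\int_0^{\tau_x^D}c(X_s)\,\mathrm{d}s)$ integrable and the limit legitimate.
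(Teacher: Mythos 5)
The paper does not prove this statement—it is quoted from Freidlin's book—and your It\^o--Dynkin argument with the double localization (exhaustion $D_k\uparrow D$, then the time cap $n\to\infty$) is exactly the standard route for it, so the overall architecture is fine. The computation $\mathrm{d}Z_t=Y_t\,f(X_t)\,\mathrm{d}t+\text{(local martingale)}$, the inner limit in $k$, and the treatment of the $f$-term by dominated convergence against $\|f\|_\infty\int_0^{\tau_x^D}Y_t\,\mathrm{d}t$ are all correct.

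The gap is in your justification of the final limit for the boundary term. You dominate $Z_{\tau_x^D\wedge n}$ by $\|u\|_\infty e^{\|c\|_\infty\tau_x^D}$ and claim this is integrable because uniform ellipticity of $A$ on the bounded domain gives exponential moments of $\tau_x^D$. Uniform ellipticity only yields $\mathbb{E}[e^{\lambda\tau_x^D}]<\infty$ for $\lambda$ \emph{below a threshold} determined by the domain and the ellipticity constants (essentially the principal Dirichlet eigenvalue; already for Brownian motion on an interval the exponential moment blows up at a finite $\lambda$). If $\|c^+\|_\infty$ exceeds that threshold, your dominating function is not integrable and the step fails as written—and this is precisely the regime the hypothesis $\mathbb{E}\bigl[\int_0^{\tau_x^D}\exp\bigl(\int_0^t c(X_s)\,\mathrm{d}s\bigr)\mathrm{d}t\bigr]<\infty$ is there to handle, so you cannot substitute free exponential moments for it. The repair is short and uses only boundedness of $c$ together with the stated hypothesis: comparing $Y_{t^*}$ with its average over $[t^*-1,t^*]$ gives $\sup_{t\le\tau_x^D}Y_t\le e^{\|c\|_\infty}\bigl(1+\int_0^{\tau_x^D}Y_t\,\mathrm{d}t\bigr)$, which is an integrable dominating function for $Z_{\tau_x^D\wedge n}$; equivalently, one can bound the contribution of $\{\tau_x^D>n\}$ by $\mathbb{E}[\mathbf{1}_{\{\tau_x^D>n\}}Y_n]\le e^{\|c\|_\infty}\,\mathbb{E}\bigl[\int_{n-1}^{\infty}\mathbf{1}_{\{t<\tau_x^D\}}Y_t\,\mathrm{d}t\bigr]\to0$. (A minor additional point: identifying the limit of $u(X_{\tau_k})$ with $\psi(X_{\tau_x^D})$ uses continuity of $u$ up to $\partial D$, which you flagged; with that reading of ``solution of \eqref{cauchy}'' the rest of your argument stands.)
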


However, Theorem \ref{freidlinPDE} does not say whether the equation
\eqref{cauchy} admits a solution. The existence of solution comes from 
heavier machinery. Denote the principal eigenvalue of $L_D$ on $D$ by
$\lambda_{1}(L_{D}, D)$. By \cite{berestycki1994principal}, we have
$$
  \lambda_{1}(L_{D}, D) = \sup\{ \lambda \,|\, \exists \phi > 0 \mbox{ in }
  D \mbox{ satisfying } (L_{D} + \lambda) \phi \leq 0 \} \,.
$$ 
We have the well-known result from \cite{berestycki1994principal} (Theorem 1.2 of \cite{berestycki1994principal}).

\begin{thm}
  \label{nirenberg}
  Assume $\lambda_{1}(L_{D}, D) >0$. Then given $f \in L^{d}(D)$, there is
  a unique solution $u \in W^{2, d}_{loc}$ for \eqref{cauchy} with zero boundary
  condition $\psi = 0$. In addition, $|u|$ is bounded.
\end{thm}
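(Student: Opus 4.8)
The plan is to follow the scheme of Berestycki--Nirenberg--Varadhan: the hypothesis $\lambda_{1}(L_{D},D)>0$ is exactly the condition guaranteeing the (refined) maximum principle for $L_{D}$ on $D$, and both uniqueness and the a priori bounds needed for existence will be deduced from it. The first move is to extract a well-behaved positive supersolution. Fix $\lambda=\lambda_{1}(L_{D},D)/2>0$. By the variational formula for $\lambda_{1}$ quoted above, together with a slight enlargement of the domain (legitimate since the coefficients of $L_{D}$ are Lipschitz up to $\partial D$ and $\lambda_{1}$ depends continuously on the domain), one obtains $\phi$, bounded above and below by positive constants on $\overline{D}$, with $(L_{D}+\lambda)\phi\le 0$, i.e. $L_{D}\phi\le-\lambda\phi<0$. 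Given any bounded $w$ with $L_{D}w\ge 0$ in $D$ and $\limsup_{x\to\partial D}w(x)\le 0$, write $w=\phi v$; then $v$ is bounded with $\limsup_{x\to\partial D}v(x)\le 0$, and a direct computation shows $\tilde L v\ge 0$, where $\tilde L$ has the same principal part as $L_{D}$, first-order coefficients $\tilde b_{i}=b_{i}+2\sum_{j}a_{ij}\partial_{j}\phi/\phi$, and zeroth-order coefficient $\tilde c=L_{D}\phi/\phi\le-\lambda<0$. Since $\tilde L$ has nonpositive zeroth-order term, the classical weak maximum principle gives $v\le 0$, hence $w\le 0$, throughout $D$. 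Applying this to $\pm w$ yields the maximum principle for $L_{D}$ and, as an immediate corollary, uniqueness in \eqref{cauchy}: the difference of two bounded solutions with $\psi=0$ solves $L_{D}w=0$ with $w\to 0$ at $\partial D$, so $w\equiv 0$.

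For existence I would exhaust $D$ by smooth subdomains $D_{n}$ with $\overline{D_{n}}\subset D_{n+1}$ and $\bigcup_{n}D_{n}=D$. Restricting $\phi$ shows $\lambda_{1}(L_{D},D_{n})\ge\lambda_{1}(L_{D},D)>0$, so $L_{D}$ obeys the maximum principle on each $D_{n}$; since $\partial D_{n}$ is smooth and the coefficients of $L_{D}$ are Lipschitz and bounded, the Dirichlet problem $L_{D}u_{n}=f$ in $D_{n}$, $u_{n}=0$ on $\partial D_{n}$, has a unique solution $u_{n}\in W^{2,d}(D_{n})$ by the standard $L^{p}$ theory for elliptic operators in non-divergence form (Alexandrov--Bakelman--Pucci estimate for injectivity; Calder\'on--Zygmund estimates together with the method of continuity for solvability). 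The crucial step is an $L^{\infty}$ bound on $u_{n}$ that does not deteriorate as $n\to\infty$. Writing $u_{n}=\phi v_{n}$ as above turns the equation into $\tilde L v_{n}=f/\phi$ with $\tilde c\le-\lambda<0$, so the ABP maximum principle applied to $\tilde L$ on $D_{n}$ gives $\|v_{n}\|_{L^{\infty}(D_{n})}\le C\,\|f/\phi\|_{L^{d}(D_{n})}$, with $C$ depending only on the ellipticity constants of $L_{D}$, on $\|\tilde b\|_{\infty}$, and on $\mathrm{diam}(D)$; since $\phi$ is bounded above and below by positive constants on $\overline{D}\supset D_{n}$, this yields $\|u_{n}\|_{L^{\infty}(D_{n})}\le C'\|f\|_{L^{d}(D)}$ with $C'$ independent of $n$.

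With the uniform $L^{\infty}$ bound in hand, interior $W^{2,d}$ estimates on subdomains $\overline{D_{m}}\subset D$ give bounds on $\|u_{n}\|_{W^{2,d}(D_{m})}$ independent of $n$ (for $n$ large). Weak compactness in $W^{2,d}(D_{m})$ and a diagonal argument over $m$ produce a subsequence converging weakly in $W^{2,d}_{\mathrm{loc}}(D)$ and locally uniformly to some $u\in W^{2,d}_{\mathrm{loc}}(D)$ solving $L_{D}u=f$ a.e.\ in $D$, with $|u|\le C'\|f\|_{L^{d}(D)}$; in particular $u$ is bounded. Finally, at each $x_{0}\in\partial D$ the exterior cone condition (granted by $\partial D\in C^{2,\beta}$) furnishes a local barrier dominating $|u|$ near $x_{0}$, forcing $u(x)\to 0$ as $x\to\partial D$; thus $u$ attains the boundary value $\psi=0$ and is the desired bounded solution, uniqueness having been established already.

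The main obstacle is converting the purely qualitative hypothesis $\lambda_{1}>0$ into quantitative estimates that survive the exhaustion $D_{n}\uparrow D$. A naive approach---e.g.\ solving with $L_{D}-\mu I$ for $\mu$ large and iterating---produces constants that blow up as $D_{n}\to D$; the role of $\lambda_{1}>0$ is precisely to supply the global positive supersolution $\phi$ that conjugates $L_{D}$ into an operator $\tilde L$ with strictly negative zeroth-order term, for which the ABP estimate holds with a domain-robust constant. If one only knew $\lambda_{1}\ge 0$, this mechanism---and, in fact, solvability of \eqref{cauchy} itself---can genuinely fail. A lesser technical point is ensuring that $\phi$ is controlled up to $\partial D$, which is why the slight-enlargement argument (using the Lipschitz coefficients and continuity of $\lambda_{1}$ in the domain) is inserted at the outset.
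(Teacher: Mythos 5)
The paper gives no proof of this theorem: it is imported verbatim as Theorem 1.2 of the cited Berestycki--Nirenberg--Varadhan paper, and your sketch reconstructs essentially that source's argument (a positive strict supersolution obtained from $\lambda_{1}>0$, conjugation $u=\phi v$ to produce a zeroth-order coefficient $\le -\lambda<0$ and hence a maximum principle and $n$-uniform ABP bounds, exhaustion by smooth subdomains with interior $W^{2,d}$ estimates, and barriers at the $C^{2,\beta}$ boundary). So your proposal is consistent with the paper's treatment and correct at the level of detail given, the only points taken on faith being standard ones for smooth domains (continuity of $\lambda_{1}$ under slight exterior enlargement, and solvability on the subdomains $D_{n}$, which should be run through the conjugated operator $\tilde L$ rather than through ABP for $L_{D}$ itself since $c$ may change sign).
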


It is known that the principal eigenvalue $\lambda_{1}(L_D, D)$ has a stochastic
representation. The following theorem is given by Theorem 6.2 of
\cite{liu2002strongly}.

\begin{thm}
  \label{liu}
  Assume $L_{D}$ and $D$ satisfy the regularity condition described
  above. We have
  
$$
  \lambda_{1}( L_{D}, D)= - \lim_{t \rightarrow \infty} \frac{1}{t} \ln \left (
    \sup_{x \in D} \mathbb{E}_{x}\left [  \mathbf{1}_{\{t <
        \tau_{x}^{D}\}} \exp \left ( \int_{0}^{t} c(X_{s}) \mathrm{d}s \right ) \right ]
  \right ) \,.
$$
\end{thm}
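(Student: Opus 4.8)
The plan is to show that $R(t):=\sup_{x\in D}u(t,x)$, with
$$
u(t,x):=\mathbb E_x\!\left[\mathbf 1_{\{t<\tau_x^D\}}\exp\!\left(\int_0^t c(X_s)\,\mathrm{d}s\right)\right],
$$
obeys a two-sided bound $c_1 e^{-\lambda t}\le R(t)\le c_2 e^{-\lambda t}$ for all $t\ge1$, where $\lambda:=\lambda_1(L_D,D)$ and $c_1,c_2>0$ are constants; dividing $\ln R(t)$ by $t$ then forces $-\,t^{-1}\ln R(t)\to\lambda$, which is the assertion. Here I write $\mathcal A:=L_D-c$ for the second-order part of $L_D$, i.e.\ the generator of the diffusion $X_t$ of \eqref{SDE}. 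Under the stated hypotheses the results of \cite{berestycki1994principal} guarantee that $\lambda$ is a genuine eigenvalue, with a principal eigenfunction $\phi_1\in C^{2,\beta}(\overline D)$ satisfying $\phi_1>0$ in $D$, $\phi_1=0$ on $\partial D$, and $\mathcal A\phi_1+c\phi_1=-\lambda\phi_1$ in $D$; and since $\partial D\in C^{2,\beta}$ supplies a uniform interior ball condition, Hopf's lemma gives a constant $c_0>0$ with $\phi_1(x)\ge c_0\,\mathrm{dist}(x,\partial D)$ on $D$.

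The engine is the identity
$$
e^{\lambda t}\,\mathbb E_x\!\left[\mathbf 1_{\{t<\tau_x^D\}}\,\phi_1(X_t)\exp\!\left(\int_0^t c(X_s)\,\mathrm{d}s\right)\right]=\phi_1(x),\qquad x\in D,\ t\ge0.
$$
To establish it I would extend $\phi_1$ to a $C^2$ function on $\mathbb R^d$ and apply It\^o's formula to $F_t:=e^{\lambda t}\phi_1(X_t)\exp(\int_0^t c(X_s)\,\mathrm{d}s)$: its drift equals $e^{\lambda t}\exp(\int_0^t c)\,[\lambda\phi_1+c\phi_1+\mathcal A\phi_1](X_t)$, which vanishes on $\{t<\tau_x^D\}$ by the eigenfunction equation, so $F_{t\wedge\tau_x^D}$ is a local martingale; since $\phi_1$ and $c$ are bounded it is bounded on each finite time interval, hence a true martingale, giving $\mathbb E_x[F_{t\wedge\tau_x^D}]=\phi_1(x)$. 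On $\{\tau_x^D\le t\}$ one has $F_{\tau_x^D}=0$ because $\phi_1$ vanishes on $\partial D$, so only $\{t<\tau_x^D\}$ contributes, and the identity follows.

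The lower bound is then immediate: since $1\ge\phi_1/\|\phi_1\|_{L^\infty(D)}$, the identity gives $u(t,x)\ge e^{-\lambda t}\phi_1(x)/\|\phi_1\|_{L^\infty(D)}$, and taking the supremum over $x$ yields $R(t)\ge e^{-\lambda t}$. For the upper bound the naive comparison fails — near $\partial D$ the function $1$ is not dominated by any multiple of $\phi_1$ — so I first control $u$ at the fixed time $t_0=1$: using $\|c\|_\infty<\infty$,
$$
u(1,x)\le e^{\|c\|_\infty}\,\mathbb P_x[\tau_x^D>1]\le e^{\|c\|_\infty}\,\mathbb E_x[\tau_x^D]=e^{\|c\|_\infty}w(x),
$$
and the mean exit time $w(x):=\mathbb E_x[\tau_x^D]$ is the unique solution of $\mathcal A w=-1$ in $D$, $w=0$ on $\partial D$, hence $w\in C^{2,\beta}(\overline D)$ by Schauder theory, so $w(x)\le\mathrm{Lip}(w)\,\mathrm{dist}(x,\partial D)\le(\mathrm{Lip}(w)/c_0)\,\phi_1(x)$ and thus $u(1,\cdot)\le C_0\phi_1$ on $\overline D$ for some $C_0<\infty$. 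Applying the Markov property at time $t-1$ and then the identity,
$$
u(t,x)=\mathbb E_x\!\left[\mathbf 1_{\{t-1<\tau_x^D\}}\exp\!\left(\int_0^{t-1}\!c(X_s)\,\mathrm{d}s\right)u(1,X_{t-1})\right]\le C_0\,e^{-\lambda(t-1)}\phi_1(x)\le C_0e^{\lambda}\|\phi_1\|_{L^\infty(D)}\,e^{-\lambda t},
$$
so $R(t)\le c_2 e^{-\lambda t}$ for $t\ge1$, and combining with the lower bound gives $t^{-1}\ln R(t)\to-\lambda$.

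Several points will require routine care but no new idea: the precise citation for the existence and up-to-the-boundary regularity of $\phi_1$; the Schauder/Dynkin facts identifying $w$ with the mean exit time and placing it in $C^{2,\beta}(\overline D)$; and the justification of It\^o's formula and the martingale property, which are harmless given the Lipschitz coefficients and boundedness on compact time intervals. The genuinely delicate step — the one I expect to be the main obstacle — is the upper bound on $R(t)$: because $1$ is not controlled by $\phi_1$ at $\partial D$, one cannot compare directly and must instead wait a unit of time and exploit that the killed diffusion started close to $\partial D$ is very unlikely to survive, i.e.\ that $u(1,\cdot)$ already inherits the boundary vanishing of $\phi_1$ and is hence dominated by it. That comparison carries the real content of the theorem; everything else is bookkeeping around the martingale identity.
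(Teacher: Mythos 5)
Your proof is correct, but note that the paper does not prove this statement at all: it is quoted verbatim as Theorem 6.2 of the cited work of Liu (a semigroup/spectral-theoretic result for the killed Feynman--Kac semigroup), so there is no internal argument to compare against. What you supply is a self-contained, essentially elementary proof: the martingale identity $e^{\lambda t}\,\mathbb E_x[\mathbf 1_{\{t<\tau_x^D\}}\phi_1(X_t)e^{\int_0^t c}]=\phi_1(x)$ is a correct application of It\^o plus optional stopping (boundedness of $\phi_1$, $c$ on finite horizons makes the local martingale a true one, and $\phi_1=0$ on $\partial D$ kills the contribution of $\{\tau_x^D\le t\}$), the lower bound $R(t)\ge e^{-\lambda t}$ follows immediately, and your treatment of the upper bound is the right one: since $\mathbf 1$ is not dominated by $\phi_1$ near $\partial D$, you wait a unit of time, bound $u(1,x)\le e^{\|c\|_\infty}\mathbb P_x[\tau_x^D>1]\le e^{\|c\|_\infty}\mathbb E_x[\tau_x^D]$, identify the mean exit time with the solution of $\mathcal A w=-1$, $w|_{\partial D}=0$ (Lipschitz up to the boundary by Schauder, since Lipschitz coefficients are H\"older and $\partial D\in C^{2,\beta}$), and then compare with $\phi_1\ge c_0\,\mathrm{dist}(\cdot,\partial D)$ from Hopf; the Markov property at time $t-1$ then closes the two-sided bound $c_1e^{-\lambda t}\le R(t)\le c_2 e^{-\lambda t}$ and hence the limit. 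The items you flag as routine really are routine, but two deserve an explicit sentence in a final write-up: (i) Hopf's lemma is stated for a nonpositive zeroth-order coefficient, and here the coefficient is $c+\lambda$ of arbitrary sign; since $\phi_1>0$ in $D$ and $\phi_1$ vanishes at the boundary point, the standard reduction (replace $c+\lambda$ by its negative part, using $(\mathcal A+(c+\lambda)-(c+\lambda)^+)\phi_1\le 0$) applies; and (ii) the Berestycki--Nirenberg--Varadhan eigenfunction is a priori only $W^{2,p}_{loc}$, so you should record the bootstrap to $C^{2}(\overline D)$ (interior plus boundary Schauder, available under the paper's regularity hypotheses) before invoking It\^o with an extension and Hopf up to $\partial D$. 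With those citations in place the argument is complete, and arguably more transparent than outsourcing the statement to the literature as the paper does.
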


Therefore, summarizing Theorem \ref{MET}, Theorem \ref{freidlinPDE}, Theorem
\ref{nirenberg}, Theorem \ref{liu}, and eigenvalue perturbation theory
in \cite{kato2013perturbation}, the following theorem gives the criterion that the
Feynman-Kac equation \eqref{cauchy} admits a strictly positive
solution. 

\begin{thm}
  \label{thm28}
Assume that the stochastic differential equation \eqref{SDE}, denoted by
$X_{t}$, has $C^3$ coefficients and satisfies condition (1) of Theorem \ref{MET} with respect to
an invariant probability measure $\pi$ and a Lyapunov function
$V$. Let $D\subset \mathbb{R}^{n}$ be a bounded domain with regular
$C^{4}$ boundary. Further assume that $c_{0}(x)$ is a $C^3$ function such that 
\begin{itemize}
    \item[(a)] $\pi( \mathbf{1}_D c_0) < 0$, and
    \item[(b)] Function $F(x) := c_0(x)\mathbf{1}_D(x) - \pi( \mathbf{1}_D c_0)$ satisfies condition (2) of Theorem \ref{MET}.
\end{itemize}
Then
there exists a constant $\alpha_{0} > 0$, such that for any $0 <
\alpha < \alpha_{0}$ and any bounded $C^{2}$ functions $f(x) < 0$ and
$C^{2}$ function $\psi(x) > 0$, the Cauchy-Dirichlet problem 
\begin{align}
  \label{cauchySDE}
  & \mathcal{L} u + \alpha c_{0}  u = f(x), \quad  x \in D\\\nonumber
  & u(x_{0}) = \psi(x_{0}), \quad  x_{0} \in \partial D \nonumber\,.
\end{align}
admits a strictly positive solution $u$.
\end{thm}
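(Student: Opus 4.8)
The plan is to combine the stochastic representation (Theorem \ref{freidlinPDE}), the solvability criterion via the principal eigenvalue (Theorem \ref{nirenberg}), the stochastic formula for that eigenvalue (Theorem \ref{liu}), and the multiplicative ergodic theorem (Theorem \ref{MET}) together with eigenvalue perturbation. The key object to control is $\lambda_1(\mathcal L + \alpha c_0, D)$: by Theorem \ref{nirenberg}, once we show this principal eigenvalue is strictly positive for small $\alpha > 0$, the Cauchy--Dirichlet problem \eqref{cauchySDE} with zero boundary data and right-hand side $f$ has a unique bounded $W^{2,d}_{loc}$ solution; elliptic regularity upgrades this to $C^2$ given that $f, c_0$ are $C^3$ and $\partial D \in C^4$. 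Adding a harmonic-type correction to incorporate the boundary data $\psi$, and using the maximum principle together with the sign hypotheses $f < 0$, $\psi > 0$ (and $\lambda_1 > 0$, which forces the operator to satisfy a comparison principle), will yield that the full solution $u$ is strictly positive. So the crux is: \emph{for all sufficiently small $\alpha > 0$, $\lambda_1(\mathcal L + \alpha c_0, D) > 0$.}

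To establish this, I would first analyze $\alpha = 0$. By Theorem \ref{liu},
$$
-\lambda_1(\mathcal L, D) = \lim_{t \to \infty} \frac{1}{t} \ln \sup_{x \in D} \mathbb E_x\!\left[\mathbf 1_{\{t < \tau_x^D\}}\right] = \lim_{t\to\infty}\frac1t \ln \sup_{x\in D}\mathbb P_x[\tau_x^D > t],
$$
and since $D$ is a regular bounded domain for the non-degenerate diffusion $X_t$, the exit time has exponential tails uniformly in $x \in D$, so this limit is strictly negative; hence $\lambda_1(\mathcal L, D) > 0$. For $\alpha > 0$ small, I want to compare $\lambda_1(\mathcal L + \alpha c_0, D)$ with $\lambda_1(\mathcal L, D)$. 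The natural route is again Theorem \ref{liu}:
$$
-\lambda_1(\mathcal L + \alpha c_0, D) = \lim_{t\to\infty}\frac1t \ln \sup_{x\in D}\mathbb E_x\!\left[\mathbf 1_{\{t<\tau_x^D\}}\exp\!\Big(\alpha\!\int_0^t c_0(X_s)\,\mathrm ds\Big)\right].
$$
The idea is that the exponential moment $\mathbb E_x[\exp(\alpha \int_0^t c_0(X_s)\mathbf 1_D(X_s)\,\mathrm ds)]$ over the full (non-killed) process is governed, via the multiplicative ergodic theorem, by $\Lambda(\alpha)$ — the maximal isolated eigenvalue of $\mathcal L + \alpha F$ where $F = c_0 \mathbf 1_D - \pi(\mathbf 1_D c_0)$. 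Hypotheses (a) and (b) are exactly what is needed to invoke Theorem \ref{MET}: (b) gives the nontrivial asymptotic variance and zero mean, and condition (1) of Theorem \ref{MET} is assumed. By the perturbation theory of \cite{kato2013perturbation}, $\Lambda(\alpha) = o(\alpha)$, so $\Lambda(\alpha) \to 0$ as $\alpha \to 0$. Writing $\int_0^t c_0 \mathbf 1_D(X_s)\,\mathrm ds = \int_0^t F(X_s)\,\mathrm ds + t\,\pi(\mathbf 1_D c_0)$, the full-process growth rate of the exponential moment is $\alpha\,\pi(\mathbf 1_D c_0) + \Lambda(\alpha)$, which by (a) and $\Lambda(\alpha)=o(\alpha)$ is $\le -\tfrac12 |\alpha\,\pi(\mathbf 1_D c_0)| < 0$ for small $\alpha$. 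Since killing at $\partial D$ only decreases this moment, and the killed growth rate already is $-\lambda_1(\mathcal L,D) < 0$ at $\alpha = 0$, a continuity/interpolation argument — or more cleanly, bounding the killed exponential moment by the product of the non-killed exponential moment (controlled above) and a survival-probability factor, using Hölder's inequality with a slightly larger exponent on $c_0$ — shows $-\lambda_1(\mathcal L + \alpha c_0, D) < 0$, i.e. $\lambda_1(\mathcal L + \alpha c_0, D) > 0$, uniformly for $0 < \alpha < \alpha_0$.

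Having secured $\lambda_1(\mathcal L + \alpha c_0, D) > 0$, Theorem \ref{nirenberg} produces a unique bounded solution $u_0 \in W^{2,d}_{loc}$ of $(\mathcal L + \alpha c_0)u_0 = f$ with $u_0|_{\partial D} = 0$; Schauder estimates give $u_0 \in C^{2,\beta}(\overline D)$. Since $\lambda_1 > 0$ the operator $\mathcal L + \alpha c_0$ satisfies the maximum principle on $D$, and $f < 0$ forces $u_0 > 0$ in $D$ (a supersolution comparison: $u_0$ cannot attain a nonpositive interior minimum). To handle the boundary data, solve $(\mathcal L + \alpha c_0) w = 0$ in $D$, $w|_{\partial D} = \psi > 0$ — solvability and positivity of $w$ again follow from $\lambda_1 > 0$ and the maximum principle, with $C^{2,\beta}$ regularity up to the boundary from $\partial D \in C^4$ and $\psi \in C^2$. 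Then $u := u_0 + w$ solves \eqref{cauchySDE} and is strictly positive, being a sum of two strictly positive functions (with the boundary values of $w$ positive and $u_0$ vanishing on $\partial D$). Finally, the stochastic representation of Theorem \ref{freidlinPDE} is consistent here because the finiteness hypothesis $\mathbb E_x[\int_0^{\tau_x^D} \exp(\alpha\int_0^t c_0(X_s)\,\mathrm ds)\,\mathrm dt] < \infty$ is precisely what the estimate $\lambda_1(\mathcal L + \alpha c_0, D) > 0$ guarantees.

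The main obstacle I anticipate is the transfer from the \emph{non-killed} multiplicative ergodic estimate (Theorem \ref{MET}, which lives on all of $\Omega$) to the \emph{killed} exponential moment appearing in Theorem \ref{liu}: the MET controls $\mathbb E_x[\exp(\alpha\int_0^t F(X_s)\,\mathrm ds)]$ but not directly the version with the indicator $\mathbf 1_{\{t < \tau_x^D\}}$. Bridging this requires a Hölder-type splitting $\mathbb E_x[\mathbf 1_{\{t<\tau_x^D\}} e^{\alpha\int_0^t c_0\mathbf 1_D}] \le \mathbb P_x[\tau_x^D > t]^{1/p}\,\mathbb E_x[e^{p'\alpha\int_0^t c_0\mathbf 1_D(X_s)\mathrm ds}]^{1/p'}$ and then balancing the exponentially small survival factor (rate $-\lambda_1(\mathcal L,D)$) against the MET-controlled factor (rate $p'\alpha\,\pi(\mathbf 1_Dc_0) + \Lambda(p'\alpha) < 0$ for small $\alpha$); one must check the constant $\alpha_0$ can be chosen uniformly and that the $o(\alpha)$ bound on $\Lambda$ from \cite{kato2013perturbation} holds on a whole neighborhood of $0$ in $\mathbb C$, as needed for $p'\alpha$ to remain in the admissible range of Theorem \ref{MET}.
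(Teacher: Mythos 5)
Your proposal follows essentially the same route as the paper: Kato perturbation theory gives $\Lambda(\alpha)=o(\alpha)$, Theorem \ref{MET} plus hypothesis (a) gives a negative exponential growth rate $\alpha\pi(\mathbf{1}_Dc_0)+\Lambda(\alpha)<0$ for the Feynman--Kac moment, Theorem \ref{liu} converts this into $\lambda_1(\mathcal{L}+\alpha c_0,D)\geq b_1\alpha>0$, and Theorem \ref{nirenberg} plus a boundary lifting and elliptic regularity yields a bounded $C^2$ solution. Two remarks. First, the ``main obstacle'' you anticipate (passing from the non-killed MET estimate to the killed moment in Theorem \ref{liu}) is not an obstacle and needs no H\"older splitting: on the event $\{t<\tau_x^D\}$ the path stays in $D$, so $\int_0^t c_0(X_s)\,\mathrm{d}s=\int_0^t c_0\mathbf{1}_D(X_s)\,\mathrm{d}s$, and bounding the indicator by $1$ gives $\mathbb{E}_x\bigl[\mathbf{1}_{\{t<\tau_x^D\}}e^{\alpha\int_0^t c_0(X_s)\mathrm{d}s}\bigr]\leq \mathbb{E}_x\bigl[e^{\alpha\int_0^t c_0\mathbf{1}_D(X_s)\mathrm{d}s}\bigr]$ directly at exponent $\alpha$; this one-line bound is exactly what the paper uses, so your concern about keeping $p'\alpha$ in the admissible range of Theorem \ref{MET} evaporates (and the separate argument that $\lambda_1(\mathcal{L},D)>0$ at $\alpha=0$ is not needed). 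Second, for strict positivity the paper simply reads it off the stochastic representation of Theorem \ref{freidlinPDE}, whose integrability hypothesis is precisely the exponential bound just established, with $f<0$ and $\psi>0$ making both terms in the representation positive; your alternative via the maximum principle is workable but leans on the refined maximum principle of Berestycki--Nirenberg--Varadhan (equivalent to $\lambda_1>0$), which is not literally among the quoted results, and your auxiliary problem $(\mathcal{L}+\alpha c_0)w=0$, $w|_{\partial D}=\psi$ is not covered by Theorem \ref{nirenberg} as stated (zero boundary data), so you would still need the subtraction of a $C^2$ extension $\omega$ of $\psi$, which is how the paper handles the boundary condition. With these adjustments your argument is correct and matches the paper's proof.
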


The proof of Theorem \eqref{thm28} is given in the appendix. 

\bigskip

{\bf D. Hypoellipticity.}

Assumption {\bf (A1')} requires the transition kernel to possess a
jointly continuous density function on $C$. Later we would need to claim that in our context this assumption is satisfied. To do so we will use   H\"ormander's Theorem \ref{hormander} below.  
To state this  theorem we still consider a stochastic differential equation as in 
\eqref{SDE} with the assumption that $b$ is a smooth vector field and $\sigma$ is a $n\times d$ matrix
smoothly dependent on $x$ for $n \leq d$. Define vector fields $V_{0}, V_{1}, \cdots, V_{n}$ such that
\begin{equation}
  \label{Vks}
  V_{k} = \left \{
\begin{array}{lcl}
 \sum_{i = 1}^{N} \sigma_{ik}  \partial_{x_{i}}&if  & 1 \leq k
                                                              \leq n\\
  \sum_{i = 1}^{N} \left ( b_{i} -\frac{1}{2} \sum_{k = 1}^{n} (V_{k} \sigma_{i,k})\right )\partial_{x_{i}}
\end{array}
  \right . \,.
\end{equation}

For two vector fields $V_{1}$ and $V_{2}$, their Lie bracket is defined by
$$
  [V_{1}, V_{2}](x) = DV_{1}(x) V_{2}(x) - DV_{2}(x) V_{1}(x) \,,
$$
where $DV_{1}$ and $DV_{2}$ are Jacobian matrices. Further, for vector
fields $\{V_{1}, V_{2},\cdots, V_{n}\}$, we define the Lie algebra $\mathrm{Lie}_{x}(V_{1},
\cdots, V_{n})$ generated by $\{V_{1}, V_{2},\cdots, V_{n}\}$ as the
vector space spanned by
\begin{align*}
  & V_{i_{0}}(x) \\
  &[ V_{i_{0}}(x), V_{i_{1}}(x)] \\
  &\left [ [ V_{i_{0}}, V_{i_{1}} ] , V_{i_{2}}\right ]\\
  &\quad \vdots 
\end{align*}
for $0 \leq i_{0}, i_{1}, \cdots \leq n$.

The following H\"ormander's theorem comes from Theorem 7.4.20 of
\cite{stroock2008partial}.

\begin{thm}
  \label{hormander}
  Define vector fields $V_{0}, V_{1}, \cdots, V_{n}$ from $\sigma$ and
  $b$ as described in equation \eqref{Vks}. If $D \subset \mathbb{R}^{d}$ is an
  open subset such that
  
$$
  \mathrm{Lie}_{x}\left ( [V_{0}, V_{1}], \cdots, [V_{0}, V_{m}],
    V_{1}, \cdots, V_{n}\right ) = \mathbb{R}^{d}
  $$
  for all $x \in D$, then equation \eqref{SDE} admits a smooth
  transition density function $p(t,x,y)$ such that
  
$$
  P^{t}(x, A) = \int_{A} p(t,x,y) \mathrm{d}y
  $$
  for all measurable $A \subset D$.
\end{thm}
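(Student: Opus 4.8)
The plan is to prove the theorem by Malliavin calculus, which is the route of \cite{stroock2008partial}. The engine is the standard smoothness criterion: if $F=(F^1,\dots,F^d)$ is a random vector whose components lie in $\mathbb{D}^{\infty}=\bigcap_{k,p}\mathbb{D}^{k,p}$ and whose Malliavin covariance matrix $\gamma_F=\big(\langle DF^i,DF^j\rangle_{H}\big)_{i,j}$ is a.s.\ invertible with $\det\gamma_F^{-1}\in\bigcap_{p<\infty}L^p$, then the law of $F$ has a $C^\infty$ density. I would apply this to $F=X_t$, $t>0$, with $X_0=x$. The restriction to a subdomain $D$ is handled by a routine localization (replace the coefficients away from $D$ and use that Malliavin nondegeneracy at time $t$ can be read off from the paths on a short interval ending at $t$); so below I take $b,\sigma$ to be smooth with bounded derivatives of all orders and assume the bracket condition holds on all of $\mathbb R^d$, producing a globally smooth transition density, from which the statement on $D$ follows.

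First I would record the two structural facts. (i) With smooth coefficients having bounded derivatives, $X_t\in(\mathbb D^\infty)^d$ and $D_sX_t=J_{s\to t}\,\sigma(X_s)\mathbf 1_{\{s\le t\}}$, where $J_{s\to t}=\partial X_t/\partial X_s$ is the Jacobian flow; $J_{s\to t}$ and $J_{s\to t}^{-1}$ solve linear SDEs and have moments of all orders. (ii) Hence $\gamma_{X_t}=J_{0\to t}\,C_t\,J_{0\to t}^{T}$ with the reduced Malliavin matrix $C_t=\int_0^t J_{0\to s}^{-1}\sigma(X_s)\sigma(X_s)^{T}(J_{0\to s}^{-1})^{T}\,ds$, so $\det\gamma_{X_t}^{-1}\in\bigcap_p L^p$ will follow from the moment bounds on $J$ together with $\mathbb P\big(\lambda_{\min}(C_t)<\varepsilon\big)$ decaying faster than every power of $\varepsilon$.

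The heart of the matter, and the step I expect to be the main obstacle, is this quantitative nondegeneracy of $C_t$, which is exactly where the bracket hypothesis enters. For a unit vector $v$, $v^{T}C_t v=\sum_{k=1}^n\int_0^t\big(\langle v,J_{0\to s}^{-1}V_k(X_s)\rangle\big)^2\,ds$. Writing $\phi^{W}_s:=\langle v,J_{0\to s}^{-1}W(X_s)\rangle$ for a smooth vector field $W$, a Stratonovich computation gives $d\phi^{W}_s=\phi^{[V_0,W]}_s\,ds+\sum_{k=1}^n\phi^{[V_k,W]}_s\circ dW^k_s$; i.e.\ the drift and diffusion coefficients of $\phi^{W}$ are again of the form $\phi^{(\cdot)}$ evaluated at Lie brackets. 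The Norris lemma (a quantitative statement that a continuous semimartingale with controlled coefficients that is small in $L^2([0,t])$ has a small bounded-variation part and a small martingale part) then propagates smallness: if $\int_0^t(\phi^{V_k}_s)^2\,ds<\varepsilon$ for all $k$, then with probability at least $1-C_N\varepsilon^N$ the quantities $\int_0^t(\phi^{[V_0,V_k]}_s)^2\,ds$ and $\int_0^t(\phi^{[V_j,V_k]}_s)^2\,ds$ are all $\le\varepsilon^{\kappa}$ for some $\kappa\in(0,1)$. Iterating over the finitely many bracket levels needed to span (the spanning order is locally uniform by continuity), one gets smallness of $\int_0^t(\phi^{W_i}_s)^2\,ds$ for a family $W_1,\dots,W_d$ of iterated brackets that span $\mathbb R^d$ at $x$. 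But $J_{0\to 0}^{-1}=\mathrm{Id}$ and $\{W_i(x)\}$ spans, so $\sum_i(\phi^{W_i}_s)^2\ge c_0>0$ on a controllably positive initial interval, contradicting smallness; quantifying the contradiction and chaining over a net of unit vectors $v$ yields $\mathbb P(\lambda_{\min}(C_t)<\varepsilon)\le C_N\varepsilon^N$ for every $N$. The delicate points are the exponential-martingale and Hölder-modulus estimates underpinning Norris's lemma, and the bookkeeping of the correction terms (brackets of brackets created in passing between Stratonovich and Itô) so that the induction closes.

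With the criterion in force, the law of $X_t$ has a $C^\infty$ density $p(t,x,\cdot)$ for every $t>0$; undoing the localization gives the smooth density on $D$ and the identity $P^t(x,A)=\int_A p(t,x,y)\,dy$ for measurable $A\subset D$. Joint smoothness in $x$ follows since $x\mapsto X_t^x$ is smooth with derivative processes in $\mathbb D^\infty$ and the covariance bounds above are locally uniform in $x$, so the Malliavin integration-by-parts representations $\partial_x^\alpha\partial_y^\beta p(t,x,y)=\mathbb E[\cdots]$ have locally uniformly integrable integrands and dominated convergence gives continuity, hence (iterating) smoothness, in $(x,y)$. Regularity in $t$ comes either from applying the same argument to the space-time process, or from parabolic hypoellipticity of $\partial_t-\mathcal L$, or by bootstrapping through Chapman--Kolmogorov $p(t+s,x,y)=\int p(t,x,z)p(s,z,y)\,dz$ once the spatial regularity is available. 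This completes the proof.
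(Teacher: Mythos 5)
There is no in-paper proof to compare against: the paper quotes this statement verbatim as a known result, Theorem 7.4.20 of \cite{stroock2008partial}, and uses it as a black box (both for the broken system \eqref{broken} and for the full system \eqref{bigsystem} in Section 6). So your proposal is not an alternative to an argument in the paper; it is a sketch of one standard proof of the cited theorem. As such, the route you describe --- the Malliavin smoothness criterion, the factorization $\gamma_{X_t}=J_{0\to t}C_tJ_{0\to t}^{T}$, quantitative nondegeneracy of the reduced covariance via Norris's lemma with the bracket hierarchy propagating smallness, then integration by parts to get joint regularity in $(x,y)$ and Chapman--Kolmogorov or the space-time process for regularity in $t$ --- is the classical probabilistic proof (Kusuoka--Stroock/Norris; see also Hairer's exposition), and the skeleton you give is accurate, including the correct observation that the stated bracket condition only involves $V_1,\dots,V_n$ and their iterated brackets with $V_0$, which is exactly what the Norris iteration produces.

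The one place where your sketch claims more ease than is warranted is the localization to $D$. Simply replacing the coefficients away from $D$ changes the law of $X_t$: the original process may leave $D$ and return before time $t$, so the modified process's density does not coincide with $P^t(x,\cdot)$ on $D$, and no identification of densities follows. The parenthetical idea you mention --- that nondegeneracy can be extracted from a short terminal interval $[t-\varepsilon,t]$, on which, conditionally on $X_t$ landing near a point $y\in D$, continuity keeps the path in a neighborhood of $D$ where the bracket condition holds --- is indeed the correct mechanism, but making it rigorous requires a genuinely local smoothness criterion (a cutoff $\chi$ supported near $y$, integration by parts against $\chi(X_t)$, and polynomial tail bounds for the localized Malliavin matrix on the event $\{X_t\in\mathrm{supp}\,\chi\}$), with the Norris argument run on $[t-\varepsilon,t]$ using the flow from time $t-\varepsilon$; this is a nontrivial refinement, not a routine reduction. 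Alternatively, the local statement follows more cheaply from the analytic side: the distributional density solves the forward equation, and H\"ormander hypoellipticity of $\partial_t-\mathcal{L}^{*}$ is a local property, giving smoothness exactly where the bracket condition holds. Either repair closes the gap; as written, the localization step is the only part of your outline that would not survive scrutiny.
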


\section{Proof strategy}
\label{mainresult}

\subsection{Main idea: when Feynman-Kac meets Lyapunov}

A fundamental tool for establishing stochastic stability—meaning the existence, uniqueness, and convergence rate of an invariant probability measure for a stochastic process—is the Lyapunov function method. Drift conditions such as {\bf (A2G)} and {\bf (A2P)} in the previous section are commonly used to prove stochastic stability of Markov processes. However, constructing a Lyapunov function in higher-dimensional settings remains a significant challenge. For example, in this
paper, the intricate interaction between energy components  $I_{1}, I_{2}, I_{3}$ and phases
$\theta_{1}, \theta_{3}$ makes an explicit Lyapunov function construction nearly impossible.

Our approach uses the Feynman-Kac representation to construct a pre-factor for the Lyapunov function. To illustrate our strategy, consider the following toy model with one-sided coupled $X$ and $Y$ terms
\begin{align}
  \label{toymodel}
  \mathrm{d} X_{t}& = X_{t} c(Y_{t}) \mathrm{d}t \\\nonumber
  \mathrm{d} Y_{t}& = g(Y_{t}) \mathrm{d} t + \sigma (Y_{t})
                    \mathrm{d} B_{t} \,,
\end{align}
where $X_{t} \in \mathbb{R}$ is a scalar variable, $Y_{t} \in \mathbb{R}^{d}$, $c$ is a
continuous function on $\mathbb{R}^{d}$, $g: \mathbb{R}^{d} \rightarrow \mathbb{R}^d$ is a vector field, $\sigma: \mathbb{R}^d \rightarrow \mathbb{R}^{d\times n}$ is a matrix-valued function, and $B_{t}$ is the Wiener process in $\mathbb{R}^{n}$. If $c < 0$, obviously $X_{t}$ is stable. However, if
$c > 0$ for some $y \in \mathbb{R}^{d}$, things become less clear.

To address stability, we then consider a Lyapunov function of the form
$$
  V = |x|^{\alpha} f(y) \,,
$$
where $f(y)$ is a pre-factor function to be determined. For simplicity, we first analyze the case where $x > 0$ and $y \in \Omega$ for some compact
set $\Omega \subset \mathbb{R}^{d}$. (In the actual proof,
we need to show that $\Omega$ attracts $Y_{t}$ in a strong enough
manner.) Let $\mathcal{L}$ be the infinitesimal generator of equation
\eqref{toymodel}. To ensure $V$ serves as a Lyapunov function, we require
$$
  \mathcal{L} V = x^{\alpha}( \mathcal{L}^{y} f + \alpha c f ) < 0 \,,
$$
where $\mathcal{L}^{y}$ is the infinitesimal generator of $Y_{t}$. It
is easy to see that $V$ is a Lyapunov function for all $(x, y)$ with
$x > 0, y \in \Omega$ if we have
\begin{equation}
  \label{feynman}
  \mathcal{L}^{y} f + \alpha c f = -1 
\end{equation}
in $\Omega$. The operator $\mathcal{L}^{y} + \alpha c$ is called the
Feynman-Kac operator. Thus, the existence of $V$ is equivalent to finding a strictly positive solution to \eqref{feynman}. By Theorem \ref{freidlinPDE}, once a solution exists, its positivity follows automatically. The existence of a
solution, which is given by Theorem \ref{thm28}, can be summarized as follows. Because of
Theorem \ref{nirenberg} and Theorem \ref{liu}, the existence of a
solution to equation \eqref{feynman} boils down to the estimation of
\begin{equation}
  \label{toyMET}
  \mathbb{E}_{y}\left [  \mathbf{1}_{\{t <\tau_{y}^{\Omega}\}} \exp
    \left ( \int_{0}^{t} c(Y_{s}) \mathrm{d}s \right ) \right ] \,,
\end{equation}
where $\tau_{y}^{\Omega}$ is the first exit time from $\Omega$ when
starting at $y \in \Omega$. If the expectation in \eqref{toyMET} is negative for all initial value $y$, then equation \eqref{feynman} admits a positive solution. This follows from the multiplicative ergodic theory for Markov processes \cite{kontoyiannis2003spectral, kontoyiannis2005large}. As shown in
Theorem \ref{MET}, the multiplicative ergodic theorem follows if
$Y_{t}$ itself is geometrically ergodic and admits a suitable
Lyapunov function. Let $\pi^{Y}$ be the
invariant probability measure of $Y_{t}$. If $0 <\alpha \ll 1$, the eigenvalue perturbation
theory in \cite{kato2013perturbation} shows that $\Lambda(\alpha)$
is an $o(\alpha)$ perturbation of $\alpha \pi^Y(c)$. Therefore, the function $V = x^\alpha f(y)$ serves as a Lyapunov function of system \eqref{toymodel} if $\pi^{Y}(c) < 0$ and $\alpha$ is sufficiently small. In other words, if $Y_t$ has strong enough mixing properties and the coefficient $c(Y_{t})$ is negative on average, then $X_t$ remains stable.

We note that while similar "stabilization by noise" results—often based on the idea of "averaging" a subset of variables—have been used to establish stochastic stability in various works \cite{athreya2012propagating, herzog2015noise, hening2018coexistence}, our use of the Feynman-Kac operator in constructing a Lyapunov function is, to the best of our knowledge, novel. The key advantage of our Feynman-Kac-Lyapunov approach is that it directly yields a Lyapunov function with respect to the infinitesimal generator, which is a stronger condition than a Lyapunov function derived from the time-$T$ sample chain of a sufficiently large
$T$. As we will discuss later, this property provides significant technical benefits, particularly when handling perturbations or integrating Lyapunov-type conditions across different regions.

\subsection{Mechanism of energy transfer}

Before presenting the mathematical proof, we first explain the detailed mechanism of energy transfer that enables stochastic stability. As in many earlier works, establishing stochastic stability relies on constructing a Lyapunov function. The general idea is to assign higher values to states that the system \eqref{bigsystem} is less likely to reach and sustain. To prevent the system from escaping to infinity, we expect the total energy \( I_1 + I_2 + I_3 \) to decrease when it exceeds a certain threshold. Similarly, since the energy transfer would be  permanently broken at \( I_2 = 0 \), when \( I_2 \ll 1 \), we expect \( I_2^{-1} \) to decrease. These considerations naturally suggest two candidate Lyapunov functions, namely $( I_1 + I_2 + I_3 )^\beta$ and $ I_2^{-\alpha}$ for some constants $\alpha$ and $\beta$. 

However, this approach encounters two major difficulties. The first, the {\it high-energy problem}, arises when the middle mode \( I_2 \) accumulates a large amount of energy while \( I_1 \) and \( I_3 \) remain small. In this scenario, the total energy dissipation, approximately \( O( I_2(I_1 + I_3)) \), can be outweighed by the total energy injection, causing the expected total energy to increase rather than decrease. The second, the {\it low-energy problem}, occurs when \( I_2 \) is extremely small. Since mode interactions are proportional to \( I_2 \), a very low \( I_2 \) effectively breaks the chain. However, the growth rate coefficient of $I_2$ is \( -2 (I_1 \sin \theta_1 + I_3 \sin \theta_3) \), which can take both positive and negative values. This makes it impossible to conclude that \( I_2 \) must increase over time, further complicating the explicit construction of a Lyapunov function.  

A closer examination of system \eqref{bigsystem} reveals time-scale separation dynamics in both high- and low-energy regimes. In each case, a fast-scale, lower-dimensional subsystem guides the system toward a more stable state, where the natural Lyapunov function proposed earlier becomes valid. The dynamics of each subsystem provide a pre-factor for the main Lyapunov function. As shown in Section 3.1, this pre-factor is determined by the existence of a positive solution to a Cauchy-Dirichlet problem of the Feynman-Kac equation.  

We now give more details on the delicate dynamics that system \eqref{bigsystem} encounters in both the high- and the low-energy settings.

\medskip

{\it High-Energy Setting.} When the internal energy \( I_2 \) is high, the main challenge is that the rate of energy exchange between \( I_2 \) and \( I_1 \) (resp. \( I_3 \)) is proportional to \( I_1 I_2 \) (resp. \( I_3 I_2 \)). When the boundary energy \( I_1 \) (resp. \( I_3 \)) is small, the release of internal energy is hindered. As a result, energy dissipation into the heat bath can only continue once \( I_1 \) or \( I_3 \) recovers. A natural approach to modeling this process is to assign a high Lyapunov function value to states where \( I_1 \) (resp. \( I_3 \)) is very low, so that an increase in \( I_1 \) (resp. \( I_3 \)) significantly decreases the Lyapunov function’s value, compensating for any potential increase in internal energy. This leads to the natural Lyapunov function:  
\[
  \tilde{V} := I_1^{-\beta_1} + I_3^{-\beta_1} + (I_1 + I_2 + I_3)^{\beta_0}
\]
for some positive constants \( \beta_0 \) and \( \beta_1 \). However, calculations reveal that this approach does not work well. Specifically, when the angle \( \theta_1 \) (resp. \( \theta_3 \)) satisfies \( \sin \theta_1 < \gamma \) (resp. \( \sin \theta_3 < \gamma \)), \( I_1 \) (resp. \( I_3 \)) initially decreases, causing \( \tilde{V} \) to increase rather than decrease. Consequently, there exists a “bad set” where \( I_2 \gg 1 \), \( I_1, I_3 \sim I_2^{-1} \), and \( \sin \theta_1, \sin \theta_3 < \gamma \), such that \( \mathcal{L} \tilde{V} \) remains positive.  

A closer look at numerical simulations reveals the actual mechanism behind the dissipation of high internal energy \( I_2 \). When \( I_2 \) is large and \( I_1 \) (resp. \( I_3 \)) is small, the angle \( \theta_1 \) (resp. \( \theta_3 \)) evolves on a fast time scale and quickly converges toward \( 2\pi/3 \). This results in \( \sin \theta_1 > \gamma \) (resp. \( \sin \theta_3 > \gamma \)), enabling continued energy dissipation into the heat bath. To incorporate this mechanism into the construction of the Lyapunov function, we introduce a pre-factor \( f(\theta_1) \) (resp. \( f(\theta_3) \)) that assigns high values to the “bad” angles \( \theta_1 \) (resp. \( \theta_3 \)). This pre-factor function is obtained by solving a Cauchy-Dirichlet problem of the Feynman-Kac equation.  

A more subtle technical issue arises because the dynamics of \( \theta_1 \) (resp. \( \theta_3 \)) may linger near the unstable equilibrium \( 4\pi/3 \) for an unexpectedly long time, further complicating the analysis. To mitigate this, we artificially amplify the noise to accelerate the system’s escape from the unstable equilibrium. This leads to the assumption on the weight function \( g \) in Assumption {\bf (H)} in Section \ref{highenergy} below. While we expect Assumption {\bf  (H)} to hold for a broader class of weights, in this paper, we are only able to rigorously prove it for a specific choice of \( g \).

\medskip

{\it Low-Energy Setting.} When \( I_2 \ll 1 \), our goal is to show that \( I_2 \) cannot continue decreasing indefinitely and permanently break the chain. However, similar to the high-energy setting, the natural Lyapunov function  
\[
  \tilde{W} = I_2^{-1}
\]
fails because \( \mathcal{L} \tilde{W} \) has a leading term \( 2(I_1 \sin \theta_1 + I_3 \sin \theta_3) \), which is not necessarily negative. The actual mechanism preventing \( I_2 \) from vanishing is first observed in numerical simulations. When \( T_1 \ll T_3 \) and \( I_2 \ll 1 \), we have \( I_3 \gg I_1 \) with high probability. Consequently, the dynamics of \( \theta_3 \) is primarily governed by the term \( -I_3 (1 + 2 \cos \theta_3) \), which admits a stable equilibrium at \( 4\pi/3 \). Furthermore, because \( T_1 \ll T_3 \), we typically have \( I_1 \ll I_3 \) on average. This implies that the dominant term in \( h := I_1 \sin \theta_1 + I_3 \sin \theta_3 \) is \( I_3 \sin \theta_3 \), which takes a negative value near the stable equilibrium at \( 4\pi/3 \). Since \( \theta_3 \) remains near \( 4\pi/3 \) with high probability, on average, \( I_1 \sin \theta_1 + I_3 \sin \theta_3 < 0 \), ensuring that \( I_2 \) increases when \( I_2 \ll 1 \).  

Similar to the high-energy setting, we address this issue by introducing a pre-factor to the natural Lyapunov function \( \tilde{W} \). This pre-factor is obtained as the solution to a Feynman-Kac equation, following the same methodology. Notably, in this regime, equation \eqref{bigsystem} is well-approximated by the toy model \eqref{toymodel}, with \( X = I_2 \) and \( Y = (I_1, I_3, \theta_1, \theta_3) \). As described in Section 2, a solution to the Feynman-Kac equation exists if the reduced system governing \( Y \) is strongly mixing and the expectation of \( I_1 \sin \theta_1 + I_3 \sin \theta_3 \) is positive with respect to the invariant probability measure \( \pi^Y \) of the reduced system. This motivates a detailed study of the reduced system \( (I_1, I_3, \theta_1, \theta_3) \) in \( \mathbb{R}^4 \). Through explicit calculations, we ultimately establish the existence of such a pre-factor function.  

\medskip  

To further illustrate the mechanism of energy transfer, we conduct a detailed study of the reduced systems arising in both the high-energy and low-energy settings. In each case, we establish the stochastic stability of the reduced system and estimate the expectation and variance of the coefficient function under its steady state. While not all of these results are directly required for proving the existence of the Lyapunov function’s pre-factor, we hope they provide insight into the behavior of the reduced systems and clarify why their properties eventually contribute to the stochastic stability of the full system.

\section{Dissipation of high energy}
\label{highenergy}
The goal of this section is to show that the total energy has zero probability of blow up. In other words, we would like to show that system
\eqref{bigsystem} admits a Lyapunov function
$$
  V( \mathbf{x}) = (I_{1} + I_{2} + I_{3})^{\beta_{0}} +
  I_{1}^{-\beta_{1}}f(\theta_{1}) +
  I_{3}^{-\beta_{1}}f(\theta_{3})=
  V_2( \mathbf{x})+V_1( \mathbf{x})+V_3( \mathbf{x})$$
  where the positive constants $\beta_{0}$ and $\beta_{1}$ satisfy
  $\beta_{1} + 1 < \beta_{0} < \beta_{1} + 2$, and $f$ is a pre-factor that is determined by the following assumption {\bf (H)}.

  {\bf Assumption (H)}
  There exists a pre-factor function $f(\theta)$ on $\mathbb{S}^{1}$ and a weight function
  $g(I, \theta)$ on $\mathbb{R}^{+} \times \mathbb{S}^{1}$ such that
  
\begin{itemize}
  \item[(a)] There exist $O(1)$ constants $m_{0}$ and $m_{1}$ such that $0
    < m_{0} \leq f(\theta) \leq m_{1} < \infty$,
  \item[(b)] There exists an $O(1)$ constant $m_{2}$ such that
    $|f'(\theta)| \leq m_{2}$ and $|f''(\theta)| \leq m_{2}$,
    \item[(c)] $g(I, \theta) \geq 1$ with $\lim_{I \rightarrow 0} g(I,
      \theta) = 1$,
      \item[(d)] There exist constants $\beta_{1} >0$ and $\epsilon > 0$ such that $\mathcal{L}^{I} f \leq -
        \epsilon I$ for all sufficiently large $I$, where    
$$
  \mathcal{L}^{I} f = -2 \beta_{1} I (\sin \theta - \gamma) f + I(1 + 2\cos
  \theta) f' + \frac{\gamma }{2}g^{2}(I, \theta) f'' \,.
$$
\end{itemize}

Although we expect assumption {\bf (H)} to hold for a wide range of
coefficient functions $g(I, \theta)$, including the constant case, due
to technical reasons, in Section \ref{highenergy}, we will only
prove that assumption {\bf (H)} holds for a specific choice: $g =
\sqrt{I}$ when $I \geq 2$.

  \subsection{Analysis of the infinitesimal generator}
  We will first show that $V$ is the Lyapunov function for the
  infinitesimal generator $\mathcal{L}$ of the system \eqref{bigsystem}, assuming the coefficient $f$ satisfies
  the assumption {\bf (H)} described in Section \ref{mainresult}. The following lemma addresses the more challenging case when
  $I_{2}$ is large but $I_{1}$ and/or $I_{3}$ are small.

\begin{figure}[htbp]
\centerline{\includegraphics[width = \linewidth]{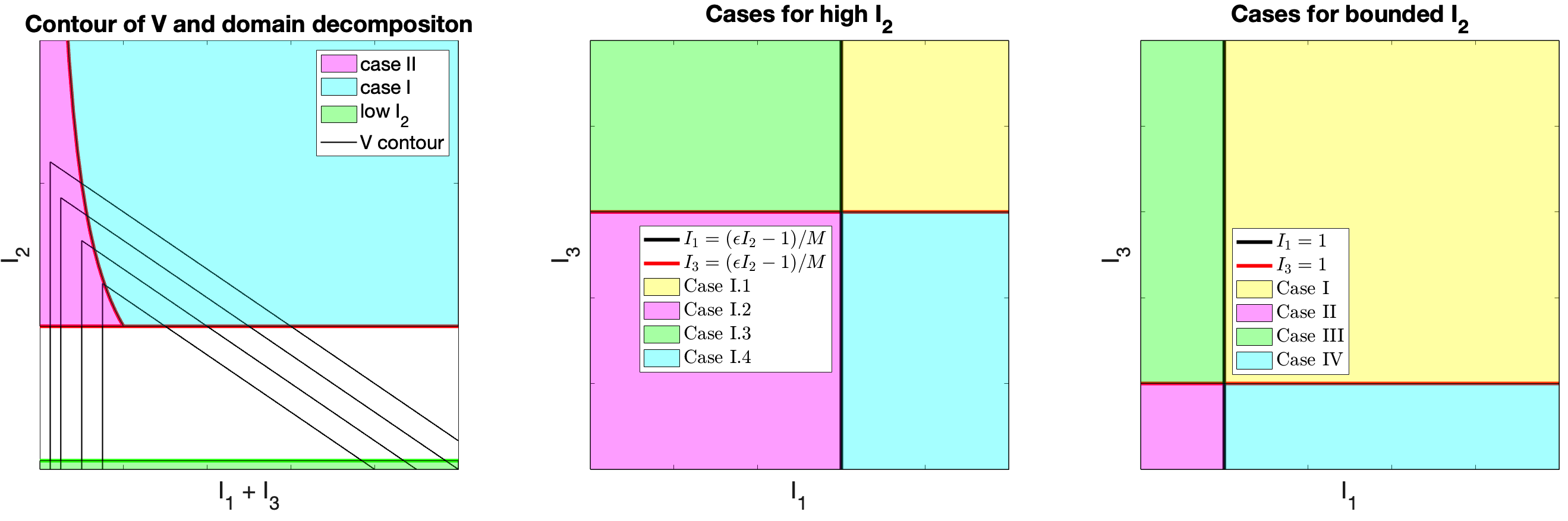}}
\caption{Left: Illustration of Case I and II in the proof of Lemma \ref{highI2} and some contour plots of $V(
  \mathbf{x})$. Middle: Illustration of different subcases of Case I in the proof of
  Lemma \ref{highI2}. Right: Illustration of different cases in the proof of Lemma \ref{lowI2}.}
\label{sketch}
\end{figure}
  
  \begin{lem}
    \label{highI2}
Assume assumption {\bf (H)} holds. Then there exist constant $c_{I_{2}} > 0$
and $C_{I_{2}} < \infty$ such that 
$$
\mathcal{L}V( \mathbf{x}) \leq -c_{I_{2}} V^{1 - 1/\beta_{0}}
$$
for all $I_{2} \geq C_{I_{2}}$.
\end{lem}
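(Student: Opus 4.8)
\emph{Proof plan.} Write $S := I_1+I_2+I_3$ and $p := 1-1/\beta_0\in(0,1)$, so that $V = V_2+V_1+V_3$ with $V_2 = S^{\beta_0}$, $V_1 = I_1^{-\beta_1}f(\theta_1)$, $V_3 = I_3^{-\beta_1}f(\theta_3)$. The plan is to expand $\mathcal{L}V$, extract the dissipative resources supplied by Assumption~{\bf (H)}, and then run the region decomposition drawn in Figure~\ref{sketch}. A direct computation, using the crucial cancellation of the $\sin\theta_1,\sin\theta_3$ terms in $\mathrm{d}(I_1+I_2+I_3)$, gives
\[
\mathcal{L}V_2=\beta_0 S^{\beta_0-1}\bigl[-2\gamma I_2(I_1+I_3)+\gamma(T_1+T_3)-\gamma(I_1^3+I_3^3)\bigr]+\tfrac{\beta_0(\beta_0-1)}{4}\gamma(T_1 I_1+T_3 I_3)S^{\beta_0-2},
\]
while collecting exactly the terms of $\mathcal{L}V_1$ carrying the factor $I_2$ reproduces the operator of {\bf (H)}(d):
\[
\mathcal{L}V_1=I_1^{-\beta_1}\,\mathcal{L}^{I_2}f(\theta_1)+R_1,
\]
where $R_1$ is the remainder: the cross-coupling $-2I_3\cos\theta_3\,I_1^{-\beta_1}f'(\theta_1)$, the self-terms $-I_1^{1-\beta_1}(1+2\cos\theta_1)f'(\theta_1)$ and $\beta_1\gamma I_1^{2-\beta_1}f(\theta_1)$, and the $O(I_1^{-\beta_1-1})$ contributions of the $I_1$-damping and $I_1$-diffusion, which combine into $\gamma T_1\beta_1\tfrac{\beta_1-3}{4}I_1^{-\beta_1-1}f(\theta_1)$, a negative quantity once $\beta_1<3$. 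The analogous identity holds for $V_3$ with $1\leftrightarrow 3$. Since $I_2\ge C_{I_2}$ may be taken above the threshold in {\bf (H)}(d), that assumption gives $\mathcal{L}^{I_2}f(\theta_1),\,\mathcal{L}^{I_2}f(\theta_3)\le-\epsilon I_2$, and {\bf (H)}(a)--(b) bound $f,f',f''$. Thus $\mathcal{L}V$ has three dissipative resources: (i) from $V_2$, the total-energy damping $-2\gamma\beta_0 I_2(I_1+I_3)S^{\beta_0-1}$ and the cubic damping $-\gamma\beta_0(I_1^3+I_3^3)S^{\beta_0-1}$; (ii) from $V_1,V_3$, the angular engines $I_1^{-\beta_1}\mathcal{L}^{I_2}f(\theta_1)\le-\epsilon I_2 I_1^{-\beta_1}$ and $I_3^{-\beta_1}\mathcal{L}^{I_2}f(\theta_3)\le-\epsilon I_2 I_3^{-\beta_1}$; (iii) the negative $O(I_1^{-\beta_1-1})$ and $O(I_3^{-\beta_1-1})$ terms, which dominate as $I_1\to0$ or $I_3\to0$.

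By subadditivity of $t\mapsto t^{p}$ we have $V^{1-1/\beta_0}\le S^{\beta_0-1}+m_1^{p}(I_1^{-\beta_1 p}+I_3^{-\beta_1 p})$, so it suffices to prove $\mathcal{L}V\le-c(S^{\beta_0-1}+I_1^{-\beta_1 p}+I_3^{-\beta_1 p})$ on $\{I_2\ge C_{I_2}\}$ for some $c>0$; note $\beta_1 p<\beta_1$, so $I_j^{-\beta_1 p}\le I_j^{-\beta_1}$ whenever $I_j\le 1$. Fix a large constant $M=M(\gamma,\beta_0,T_1,T_3,m_i,\epsilon)$ and split the region. In \emph{Case~I}, $I_1+I_3\le M(T_1+T_3)/I_2$: both boundary modes are then $\lesssim 1/I_2$, $S\asymp I_2$, and the part of $R_1$ that is not manifestly negative is $O(I_2^{-1}I_1^{-\beta_1})$ (similarly for $R_3$), so $\mathcal{L}V_1+\mathcal{L}V_3\le-\tfrac{\epsilon}{2}I_2(I_1^{-\beta_1}+I_3^{-\beta_1})\lesssim -I_2^{\beta_1+1}$; since $\beta_1+1>\beta_0-1$ (this is precisely $\beta_0<\beta_1+2$), for $C_{I_2}$ large this beats the $O(I_2^{\beta_0-1})=O(S^{\beta_0-1})$ non-negative part of $\mathcal{L}V_2$ with room to spare to produce $-c(S^{\beta_0-1}+I_1^{-\beta_1 p}+I_3^{-\beta_1 p})$. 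This is exactly the ``overheating'' set $I_2\gg1$, $I_1,I_3\sim I_2^{-1}$, and the exponent gap $\beta_1+1>\beta_0-1$ is what makes it close.

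In \emph{Case~II}, $I_1+I_3> M(T_1+T_3)/I_2$: then $-2\gamma\beta_0 I_2(I_1+I_3)S^{\beta_0-1}\le-2\gamma\beta_0 M(T_1+T_3)S^{\beta_0-1}$ dominates, for $M$ large, the entire non-negative part of $\mathcal{L}V_2$ (using $I_jS^{\beta_0-2}\le S^{\beta_0-1}$), leaving $-cS^{\beta_0-1}$. It remains to absorb $\mathcal{L}V_1,\mathcal{L}V_3$ and to generate $-c(I_1^{-\beta_1 p}+I_3^{-\beta_1 p})$; by symmetry I discuss $V_1$, splitting on the size of $I_1$ (and, when needed, $I_3$). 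If $I_1$ is bounded below by a fixed $\delta_0>0$, then $I_1^{-\beta_1 p}$ is bounded and the positive part of $R_1$, namely $O(1)+O(I_1^{2-\beta_1})+O(I_3 I_1^{-\beta_1})$, is absorbed by the cubic dampings $\gamma\beta_0 I_1^3 S^{\beta_0-1}$ and $\gamma\beta_0 I_3^3 S^{\beta_0-1}$ (and, when $I_3$ is small, by the total-energy term). If $I_1<\delta_0$, the dominant pieces of $\mathcal{L}V_1$ are the engine $-\epsilon I_2 I_1^{-\beta_1}$ and the negative $O(I_1^{-\beta_1-1})$ term of mechanism~(iii); the cross-term $O(I_3 I_1^{-\beta_1})$ is then controlled either by the cubic $I_3$-damping $\gamma\beta_0 I_3^3 S^{\beta_0-1}$ (when $I_3$ is large relative to $I_1^{-\beta_1}$) or by the $O(I_1^{-\beta_1-1})$ term (when $I_1$ is very small relative to $1/I_3$), the two ranges overlapping because $\beta_0>\beta_1+1$ forces $(\beta_1-\beta_0-1)/\beta_1<0$, and the engine plus what remains supplies $-cI_1^{-\beta_1 p}$. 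Choosing $M$ large, then $\delta_0$ small, then $C_{I_2}$ large enough in terms of all earlier constants, $\gamma,T_1,T_3,\epsilon$ and the $m_i$, completes the estimate; this produces the constants $c_{I_2}$ and $C_{I_2}$ of the statement.

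The genuinely delicate point, and the one to worry about, is the bilinear cross-coupling $-2I_3\cos\theta_3\,I_1^{-\beta_1}f'(\theta_1)$ and its mirror image: these tie the two boundary-mode mechanisms together, and forcing the cascade of overlapping sub-cases above — in which the cubic damping, the engine, or the $O(I^{-\beta_1-1})$ term alternately takes over — is where the full exponent window $\beta_1+1<\beta_0<\beta_1+2$, together with $\beta_1<3$, is used. By contrast, the conceptual obstacle is already handled by Assumption~{\bf (H)}(d): the naive candidate $\tilde V=I_1^{-\beta_1}+I_3^{-\beta_1}+S^{\beta_0}$ satisfies $\mathcal{L}\tilde V>0$ on the set where $\sin\theta_1,\sin\theta_3<\gamma$, whereas the pre-factor $f$ is engineered so that the full fast-angle operator $\mathcal{L}^{I_2}$ — the drift toward $2\pi/3$ together with the (amplified) $\theta$-diffusion — makes $\mathcal{L}^{I_2}f\le-\epsilon I_2$ uniformly in $\theta$; granting {\bf (H)}, the proof of Lemma~\ref{highI2} is then the bookkeeping described above.
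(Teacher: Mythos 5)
Your argument is correct and follows essentially the same route as the paper: the same decomposition $V=V_2+V_1+V_3$, the same use of Assumption {\bf (H)}(d) as the angular ``engine'' giving $-\epsilon I_2 I_1^{-\beta_1}$, and the same dichotomy on the size of $(I_1+I_3)I_2$ (your Cases I/II are the paper's Cases II/I), with the exponent window $\beta_1+1<\beta_0<\beta_1+2$ used in the same places. The only difference is bookkeeping: where the paper treats the mixed regimes by a worst-case optimization in $I_1$ (yielding the bound $C I_3^{\beta_1+1}$, then dominated by $\mathcal{L}V_2$ since $\beta_0>\beta_1+1$, plus a separate subcase $V_1>V_2$), you absorb the cross-term $I_3 I_1^{-\beta_1}$ by an overlapping-range dichotomy between the cubic $I_3$-damping and the negative $I_1^{-\beta_1-1}$ term — both are valid organizations of the same estimates.
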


\begin{proof}
  Apply Ito's formula, we have
  
  \begin{align}
    \label{lv}
  \mathcal{L}V &= \beta_{0}(I_{1} + I_{2} + I_{3})^{\beta_{0} -
                 1}\left (-2
                 \gamma I_{1}I_{2} - 2 \gamma I_{3}I_{2} + \gamma
                 (T_{1} - I_{1}^{3}) + \gamma (T_{3} - I_{3}^{3})
                 \right )\\\nonumber
  &+ \frac{1}{2}\beta_{0}(\beta_{0} - 1)(I_{1} + I_{2} +
    I_{3})^{\beta_{0} - 2}(\frac{1}{2} \gamma T_{1}I_{1} + \frac{1}{2}
    \gamma T_{3}I_{3})\\\nonumber
  &-\beta_{1} I_{1}^{-\beta_{1} - 1}f(\theta_{1})\left (2 I_{1}I_{2} (\sin \theta_{1} -
    \gamma) + \gamma(T_{1} - I_{1}^{3})  +
    \frac{1}{4}\beta_{1}(\beta_{1} + 1)\gamma T_{1} \right )\\\nonumber
    &+ I_{1}^{-\beta_{1}}\left \{ [I_{2}(1 + 2 \cos \theta_{1}) -
    I_{1}(1+ 2 \cos \theta_{1}) - 2 I_{3} \cos \theta_{3}]
    f'(\theta_{1}) + \frac{\gamma }{2}
    g^{2}(I_{2}, \theta_{1}) f''(\theta_{2}) \right \} \\\nonumber
  &-\beta_{1} I_{3}^{-\beta_{1} - 1} f(\theta_{3})\left (2 I_{3}I_{2} (\sin \theta_{3} -
    \gamma) + \gamma(T_{3} - I_{3}^{3}) ) +
    \frac{1}{4}\beta_{1}(\beta_{1} + 1)\gamma T_{3} \right )\\\nonumber
        &+ I_{3}^{-\beta_{1}}\left \{ [I_{2}(1 + 2 \cos \theta_{3}) -
    I_{3}(1+ 2 \cos \theta_{3}) - 2 I_{1} \cos \theta_{1}]
    f'(\theta_{3}) + \frac{\gamma}{2}
    g^{2}(I_{2}, \theta_{3}) f''(\theta_{3})\right \} \\\nonumber
    :=& \mathcal{L}V_{2} + \mathcal{L}V_{1} + \mathcal{L}V_{3},
  \end{align}
where
\begin{equation}
  \label{L2}
  \mathcal{L}V_{2} = \gamma \beta_{0}(I_{1} + I_{2} + I_{3})^{\beta_{0} - 1} \left (
     -2 I_{2}( I_{1} + I_{3}) + (T_{1} - I_{1}^{3}) + (T_{3} -
     I_{3}^{3}) + \frac{T_{1}I_{1} + T_{3}I_{3}}{4(I_{1} + I_{2} +
     I_{3})}(\beta_{0} - 1) 
     \right ) \,,
\end{equation}
\begin{align}
  \label{L1}
 &  \mathcal{L}V_{1} = - \beta_{1}T_{1} \gamma (1 - \frac{\beta_{1} + 1}{4})f(\theta_{1})I_{1}^{-\beta_{1} - 1} + \gamma \beta_1 I_1^{-\beta_1 + 2} f(\theta_1)\\\nonumber
  & + I_{1}^{-\beta_{1}} \left \{ - 2 \beta_{1} I_{2}( \sin \theta_{1} - \gamma)f(\theta_{1})
    +  I_{2}( 1 + 2 \cos \theta_{1}) f'(\theta_{1}) + \frac{\gamma }{2} g^{2}(I_{2}, \theta_{1})
    f''(\theta_{1}) \} \right
    \}\\\nonumber
  & + I_{1}^{-\beta_{1}}[- I_{1} (1 + 2 \cos \theta_{1}) - 2
    I_{3} \cos \theta_{3} ] f'(\theta_{1}) \\\nonumber
  &:= A_{1} + A_{2} + A_{3}\,,
\end{align}
and
\begin{align}
  \label{L3}
 & \mathcal{L}V_{3} = - \beta_{1}T_{3} \gamma f(\theta_{3})(1 - \frac{\beta_{1} + 1}{4})I_{3}^{-\beta_{1} - 1} + \gamma \beta_1 I_3^{-\beta_1 + 2} f(\theta_3)\\\nonumber
  & + I_{3}^{-\beta_{1}} \left \{ - 2 \beta_{1} I_{2}( \sin \theta_{3} - \gamma)f(\theta_{3})
    +  I_{2}( 1 + 2 \cos \theta_{3}) f'(\theta_{3}) + \frac{\gamma }{2} g^{2}(I_{2}, \theta_{3})
    f''(\theta_{3}) \right
    \}\\\nonumber
    & + I_{3}^{-\beta_{1}} [- I_{3} (1 + 2 \cos \theta_{3}) - 2
    I_{1} \cos \theta_{1}] f'(\theta_{3}) \\\nonumber
  &:= B_{1} + B_{2}  + B_{3}\,,
\end{align}

It is easy to see that when $I_1 \ll 1$, the leading term in $\mathcal{L}V_{1}$ is $A_{1}$. If $\beta_1 < 2$, the $I_1^{-\beta_1+2}$ term is large when $I_1 \gg 1$ and has to be absorbed by $\mathcal{L}V_2$.

The key term that must be controlled by the inequality in assumption {\bf (H)} is  
$$
  A_{2} = I_{1}^{-\beta_{1}}\left \{  2 \beta_{1} I_{2}( \sin \theta_{1} - \gamma)f(\theta_{1})
    +  I_{2}( 1 + 2 \cos \theta_{1}) f'(\theta_{1}) + \frac{\gamma T_{1}}{2} g^{2}(I_{2}, \theta_{1})
   f''(\theta_{1}) \right \} \,.$$
By assumption {\bf (H)}, it follows that
$$
  A_{2}  \leq - \epsilon I_{2} I_{1}^{-\beta_{1}}
$$
for all sufficiently large $I_{2}$. In the worst-case scenario, when
$I_{1} \ll 1$ and $I_{3} > I_{1}$, the dominant term in $A_{3}$ is
$-2I_{3} \cos \theta_{3} f'( \theta_{1})I_{1}^{-\beta_{1}}$. By assumption {\bf (H)} we have
$$
  A_{3} \leq I_{1}^{-\beta_{1}}( M I_{3} + 1) 
  $$
for an $O(1)$ constant $M$. This results the bound
  
\begin{equation}
  \label{L1bound}
  \mathcal{L}V_{1} \leq A_{1} - \epsilon I_{2} I_{1}^{-\beta_{1}} +
  I_{1}^{-\beta_{1}}(M I_{3} + 1) \,.
\end{equation}
  
An analogous argument holds for $\mathcal{L}V_{3}$.

\medskip

The calculation is then divided into the following steps. Refer to Figure \ref{sketch} for an illustration of the different cases discussed in the proof.

{\it Step 1, worst case analysis for $\mathcal{L}V_{1}$ and $\mathcal{L}V_{3}$.}

Suppose $I_{1} \ll 1$ and $I_{2} \gg 1$. To control $\mathcal{L}V_{1}$, we need to
first study the worst case when $I_{3}$ is also a large number. Let $x = I_{1}$
be a changing variable. Then we have
$$
\mathcal{L}V_{1} \leq r x^{-\beta_1 + 2} - p x^{-\beta_{1} - 1} + q x^{-\beta_{1}} :=
 r x^{-\beta_1 + 2} + \phi(x) \,, 
  $$
  where
  
$$
  p = \beta_{1} T_{1} \gamma\left ( 1 - \frac{\beta_{1} + 1}{4}\right ) f(\theta_{1}) \,,
  $$
$$
  q = (M I_{3} + 1 - \epsilon I_{2}) \,,
$$
and 
$$
r = \gamma \beta_1 m_1 \,.
$$
If $ M I_{3} + 1 \leq \epsilon I_{2}$ we have
$$
  \mathcal{L}V_{1} \leq A_{1} \,.
$$
Otherwise, if $I_{3}$ is large enough that $q > 0$, then by taking the derivative, we obtain
$$
  \phi'(x) = p (\beta_{1} + 1) I_{1}^{-\beta_{1} -2}  - \beta_{1} q
  I_{1}^{-\beta_{1}-1}\,.
  $$
 Hence 
$$
  x^{*} = \frac{ (\beta_{1} + 1)p}{\beta_{1} q}
$$
is the unique critical point. Thus, after simplifications, we obtain
the bound
$$
  \mathcal{L}V_{1} \leq \frac{\beta_{1}^{\beta_{1}}}{ (1 + \beta_{1})^{1 +
      \beta_{1}}}\frac{q^{\beta_{1}+1}}{p^{\beta_{1}}} + r I_1^{-\beta_1 + 2}\,.
$$
Since $p = O(1)$ for all $\theta_{1}$, it follows that when $M I_{3}  + 1 \geq
\epsilon I_{2}$, the worst-case upper bound is
\begin{equation}
  \label{worstlv1}
  \mathcal{L}V_{1} \leq C(\beta_{1}, \gamma, T_{1}) I_{3}^{\beta_{1} + 1} + r I_1^{-\beta_1 + 2}\,.
\end{equation}

  for some $O(1)$ constant $C(\beta_{1}, \gamma, T_{1})$

A similar argument applies to $\mathcal{L}V_{3}$, leading to the bound

\begin{equation}
  \label{worstlv3}
  \mathcal{L}V_{3} \leq C(\beta_{1}, \gamma, T_{3}) I_{1}^{\beta_{1} + 1} + r I_3^{-\beta_1 + 2}\,.
\end{equation}
when $I_{1}$ is large enough such that $M I_{1} + 1 \geq \epsilon
I_{2}$. Otherwise, we can use the trivial bound
$$
  \mathcal{L}V_{3} \leq B_{1} \,.
$$

\medskip

{\it Step 2, Calculation for case I: large $(I_{1} + I_{3})I_{2}$.}

As seen in equation \eqref{L2}, the key to making $\mathcal{L}V_{2} < 0$ is that
$(I_{1} + I_{3})I_{2}$ must be greater than a certain constant. More
precisely, because
$$
  (T_{1} - I_{1}^{3}) + (T_{3} - I_{3}^{3}) + \frac{T_{1}I_{1} +
    T_{3}I_{3}}{4(I_{1} + I_{2} + I_{3})}(\beta_{0} - 1) \leq (1 +
  \frac{\beta_{0} - 1}{4})(T_{1} + T_{3}) \,,
  $$
  by letting
$$
  \mu = \frac{1}{2}(1 + \frac{\beta_{0} -1}{4})(T_{1} + T_{3}) +
  \frac{1}{2} \,,
$$

we will have  
\begin{align*}
  \mathcal{L}V_{2} &= \beta_{0}(I_{1} + I_{2} + I_{3})^{\beta_{0} -
                 1}\left (-2
                 \gamma I_{1}I_{2} - 2 \gamma I_{3}I_{2} + \gamma
                 (T_{1} - I_{1}^{3}) + \gamma (T_{3} - I_{3}^{3})
           \right )\\\nonumber
  &\leq \gamma \beta_{0}(I_{1} + I_{2} + I_{3})^{\beta_{0} - 1}\left (
    -2(I_{1} + I_{3})I_{2} + (1 + \frac{\beta_{0} - 1}{4})(T_{1} + T_{3} - I_1^3 - I_3^3)
    \right )\\\nonumber
  &= \gamma \beta_{0}(I_{1} + I_{2} + I_{3})^{\beta_{0} - 1}(-2(I_{1}
    + I_{3})I_{2} + 2 \mu - 1 - I_1^3 - I_3^3)\\
  &\leq - \gamma \beta_{0}(I_{1} + I_{2} + I_{3})^{\beta_{0} - 1}(1 + I_1^3 + I_3^3)
\end{align*}
whenever $(I_{1} + I_{3})I_{2} \geq \mu$.

Thus, we define {\bf case I} as $(I_{1} + I_{3})I_{2} \geq \mu$ for
some constant $\mu = O(1)$ described above.

\medskip

Let $0 < \epsilon < 0.5$. Case I can be further divided into four
subcases (see also Figure \ref{sketch}):
\begin{itemize}
  \item[Case I.1] $I_{1} \geq (\epsilon I_{2} - 1)/M, I_{3} \geq
    (\epsilon I_{2} - 1)/M$;
  \item[Case I.2] $I_{1} < (\epsilon I_{2} - 1)/M$, $I_{3} < (\epsilon I_{2} - 1)/M$;
    \item[Case I.3] $I_{1} < (\epsilon I_{2} - 1)/M$, $I_{3} \geq
      (\epsilon I_{2} - 1)/M$;
      \item[Case I.4] $I_{1} \geq (\epsilon I_{2} - 1)/M$, $I_{3} <
        (\epsilon I_{2} - 1)/M$.
\end{itemize}

\medskip

{\it Step 2.1: Case I.1.}
Since $I_{2}$ is a sufficiently large number, case I.1 means $\mathcal{L}V_{1}$
and $\mathcal{L}V_{3}$ are both very small numbers that can be easily
absorbed. More precisely by equation \eqref{L1bound} we have 

$$
  \mathcal{L}V_{1} + \mathcal{L}V_{3} \leq I_{1}^{-\beta_{1}}(M I_{3} + 1) +
  I_{3}^{-\beta_{1}}(M I_{1} + 1) + r I_1^2 + r I_3^2\,. 
$$
Since $I_{1}$ and $I_{3}$ are both large numbers and 
$$
  \mathcal{L}V_{2} \leq -\gamma \beta_{0}( I_{1} + I_{2} +
  I_{3})^{\beta_{0}-1}\left ( 1 + I_1^3 + I_3^3
  \right ) \,,
$$
it is easy to see that

$$
  - \mathcal{L}V_{2} \gg (\mathcal{L}V_{1} + \mathcal{L}V_{3}) \,.
$$

Therefore, noting that $V_{1}$ and $V_{3}$ are both small terms, we have
$$
  \mathcal{L} V  \leq -\frac{1}{2}\gamma \beta_{0}(I_{1} + I_{2} +
  I_{3})^{\beta_{0} -1} \leq -C_{1,1} V^{1 - 1/\beta_{0}}
$$
for some $O(1)$ constant $C_{1,1}$

\medskip

{\it Step 2.2: Case I.2}

In this case $A_3$ (resp. $B_3$) is absorbed by $A_2$ (resp. $B_2$). Thus, we have $\mathcal{L}V_{1} \leq A_{1}$ and $\mathcal{L}V_{3} \leq B_{1}$,
where $- A_{1}$ is dominantly larger than $V_{1}$ and $- B_{1}$ is
dominantly larger than $V_{3}$ for all sufficiently small $I_{1}$ and
$I_{3}$, respectively. So we have $\mathcal{L}V_1 \ll -V_1$ (resp. $\mathcal{L}V_3 \ll -V_3$) if $I_1 \ll 1$ (resp. $I_3 \ll 1$).

If otherwise $I_1$ or $I_3$ is not a small term, then $A_1$ (resp. $B_1$) is at most $r I_1^2$ (resp. $r I_3^2$). Since we also have
$$
  \mathcal{L}V_{2} \leq - \gamma \beta_{0}( I_{1} + I_{2} +
  I_{3})^{\beta_{0}-1} (1 + I_1^3 + I_3^3)\,,
$$
the $I_1^2$ and $I_3^2$ terms can be absorbed by $\mathcal{L}V_2$.

    Therefore, regardless of the scales of $I_{1},
    I_{2}$, and $I_{3}$,  in the worst case, we still obtain
    
$$
  \mathcal{L} V \leq - C_{1,2} V^{1 - 1/\beta_{0}}
$$
for a constant $C_{1,2}$.

\medskip

{\it Step 2.3: Case I.3 and Case I.4.}

We will only show the proof of Case I.3, as the proof of Case I.4 is
identical. The main idea is to use the worst-case analysis for
$\mathcal{L}V_{1}$. For all sufficiently large $I_2$, when $I_{3}$ is large
, consider the worst case of $I_{1}$. That is, \eqref{worstlv1}, we
have
$$
  \mathcal{L}V_{1} \leq C(\beta_{1}, \gamma, T_{1})I_{3}^{\beta_{1} + 1} + r I_1^2\,.
$$
In addition, it is easy to see that $\mathcal{L}V_{3} \leq 1 + r I_3^2$ because all other terms in $\mathcal{L}V_3$ are small.

Since both $I_{2}$ and $I_{3}$ are large, without loss of
generality, we assume $I_{2}$ is sufficiently large so that
$$
  2 I_{2}I_{3} - (1 + \frac{\beta_{0}-1}{4})(T_{1} + T_{3}) \geq
  (I_{1} + I_{2} + I_{3}) \,.
$$

Therefore, we have
$$
  \mathcal{L}V \leq - \gamma \beta_{0}(I_{1} + I_{2} +
  I_{3})^{\beta_{0}-1}(I_1 + I_2 + I_3 + I_1^3 + I_3^3) + r I_3^2 + C(\beta_{1}, \gamma, T_{1})I_{3}^{\beta_{1}
    + 1} + r I_1^2\,.
$$

Since $\beta_{0} > \beta_{1} + 1$, the dominant term is still
$\mathcal{L}V_{2}$. Thus, we have
$$
  \mathcal{L}V \leq -\frac{1}{2} \gamma \beta_{0}( I_{1} + I_{2} +
  I_{3})^{\beta_{0}-1} (I_1 + I_2 + I_3 + I_1^3 + I_3^3) \leq - \frac{1}{2} \gamma \beta_0 V\,.
  $$

  It remains to compare $\mathcal{L}V$ and $V^{1 - 1/\beta_0}$. Since $V_{3}$ is
  small, if $V_{1} \leq V_{2}$, then obviously we have
$$
  \mathcal{L}V \leq -C_{1,3} V^{1 - 1/\beta_{0}} \,.
  $$
  
Otherwise, if $V_{1} > V_{2}$, we have $I_{1}^{-1} \gg I_{3}$ because
$\beta_{0} > \beta_{1} + 1$. In this case, we have better bound
$$
  \mathcal{L}V_{1} \leq 1 +  p I_{1}^{-\beta_{1}-1} + q I_{1}^{-\beta_{1}} \leq
  -\frac{1}{2}p I_{1}^{-\beta_{1}-1}
  $$
  because $q = O(I_{3})$, where the terms $p$ and $q$ come from Step 1 of
  the proof. In this case we have $\mathcal{L}V_{1} \leq - V_{1}$ when $I_{1}$
  is sufficiently small. Thus, we still have
  
$$
  \mathcal{L}V \leq -C_{1,3} V^{1 - 1/\beta_{0}} \,.
$$

  {\it Step 3, Case II.}

  Case II is defined by $(I_{1} + I_{3})I_{2} < \mu$ for
  
$$
  \mu = \frac{1}{2}(1 + \frac{\beta_{0} - 1}{4})(T_{1} + T_{3}) +
  \frac{1}{2} \,.
$$

The main difference in Case II is that $\mathcal{L}V_{2}$ becomes
positive. Thus, we need $\mathcal{L}V_{1}$ and $\mathcal{L}V_{3}$ to control
$\mathcal{L}V$. 

Notice that in Case II, we obviously have $\mathcal{L}V_{1} \leq A_{1}$
and $\mathcal{L}V_{3} \leq B_{1}$, as the term $A_3$ (resp. $B_3$) is absorbed by $A_2$ (resp. $B_2$). In addition, the terms $I_1^{-\beta_1 + 2}$ and $I_3^{-\beta_1 + 2}$ are both small. The same calculation gives the bound
\begin{align*}
  \mathcal{L}V  \leq & \gamma \beta_{0}(I_{1} + I_{2} + I_{3})^{\beta_{0} - 1} \left (
     -2 I_{2}( I_{1} + I_{3}) + (T_{1} - I_{1}^{3}) + (T_{3} -
     I_{3}^{3}) + \frac{T_{1}I_{1} + T_{3}I_{3}}{4(I_{1} + I_{2} +
     I_{3})}(\beta_{0} - 1) 
     \right )\\
     &+ A_{1} + B_{1}\\
  \leq & \gamma \beta_{0}(I_{1} + I_{2} + I_{3})^{\beta_{0} - 1}\left ( (1 +
         \frac{\beta_{0} - 1}{4})(T_{1} + T_{3}) \right) + A_{1} +
         B_{1} \,.
\end{align*}

When $I_{2}$ is large, both $A_{1}$ and $B_{1}$ are significantly large
negative terms on the scale of $O(I_{2}^{\beta_{1}+1})$. Because
$\beta_{0} < \beta_{1} + 2$, we have $\beta_{0} - 1 < \beta_{1} +
1$. Hence $I_{1}^{-\beta_{1} - 1}$ and $I_{3}^{-\beta_{1} - 1}$ are
much larger than $(I_{1} + I_{2} + I_{3})^{\beta_{0} - 1}$ when
$I_{2}$ is sufficiently large. This gives
$$
  \mathcal{L}V \leq - C( I_{1}^{-\beta_{1} - 1} +
  I_{3}^{-\beta_{1} - 1} )
  $$
  for some constant $C > 0$. Noting that $(I_{1} + I_{3})I_{2} <
  \mu$, we have $O(I_{1}^{-\beta_{1} - 1}) \gg O(I_{2}^{\beta_{1} +
    1})$, $O(I_{3}^{-\beta_{1} - 1}) \gg O(I_{2}^{\beta_{1} +
    1})$, and $(I_{1} + I_{2} + I_{3})^{\beta_{0}} \approx
  I_{2}^{\beta_{0}}$. Thus, when $I_{2}$ is large, we have
  
$$
  \mathcal{L}V \leq - C_{2} V \leq - C_{2}
  V^{1 - 1/\beta_{0}} 
  $$
for an $O(1)$ constant $C_{2}$. 
  \medskip

  The proof is completed by combining the estimates from {\bf Case I.1} to {\bf Case
  I.4} and {\bf Case II}. 
\end{proof}

It remains to show that $V$ is also a Lyapunov function when $I_{2}
\leq C_{I_{2}}$ but $V$ still has a large value. This is proved in the
next lemma.

\begin{lem}
  \label{lowI2}
  There exist constants $\epsilon_{1,3} > 0$ and $0<C_{1,3} < \infty$
  such that for all $\mathbf{x}(I_{1}, I_{2}, I_{3}, \theta_{1}, \theta_{3})$
  with $I_{2}\leq C_{I_{2}}$, if $I_{1} + I_{3} > C_{1,3}$ or
  $\min\{I_{1}, I_{3}\} < \epsilon_{1,3}$, we have
$$
  \mathcal{L}V(\mathbf{x}) \leq -V(\mathbf{x}) \,.
  $$ 
\end{lem}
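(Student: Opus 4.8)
The plan is to verify $\mathcal{L}V \le -V$ separately on the two regions in the statement, keeping $I_2 \le C_{I_2}$ fixed throughout, starting from the decomposition $\mathcal{L}V = \mathcal{L}V_2 + \mathcal{L}V_1 + \mathcal{L}V_3$ and the explicit formulas \eqref{L2}, \eqref{L1}, \eqref{L3} obtained in the proof of Lemma~\ref{highI2}. What makes this case simpler than Lemma~\ref{highI2} is that $I_2$ is now bounded: since $f, f', f''$ and $g(I_2,\cdot)$ are all bounded for $I_2 \in [0, C_{I_2}]$ by Assumption~{\bf (H)}(a)--(c), the terms $A_2, A_3$ are $O(I_1^{-\beta_1})$ and $B_2, B_3$ are $O(I_3^{-\beta_1})$ with $O(1)$ constants, apart from a cross term $2m_2 I_3 I_1^{-\beta_1}$ in $A_3$ (and $2m_2 I_1 I_3^{-\beta_1}$ in $B_3$). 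The only term of order $I_1^{-\beta_1-1}$ is the dissipative part $A_1 = -\kappa_1 f(\theta_1) I_1^{-\beta_1-1} + \gamma\beta_1 f(\theta_1) I_1^{2-\beta_1}$ with $\kappa_1 := \beta_1 \gamma T_1 (1 - \tfrac{\beta_1+1}{4})$, which is \emph{positive} by the choice of $\beta_1$ (in particular $\beta_1 < 3$); likewise $B_1$ with $T_1$ replaced by the large $T_3$, which only helps.

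\emph{Region $\{I_1 + I_3 > C_{1,3}\}$.} It suffices to treat the sub-region where also $I_1, I_3 \ge \epsilon_{1,3}$, the rest belonging to the next region. There $I_1 + I_2 + I_3 \le 2(I_1 + I_3)$ and $I_1^3 + I_3^3 \ge \tfrac14(I_1+I_3)^3$, while every other term in the bracket of \eqref{L2} is $O(I_1 + I_3)$; hence for $C_{1,3}$ large, $\mathcal{L}V_2 \le -c_0 (I_1 + I_2 + I_3)^{\beta_0 + 2}$ with $c_0 > 0$ independent of $\epsilon_{1,3}, C_{1,3}$. Using $I_1, I_3 \ge \epsilon_{1,3}$ to bound the prefactors $I_1^{-\beta_1}, I_3^{-\beta_1}$, one has $\mathcal{L}V_1 + \mathcal{L}V_3 \le C(\epsilon_{1,3})\,(1 + (I_1+I_2+I_3)^2)$, while $V \le 2(I_1+I_2+I_3)^{\beta_0}$. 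Since $\beta_0 + 2 > 2$, choosing $C_{1,3}$ large in terms of $\epsilon_{1,3}$ yields $\mathcal{L}V \le -(I_1+I_2+I_3)^{\beta_0 + 2} \le -V$.

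\emph{Region $\{\min\{I_1, I_3\} < \epsilon_{1,3}\}$.} Say $I_1 < \epsilon_{1,3}$ (the case $I_3 < \epsilon_{1,3}$ is symmetric). Collecting the $O(I_1^{-\beta_1})$ pieces of $A_2, A_3$ and the sub-leading part of $A_1$ gives $\mathcal{L}V_1 \le -\kappa_1 m_0 I_1^{-\beta_1-1} + (C + 2m_2 I_3) I_1^{-\beta_1}$ for an $O(1)$ constant $C$. When $I_1 + I_3 \le C_{1,3}$ we have $I_3 \le C_{1,3}$, so the positive part is $\le C' I_1^{-\beta_1}$ and, $I_1$ being small, is dominated by $\tfrac12 \kappa_1 m_0 I_1^{-\beta_1-1}$; hence $\mathcal{L}V_1 \le -\tfrac12 \kappa_1 m_0 I_1^{-\beta_1-1} \le -K V_1$ with $K = \kappa_1 m_0/(2m_1\epsilon_{1,3})$ arbitrarily large. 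When $I_1 + I_3 > C_{1,3}$ (so $I_3$ is large), the cross term $2m_2 I_3 I_1^{-\beta_1}$ is split according to whether $I_3 \le 1/I_1$ (then it is $\le 2m_2 I_1^{-\beta_1-1}$ and is absorbed for $\epsilon_{1,3}$ small) or $I_3 > 1/I_1$ (then it is $\le 2m_2 I_3^{\beta_1+1}$ and is absorbed by $\mathcal{L}V_2 \le -c_0(I_1+I_2+I_3)^{\beta_0+2}$ since $\beta_1 + 1 < \beta_0 + 2$), giving $\mathcal{L}V_1 + \mathcal{L}V_2 \le -\tfrac14\kappa_1 m_0 I_1^{-\beta_1-1} - \tfrac12 c_0 I_3^{\beta_0+2}$. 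In all cases one has a large negative budget of order $I_1^{-\beta_1-1}$ (or $I_3^{\beta_0+2}$); the leftover $\mathcal{L}V_2, V_2$ are either absorbed by $\mathcal{L}V_2$ itself (when $I_1 + I_3 > C_{1,3}$) or bounded by a fixed constant (when $I_1 + I_3 \le C_{1,3}$, all $I_j$ then being bounded), and $\mathcal{L}V_3, V_3$ are handled symmetrically: if $I_3 < \epsilon_{1,3}$ then $\mathcal{L}V_3 \le -K'V_3$ by the same estimate, and if $I_3 \ge \epsilon_{1,3}$ then either $I_3$ lies below a fixed threshold $\delta_0$ where $-\kappa_3 m_0 I_3^{-\beta_1-1}$ forces $\mathcal{L}V_3 \le -V_3$, or $I_3 \ge \delta_0$ and $\mathcal{L}V_3, V_3$ are bounded by constants independent of $\epsilon_{1,3}$. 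Subtracting $V$ then gives $\mathcal{L}V \le -V$ once $\epsilon_{1,3}$ is small.

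The step I expect to be the main obstacle is not any individual estimate but the bookkeeping of constants: the cross term in $\mathcal{L}V_1$ forces $\epsilon_{1,3}$ to be small \emph{relative to} $C_{1,3}$, whereas the first region forces $C_{1,3}$ to be large relative to a negative power of $\epsilon_{1,3}$. These close up consistently provided one fixes the parameters in the order $\beta_0, \beta_1$ (with $\beta_1 < 3$), then $\delta_0$, then $C_{1,3}$, then $\epsilon_{1,3}$ of order $1/C_{1,3}$ — and this is possible precisely because $\beta_1 + 1 < \beta_0$. The only genuinely structural requirement is $\beta_1 < 3$, needed to keep $\kappa_1, \kappa_3 > 0$, i.e. to ensure that the heat baths really push $I_1$ and $I_3$ away from $0$.
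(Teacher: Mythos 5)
Your overall strategy is essentially the paper's: the same decomposition $\mathcal{L}V=\mathcal{L}V_1+\mathcal{L}V_2+\mathcal{L}V_3$, the observation that boundedness of $I_2$ turns $A_2,A_3$ (resp.\ $B_2,B_3$) into $O(I_1^{-\beta_1})$ (resp.\ $O(I_3^{-\beta_1})$) terms with at most a linear-in-$I_3$ (resp.\ $I_1$) cross factor, the dissipative terms $A_1,B_1$ dominating when $I_1$ or $I_3$ is small, and $\mathcal{L}V_2\lesssim -(I_1+I_2+I_3)^{\beta_0+2}$ when $I_1+I_3$ is large; your parameter ordering and the role of $\beta_1+1<\beta_0<\beta_1+2$ also match the paper's case analysis (and the implicit requirement $\beta_1<3$, which the paper uses without comment to make $A_1$, $B_1$ negative, is correctly identified).

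There is, however, one step that fails as written: in the mixed regime $I_1<\epsilon_{1,3}$ with $I_3$ large, you split the cross term $2m_2 I_3 I_1^{-\beta_1}$ at the threshold $I_3\le 1/I_1$ and claim that on that side it is $\le 2m_2 I_1^{-\beta_1-1}$ and ``absorbed for $\epsilon_{1,3}$ small.'' Smallness of $\epsilon_{1,3}$ cannot help there: both this term and the dissipative budget $-\kappa_1 m_0 I_1^{-\beta_1-1}$ scale as $I_1^{-\beta_1-1}$, so their ratio $2m_2/(\kappa_1 m_0)$ is independent of $I_1$, and since $\kappa_1=\beta_1\gamma T_1\bigl(1-\tfrac{\beta_1+1}{4}\bigr)$ carries the small factor $\gamma T_1$ while $m_2$ is a fixed $O(1)$ bound on $f',f''$, nothing in Assumption {\bf (H)} makes this ratio less than one; nor can $\mathcal{L}V_2$ absorb it, since $I_3$ may be only of size $C_{1,3}$ while $I_1^{-\beta_1-1}$ is arbitrarily large. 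The repair stays entirely within your scheme: split instead at $I_3\le c^*/I_1$ with $c^*:=\kappa_1 m_0/(4m_2)$, so the cross term is at most $\tfrac12\kappa_1 m_0 I_1^{-\beta_1-1}$ on one side and at most $\bigl(2m_2/(c^*)^{\beta_1}\bigr) I_3^{\beta_1+1}$ on the other, which is still absorbed by $-c_0 I_3^{\beta_0+2}$ because $\beta_1+1<\beta_0+2$. (The paper's version of this dichotomy is the comparison $V_1\gtrless V_2$: in the delicate case it yields $I_3< I_1^{-\beta_1/\beta_0}\ll I_1^{-1}$, hence $I_1 I_3\ll 1$, so the cross term is $o(I_1^{-\beta_1-1})$ and no comparison of fixed constants is needed.) With this correction your argument closes and is otherwise sound.
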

\begin{proof}
We still use notations defined in equation \eqref{L1} and \eqref{L3}. The main difference from the proof of the previous lemma is that now
we only have $A_{2} \leq C I_{1}^{-\beta_{1}}$ and $B_{2} \leq C I_{3}^{-\beta_{1}}$ in
equations \eqref{L1} and \eqref{L3}, respectively. Similarly, as in the proof of the previous lemma, we need to discuss four different cases (see also Figure \ref{sketch}). 

{\bf Case I:} $I_{1} \leq 1$, $I_{3} \leq 1$, and $\min \{ I_1, I_3 \} < \epsilon_{1,3}$.

This is the trivial case because $\mathcal{L}V_{2}$ is now bounded by a
constant. Since $I_{2}$ is bounded, we have $A_{2} \leq C
I_{1}^{-\beta_{1}}$ and $B_{2} \leq C I_{3}^{-\beta_{1}}$  for some
constant $C$. Additionally, the cross-terms $A_{3}$ and $B_{3}$ in equations \eqref{L1}
and \eqref{L3} are bounded by $C I_{1}^{-\beta_{1}}$ and $C
I_{3}^{-\beta_{3}}$, respectively, for some constant $C$. Therefore, the
dominant terms in $\mathcal{L}V$ are $A_{1}$ and $B_{1}$ in equations
\eqref{L1} and \eqref{L3}, both of which are significantly larger in absolute value, than
$V_{1}$ and $V_{3}$, respectively, when $I_{1} \ll 1$ and $I_{3} \ll 1$. This gives
$$
  \mathcal{L}V_{1} \leq - \beta_{1} T_{1} \gamma( 1 - \frac{\beta_{1} + 1}{4})
  m_{0} I_{1}^{-\beta_{1}-1} + O( I_{1}^{-\beta_{1}}) \leq - 1.1 V_{1}
$$
(resp. 
$$
  \mathcal{L}V_{3} \leq - \beta_{1} T_{3} \gamma( 1 - \frac{\beta_{1} + 1}{4})
  m_{0} I_{3}^{-\beta_{1}-1} + O( I_{3}^{-\beta_{1}}) \leq - 1.1 V_{3}\quad)
$$
for all sufficiently small $I_{1}$ (resp. I$_{3}$).  If $V$ is large, at least one of $I_{1}$ or
$I_{3}$ has to be extremely small. Since all other terms have significantly smaller
scales that can be absorbed into $0.1 V_{1}$ or $0.1 V_{3}$, it is easy to see that we must have
$$
  \mathcal{L}V \leq -V 
$$
if $V$ is sufficiently large.

\medskip

{\bf Case II:} $I_{1} \geq 1$, $I_{3} \geq 1$, and $I_1 + I_3 > C_{1,3}$.
This is another trivial case because $V_{1}$ and $V_{3}$ are bounded by constants, and $\mathcal{L}V_{1}$ and $\mathcal{L}V_{3}$ are bounded by $O(I_1^{-\beta_1 + 2})$ and $O(I_3^{-\beta_1 + 2})$, respectively. Note that if $V$ is
sufficiently large, then $I_{1} + I_{3}$ must be large. In addition, for $\mathcal{L}V_{2}$ we obtain
\begin{align*}
  \mathcal{L} V_{2} & = \gamma \beta_{0}(I_{1} + I_{2} +
                      I_{3})^{\beta_{0} - 1}\left( -2I_{2}(I_{1} +
                      I_{3}) + (T_{1} - I_{1}^{3}) + (T_{3} -
                      I_{3}^{3}) + \frac{T_{1}I_{1} +
                      T_{3}I_{3}}{4(I_{1} + I_{2} + I_{3})}(\beta_{0}
                      - 1)\right)\\
  &\leq \gamma \beta_{0}(I_{1} + I_{2} +
                      I_{3})^{\beta_{0} - 1}\left( (T_{1} + T_{3})(1 +
    \frac{\beta_{0}-1}{4}) - I_{1}^{3} - I_{3}^{3}\right)
\end{align*}
Notice that $I_{1}^{3} + I_{3}^{3} \geq \frac{1}{4}(I_{1} +
I_{3})^{3}$ for nonnegative $I_{1}, I_{3}$ (because $I_{1}^{3} +
I_{3}^{3} - I_{1}^{2}I_{3} - I_{1}I_{3}^{2} = (I_{1} -
I_{3})^{2}(I_{1} + I_{3})$). In addition, $I_{2}$ is bounded by
$C_{I_{2}}$. Thus, the leading term of $\mathcal{L}V_{2}$ is
$- \frac{1}{4}\gamma \beta_{0}(I_{1} + I_{2} + I_{3})^{\beta_{0}
  -1}(I_{1} + I_{3})^{3}$. There
exists a constant $C_{1,3}$ such that
$$
  \mathcal{L}V_{2} \leq -\frac{1}{8}\gamma \beta_{0}(I_{1} + I_{2} +
  I_{3})^{\beta_{0} + 2} \leq - 1.1 V_{2}
  $$
  for all $I_{1} + I_{3} \geq C_{1,3}$, because the $(I_{1} + I_{2} +
  I_{3})^{\beta_{0}+2}$ is of higher order. This gives
  
$$
  \mathcal{L}V \leq -V 
$$
because now all terms in $ \mathcal{L}V_{1}$, $ \mathcal{L}V_{3}$, $V_{1}$, and $V_{3}$ can be absorbed by $0.1 V_{2}$.
\medskip

{\bf Case III:} $I_{1} \leq 1$, $I_{3} \geq 1$, and either $\min\{ I_1, I_3 \} < \epsilon_{1,3}$ or $I_1 + I_3 > C_{1,3}$.

This is similar to Case I.3 in the previous proof. When $I_1 \leq 1$, since both $f(\theta)$ and its first and second derivatives are bounded, we have
$$
LV_1 \leq -p x^{-\beta_1 - 1} + q x^{-\beta_1} \,,
$$
where 
$$
p = \beta_1 T_1 \gamma \left ( 1 - \frac{\beta_1 + 1}{4}\right) f(\theta_1)
$$
and 
$$
q = M I_3 + M_2 \,,
$$
where the term $M_2$ absorbs the $I_1^{-\beta_1 + 2}$ term in $A_1$, the entire $A_2$, and the term $-2I_1 (1 + 2 \cos \theta_1)$. Notice that $M_2$ is now an $O(1)$ constant because $I_2$, $I_1$, $f(\theta_1)$, and all derivatives of $f$ are bounded. Then a similar worst case analysis as in the proof of the last lemma gives 
$$
  \mathcal{L}V_{1} \leq C_3 I_{3}^{\beta_{1}+1} 
$$
for a new $O(1)$ constant $C_3$.

 If we know that  $I_{3}$ is very  large, then we could write 
$$
  \mathcal{L}V_{2} \leq  \gamma \beta_{0}(I_{1} + I_{2} +
  I_{3})^{\beta_{0}-1}\left( (T_{1} + T_{3})(1 +
    \frac{\beta_{0}-1}{4}) - I_{3}^{3}\right) \,
$$
where the term $I_3^3$ is dominant. Since $I_{1}$, $I_{2}$ are both bounded, there exists a constant
$C_{I_{3}}$ such that
$$
  \mathcal{L}V_{2} \leq - \frac{1}{2} \gamma \beta_{0}( I_{1} + I_{2} +
  I_{3})^{\beta_{0} + 2} 
  $$
whenever $I_{3} \geq C_{I_{3}}$. In addition, since $I_{3} \geq 1$, $V_{3}$ is bounded by constant and $ \mathcal{L}V_{3}$ is bounded by $O(I_3^{-\beta_1 + 2})$
  
Therefore, we have the following cases for discussion.

(a) If $I_{3} \geq C_{I_{3}}$ and $V_{1} \leq V_{2}$, we have
$$
  \mathcal{L}V \leq  - \frac{1}{2} \gamma \beta_{0}( I_{1} + I_{2} +
  I_{3})^{\beta_{0} + 2} +  C
  I_{3}^{\beta_{1}+1}  + C I_3^{-\beta_1 + 2} \leq - 3 V_{2} \leq -V
$$
for all sufficiently large values of $V$, because now $ \mathcal{L}V_{2}$ is the
dominant term.

(b) If $I_{3} \geq C_{I_{3}}$ and $V_{1} > V_{2}$, then we have $I_{3}
< I_{1}^{-\beta_{1}/\beta_{0}} \ll I_{1}^{-1}$ because $I_{1}^{-\beta_{1}} > (I_{1} + I_{2} +
I_{3})^{\beta_{0}}$. Notice that $\beta_{0} > \beta_{1} + 1$. In this
case for all $I_{1}$ that are sufficiently small, the term
$A_{3}$ in equation \eqref{L1} is bounded by a small $\epsilon
I_{1}^{-\beta_{1} - 1}$ for 
$$
  \epsilon < \frac{1}{3}\beta_{1} T_{1} \gamma (1 -\frac{\beta_{1} +
    1}{4}) m_{0} \,,
  $$
  because $I_{1}I_{3}$ is a $\ll 1 $ term. In addition, $A_{2}$ is also an $O(I_{1}^{-\beta_{1}})$ term. This
gives
$$
  \mathcal{L}V_{1} \leq - \frac{1}{2}\beta_{1} T_{1} \gamma (1 -\frac{\beta_{1} +
    1}{4}) m_{0} I_{1}^{-\beta_{1} - 1} \,.
$$
The dominant term is $I_{1}^{-\beta_{1} - 1}$ because
$I_{1}^{-\beta_{1} - 1} \gg V_{1} > \frac{1}{3}V$. Hence, we have
$$
  \mathcal{L}V \leq -V
$$
for sufficiently large $V$.

(c)  If $I_{3} \leq C_{I_{3}}$, then the only situation that makes $V$
large is when $I_{1} \ll 1$. In addition, the terms $V_{3},  \mathcal{L}V_{3}, V_{2}$, and
$ \mathcal{L}V_{2}$ are now bounded by constants. Since $I_{3}$ is now bounded, in equation
\eqref{L1} we have both $A_{2}$ and $A_{3}$ as
$O(I_{1}^{-\beta_{1}})$ terms, making $A_{1}$ the dominant
term. Therefore, since $I_{1}^{-\beta_{1} -1 } \gg I_{1}^{-\beta}$,
for sufficiently small $I_{1}$ we have
$$
  \mathcal{L}V \leq \frac{1}{2}  \mathcal{L}V_{1} \leq - \frac{1}{2}\beta_{1} T_{1} \gamma (1 -\frac{\beta_{1} +
    1}{4}) m_{0} I_{1}^{-\beta_{1} - 1}  + O( I_{1}^{-\beta_{1}}) \leq
  -V \,.
$$

{\bf Case IV: $I_{1} \geq 1$, $I_{3} \leq 1$.} The proof of this case
is identical to that of {\bf Case III}.

The lemma is proved by combining the above four cases. 
\end{proof}

Combine Lemmata \ref{highI2} and \ref{lowI2}, we conclude that the
infinitesimal generator admits a Lyapunov function.

\begin{thm}
  \label{generator}
For every $\mathbf{x} \in \Omega$, there exist constants
$c > 0$ and $0<C_{V} < \infty$ such that
$$
  \mathcal{L} V \leq -c V^{1 - 1/\beta_{0}}
$$
for all $\mathbf{x}$ with $V( \mathbf{x}) > C_{V}$.
\end{thm}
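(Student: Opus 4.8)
The plan is to obtain Theorem~\ref{generator} purely as a patching argument that glues together Lemma~\ref{highI2} and Lemma~\ref{lowI2}, using the observation that the only region of $\Omega$ left uncovered by those two lemmas is compact, so that $V$ is bounded there.

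First I would split $\Omega = \mathbb{R}^3_+ \times \mathbb{T}^2$ according to the value of $I_2$. On the half-space $\{I_2 \geq C_{I_2}\}$, Lemma~\ref{highI2} already gives $\mathcal{L}V \leq -c_{I_2} V^{1-1/\beta_0}$, so nothing further is needed there. On the complementary slab $\{I_2 < C_{I_2}\}$ I would invoke Lemma~\ref{lowI2}, which applies as soon as $I_1 + I_3 > C_{1,3}$ or $\min\{I_1,I_3\} < \epsilon_{1,3}$ and then yields the stronger bound $\mathcal{L}V \leq -V$. The set of states in this slab that satisfy neither of those two conditions is
$$
K := \{ \mathbf{x} : I_2 \leq C_{I_2},\ \epsilon_{1,3} \leq I_1,\ \epsilon_{1,3} \leq I_3,\ I_1 + I_3 \leq C_{1,3} \},
$$
which is compact, since the $I_j$ are bounded above and $I_1,I_3$ are bounded away from $0$, while $\theta_1,\theta_3$ range over $\mathbb{T}$. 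Because $I_1,I_3$ stay away from zero on $K$, the function $V$ has no singularity there and is continuous, hence bounded: using $f \leq m_1$ from assumption {\bf (H)} one gets $V(\mathbf{x}) \leq M_V := (C_{1,3}+C_{I_2})^{\beta_0} + 2 m_1 \epsilon_{1,3}^{-\beta_1}$ for all $\mathbf{x} \in K$.

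Then I would set $C_V := \max\{1, M_V\}$ and $c := \min\{1, c_{I_2}\}$ and verify the conclusion for an arbitrary $\mathbf{x}$ with $V(\mathbf{x}) > C_V$. If $I_2 \geq C_{I_2}$, Lemma~\ref{highI2} gives directly $\mathcal{L}V(\mathbf{x}) \leq -c_{I_2} V^{1-1/\beta_0} \leq -c V^{1-1/\beta_0}$. If instead $I_2 < C_{I_2}$, then $\mathbf{x} \notin K$, for otherwise $V(\mathbf{x}) \leq M_V \leq C_V$, contradicting $V(\mathbf{x}) > C_V$; hence $I_1 + I_3 > C_{1,3}$ or $\min\{I_1,I_3\} < \epsilon_{1,3}$, and Lemma~\ref{lowI2} yields $\mathcal{L}V(\mathbf{x}) \leq -V(\mathbf{x})$. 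Since $V(\mathbf{x}) > C_V \geq 1$ and $0 < 1-1/\beta_0 < 1$ (because $\beta_0 > 1$), we have $V \geq V^{1-1/\beta_0}$, so $\mathcal{L}V \leq -V \leq -V^{1-1/\beta_0} \leq -cV^{1-1/\beta_0}$. As these two cases exhaust $\{V > C_V\}$, the proof is complete.

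There is no genuine analytic obstacle here: all the substance is in Lemmata~\ref{highI2} and~\ref{lowI2}, and what remains is the bookkeeping observation that their hypotheses jointly cover $\{V > C_V\}$ precisely because the leftover region $K$ is compact. The only point requiring a moment of care is reconciling the two different right-hand sides ($-c_{I_2}V^{1-1/\beta_0}$ versus $-V$) into a single bound of the form $-cV^{1-1/\beta_0}$, which is immediate once $C_V$ is chosen to be at least $1$.
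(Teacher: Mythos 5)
Your proposal is correct and is essentially the paper's own argument: the paper likewise obtains Theorem~\ref{generator} immediately from Lemmata~\ref{highI2} and~\ref{lowI2}, observing that every $\mathbf{x}$ with $V(\mathbf{x})$ sufficiently large falls under one of their hypotheses (the leftover set being compact with $V$ bounded) and that $\mathcal{L}V \leq -V$ implies $\mathcal{L}V \leq -cV^{1-1/\beta_0}$ for large $V$. Your write-up merely makes the bookkeeping (the set $K$, the choice of $C_V$ and $c$) explicit.
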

\begin{proof}
Noting that for large $V$, $\mathcal{L}V \leq - V$ implies $\mathcal{L} V \leq - c V^{1 - 1/\beta_0}$. This theorem is implied immediately by Lemma \ref{highI2} and Lemma
\ref{lowI2} because all $\mathbf{x}$ with sufficiently large $V( \mathbf{x})$ values
are covered by the conditions in either Lemma \ref{highI2} or Lemma
\ref{lowI2}. 
\end{proof}

\subsection{Reduced system and proof of assumption {\bf (H)}}
\label{reducelow}

We provide this subsection to further explain the mechanism behind the
dissipation of high internal energy $I_{2}$ and our motivation for constructing the
Lyapunov function $V$. We will also explain the technical assumption
{\bf (H)} with respect to the coefficient $g(I_{2}, \theta)$ in equation
\eqref{bigsystem}, which ensures that the pre-factor $f$ of the Lyapunov function
works uniformly for large $I_{2}$. 

From the expression of $\mathcal{L}V_{2}$ we can
see that when both $I_{1}$ and $I_{3}$ are small, $\mathcal{L}V_{2}$ becomes a
positive term, indicating an increase in the total energy. This needs
to be compensated for by Lyapunov functions of $I_{1}$ and $I_{3}$,
which penalize the state of very low $I_{1}$ and $I_{3}$. However, if we denote $\tilde{V}_{1} =
I_{1}^{-\beta_{1}}$, we can see that
$$
  \mathcal{L} \tilde{V}_{1} = - \beta_{1}I_{1}^{-\beta_{1} - 1}\left ( 2I_{1}I_{2} (\sin \theta_{1}
       - \gamma) + T_{1} \gamma (1 - \frac{\beta_{1} + 1}{4}) - \gamma
       I_{1}^{3} \right)  \,.
$$
Therefore, when $\sin \theta_{1} - \gamma < 0$, even $\tilde{V}_{1}$
is an increasing term. In order to release the high $I_{2}$ stored in the
middle of the chain, the phase $\theta_{1}$ has to be changed.

Looking closer, one can find that the system
  \eqref{bigsystem} undergoes an interesting multi-scale
  dynamics. Because $I_{2}$ is dominantly large, the dynamics of $\theta_{1}$ and
  $\theta_{3}$ can be seen as a perturbation of the following
  effective dynamics (where $\theta$ is used to denote the generic phase variable)
  \begin{equation}
    \label{eqntheta0}
\mathrm{d} \theta =  I_{2} (1 + 2 \cos \theta) \mathrm{d}t
                                + \sqrt{\gamma } g(I_{2}, \theta)\mathrm{d}
                                B_{t } \,.
\end{equation}
The role of $g(I_{2}, \theta)$ is to increase the level of noise
in the neighborhood of the unstable equilibrium, which ``pushes'' the
$\theta$ away from the unstable equilibrium $4 \pi /3$ faster,
so that one can find a single pre-factor function $f$ that works
uniformly for all large $I_{2}$.  It is easy to see that
equation \eqref{eqntheta} quickly moves $\theta$ into a small neighborhood of $2 \pi
  /3$, which is the stable equilibrium of equation $\theta' =  I_{2}
  (1 + 2 \cos \theta)$. Because $\sin 2 \pi/3 = \sqrt{3}/2 > 0$,
 this fast dynamics means the phase $\theta$ will
 move toward the vicinity of $2 \pi /3$ after a short period of time when $\gamma$ is
 small, at which point we have $\sin \theta_{1} > \gamma$. Therefore, we
 will have $\mathcal{L} \tilde{V}_{1} < 0$ for all sufficiently small $I_{1}$. 
 The expected value of $I_{1}^{-\beta_{1}}$ decreases over the slow
 dynamics after the angle becomes ``good'', i.e., when $\sin \theta_{1} > \gamma$. After an additional
 period of time, the expected value of $\tilde{V}_{1}$ will be
 lower than its initial value. Moreover, a higher $I_{1}$ will eventually
 release the total energy stored in the system.

 In this subsection, we rigorously prove assumption {\bf (H)} for
 a particular choice of $g$:
$$
g(\theta, I) = \left \{
\begin{array}[tb]{lcl}
  1& \mbox{ if } & g \leq 1 \\
  \phi(I) &\mbox{ if} & 1 < g \leq 2\\
  \sqrt{I} & \mbox{ if } & g > 2 \,,
\end{array}
\right .
$$
where $\phi$ is an interpolation function that makes $g$ a $C^{2}$
continuous function. More precisely, for $I_{2} > 2$, we consider the system 
  \begin{equation}
    \label{eqntheta}
\mathrm{d} \theta =  I_{2} (1 + 2 \cos \theta) \mathrm{d}t
                                + \sqrt{\gamma  I_{2} }\mathrm{d}
                                B_{t } \,.
\end{equation}
instead.

We first provide some
rigorous justification for the invariant probability measure of the
$\theta$ dynamics in equation \eqref{eqntheta}  to demonstrate that after a short time, the phase of the reduced
system will be stabilized at an angle that makes $\sin \theta_{1} >
\gamma$. In addition, the mixing is strong enough that the
multiplicative ergodic theorem holds, which leads to the existence of
the pre-factor function $f(\theta)$ in item (d) of assumption {\bf
  (H)}. Although theoretically, assumption {\bf (H)} should hold for a
wide range of functions $g$ including the constant function $g = 1$, due to
technical reasons, we can only prove assumption {\bf (H)} for the
particular choice $g = \sqrt{I}$ as described above.

\begin{lem}
  \label{lem44}
  For each $I_{2}$, the invariant probability measure of equation \eqref{eqntheta}
  has a probability density function
  
$$
  u^{*}(\theta) = \frac{1}{Z} \left ( \frac{(e^{-2\pi \alpha} -
      1)e^{\alpha (\theta + 2 \sin \theta)}}{\int_{0}^{2\pi}
      e^{-\alpha(r + 2 \sin r) }\mathrm{d}r}\int_{0}^{\theta}
    e^{-\alpha(r + 2 \sin r)} \mathrm{d}s + e^{\alpha(\theta + 2 \sin
      \theta)}\right ) \,,
  $$
  where $\alpha = \frac{2}{\gamma }$, and $Z$ is a normalizer. 
\end{lem}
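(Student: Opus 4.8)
The density of the (unique) invariant probability measure of the non-degenerate scalar diffusion \eqref{eqntheta} on the compact circle $\mathbb{S}^{1}$ is characterized as the unique probability solution $u^{*}$ of the stationary Fokker--Planck equation $(\mathcal{L}^{\theta})^{*}u^{*}=0$, where $\mathcal{L}^{\theta}u:=I_{2}(1+2\cos\theta)\,u'+\tfrac{\gamma I_{2}}{2}u''$ is the generator of \eqref{eqntheta} (with $I_{2}$ held fixed) and $(\mathcal{L}^{\theta})^{*}$ its formal $L^{2}(\mathrm{d}\theta)$-adjoint; existence, uniqueness, smoothness and strict positivity of such a density are classical for uniformly elliptic diffusions on a compact manifold (the smoothness being in particular a special case of Theorem~\ref{hormander}). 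So the plan is simply to solve this ODE explicitly and normalize.

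The first step is to observe that $(\mathcal{L}^{\theta})^{*}u^{*}=0$ is equivalent to the probability current $J:=I_{2}(1+2\cos\theta)\,u^{*}(\theta)-\tfrac{\gamma I_{2}}{2}(u^{*})'(\theta)$ being constant in $\theta$. This turns the problem into the first-order linear ODE
$$ (u^{*})'-\alpha(1+2\cos\theta)\,u^{*}=-\frac{2J}{\gamma I_{2}},\qquad \alpha:=\frac{2}{\gamma}. $$
Multiplying through by the integrating factor $e^{-\alpha(\theta+2\sin\theta)}$ and integrating from $0$ to $\theta$ gives
$$ u^{*}(\theta)=e^{\alpha(\theta+2\sin\theta)}\left(u^{*}(0)-\frac{2J}{\gamma I_{2}}\int_{0}^{\theta}e^{-\alpha(r+2\sin r)}\,\mathrm{d}r\right). $$
It then remains to pin down the two constants $u^{*}(0)$ and $J$ from the two available conditions: $2\pi$-periodicity on the circle, $u^{*}(2\pi)=u^{*}(0)$, and the normalization $\int_{0}^{2\pi}u^{*}(\theta)\,\mathrm{d}\theta=1$. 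Imposing periodicity (and using $e^{2\pi\alpha}\neq1$, $\sin2\pi=0$) yields $\tfrac{2J}{\gamma I_{2}}=u^{*}(0)\,(1-e^{-2\pi\alpha})\big/\!\int_{0}^{2\pi}e^{-\alpha(r+2\sin r)}\,\mathrm{d}r$; substituting this back and writing $u^{*}(0)=1/Z$ reproduces verbatim the formula in the statement, and the normalization then fixes $Z$.

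Finally, to confirm that $u^{*}$ is a genuine probability density (equivalently $Z>0$) I would check positivity directly: the bracketed factor $h(\theta):=1-c\int_{0}^{\theta}e^{-\alpha(r+2\sin r)}\,\mathrm{d}r$ with $c:=(1-e^{-2\pi\alpha})\big/\!\int_{0}^{2\pi}e^{-\alpha(r+2\sin r)}\,\mathrm{d}r>0$ satisfies $h'(\theta)=-c\,e^{-\alpha(\theta+2\sin\theta)}<0$ and $h(2\pi)=e^{-2\pi\alpha}>0$, so $h>0$ on $[0,2\pi]$ and hence $u^{*}>0$ throughout. The computation is routine; the only points needing care are keeping track of the constant current $J$ and recognizing that the slightly unusual form of the stated answer (featuring $e^{-2\pi\alpha}-1$ rather than $e^{2\pi\alpha}-1$) is precisely what the periodicity condition produces. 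A shortcut that avoids the derivation altogether is to verify directly that the stated $u^{*}$ solves $(\mathcal{L}^{\theta})^{*}u^{*}=0$ and integrates to one, with uniqueness again coming from the classical ergodic theory above.
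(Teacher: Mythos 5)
Your proposal is correct and follows essentially the same route as the paper: reduce the stationary Fokker--Planck equation to a first-order linear ODE (constant probability current, which the paper simply calls $C_0$), solve by the integrating factor $e^{-\alpha(\theta+2\sin\theta)}$, and fix the constants via periodicity $u(0)=u(2\pi)$ and normalization. Your explicit positivity check of the bracketed factor is a small addition the paper leaves implicit, but it does not change the argument.
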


\begin{proof}
  The stationary Fokker-Planck equation corresponding to equation \eqref{eqntheta}
  is
$$
 \frac{\mathrm{d}}{\mathrm{d} \theta} (- I_{2} (1 + 2 \cos \theta) u) +
 \frac{1}{2}\gamma I_{2} \frac{\mathrm{d}^{2}}{\mathrm{d} \theta^{2}} u =
 0 \,.
$$

Let $\alpha = \frac{2}{\gamma }$. After some simplifications,
we have
$$
  \frac{\mathrm{d}}{\mathrm{d} \theta} ( -(1 + 2 \cos \theta) u) +
 \alpha^{-1} \frac{\mathrm{d}^{2}}{\mathrm{d} \theta^{2}} u =
 0 \,.
$$
Or
$$
  \frac{\mathrm{d}}{\mathrm{d} \theta} ( -(1 + 2 \cos \theta) u +
 \alpha^{-1} \frac{\mathrm{d}}{\mathrm{d} \theta} u) =
 0 \,,
$$
which implies
$$
  u' - \alpha(1 + 2 \cos \theta )u = C_{0}
 $$
  for some constant $C_{0}$.

  Together with the periodic boundary condition $u(0) = u(2 \pi)$, we have
  
$$
  u(\theta) = C_{0} e^{\alpha (\theta + 2 \sin
    \theta)}\int_{0}^{\theta} e^{-\alpha(r + 2 \sin r)} \mathrm{d}r +
  C_{1} e^{\alpha (\theta + 2 \sin \theta)}
$$
for a free constant $C_{1}$. Without loss of generality, let $C_{1} =
1$ (because we will renormalize $u$ anyway). The periodic boundary
condition gives
$$
  C_{0} = \frac{e^{-2 \pi \alpha} - 1}{\int_{0}^{2\pi} e^{-\alpha ( r +
      2 \pi r)} \mathrm{d}r} \,.
$$
The proof is completed by normalizing $u$.

\end{proof}

For the sake of simplicity, we denote the random variable in
$\mathbb{S}^{1}$ whose probability density function is $u^{*}(\theta)$
by $\Theta^{*}$. The following two lemmata estimate the expectation
and variance of $\Theta^{*}$. 

\begin{lem}
  \label{concentrationtheta}
For any constants $c > 0$ and $\epsilon >0$, there exists a positive
constant $\gamma_{0} > 0$, such that
$$
  \mathbb{P}\left[ \left| \Theta^{*} - \frac{2\pi}{3} \right| > c\right] < \epsilon 
  $$
  for all $\gamma < \gamma_{0}$.
\end{lem}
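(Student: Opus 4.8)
The plan is to reduce the explicit stationary density of Lemma~\ref{lem44} to a clean Laplace-asymptotics problem in $\alpha = 2/\gamma \to \infty$. Set $\psi(x) := x + 2\sin x$, regarded as a function on all of $\R$; it satisfies the quasi-periodicity $\psi(x+2\pi) = \psi(x) + 2\pi$, so that $e^{-2\pi\alpha}\int_{0}^{\theta} e^{-\alpha\psi(r)}\,\mathrm{d}r = \int_{2\pi}^{\theta+2\pi} e^{-\alpha\psi(r)}\,\mathrm{d}r$. Using this identity one rewrites the formula of Lemma~\ref{lem44} (after cancelling the factor $e^{\alpha\psi(\theta)}$ against the integral it multiplies) as
$$u^{*}(\theta) = \frac{1}{\widetilde Z}\int_{0}^{2\pi} e^{-\alpha\,\Delta_\theta(s)}\,\mathrm{d}s, \qquad \Delta_\theta(s) := \psi(\theta+s) - \psi(\theta) = \int_{0}^{s}\bigl(1 + 2\cos(\theta+u)\bigr)\,\mathrm{d}u,$$
with $\widetilde Z = \int_{\mathbb{S}^{1}}\int_{0}^{2\pi} e^{-\alpha\Delta_\theta(s)}\,\mathrm{d}s\,\mathrm{d}\theta$ the normalizer. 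In this form the exponent $\Delta_\theta$ is exactly the action accumulated along the forward deterministic flow $\dot x = 1 + 2\cos x$, whose fixed points are $2\pi/3$ (stable, since $-2\sin(2\pi/3) < 0$) and $4\pi/3$ (unstable), and this is the mechanism that forces concentration at $2\pi/3$.

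Next I would compute the rate function $m(\theta) := \min_{0 \le s \le 2\pi}\Delta_\theta(s) = \min_{r\in[\theta,\theta+2\pi]}\psi(r) - \psi(\theta)$. Since $\psi$ is monotone up to $2\pi$-periodic wiggles, its only local minima lie at the points $\equiv 4\pi/3 \pmod{2\pi}$; hence over a window of length $2\pi$ the minimum of $\psi$ is $\min\bigl(\psi(\theta),\psi(\widehat r(\theta))\bigr)$, where $\widehat r(\theta)$ is the unique point $\equiv 4\pi/3$ in $(\theta,\theta+2\pi)$ (the right endpoint being discarded because $\psi(\theta+2\pi) = \psi(\theta)+2\pi$). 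Taking the fundamental domain $\theta\in[-2\pi/3,4\pi/3)$ one gets $m(\theta) = \min\bigl(0,\ \psi(4\pi/3) - \psi(\theta)\bigr)$, which is strictly negative exactly where $\psi(\theta) > \psi(4\pi/3)$ and equals $\psi(4\pi/3)-\psi(\theta)$ there. Because $\psi' = 1 + 2\cos$ vanishes at $2\pi/3$ with $\psi''(2\pi/3) = -\sqrt 3 < 0$, the function $\psi$ has a unique maximum on this domain at $\theta = 2\pi/3$, so $m$ attains a \emph{unique} global minimum on $\mathbb{S}^{1}$ at $\theta = 2\pi/3$, with $m(\theta) - m(2\pi/3) = \psi(2\pi/3) - \psi(\theta) = \tfrac{\sqrt3}{2}(\theta - 2\pi/3)^{2} + O(|\theta-2\pi/3|^{3})$ near $2\pi/3$.

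With these two ingredients the conclusion follows by the usual Laplace estimates. For the numerator I use the crude bound $\int_{0}^{2\pi} e^{-\alpha\Delta_\theta(s)}\,\mathrm{d}s \le 2\pi\, e^{-\alpha m(\theta)}$, valid for every $\theta$. For $\widetilde Z$ I produce a matching lower bound by restricting to $\theta$ near $2\pi/3$: there the minimizer of $\Delta_\theta$ is the interior point $s^{*}(\theta) = 4\pi/3 - \theta$, bounded away from $0$ and $2\pi$, and $\Delta_\theta''(s^{*}) = \psi''(4\pi/3) = \sqrt 3 > 0$, so a quadratic expansion gives $\int_{0}^{2\pi} e^{-\alpha\Delta_\theta(s)}\,\mathrm{d}s \gtrsim \alpha^{-1/2}e^{-\alpha m(\theta)}$ uniformly for $|\theta - 2\pi/3|$ small; integrating this against $-\alpha m(\theta) \ge -\alpha m(2\pi/3) - \alpha(\theta - 2\pi/3)^{2}$ yields $\widetilde Z \gtrsim \alpha^{-1}e^{-\alpha m(2\pi/3)}$. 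Combining, for any fixed $c\in(0,\pi]$ and with $m_{c} := \min_{|\theta - 2\pi/3|\ge c} m(\theta)$,
$$\mathbb{P}\bigl[|\Theta^{*} - \tfrac{2\pi}{3}| > c\bigr] = \int_{|\theta-2\pi/3|>c} u^{*}(\theta)\,\mathrm{d}\theta \ \le\ \frac{4\pi^{2}\,e^{-\alpha m_{c}}}{\widetilde Z} \ \le\ C\,\alpha\, e^{-\alpha(m_{c} - m(2\pi/3))}.$$
Since $2\pi/3$ is the unique minimizer of the continuous function $m$ on the compact set $\mathbb{S}^{1}$, we have $\delta_{c} := m_{c} - m(2\pi/3) > 0$; and $\alpha = 2/\gamma \to \infty$ as $\gamma \to 0$, so the right-hand side tends to $0$ and can be made $< \epsilon$ for all $\gamma$ below some $\gamma_{0}(c,\epsilon)$, which is the assertion.

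The only genuinely delicate step is the middle one: identifying $m(\theta)$ and checking that the forward-flow action dips most when $\theta$ approaches the \emph{stable} equilibrium $2\pi/3$. This is where the phase portrait of $\dot x = 1 + 2\cos x$ (one stable, one unstable fixed point) enters, and where one must carefully track the windowed minimum of the quasi-periodic function $\psi$; everything else is bookkeeping for Laplace's method and the normalization constant.
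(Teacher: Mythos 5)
Your argument is correct, and it takes a genuinely different route from the paper. The paper rescales time ($s=I_2 t$, which removes $I_2$), notes that the non-degenerate noise on the compact circle gives uniqueness of the invariant measure, and then simply invokes Freidlin--Wentzell large-deviation theory (Theorem 4.4.3 of the cited reference) for concentration of the stationary law at the unique stable equilibrium $2\pi/3$ of $\dot\theta = 1+2\cos\theta$. You instead work directly with the explicit density of Lemma \ref{lem44}: your rewriting $u^*(\theta)\propto \int_0^{2\pi} e^{-\alpha(\psi(\theta+s)-\psi(\theta))}\,\mathrm{d}s$ is exactly right (I checked the algebra via $\psi(r+2\pi)=\psi(r)+2\pi$, which turns the prefactor plus Gibbs factor into $\int_\theta^{\theta+2\pi}e^{-\alpha(\psi(r)-\psi(\theta))}\mathrm{d}r$), and your identification of the windowed minimum $m(\theta)=\min\bigl(0,\psi(4\pi/3)-\psi(\theta)\bigr)$ on the fundamental domain, with unique global minimum at $2\pi/3$ and value $2\pi/3-2\sqrt3<0$, is correct; the Laplace upper bound for the numerator and the $\alpha^{-1}$ lower bound for $\widetilde Z$ then give $\mathbb{P}[|\Theta^*-2\pi/3|>c]\le C\alpha e^{-\alpha\delta_c}$ with $\alpha=2/\gamma$, which is more than the lemma asks. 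What your route buys: it is self-contained (no LDP machinery), quantitative (an explicit exponential rate in $1/\gamma$), and it correctly handles the tilted-periodic (non-gradient) structure, where a naive reading of $u^*\propto e^{\alpha\psi(\theta)}\times(\text{prefactor})$ would be unjustified because the prefactor is itself exponentially varying --- this is essentially a sharpened version of the bookkeeping the paper only does later, in Lemma \ref{vartheta}. What the paper's route buys is brevity and a transparent dynamical interpretation (concentration at the stable fixed point), at the cost of citing standard large-deviations results rather than proving the bound by hand.
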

\begin{proof}
Define $s = I_{2} t$ and consider $s$ as the new time variable. The equation of
$\theta$ becomes
\begin{equation}
  \label{eqnthetas}
  \mathrm{d} \theta = (1 + 2 \cos \theta) \mathrm{d}s + \sqrt{\gamma} \mathrm{d}B_{s} \,.
  \end{equation}

 Since $\mathbb{S}^{1}$ is compact and the noise in \eqref{eqnthetas} 
 is non-degenerate, $\theta_{s}$ is
 irreducible. Thus, the invariant probability measure given by Lemma
 \ref{lem44} must be unique. The deterministic part
 
$$
  \mathrm{d}\theta = 2(1 + 2 \cos \theta_{3}) \mathrm{d}t
  $$
  has a unique stable equilibrium at $\theta = 2
  \pi /3$. Then the conclusion of the  lemma follows from standard Freidlin-Wentzell large
  deviation theory, for example Theorem 4.4.3 of
  \cite{freidlin2012random}. 

\end{proof}

Since $\mathbb{S}^{1}$ is compact, the following Corollary holds.

\begin{cor}
  \label{thetaexp}
For any $\epsilon  > 0$, there exists  $\gamma_{0}
> 0$ such
that
$$
  \left| \mathbb{E}\left[\sin (\Theta^{*}) \right] - \frac{\sqrt{3}}{2} \right| < \epsilon
  $$
  for any $\gamma < \gamma_{0}$.
\end{cor}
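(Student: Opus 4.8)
The plan is to deduce the corollary directly from Lemma~\ref{concentrationtheta} together with the continuity and boundedness of $\sin$ on the compact circle $\mathbb{S}^{1}$. Fix $\epsilon>0$. Since $\theta\mapsto\sin\theta$ is (Lipschitz) continuous on $\mathbb{S}^{1}$, there is a $c=c(\epsilon)>0$ such that $|\sin\theta-\sin(2\pi/3)|<\epsilon/2$ whenever $|\theta-2\pi/3|\le c$. Apply Lemma~\ref{concentrationtheta} with this value of $c$ and with $\epsilon/4$ in place of the $\epsilon$ appearing there: this produces a threshold $\gamma_{0}>0$ such that $\mathbb{P}\big[|\Theta^{*}-2\pi/3|>c\big]<\epsilon/4$ for all $\gamma<\gamma_{0}$.

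Next I would split the expectation over the good event $G=\{|\Theta^{*}-2\pi/3|\le c\}$ and its complement $G^{c}$. On $G$ the integrand $\sin\Theta^{*}$ lies within $\epsilon/2$ of $\sqrt{3}/2=\sin(2\pi/3)$, so this part contributes at most $\epsilon/2$ to $|\mathbb{E}[\sin\Theta^{*}]-\sqrt{3}/2|$. On $G^{c}$ one only has the trivial bound $|\sin\Theta^{*}-\sqrt{3}/2|\le 1+\sqrt{3}/2\le 2$, but since $\mathbb{P}(G^{c})<\epsilon/4$ this part contributes at most $2\cdot(\epsilon/4)=\epsilon/2$. Adding the two contributions yields $|\mathbb{E}[\sin\Theta^{*}]-\sqrt{3}/2|<\epsilon$ for all $\gamma<\gamma_{0}$, which is the claim.

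There is essentially no genuine obstacle here: the only points requiring care are the bookkeeping of constants—choosing $c$ from the modulus of continuity of $\sin$ \emph{before} invoking the lemma, and feeding $\epsilon/4$ rather than $\epsilon$ into Lemma~\ref{concentrationtheta}—and the observation that compactness of $\mathbb{S}^{1}$ makes the tail contribution on $G^{c}$ harmless because the observable $\sin$ is bounded. Put differently, the corollary is precisely the statement that the weak convergence of the law of $\Theta^{*}$ to the Dirac mass at $2\pi/3$ (which Lemma~\ref{concentrationtheta} encodes) tests correctly against the bounded continuous function $\sin$; no further quantitative information about the density $u^{*}(\theta)$ from Lemma~\ref{lem44} is needed.
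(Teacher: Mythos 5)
Your argument is correct and is exactly what the paper intends: the paper dispenses with the corollary in one line (``Since $\mathbb{S}^1$ is compact, the following Corollary holds''), relying on precisely the concentration estimate of Lemma \ref{concentrationtheta} plus boundedness and continuity of $\sin$, which you have simply written out in full with the standard good-set/bad-set splitting. No difference in substance, only in level of detail.
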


It remains to show that $\Theta^{*}$ significantly concentrates aroud the
stable equilibrium. Known results about concentration of the invariant
probability measure \cite{li2016systematic} can be applied here. But since the
invariant probability measure is already explicitly given, it is
easier to calculate the local WKB expansion directly. 

\begin{lem}
  \label{vartheta}
$$
  \mathrm{Var}( \Theta^{*}) = O( \gamma)
  $$
  if $\gamma \ll 1$. 
\end{lem}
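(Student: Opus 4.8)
The plan is to treat this as a Laplace/WKB estimate on the explicit density from Lemma~\ref{lem44}. Write $\alpha = 2/\gamma$ and $\Phi(\theta) = \theta + 2\sin\theta$, extended to $\R$ by $\Phi(\theta+2\pi) = \Phi(\theta)+2\pi$, so that $\Phi' = 1+2\cos\theta$ and $\Phi'' = -2\sin\theta$. First I would put $u^*$ into a transparent form: a few lines of algebra starting from the formula in Lemma~\ref{lem44} — using $\int_0^\theta e^{-\alpha(\Phi(r)+2\pi)}\,dr = \int_{2\pi}^{\theta+2\pi}e^{-\alpha\Phi(r)}\,dr$ and absorbing $\int_0^{2\pi}e^{-\alpha\Phi}$ into the normalizer — rewrite it as
$$
u^*(\theta) = \frac{1}{\tilde Z}\,w(\theta),\qquad w(\theta) := e^{\alpha\Phi(\theta)}\int_\theta^{\theta+2\pi}e^{-\alpha\Phi(r)}\,dr = \int_0^{2\pi}e^{-\alpha(\Phi(\theta+s)-\Phi(\theta))}\,ds,
$$
with $\tilde Z := \int_0^{2\pi}w(\theta)\,d\theta$ the normalizer. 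Since $\Theta^*$ concentrates near $2\pi/3$ (Lemma~\ref{concentrationtheta}), I would work in the chart $\theta = 2\pi/3 + a$, $a\in(-\pi,\pi]$, and use $\mathrm{Var}(\Theta^*) \le \mathbb E[(\Theta^*-2\pi/3)^2] = \int_{-\pi}^{\pi} a^2\,u^*(2\pi/3+a)\,da$, so it suffices to bound this last integral by $C\gamma$.

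Next I would identify the effective potential that controls $w$. Set $m(\theta) := \min_{r\in[\theta,\theta+2\pi]}\Phi(r)$ and $\psi(\theta) := \Phi(\theta)-m(\theta)\ge 0$, so $w(\theta) = e^{\alpha\psi(\theta)}\int_\theta^{\theta+2\pi}e^{-\alpha(\Phi(r)-m(\theta))}\,dr$. On one period $\Phi$ has a single local maximum, at $2\pi/3$ (with $\Phi'' = -\sqrt3$), and a single local minimum, at $4\pi/3$ (with $\Phi'' = \sqrt3$), and $\Phi(2\pi/3)-\Phi(4\pi/3) = 2\sqrt3 - 2\pi/3 =: \Delta_0 > 0$. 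Inspecting these monotonicity intervals for the sliding window $[\theta,\theta+2\pi]$ should give: (i) for $\theta$ in a fixed neighborhood $U = (2\pi/3-\delta_0,\,2\pi/3+\delta_0)$ one has $m(\theta) = \Phi(4\pi/3)$, attained only at the interior point $r = 4\pi/3$, where $\Phi$ is nondegenerate; and (ii) $\psi$ is continuous on $\mathbb S^1$, vanishes on a closed arc disjoint from $2\pi/3$, and attains its global maximum $\Delta_0$ only at $2\pi/3$, with $\psi(2\pi/3+a) = \Delta_0 - \tfrac{\sqrt3}{2}a^2 + O(|a|^3)$.

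Granting (i)--(ii), the remaining estimates are routine Laplace asymptotics. By (i), the one-dimensional Laplace method (both bounds, uniformly in $\theta\in U$ for large $\alpha$) gives $c_1\alpha^{-1/2}e^{-\alpha m(\theta)} \le \int_\theta^{\theta+2\pi}e^{-\alpha\Phi(r)}\,dr \le c_2\alpha^{-1/2}e^{-\alpha m(\theta)}$, hence $c_1\alpha^{-1/2}e^{\alpha\psi(\theta)} \le w(\theta) \le c_2\alpha^{-1/2}e^{\alpha\psi(\theta)}$ on $U$; integrating the lower bound over $U$ and using the quadratic expansion of $\psi$ yields $\tilde Z \ge \int_U w \ge c_3\alpha^{-1}e^{\alpha\Delta_0}$. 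For $\theta$ at circle-distance $\ge\delta_0$ from $2\pi/3$, the crude bound $\int_\theta^{\theta+2\pi}e^{-\alpha\Phi(r)}\,dr \le 2\pi e^{-\alpha m(\theta)}$ gives $w(\theta) \le 2\pi e^{\alpha\psi(\theta)} \le 2\pi e^{\alpha(\Delta_0-\kappa)}$ where $\kappa := \Delta_0 - \sup_{d(\cdot,2\pi/3)\ge\delta_0}\psi > 0$ by (ii). Dividing by $\tilde Z$, and shrinking $\delta_0$ if needed so that on $U$ the cubic error is dominated by half the quadratic term, I obtain constants $C,c>0$ with $u^*(2\pi/3+a) \le C\alpha^{1/2}e^{-c\alpha a^2}$ for $|a|<\delta_0$ and $u^*(\theta) \le C\alpha e^{-\kappa\alpha} \le Ce^{-\kappa\alpha/2}$ for $d(\theta,2\pi/3)\ge\delta_0$. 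Splitting the variance integral at $|a|=\delta_0$ and inserting these bounds,
$$
\mathrm{Var}(\Theta^*) \le C\alpha^{1/2}\int_{\R}a^2 e^{-c\alpha a^2}\,da + \pi^2\int_{\delta_0\le|a|\le\pi} u^*(2\pi/3+a)\,da \le C'\alpha^{-1} + C''e^{-\kappa\alpha/2} = O(\alpha^{-1}) = O(\gamma),
$$
since $\alpha = 2/\gamma$ and $e^{-\kappa\alpha/2} = e^{-\kappa/\gamma} = o(\gamma)$.

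The one genuinely delicate point is the bookkeeping behind (i)--(ii): because the lift $\Phi$ is not periodic and $m$ is a moving-window minimum, one must check carefully that $m(\theta)$ is constant and equal to $\Phi(4\pi/3)$ on a whole neighborhood of $2\pi/3$ with an interior nondegenerate minimizer, and that $\psi$ has a unique, nondegenerate global maximum at $2\pi/3$ (so the region away from $2\pi/3$ is exponentially suppressed); once this structural picture is in place everything else is a textbook Laplace computation. Pushing the last step a bit further — replacing the two-sided Laplace bounds by the leading-order asymptotics $w(\theta) \sim (2\pi/(\alpha\sqrt3))^{1/2}e^{\alpha\psi(\theta)}$ — would in fact give the sharp constant $\mathrm{Var}(\Theta^*) = \tfrac{\gamma}{2\sqrt3}(1+o(1))$, but for the stated $O(\gamma)$ bound the rougher estimates suffice.
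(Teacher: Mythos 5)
Your argument is correct: I checked the algebraic rewrite (indeed $\bigl(e^{-2\pi\alpha}-1\bigr)\int_0^\theta e^{-\alpha\Phi}\,dr+\int_0^{2\pi}e^{-\alpha\Phi}\,dr=\int_\theta^{\theta+2\pi}e^{-\alpha\Phi}\,dr$, so $u^*\propto w$), and the structural facts (i)--(ii) you flag as delicate do hold: for $\theta$ near $2\pi/3$ the window minimum is the interior nondegenerate point $4\pi/3$ with $\Phi''(4\pi/3)=\sqrt3$, and $\psi=\Phi-m$ vanishes on the arc from $4\pi/3$ through $0$ up to the point $\theta_c\in(0,2\pi/3)$ where $\Phi(\theta_c)=\Phi(4\pi/3)$, while attaining its unique nondegenerate maximum $\Delta_0=2\sqrt3-2\pi/3$ at $2\pi/3$. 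The remaining Laplace estimates, the Gaussian envelope $u^*(2\pi/3+a)\le C\alpha^{1/2}e^{-c\alpha a^2}$, the exponentially small tail, and the final $O(\alpha^{-1})=O(\gamma)$ bound all go through as written.

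Your route differs from the paper's in a way worth noting. The paper keeps the density in the form $u^*=\frac1Z h(\theta)e^{\alpha(\theta+2\sin\theta)}$, invokes the Freidlin--Wentzell concentration statement (Lemma \ref{concentrationtheta}) to dismiss the region away from $2\pi/3$, and then argues that the prefactor $h$ is nearly constant near $2\pi/3$ (by comparing $h(\theta^*)$ with $h'$), so that locally $u^*$ is a Gibbs factor whose quadratic expansion gives variance $O(\alpha^{-1})$; several steps there are stated as approximations. You instead repackage the density as the sliding-window integral $w(\theta)=e^{\alpha\Phi(\theta)}\int_\theta^{\theta+2\pi}e^{-\alpha\Phi}$, introduce the effective potential $\psi=\Phi-m$, and obtain genuine two-sided Laplace bounds, which simultaneously deliver the Gaussian core estimate and the exponential suppression of the far region without any appeal to large deviations. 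This is more self-contained and it handles transparently the subtlety that $e^{\alpha\Phi}$ by itself is not periodic and is largest near $\theta=2\pi$, a point the paper deals with only through the cited concentration lemma; the paper's decomposition, on the other hand, is shorter and reuses machinery already set up for Lemma \ref{concentrationtheta}. Both arguments hinge on the same quadratic expansion of $\theta+2\sin\theta$ at $2\pi/3$, and your remark that the sharp asymptotics would give $\mathrm{Var}(\Theta^*)=\frac{\gamma}{2\sqrt3}(1+o(1))$ is consistent with that.
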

\begin{proof}
Notice that
\begin{align*}
  u^{*}(\theta)  &= \frac{1}{Z} \left ( \frac{(e^{-2\pi \alpha} -
      1)}{\int_{0}^{2\pi}
      e^{-\alpha(r + 2 \sin r) }\mathrm{d}r}\int_{0}^{\theta}
    e^{-\alpha(r + 2 \sin r)} \mathrm{d}r + 1 \right )e^{\alpha(\theta + 2 \sin
  \theta)} \\
  & := \frac{1}{Z}h(\theta) e^{\alpha (\theta + 2 \sin \theta)} \,,
\end{align*}
where $\alpha = 2 /\gamma $. The negative integral of the
deterministic part $U: =  - (\theta + 2
\sin \theta)$  plays the role of a potential function, and $e^{\alpha
  (\theta + 2 \sin \theta)}  = e^{-\alpha U}$ is the Gibbs
measure. Freidlin-Wentzell large deviation theory implies that $u^*$ is negligible when $\theta$ is away from the unique attractor $2\pi/3$. Therefore, it is sufficient to estimate the pre-factor function $h(\theta)$ in the vicinity of $2\pi/3$.

At $\theta^{*} = 2 \pi /3$, we have
\begin{align*}
  h(\theta^{*}) &= \frac{(e^{-2\pi \alpha} -
      1)}{\int_{0}^{2\pi}
      e^{-\alpha(r + 2 \sin r) }\mathrm{d}r}\left ( \int_{0}^{2 \pi}
    e^{-\alpha(r + 2 \sin r)} \mathrm{d}r  - \int_{\theta^{*}}^{2\pi}
                  e^{-\alpha(r + 2 \sin r)} \mathrm{d}r  \right ) +
                  1\\
  &= e^{-2 \pi \alpha} + \frac{1 - e^{-2\pi \alpha} }{\int_{0}^{2\pi}
      e^{-\alpha(r + 2 \sin r) }\mathrm{d}r} \int_{\theta^{*}}^{2\pi}
                  e^{-\alpha(r + 2 \sin r)} \mathrm{d}r \,.
\end{align*}

When $\alpha \gg 1$, we have
$$
  \int_{0}^{2\pi} e^{-\alpha(r + 2 \sin r) }\mathrm{d}r =
  O(\alpha^{-1}) 
$$
because $r + 2 \sin r$ can be approximated by $3r$ at its minimum $r = 0$.

For the integral $\int_{\theta^{*}}^{2\pi} e^{-\alpha(r + 2 \sin r)}
\mathrm{d}r $, notice that in the interval $[2\pi/3, 2\pi]$, the function
$r + 2 \sin r$ reaches its minimum
at $r = 4 \pi/3$. When $\alpha \gg 1$, we can approximate $-\alpha (r + 2 \sin r)$ by 
$$
-\alpha (r + 2 \sin r) \approx \frac{4 \pi}{3} + \sqrt{3} - \frac{\sqrt{3}}{2}( r - \frac{4 \pi}{3})^2 \,.
$$
This means
$$
  \int_{\theta^{*}}^{2\pi} e^{-\alpha(r + 2 \sin r)}
\mathrm{d}r  = O(\alpha^{-1/2}\exp(-\alpha(4 \pi /3 - \sqrt{3}))) \,.
$$
In addition, $e^{-2\pi \alpha }$ is much smaller than the previous
term and $1 - e^{-2 \pi \alpha} \approx 1$. Hence we have the estimate
$$
  h( \theta^{*}) = O( \alpha^{1/2} \exp(-\alpha(4 \pi /3 - \sqrt{3}))) \,.
  $$

  The derivative of $h(\theta)$ is
  
$$
  h'(\theta) = \frac{(e^{-2\pi \alpha} -
      1)}{\int_{0}^{2\pi}
      e^{-\alpha(r + 2 \sin r) }\mathrm{d}r} e^{-\alpha ( \theta + \sin \theta)} = O(\alpha \exp(-\alpha (\theta + \sin \theta))) \,.
    $$
    for all $\theta$ near $\theta^*$. Notice that for all $\theta$ in the neighborhood of $\theta^*$ we have 
$$
  h(\theta^*) \gg h'(\theta)\,,
  $$
  meaning that the pre-factor $h(\theta)$ is nearly constant in the
  vicinity of $\theta^{*}$. Thus, the approximation
  
$$
  u^{*}(\theta) = \frac{1}{Z'} \exp(\alpha (\theta + 2 \sin \theta))
$$
does not change the scaling of the variance of $\Theta$, where $Z'$ is
a normalizer.

Hence the variance of $\Theta^*$ can be calculated explicitly. The Taylor expansion of $\theta + 2 \sin \theta$ at $2\pi/3$ gives the approximation
$$
  \theta + 2 \sin \theta = \frac{2\pi}{3} + \sqrt{3} - \frac{\sqrt{3}}{2}(\theta -
  \frac{2\pi}{3})^{2} + O(  (\theta -
  \frac{2\pi}{3})^{3} ) \,.
$$
Hence in the vicinity of $2\pi/3$, $u^{*}(\theta)$ is approximated by
a normal probability density function
$$
\frac{1}{Z'}\exp\{ \alpha ( \theta + 2 \sin \theta)\} \,.
$$
By the Taylor expansion, we know that $Z' = O(\exp(\alpha ( 2 \pi/3 + \sqrt{3}))\alpha^{-1/2})$. Therefore, we have
\begin{align*}
  \mathrm{Var}[ \Theta^*] &\approx \int (\theta - \frac{2 \pi }{3})^{2}
  \frac{1}{Z'} \exp\{\alpha ( \theta + 2 \sin \theta)\}
                          \mathrm{d}\theta\\
  &= \int  (\theta - \frac{2 \pi }{3})^{2}
  \frac{1}{Z''} \exp\{-\alpha [ \frac{\sqrt{3}}{2} (\theta - \frac{2\pi}{3})^{2} + O(  (\theta -
    \frac{2\pi}{3})^{3})] \} \mathrm{d}\theta \\
  &= O( \alpha^{-1}) = O(\gamma)\,,
\end{align*}
where $Z''$ is a normalizer at the scale of $O(\alpha^{-1/2})$. In other words, $\mathrm{Var}[\Theta^*]$ is approximately equal to the variance of a normal random variable with variance $\alpha^{-1}$. This completes the proof.

\end{proof}

\medskip

Next, we will show that equation \eqref{eqntheta} exhibits multiplicative ergodicity as described in Theorem \ref{MET}. Consequently, assumption {\bf (H)} holds for our specific choice of $g(I)$.

\medskip


First, we need to prove geometric ergodicity for \eqref{eqntheta}.

\begin{thm}
\label{ergodicitytheta}
Let $\theta_{t}$ be the Markov process given by equation
\eqref{eqntheta}. There exist constants $r > 0$ and positive $C < \infty$ such that, for all bounded measurable function $f$ on $\mathbb{S}^{1}$, we have
$$
  | \mathbb{E}_{\theta_{0}} f( \theta_{t} ) - \pi^{\alpha} (f) | \leq
  C e^{-rt} \,,
  $$
  where $\pi^{\theta}$ is the probability measure on $\mathbb{S}^{1}$
  with probability density function $u^{*}$.
\end{thm}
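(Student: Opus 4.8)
The plan is to exploit that \eqref{eqntheta} is a uniformly elliptic diffusion on the \emph{compact} circle $\mathbb{S}^1$, establish a Doeblin minorization for it, and then invoke the classical Doeblin theorem (equivalently Theorem \ref{HMSthm25} with a trivial Lyapunov function), upgrading from the time-$s_0$ skeleton chain to continuous time at the very end. It is convenient to first pass to the rescaled equation \eqref{eqnthetas}, $\mathrm{d}\theta = (1 + 2\cos\theta)\,\mathrm{d}s + \sqrt{\gamma}\,\mathrm{d}B_s$ with $s = I_2 t$, whose generator no longer depends on $I_2$; a convergence rate $\rho$ in the $s$-variable corresponds to the rate $\rho I_2$ in the $t$-variable, so it suffices to treat \eqref{eqnthetas}, and every constant produced below is automatically uniform over $I_2 \ge 2$.

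First I would record that for any fixed $s_0 > 0$ the transition kernel of \eqref{eqnthetas} admits a jointly continuous, strictly positive density $p^{s_0}(\theta,\theta')$ on $\mathbb{S}^1 \times \mathbb{S}^1$. The coefficients $1 + 2\cos\theta$ and $\sqrt{\gamma}$ are smooth, and the single diffusion vector field $\sqrt{\gamma}\,\partial_\theta$ already spans the one-dimensional tangent space at every point, so H\"ormander's condition holds trivially and Theorem \ref{hormander} (or plain parabolic regularity, since the equation is uniformly elliptic) gives a smooth transition density. Strict positivity follows from the Stroock--Varadhan support theorem together with the obvious controllability of $\dot\theta = (1 + 2\cos\theta) + \sqrt{\gamma}\,u$ on $\mathbb{S}^1$: since the control $u$ is unconstrained one can steer $\theta$ from any point to any other point in any prescribed positive time, so every $\theta'$ lies in the topological support of $p^{s_0}(\theta,\cdot)$. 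By continuity and compactness of $\mathbb{S}^1\times\mathbb{S}^1$, $c_0 := \inf_{\theta,\theta'}p^{s_0}(\theta,\theta') > 0$.

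The remaining steps are routine. Let $\nu$ be the uniform probability measure on $\mathbb{S}^1$ and put $\eta := 2\pi c_0$; since $\int_{\mathbb{S}^1} p^{s_0}(\theta,\theta')\,\mathrm{d}\theta' = 1$ we have $\eta \in (0,1]$, and $P^{s_0}(\theta, A) = \int_A p^{s_0}(\theta,\theta')\,\mathrm{d}\theta' \ge \eta\,\nu(A)$ for every Borel $A$ and every $\theta$. This is assumption \textbf{(A1)} with the small set equal to all of $\mathbb{S}^1$, i.e., the classical Doeblin condition for the skeleton chain $\hat\theta_n := \theta_{ns_0}$; hence $\hat\theta_n$ has a unique invariant probability measure, which must coincide with $\pi^\theta$ (the measure with density $u^*$ from Lemma \ref{lem44}, being invariant for every $P^t$), and there are $\kappa < \infty$, $q \in (0,1)$ with $\|P^{ns_0}(\theta,\cdot) - \pi^\theta\|_{TV} \le \kappa q^n$ for all $n$, $\theta$. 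To reach continuous time, split $t = ns_0 + \sigma$ with $0 \le \sigma < s_0$; using the Markov property, $\|P^\sigma f\|_\infty \le \|f\|_\infty$, and the invariance $\pi^\theta(P^\sigma f) = \pi^\theta(f)$, one gets
$$
|\mathbb{E}_{\theta_0} f(\theta_t) - \pi^\theta(f)| = \bigl| \mathbb{E}_{\theta_0}\bigl[(P^\sigma f)(\hat\theta_n)\bigr] - \pi^\theta(P^\sigma f)\bigr| \le \|f\|_\infty\,\|P^{ns_0}(\theta_0,\cdot) - \pi^\theta\|_{TV} \le \kappa\,\|f\|_\infty\, q^{n}.
$$
Since $n \ge (t - s_0)/s_0$, this is the claimed bound with $r = -\log q/s_0 > 0$ (and $r I_2$ in the original time $t$), $C = \kappa q^{-1}\|f\|_\infty$.

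As for the main obstacle: there really is no deep difficulty here — the statement is the standard fact that a uniformly elliptic diffusion on a compact manifold is uniformly (hence geometrically) ergodic. The only place that needs a little care is the strict positivity of the transition density, i.e., the support-theorem/controllability step, and the check that the resulting minorization is genuinely uniform in the starting point $\theta$; the latter is automatic here by compactness of $\mathbb{S}^1$, and the rest (the discrete Doeblin estimate and the discrete-to-continuous bookkeeping) is purely mechanical.
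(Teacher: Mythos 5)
Your proposal is correct and follows essentially the same route as the paper: both establish a minorization with the small set equal to all of $\mathbb{S}^{1}$ (you via a uniform positive lower bound on the jointly continuous transition density obtained from hypoellipticity and the support/controllability argument, the paper via the $\epsilon$-tube controllability argument verifying {\bf (A1')}), combine it with the trivial Lyapunov function available on the compact state space, and then upgrade the resulting geometric bound for the skeleton chain to continuous time. Your explicit Doeblin constant, the identification of the invariant measure with the one from Lemma \ref{lem44}, and the time rescaling making the constants uniform in $I_{2}\geq 2$ are fine refinements of the same method rather than a different argument.
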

\begin{proof}
Since $\mathbb{S}^{1}$ is bounded, for any two points $\theta_{0}, \theta_{1} \in \mathbb{S}^{1}$, we define $z(t)$ as a linear interpolation such that $z(0) = \theta_{0}$ and $z(1) = \theta_{1}$. Then, there exists a smooth and bounded
scalar function $U(t)$ that solves the control problem
$$
  \frac{\mathrm{d} z}{\mathrm{d}t} =  I_{2} (1 + 2 \cos z) +
  \sqrt{\gamma I_{2}} \frac{ \mathrm{d} U}{\mathrm{d}t}  
  $$
  for all $t \in (0, 1)$. Note that the probability that the Wiener measure of any
$\epsilon$-tube
$$
  \{ B(t) \, t \in [0, 1]\,|\, \sup_{0 \leq s \leq 1} \| B(s) - U(s)
  \| \leq \epsilon \}
$$
is strictly positive. Thus, the assumption {\bf (A1')} is
satisfied with respect to $T = 1$ and $C = \mathbb{S}^{1}$. We refer to the proof of Lemma 3.4 in \cite{mattingly2002ergodicity} for further
details.

In addition, since $\mathbb{S}^{1}$ is bounded, $V = 1$ is a trivial
Lyapunov function. By Theorem \ref{HMSthm25}, we have
$$
   | \mathbb{E}_{\theta_{0}} f( \theta_{n} ) - \pi^{\alpha} (f) | \leq
  C r_{0}^{n}
$$
for some $r_{0} < 1$. Since equation \eqref{eqntheta} is a stochastic differential equation, the same argument as in the proof of Theorem 3.2 of \cite{mattingly2002ergodicity} extends the result from discrete time to continuous time, completing the proof.

\end{proof}

Let $c(\theta) = -2 (\sin \theta - \gamma)$. We have the proof of
assumption {\bf (H)} for $g(I_{2}) = \sqrt{I_{2}} $.

\begin{thm}
  \label{pfH}
When $\gamma$ is sufficiently small, there exists a $\beta_{1} > 0$ such
that equation
$$
  \mathcal{L}^{I} u = - I
  $$
  has strictly positive solution $u(x)$ on $\mathbb{S}^{1}$ for all $I
  \geq 2$.
\end{thm}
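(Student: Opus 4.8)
The plan is to first exploit a special algebraic feature of $\mathcal L^I$ and then invoke the Feynman--Kac existence machinery of Section 2. For $I\ge 2$ we have $g(I,\theta)=\sqrt I$, so $\frac{\gamma}{2}g^2=\frac{\gamma I}{2}$ and
$$
\mathcal L^I f \;=\; I\Big(\tfrac{\gamma}{2}f'' + (1+2\cos\theta)f' - 2\beta_1(\sin\theta-\gamma)f\Big)\;=\;I\big(\mathcal L^\theta + \beta_1 c\big)f,
$$
where $c(\theta)=-2(\sin\theta-\gamma)$ and $\mathcal L^\theta=\frac{\gamma}{2}\partial_\theta^2+(1+2\cos\theta)\partial_\theta$ is the generator of the time-rescaled phase diffusion \eqref{eqnthetas}. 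Dividing by $I$ turns $\mathcal L^I u=-I$ into the single, \emph{$I$-independent} equation $(\mathcal L^\theta+\beta_1 c)u=-1$ on the closed manifold $\mathbb S^1$. So one strictly positive solution $u=u(\theta)$ automatically serves for every $I\ge 2$ at once; together with the bounds $0<m_0\le u\le m_1$ and $|u'|,|u''|\le m_2$, which are forced by smoothness and positivity on the compact $\mathbb S^1$, this is precisely what parts (a), (b) and (d) of assumption {\bf (H)} require.

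Next I would produce the positive solution of $(\mathcal L^\theta+\beta_1 c)u=-1$ by running the argument of Theorem \ref{thm28} with $D=\mathbb S^1$, zeroth-order coefficient $c_0=c$, right-hand side $f\equiv-1<0$, and no boundary term (the $C^4$-boundary and $\psi>0$ hypotheses being vacuous on a manifold without boundary, which only simplifies the proof). The hypotheses to verify for the $\theta$-diffusion \eqref{eqnthetas} are: its coefficients $1+2\cos\theta$, $\sqrt\gamma$ are smooth; it is geometrically ergodic (Theorem \ref{ergodicitytheta}) with invariant measure $\pi^\theta$ of density $u^*$ (Lemma \ref{lem44}) and trivial Lyapunov function $V\equiv1$, so condition (1) of Theorem \ref{MET} holds with $\pi^\theta(V^2)=1<\infty$; and $F:=c-\pi^\theta(c)$ (rescaled to fit the normalization of Theorem \ref{MET}) is non-constant and, being in the range of $\mathcal L^\theta$, i.e.\ $F=\mathcal L^\theta h$ for some smooth $h$, has asymptotic variance $\sigma^2=\gamma\,\pi^\theta\!\big((h')^2\big)>0$, so condition (2) holds. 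Granted the sign condition discussed below, Theorem \ref{thm28} then yields $\alpha_0>0$ such that $(\mathcal L^\theta+\alpha c)u=-1$ admits a strictly positive solution for every $0<\alpha<\alpha_0$; taking $\beta_1\in(0,\alpha_0)$ finishes the construction. Equivalently, once $\lambda_1(\mathcal L^\theta+\beta_1 c,\mathbb S^1)>0$ is known, Theorem \ref{nirenberg} yields the solution, Theorem \ref{freidlinPDE} gives the representation $u(\theta)=\mathbb E_\theta\big[\int_0^\infty\exp(\beta_1\!\int_0^t c(\theta_s)\,\mathrm ds)\,\mathrm dt\big]>0$, and elliptic bootstrapping upgrades $u$ to a smooth function.

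The one nonroutine ingredient is the sign condition $\pi^\theta(\mathbf 1_{\mathbb S^1}c)=\pi^\theta(c)<0$, and this is where the smallness of $\gamma$ enters: $\pi^\theta(c)=-2\big(\mathbb E[\sin\Theta^*]-\gamma\big)$, and by Corollary \ref{thetaexp}, $\mathbb E[\sin\Theta^*]\to\frac{\sqrt3}{2}$ as $\gamma\to0$, hence $\pi^\theta(c)\to-\sqrt3<0$; I would fix $\gamma$ small enough that $\pi^\theta(c)\le-\sqrt3/2$. To see how this forces $\lambda_1>0$ for small $\beta_1$: by Theorem \ref{liu}, $\lambda_1(\mathcal L^\theta+\beta_1 c,\mathbb S^1)=-\lim_{t\to\infty}\frac1t\ln\sup_\theta\mathbb E_\theta[\exp(\beta_1\!\int_0^t c\,\mathrm ds)]$, and splitting $\exp(\beta_1\!\int_0^t c)=e^{\beta_1 t\,\pi^\theta(c)}\exp(\beta_1\!\int_0^t F)$, Theorem \ref{MET} together with the Kato perturbation bound recalled in Section 2 gives $\lambda_1=-\beta_1\pi^\theta(c)+o(\beta_1)$ as $\beta_1\to0$; since $\pi^\theta(c)<0$ this is positive for all sufficiently small $\beta_1$, and such a $\beta_1$ is the one asserted in the statement.

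The hard part will be this last paragraph. The delicate points are: (i) verifying that $F=c-\pi^\theta(c)$ has non-degenerate asymptotic variance, which uses the non-degeneracy (or, more abstractly, hypoellipticity, cf.\ Theorem \ref{hormander}) of the $\theta$-dynamics together with $c$ being non-constant; and (ii) making the expansion $\lambda_1=-\beta_1\pi^\theta(c)+o(\beta_1)$ rigorous via \cite{kato2013perturbation} with enough uniformity that $\gamma$ can be pinned down first, to fix the sign of $\pi^\theta(c)$, and $\beta_1$ chosen only afterwards. A secondary, routine point is that Theorems \ref{freidlinPDE}, \ref{nirenberg}, \ref{liu} and \ref{thm28}, stated for bounded domains with smooth boundary, transfer to the closed manifold $\mathbb S^1$ (indeed more easily, there being no exit time to control); this should be noted but poses no real difficulty.
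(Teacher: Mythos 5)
Your proposal is correct and follows essentially the same route as the paper: factor out $I$ (using $g=\sqrt I$ for $I\ge 2$) to reduce to the $I$-independent equation $\mathcal L^{1}u=-1$ on $\mathbb S^{1}$, then apply Theorem \ref{thm28} with $D=\mathbb S^{1}$, geometric ergodicity from Theorem \ref{ergodicitytheta} with the trivial Lyapunov function, and the sign condition $\pi^{\theta}(c)<0$ for small $\gamma$ from Corollary \ref{thetaexp}. The only difference is that you spell out details the paper leaves implicit (the non-degenerate asymptotic variance of $F=c-\pi^{\theta}(c)$ via the Poisson equation, and the transfer of the boundary-value statements to the closed manifold $\mathbb S^{1}$), which is a welcome but not essentially different elaboration.
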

\begin{proof}
  For $I \geq 2$, we have
  
$$
  \mathcal{L}^{I}f = - 4 \beta_{1} I (\sin \theta - \gamma) f + 2 I (1
  + 2 \cos \theta)f' + \frac{\gamma}{2}I f'' = I \mathcal{L}^{1} f \,.
  $$
Hence, it suffices to prove that there exists a strictly positive
solution $u$ of $\mathcal{L}^{1}$ such that
$$
  \mathcal{L}^{1} u = -1 \,.
$$

Define $D = \mathbb{S}^{1}$, and let $\theta_{t}$ be the solution of equation
\eqref{eqntheta} for $I_{2} = 1$. Corollary \ref{thetaexp} implies
that $\mathbb{E}_{\pi}[c] < 0$, where the invariant probability
measure $\pi$ is given by $u^{*}$. Since $\mathbb{S}^{1}$ is bounded, the Lyapunov
condition in Theorem \ref{MET} is automatically satisfied. Therefore, the existence of a positive $u$
is given by Theorem \ref{thm28}.

\end{proof}

Therefore, Theorem \ref{generator} holds for our choice of $g(\theta, I)$.

\section{Recovery of low energy}
\label{lowenergy}
\subsection{Main conclusion based on the result of the reduced system}

The goal of this section is to demonstrate how $I_{2}$ can be
recovered when starting from a very low value. To see this, we
need to extensively work on the ``broken system'' at which $I_{2} = 0$. Noting that $g = 1$ when $I_2 \leq 1$, we have

\begin{align}
  \label{broken}
    \mathrm{d}I_{1}& = \gamma(T_{1} - I_{1}^{3}) \mathrm{d}t + \sqrt{\gamma
  T_{1} I_{1}/2} \mathrm{d}B^{(1)}_{t} \\\nonumber
  \mathrm{d}I_{3} &= 
  \gamma(T_{3} - I_{3}^{3}) \mathrm{d}t + \sqrt{\gamma T_{3}I_{3}/2}
  \mathrm{d}B^{(2)}_{t}\\\nonumber
  \mathrm{d} \theta_{1}& = -[ I_{1} (1 +
  2 \cos \theta_{1}) + 2 I_{3} \cos \theta_{3}] \mathrm{d}t +
  \sqrt{\gamma } \mathrm{d}B^{(3)}_{t}\\\nonumber
  \mathrm{d} \theta
  _{3} &= -[ I_{3} (1 + 2 \cos
  \theta_{3}) + 2 I_{1}\cos \theta_{1}] \mathrm{d}t + \sqrt{\gamma } \mathrm{d}B^{(4)}_{t}\nonumber \,.\end{align}
System \eqref{broken} is defined on $\mathbb{R}^{2} \times
\mathbb{T}^{2} := \Omega^{b}$. We denote its infinitesimal generator by $\mathcal{L}^{b}$. 

The goal of the subsection is to show that
$$
W( \mathbf{x}^b) := I_{2}^{-\alpha} U(I_{1}, I_{3}, \theta_{1}, \theta_{3})
$$

is a Lyapunov 
function for all sufficiently small $I_{2}$, where the pre-factor
function $U$ satisfies a Feynman-Kac type inequality that will be described later. 

Similar to the high-energy case, to make $I_{2}$ increase over time, the value of
$$
  h( \mathbf{x}^{b}) :=  I_{1} \sin \theta_{1} + I_{3} \sin \theta_{3}
$$
must be negative. For small $I_{2}$, the dynamics of $(I_{1}, I_{3},
\theta_{1}, \theta_{3})$ are approximately described by \eqref{broken}. Looking at
system \eqref{broken}, we find that if $T_{3} \gg T_{1}$, we have
$I_{3} \gg I_{1}$ with high probability. Therefore, the phase $\theta_3$ stays in the vicinity of the equilibrium of the $\theta_3$-dynamics, which is $4\pi/3$, with high probability. Notice that the observable $h(\mathbf{x}^b)$ is negative near $4\pi/3$. This fits the setting of the toy example \eqref{toymodel}. One can construct a Lyapunov function by solving a Feynman-Kac equation. As introduced in Section 3.1, we set up the following assumption and prove it later for the case of $T_1 \ll T_3$.

{\bf Assumption (L)}
Let 
$$
h( \mathbf{x}^b) = I_1 \sin \theta_1 + I_3 \sin \theta_3 \,.
$$
There exists a sufficiently large constant $M <
\infty$, a sufficiently small $\alpha > 0$, and a constant $\epsilon > 0$, such that the Feynman-Kac equation with
a Cauchy-Dirichlet boundary condition
\begin{align}
  \label{assumptionL}
  &\mathcal{L}^{b}Q + \alpha h( \mathbf{x}^{b}) Q = p(\mathbf{x}^{b}) \quad
    I_{1} + I_{3} \leq M\\\nonumber
  &Q( \mathbf{x}^{b}) = q(\mathbf{x}^{b}) \quad I_{1} + I_{3} = M \nonumber
\end{align}
admits a positive solution $Q$ for every $p(\mathbf{x}^{b}) < -\epsilon$,
$q(\mathbf{x}^{b}) > 0$.


\medskip

\begin{thm}
  \label{thm51}
  Assume {\bf (L)} holds. Then, there exists a pre-factor function
  $U(I_{1}, I_{3}, \theta_{1}, \theta_{3})$ such that 
$$
  W( \mathbf{x}) = I_{2}^{-\alpha} U
  $$
  is a Lyapunov function of the infinitesimal generator $\mathcal{L}$ of equation \eqref{bigsystem}. More precisely, there exist constants $\alpha, \delta, \zeta > 0$ such that 
  $$
        \mathcal{L}W \leq - \delta W
  $$
  for all $\mathbf{x} \in
  \Omega = \mathbb{R}^3_+ \times \mathbb{T}^2$ satisfying $0 < I_2 < \zeta$.
\end{thm}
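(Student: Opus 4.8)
\medskip
\noindent\textbf{Proof strategy.}
The plan is to build the pre-factor $U$ out of the Feynman--Kac solution supplied by Assumption \textbf{(L)} on the region $\{I_{1}+I_{3}\le M\}$, glue it smoothly to a slowly growing auxiliary function on $\{I_{1}+I_{3}> M\}$, and then verify the infinitesimal drift inequality $\mathcal{L}W\le-\delta W$ on these two regions separately. Two mechanisms drive the argument: on the inner region the ``magic'' identity from \textbf{(L)} makes the relevant expression pointwise negative regardless of the sign of $h=I_{1}\sin\theta_{1}+I_{3}\sin\theta_{3}$, while on the outer region the cubic dissipation $\gamma(T_{j}-I_{j}^{3})$ overwhelms everything once $I_{1}+I_{3}$ is large.

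First I would compute $\mathcal{L}W$. Since $I_{2}$ carries no Brownian term and $U$ is independent of $I_{2}$, there are no mixed second-order contributions, and It\^o's formula together with $\dot I_{2}=-2I_{2}h$ gives
\[
  \mathcal{L}W \;=\; I_{2}^{-\alpha}\bigl(\widetilde{\mathcal{L}}\,U \;+\; 2\alpha\, h\, U\bigr),
\]
where $\widetilde{\mathcal{L}}$ denotes the generator of the $(I_{1},I_{3},\theta_{1},\theta_{3})$-dynamics of \eqref{bigsystem} with $I_{2}$ held as a parameter. (The coupling constant $2\alpha$ rather than $\alpha$ is harmless: \textbf{(L)} holds for every sufficiently small coupling constant, so it applies with $2\alpha$ provided $\alpha$ itself is small enough.) Because $g\equiv1$ for $I_{2}\le1$, the diffusion matrices of \eqref{bigsystem} and of the broken system \eqref{broken} coincide on $\{I_{2}\le1\}$; hence $\widetilde{\mathcal{L}}=\mathcal{L}^{b}+R$, where $R$ is a first-order operator whose coefficients are the $O(I_{2})$ drift corrections $2I_{j}I_{2}(\sin\theta_{j}-\gamma)\,\partial_{I_{j}}$ and $I_{2}(1+2\cos\theta_{j})\,\partial_{\theta_{j}}$. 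Thus $\widetilde{\mathcal{L}}U+2\alpha hU=\mathcal{L}^{b}U+2\alpha hU+R\,U$, and $|R\,U|\le C\,I_{2}$ on any region where $\|U\|_{C^{1}}$ and $I_{1}+I_{3}$ stay bounded.

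Next I would construct $U$. By \textbf{(L)}, with $q\equiv1$ and $p$ a constant $<-\epsilon$, there is a strictly positive solution $Q$ of $\mathcal{L}^{b}Q+2\alpha hQ=p$ on $D=\{I_{1}+I_{3}<M\}$ with $Q=1$ on $\partial D$; elliptic regularity (coefficients $C^{3}$, data and boundary smooth) makes $Q$ continuous on the compact set $\overline D\times\mathbb{T}^{2}$, so $0<m_{Q}\le Q\le M_{Q}<\infty$ there. Let $\chi$ be a smooth cutoff with $\chi=0$ on $\{I_{1}+I_{3}\le M-L\}$ and $\chi=1$ on $\{I_{1}+I_{3}\ge M\}$, and set $U=(1-\chi)Q+\chi\Psi$ with $\Psi=c_{0}(1+I_{1}+I_{3})^{\beta}$, where $\beta>0$, the transition width $L<M$, and the (deliberately small) coefficient $c_{0}>0$ are fixed at the end; since $Q,\Psi>0$ we get $U>0$ and $U\ge\min\{m_{Q},\Psi\}>0$ throughout. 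On the inner region $\{I_{1}+I_{3}\le M-L\}\cap\{0<I_{2}<\zeta\}$ one has $U=Q$, so $\widetilde{\mathcal{L}}U+2\alpha hU=p+R\,Q\le-\epsilon+C_{0}I_{2}\le-\epsilon/2\le-(\epsilon/2M_{Q})\,Q=-\delta_{0}U$ as soon as $\zeta\le\min\{1,\epsilon/(2C_{0})\}$, hence $\mathcal{L}W\le-\delta_{0}W$ there. On the outer region $\{I_{1}+I_{3}\ge M\}$, $U=\Psi$, and the terms $\beta(1+I_{1}+I_{3})^{\beta-1}\gamma(T_{j}-I_{j}^{3})$ together with $I_{1}^{3}+I_{3}^{3}\ge\tfrac14(I_{1}+I_{3})^{3}$ give $\mathcal{L}^{b}\Psi\le-c\gamma(1+I_{1}+I_{3})^{\beta+2}$; since $|R\,\Psi|+|2\alpha h\Psi|\lesssim\zeta(1+I_{1}+I_{3})^{\beta}+\alpha(1+I_{1}+I_{3})^{\beta+1}$ (only the $\partial_{I_{j}}$ part of $R$ acts, as $\Psi$ has no phase dependence), taking $M$ large in terms of $\alpha,\gamma,\beta$ and $\zeta$ small yields $\widetilde{\mathcal{L}}\Psi+2\alpha h\Psi\le-\delta_{1}\Psi$, hence $\mathcal{L}W\le-\delta_{1}W$. (Alternatively, for $I_{1}+I_{3}$ large one could simply invoke Lemma \ref{lowI2} and the high-energy Lyapunov function $V$.)

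I expect the main obstacle to be the transition layer $\{M-L\le I_{1}+I_{3}\le M\}$. There $U=(1-\chi)Q+\chi\Psi$, differentiating the cutoff produces error terms of size $\lesssim L^{-1}(\gamma M^{3}+\zeta M)\max\{M_{Q},c_{0}M^{\beta}\}$, and these must be dominated by the gain $\chi(\widetilde{\mathcal{L}}\Psi+2\alpha h\Psi)+(1-\chi)(p+R\,Q)$. The gain from the $Q$-piece is only $O(\epsilon)$, a fixed constant, so the negativity has to come from the $\Psi$-piece, whose size on this layer is $\sim\gamma c_{0}M^{\beta+2}$ --- smaller by a factor $M$ than the worst cutoff error when $c_{0}M^{\beta}$ is comparable to $M_{Q}$. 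The resolution, and the delicate part of the proof, is to choose the growth coefficient $c_{0}$ small and the transition width $L$ comparatively large (for instance in the window $4\gamma c_{0}M^{\beta+3}/\epsilon<L<M$, which in turn forces $M$ to be taken large after $\gamma$ has been fixed), so that the cutoff error is absorbed into $\epsilon/4$ throughout the layer; combining the three regions with $\delta=\min\{\delta_{0},\delta_{1},\epsilon/4\}$ then completes the proof. A secondary technical point is the up-to-the-boundary $C^{1}$ (indeed $C^{2}$) control of $Q$ needed for the bound $|R\,Q|\le C_{0}I_{2}$, whose constant degrades as $\gamma\to0$ and so must be fixed before $\zeta$.
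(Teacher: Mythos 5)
Your overall architecture coincides with the paper's: write $\mathcal{L}W = I_{2}^{-\alpha}\bigl[(\mathcal{L}^{b}U + 2\alpha h U) + I_{2}A(U)\bigr]$, use Assumption {\bf (L)} to make the bracket $\le -\epsilon$ on the compact region $\{I_{1}+I_{3}\le M\}$ and absorb the $O(I_{2})$ remainder there by taking $I_{2}<\zeta$, and rely on the cubic dissipation $\gamma(T_{j}-I_{j}^{3})$ when $I_{1}+I_{3}$ is large (the paper uses $U=I_{1}+I_{3}$ outside; your $\Psi=c_{0}(1+I_{1}+I_{3})^{\beta}$ plays the same role), with the $2\alpha$ versus $\alpha$ coupling handled the same way. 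The genuine divergence is at the interface, and that is where your proposal has a gap. The paper never introduces a cutoff: it builds the matching into the data of {\bf (L)} by taking the boundary value $q\equiv M$ (the value of $I_{1}+I_{3}$ on $\{I_{1}+I_{3}=M\}$) and $p=\mathcal{L}^{b}(I_{1}+I_{3})$ in the collar $M-1\le I_{1}+I_{3}\le M$, precisely so that the glued pre-factor ($U=Q$ inside, $U=I_{1}+I_{3}$ outside) is smooth across $\{I_{1}+I_{3}=M\}$; then only the two regional estimates are needed and no commutator terms ever appear.

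Your cutoff route produces the layer error you yourself estimate, of size $\lesssim L^{-1}(\gamma M^{3}+\zeta M)\max\{M_{Q},c_{0}M^{\beta}\}$, and the proposed absorption fails quantitatively. Since $L<M$, the part of the error carrying $M_{Q}=\sup Q$ is at least of order $\gamma M^{2}M_{Q}$; it is untouched by shrinking $c_{0}$, and with your boundary datum $q\equiv1$ one has $M_{Q}\ge 1$, so this error is at least of order $\gamma M^{2}$. The negativity available on the layer is only $(1-\chi)\,\epsilon+\chi\,c\,\gamma c_{0}M^{\beta+2}\le \epsilon+\epsilon/4$ under your own constraint $c_{0}\lesssim \epsilon/(\gamma M^{\beta+2})$ (needed for your window $4\gamma c_{0}M^{\beta+3}/\epsilon<L<M$ to be nonempty), and taking $M$ larger --- which you must, both because {\bf (L)} is only available for large $M$ and for your outer estimate --- makes $\gamma M^{2}$ larger, not smaller. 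So the drift inequality fails in the transition layer as written; the parenthetical fallback of ``invoking Lemma \ref{lowI2} and $V$'' does not help, since the theorem requires $\mathcal{L}W\le-\delta W$ for $W$ itself on all of $\{0<I_{2}<\zeta\}$. A repair within your scheme would need structural input rather than parameter tuning, e.g. choosing $\Psi\ge Q$ on the layer so that the dominant cross term $(\Psi-Q)\chi'\,\gamma(T_{1}+T_{3}-I_{1}^{3}-I_{3}^{3})$ has a favorable sign, or --- as the paper does --- choosing $(p,q)$ in {\bf (L)} so that $Q$ itself matches $I_{1}+I_{3}$ smoothly at $\{I_{1}+I_{3}=M\}$ and no cutoff is needed. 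The inner-region and outer-region estimates in your proposal are otherwise in line with the paper's proof.
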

\begin{proof}
 For the sake of simplicity, for each $\mathbf{x} \in \Omega$, we
 denote its $(I_{1}, I_{3}, \theta_{1}, \theta_{3})$-component by $\mathbf{x}^{b}$. Apply the infinitesimal generator to $W$, we obtain
\begin{align*}
  \mathcal{L} W &= - \alpha I_{2}^{-\alpha - 1}[-2h( \mathbf{x}^{b})
                  I_{2}]U\\
  +& \left [ \gamma( T_{1} - I_{1}^{3}) \frac{\partial U}{\partial
     I_{1}} + \frac{1}{4}\gamma T_{1} I_{1} \frac{\partial^{2} U}{\partial
     I_{1}^{2}} \right]I_{2}^{-\alpha } + \left [ \gamma( T_{3} - I_{3}^{3}) \frac{\partial U}{\partial
     I_{3}} + \frac{1}{4}\gamma T_{3} I_{3} \frac{\partial^{2} U}{\partial
     I_{3}^{2}} \right]I_{2}^{-\alpha }\\
  +& \left \{ [- I_{1}(1 + 2 \cos \theta_{1}) - 2 I_{3} \cos
  \theta_{3}]\frac{\partial U}{\partial \theta_{1}} +
  \frac{1}{2}\gamma \frac{\partial^{2} U}{\partial
     \theta_{1}^{2}}\right \} I_{2}^{-\alpha}\\
   +& \left \{ [- I_{3}(1 + 2 \cos \theta_{3}) - 2 I_{1} \cos
  \theta_{1}]\frac{\partial U}{\partial \theta_{3}} +
  \frac{1}{2}\gamma \frac{\partial^{2} U}{\partial
      \theta_{3}^{2}}\right \} I_{2}^{-\alpha}\\
  +&I_{2}\left \{ 2 I_{1}(\sin \theta_{1} - \gamma)
     \frac{\partial U}{\partial I_{1}} + 2 I_{3}(\sin \theta_{3} - \gamma)
     \frac{\partial U}{\partial I_{3}} + (1 + 2\cos
     \theta_{1})\frac{\partial U}{\partial \theta_{1}}  +  (1 + 2
     \cos \theta_{3}) \frac{\partial U}{\partial \theta_{3}}\right \}
     I_{2}^{-\alpha}\\
  &:= I_{2}^{-\alpha}[ ( \mathcal{L}^{b}U + 2\alpha h U) + I_{2} A(U)] \,,
\end{align*}
where $A(U)$ denotes the expression in the parentheses in the last term
of $\mathcal{L}W$.

We combine $Q$, the solution to the Feynman-Kac equation that appears in assumption 
{\bf $(L)$}, into the pre-factor $U$ by defining $U = I_{1} + I_{3}$
for $I_{1} + I_{3} \geq M$ and $U = Q$ for suitable functions $p(
\mathbf{x})$ and $q( \mathbf{x})$.

{\bf Case I: Large $I_{1} + I_{3}$.} Assume $I_{1} + I_{3} > M$. Then we have $U( \mathbf{x}) = I_{1} + I_{3}$. We first calculate $\mathcal{L} (I_{2}^{-\alpha}(I_{1} + I_{3}))$:

\begin{align*}
  \mathcal{L}( I_{2}^{-\alpha}(I_{1} + I_{3}) )&= 2I_{2}^{-\alpha} \alpha h
                                    (I_{1} + I_{3}) \\
  +&I_{2}^{-\alpha} \left [ \gamma(T_{1} - I_{1}^{3}) + 
     \gamma (T_{3} - I_{3}^{3}) \right] \\
  + & I_{2}^{-\alpha} \left [ 2 I_{1}I_{2}(\sin \theta_{1} - \gamma)
       + 2 I_{3}I_{2} (\sin \theta_{3} - \gamma) \right ]
\end{align*}

Notice that
\begin{align*}
  (I_{1} + I_{3})^{3} &= I^{3}_{1} +I_{3}^{3} + 3 I_{1}I_{3} (I_{1} +
                        I_{3}) \leq  I_{1}^{3} + I_{3}^{3} + 3
                        (I_{1}+I_{3})(I_{1}^{2} - I_{1}I_{3} +
                        I_{3}^{2})\\
  &= 4(I_{1}^3 + I_{3}^3) \,.
\end{align*}
In addition $h \leq I_{1} + I_{3}$, since $\sin \theta_{1}, \sin \theta_{3} \leq 1 $. Let $x
= I_{1} + I_{3}$, we have
$$
  \mathcal{L}( I_{2}^{-\alpha}U(\mathbf{x}^{b}) ) \leq
  I_{2}^{-\alpha} \left [ \gamma (T_{1} + T_{3}) +  2\alpha x - \gamma\frac{1}{4} x^{3}
     + 4 I_{2}(1 - \gamma) x \right ]\,.
  $$

Therefore, if $x := I_{1} + I_{3}$ is sufficiently large, we have
$\mathcal{L}( I_{2}^{-\alpha}U )  \leq - \frac{1}{5} \gamma
x^{3} I_{2}^{-\alpha}$
because $- \frac{1}{4} \gamma x^{3} $ is the
dominant term for all large $x$. Without loss of generality, assume
that the constant $M$ in assumption {\bf (L)} is sufficiently large so
that $\mathcal{L}( I_{2}^{-\alpha}U )  \leq - \frac{1}{5} \gamma
x^{3}I_{2}^{-\alpha}$ for all $x \geq M$. This gives
$$
  \mathcal{L}W \leq - W \,.
$$
easily.

\medskip

{\bf Case II: Bounded $I_{1} + I_{3}$.}

When $I_{1} + I_{3} \leq M$, we construct the boundary value
$$
  q( \mathbf{x}^{b}) = M
$$
for $\mathbf{x}^{b}$ on the plane $I_{1} + I_{3} = M$ and function
$p(\mathbf{x}^{b})$ that satisfies
$$
  p( \mathbf{x}^{b}) = \left \{ 
\begin{array}[tb]{lll}
 \mathcal{L}^{b}(I_{1} + I_{3})& \mbox{if } & M-1 \leq I_{1} + I_{3}
                                              \leq M \\
  \mbox{negative smooth function} &\mbox{if} &M-2 \leq I_{1} + I_{3}
                                    <M-1\\
  -1 &\mbox{if} & I_{1} + I_{3} < M-2  
\end{array}
\right . \,.
$$

Since
$$
  \mathcal{L}^{b}( I_{1} + I_{3}) = 2 I_{1}I_{2}(\sin \theta_{1} -
  \gamma) + 2 I_{3} I_{2} (\sin \theta_{3} - \gamma) + \gamma (T_{1} +
  T_{3} - I_{1}^{3} - I_{3}^{3} ) < 0
$$
for large $I_{1} + I_{3}$ and bounded $I_{2}$, we can easily
extend $p( \mathbf{x}^{b})$ from the boundary of $\Omega^{M} := \{
I_{1} + I_{3} \leq M \}$ to the interior of $\Omega^{M}$ such that $p(
\mathbf{x}^{b}) < \epsilon$ for all $\mathbf{x}^{b} \in \Omega^{M}$. 

By assuming {\bf (L)}, the Cauchy-Dirichlet problem
\eqref{assumptionL} admits a positive solution $Q( \mathbf{x})$. We
let $U = Q$ if $I_{1} + I_{3} \leq M$. Notice that the construction of
$p( \mathbf{x}^{b})$ makes $Q$ a smooth function.

Then we have
$$
  \mathcal{L}W = I_{2}^{-\alpha} [ (\mathcal{L}^{b} Q + 2\alpha h Q) +
                 I_{2} A(Q) ]  \leq I_{2}^{-\alpha}\left[ - \epsilon  + I_{2} A(Q) 
     \right] \,.
$$
We can choose sufficiently small $I_{2}$ such that
$$
  I_{2}A(Q) < \frac{1}{2} \epsilon 
$$
uniformly for all $I_{1} + I_{3} \leq M$. This is possible because $\{
\mathbf{x}^{b} \,|\, I_{1} + I_{3} \leq M \}$ is a compact set. This gives
$$
  \mathcal{L}W \leq - \frac{1}{2} \epsilon I_{2}^{-\alpha}
$$
for all $I_2 < \zeta$ for a constant $\zeta > 0$. Since $Q$ is a
bounded function in $\Omega^{M}$, there exists a constant $\delta$
such that
$$
  \mathcal{L}W \leq - \delta W \,.
  $$
  This completes the proof. 
\end{proof}

It follows from the proof of Theorem \ref{thm51} that $\zeta \ll 1$. Thus, it remains to estimate the bound of $\mathcal{L}W$ for $I_2 > \zeta$. To achieve this, we need to extend $W(\mathbf{x})$ such that
$$
    W( \mathbf{x}) = \psi(I_2) U
$$
for a monotonically decreasing $C^2$ function $\psi$ that satisfies 
\begin{enumerate}
    \item $\psi(I_2) = I_2^{-\alpha}$ on $[0, \zeta]$, 
    \item $\psi(I_2) = 0$ for $I_2 > 1$,
    \item there exists a constant $M_\zeta$ for which $|\psi'(I_2)| < M_\zeta$ and $|\psi''(I_2)| < M_\zeta$. 
\end{enumerate}
After the extension, we have the following lemma.

\begin{lem}
\label{Wupbd}
    For any $\mathbf{x}$ with the $I_2$ component satisfying $\zeta \leq I_2 \leq 1$, there exists a constant $C_\zeta < \infty$ such that
    $$
        \mathcal{L}W \leq C_\zeta  + C_{\zeta}(I_{1} + I_{3})^{2}\,.
    $$
\end{lem}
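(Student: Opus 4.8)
The plan is to apply the generator of \eqref{bigsystem} directly to the extended function $W(\mathbf{x}) = \psi(I_2)\,U$ and to exploit the product structure together with two features of the system. First, the $I_2$-equation in \eqref{bigsystem} carries no Brownian term, so the diffusion matrix has a vanishing $I_2$-row and column; since moreover $\psi$ depends only on $I_2$ while $U$ is a function of $(I_1,I_3,\theta_1,\theta_3)$ alone, the mixed second-order term $\sum_{i,j}a_{ij}\,\partial_i\psi\,\partial_j U$ vanishes identically and $\mathcal{L}\psi(I_2)=(-2I_2\,h(\mathbf{x}^b))\,\psi'(I_2)$. Second, recycling the decomposition $\mathcal{L}U=\mathcal{L}^bU+I_2A(U)$ from the proof of Theorem \ref{thm51} (valid on $\{I_2\le 1\}$ because $g\equiv 1$ there), one gets
\[
  \mathcal{L}W \;=\; -\,2 I_2\, h(\mathbf{x}^b)\,\psi'(I_2)\,U \;+\; \psi(I_2)\bigl(\mathcal{L}^b U + I_2\, A(U)\bigr),
\]
so the whole task reduces to bounding these three contributions on the slab $\{\zeta\le I_2\le 1\}$, on which $0\le\psi(I_2)\le\zeta^{-\alpha}$ (as $\psi$ is nonnegative and decreasing), $|\psi'(I_2)|\le M_\zeta$, $I_2\le 1$, and $|h(\mathbf{x}^b)|\le I_1+I_3$.

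I would then split according to the two defining regimes of $U$. On $\{I_1+I_3\le M\}\cap\{\zeta\le I_2\le 1\}$ the variables range over a compact subset of $\Omega$ (angles on $\mathbb{T}^2$, $I_1,I_3$ bounded, $I_2\in[\zeta,1]$), and $U=Q$ is a bounded $C^2$ function with bounded first and second derivatives on $\Omega^M$ by Assumption \textbf{(L)} (together with elliptic regularity for \eqref{assumptionL}, the coefficients of $\mathcal{L}^b$ and the data $p,q$ being smooth). Hence every coefficient appearing in $\mathcal{L}W$ and every derivative of $U$ is bounded by a $\zeta$-dependent constant, so $\mathcal{L}W\le C_\zeta$ there, which in particular implies the stated bound since $(I_1+I_3)^2\ge 0$.

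On $\{I_1+I_3>M\}$ one has $U=I_1+I_3$, whence $\partial_{I_1}U=\partial_{I_3}U=1$ and all remaining first and second derivatives of $U$ vanish. Then $\mathcal{L}^bU=\gamma(T_1+T_3)-\gamma(I_1^3+I_3^3)\le\gamma(T_1+T_3)$ and $A(U)=2I_1(\sin\theta_1-\gamma)+2I_3(\sin\theta_3-\gamma)$, so $|A(U)|\le 2(1+\gamma)(I_1+I_3)$; combining with $|U|=I_1+I_3$ and $|h|\le I_1+I_3$ gives
\[
  \mathcal{L}W \;\le\; 2\,M_\zeta\,(I_1+I_3)^2 \;+\; \zeta^{-\alpha}\gamma(T_1+T_3) \;+\; 2(1+\gamma)\,\zeta^{-\alpha}\,(I_1+I_3),
\]
and absorbing the linear term by Young's inequality produces $\mathcal{L}W\le C_\zeta+C_\zeta(I_1+I_3)^2$. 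Combined with the previous case, this is the claim.

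This lemma is essentially bookkeeping, so I do not expect a serious obstacle; the two points worth flagging are the following. The quadratic term $C_\zeta(I_1+I_3)^2$ on the right-hand side is genuinely necessary and comes precisely from the contribution $-2I_2h\psi'U$, where both $h$ and $U$ may be of size $\sim I_1+I_3$ while $|\psi'|$ is an $O_\zeta(1)$ (but not $O(1)$) constant; by contrast the cubic dissipation $-\gamma(I_1^3+I_3^3)$ sitting in $\mathcal{L}^bU$ more than absorbs every other large term, so no higher power is needed. Second, one should address the regularity of $W$ across $\{I_1+I_3=M\}$; as in the proof of Theorem \ref{thm51}, the choice of $p$ is made so that $W$ is $C^2$ there, or, failing that, the estimate is read off in the interior of each region and patched by a standard mollification, which does not affect the final bound.
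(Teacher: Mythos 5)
Your proposal is correct and follows essentially the same route as the paper: the same decomposition $\mathcal{L}W=\bigl(\text{term involving }\psi'\bigr)+\psi(I_2)\bigl[\mathcal{L}^bU+I_2A(U)\bigr]$, the same split into $\{I_1+I_3\le M\}$ (compactness, $Q$ bounded) and $\{I_1+I_3>M\}$ (where $U=I_1+I_3$), with the quadratic term arising exactly from $\psi'\,h\,U$. The only cosmetic difference is that the paper uses the cubic dissipation $-\gamma(I_1^3+I_3^3)$ to make $\mathcal{L}^bU+I_2A(U)\le 0$ for large $I_1+I_3$, whereas you simply bound it by a constant plus a linear term and absorb the latter, which is an equivalent bookkeeping step.
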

\begin{proof}
    Following the same calculations as in the proof of Theorem \ref{thm51} and using the same notations, we have
    $$
    \mathcal{L}W = 2\psi'(I_2)hU + \psi(I_2)[\mathcal{L}^b(U) + I_2 A(U)] \,. 
    $$
    When $I_1 + I_3 < M$, It is clear that $\mathcal{L}W$ is bounded. So we only need to estimate the case of large $I_1 + I_3$. 

    When $x := I_1 + I_3$ is large, we have $U = I_{1} + I_{3}$. The
    same calculation as in the proof of Theorem \ref{thm51} gives
    
$$
  \mathcal{L}^b U + I_2 A(U) \leq  O(x) - \frac{1}{4}\gamma x^3\leq 0
$$
    for all sufficiently large $x$. In addition, $\psi'$ is bounded, and $h(\mathbf{x}^b) < I_1 + I_3 = x$. Thus, we have
    $$
        \mathcal{L}W \leq  2 M_\gamma x^2
    $$
    for all large $x$. Therefore, combining both cases, we can find some finite constant $C_\zeta$ that makes
        $$
        \mathcal{L}W \leq C_\zeta (I_1 + I_3)^{2} + C_\zeta \,.
    $$
    This completes the proof.
\end{proof}

\medskip

Unlike the original system \eqref{bigsystem}, the reduced
system \eqref{broken}, which excludes $I_{2}$, has relatively straightforward
dynamics. In the next few subsections, we will present a detailed study of system \eqref{broken}. This study and its results not only prove the assumption {\bf (L)} for the case of $T_1 \ll T_3$, but also shed light on the mechanism of recovery
from low $I_{2}$-energy. More precisely, the following
justifications will be provided rigorously. 

\begin{itemize}
\item[(a)] In the reduced system \eqref{broken}, there exist constants
 $t_{0}$, $M_{1}$ and $M_{2}$ independent of the initial value, such that for all initial values $(I_{1}^{0}, I_{3}^{0})$
$$
  \mathbb{E}_{I_{1}^{0}}[ I_{1}(t_{0})] < M_{1}  \quad \left ( \mbox{
      resp. }  \mathbb{E}_{I_{3}^{0}}[ I_{3}(t_{0})] < M_{1}  \right )
  $$
  and
$$
  \mathbb{E}_{I_{1}^{0}}\left[ \int_{0}^{t_{0}} I_{1}(s) \mathrm{d}s \right] <
  M_{2} \quad \left ( \mbox{
      resp. }  \mathbb{E}_{I_{3}^{0}}\left[ \int_{0}^{t_{0}} I_{3}(s) \mathrm{d}s\right] < M_{2}  \right ).
  $$
  \item[(b)] System \eqref{broken} admits a unique invariant probability
    measure $\pi^{b}$. The speed of convergence towards $\pi^{b}$ is
    exponentially fast. 
    \item[(c)] For any $T_{3} \gg T_{1}$, function $h(\mathbf{x}^b)$ has negative expectation and bounded variance with respect to $\pi^{b}$.
\item[(d)] System \eqref{broken} satisfies the multiplicative ergodic
  theorem as described in Theorem \ref{MET} with respect to a Lyapunov
  function $\tilde{W} = I_{1} + I_{3}$. 
\end{itemize}

Result (a) is proved in Lemma \ref{I1I3bounded} in Section 5.2. Result (b) is given by Theorem \ref{xbergodicity}. Theorem \ref{thm31} and Theorem \ref{varh} imply (c). And result (d) is shown in the proof of Theorem \ref{pfL}. These results not only imply the assumption {\bf (L)} (when $T_1 \ll T_3$) but indeed  reveal the mechanism of the recovery of low $I_2$. A more heuristic (but non-rigorous) explanation for low-$I_{2}$ recovery
comes from the following arguments:

Let $\hat{W} = I_{2}^{-\alpha}$ be the Lyapunov function. Note that
when $I_{2} \ll 1$, $\mathbf{x}_{t}$ is well approximated by $\mathbf{x}^{b}_{t}$. Then by Gronwall's inequality, there is
$$
  \mathbf{E}_{\mathbf{x}}[ \hat{W}( \mathbf{x}_{t})] = \hat{W}(
  \mathbf{x}_{0}) \mathbb{E}\left[ \exp\int_{0}^{t} \alpha h( \mathbf{x}_{s})
  \mathrm{d}s\right] \approx \hat{W}(
  \mathbf{x}_{0}) \mathbb{E}\left[ \exp\int_{0}^{t} \alpha h(
  \mathbf{x}^{b}_{s}) 
  \mathrm{d}s\right] \,.
  $$

  Thus, the main task is to prove that

  $$
  \mathbb{E}[ \exp\int_{0}^{T} \alpha h( \mathbf{x}^b_{s})
  \mathrm{d}s] < 1
  $$
  for some sufficiently large $T$, which follows from Theorem \ref{MET} using the same eigenvalue
  perturbation argument from \cite{kato2013perturbation}. We remark that a similar averaging approach has been used in results like \cite{hening2018coexistence}.

Unfortunately, the above argument has two technical gaps that are not
easy to close to the best of our knowledge. First in order to apply the result to the case where
$I_{2} \neq 0$, the
difference between $\mathbf{x}_{t}$ and $\mathbf{x}^{b}_{t}$ must be bounded uniformly. Secondly, this argument only shows that the
expected value of the Lyapunov function $\hat{W}$ decreases after a sufficiently long
time. This causes some subtle technical trouble because Theorem
\ref{generator} only gives a power-law decay of $V$. In fact, Theorem
\ref{generator} does not imply
$$
  \mathbb{E}[ V( \mathbf{x}_{t})] \leq V( \mathbf{x}_{0}) - c V(\mathbf{x}_{0})^{1 - 1/\beta_{0}}
$$
because $x^{1 - 1/\beta_{0}}$ is a concave function. Instead, we obtain
$$
  \mathbb{E}[ \left (V( \mathbf{x}_{t})^{1/\beta_{0}} + c t \right
  )^{\beta_{0}}] \leq V( \mathbf{x}_{0}) \,.
$$
This subtle difference causes the Lyapunov function $V$ for the
infinitesimal generator to be incompatible with the Lyapunov function $\hat{W}$
over a finite time interval. In contrast, the Feynman-Kac-Lyapunov method we use provides a Lyapunov function with respect to the infinitesimal generator, effectively avoiding these two technical difficulties.

\subsection{Results for the reduced system Part I: Boundedness of $I_{1}$ and $I_{3}$}
We will first verify item (a) of the list above, i.e., $I_{1}$ (resp. $I_{3}$) in system
\eqref{broken} can return
to a bounded set within a finite time regardless of the initial value. More
precisely, we have
\begin{lem}
  \label{I1I3bounded}
  There exist a constant $M < \infty$ and a time $t_{0} > 0$ such that
$$
  \mathbb{E}_{\mathbf{x}^{b}_{0}}[ I_{1}(t_{0})] < M \quad \mbox{ and
  } \quad \mathbb{E}_{\mathbf{x}^{b}_{0}}\left [ \int_{0}^{t_{0}}
    I_{1}(s) \mathrm{d}s \right ] < M 
  $$
  uniformly for all initial values. The same argument holds for $I_{3}$.
\end{lem}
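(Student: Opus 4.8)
The plan is to exploit that in the broken system \eqref{broken} the $I_1$–equation
$$\mathrm{d}I_{1} = \gamma(T_{1} - I_{1}^{3})\,\mathrm{d}t + \sqrt{\gamma T_{1}I_{1}/2}\,\mathrm{d}B^{(1)}_{t}$$
is a \emph{closed scalar} SDE, decoupled from $(I_3,\theta_1,\theta_3)$ and from the Brownian motions driving them, so the problem reduces to a one–dimensional estimate. First I would record well-posedness: the $1/2$–Hölder diffusion coefficient vanishes at the origin where the drift equals $\gamma T_1>0$, so a boundary classification (scale-function) argument shows the origin is inaccessible, while the strongly dissipative cubic drift is one-sided Lipschitz and rules out explosion; hence there is a unique strong solution in $[0,\infty)$ with all moments finite for every $t\ge0$ and every initial value. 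A second-moment estimate—$\mathcal{L}^{b}(I_1^2)\le C-\gamma I_1^4$ after applying Itô to $I_1^2$—gives $\sup_n\mathbb{E}[I_1(t\wedge\tau_n)^2]<\infty$ along the exit times $\tau_n=\inf\{t:I_1(t)\ge n\}$, which provides the uniform integrability needed below.

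The heart of the proof is a scalar ODE comparison for the first moment $m(t):=\mathbb{E}_{I_1^0}[I_1(t)]$. Applying Itô to $I_1$, localizing by $\tau_n\uparrow\infty$, taking expectations to kill the martingale term, and letting $n\to\infty$ (legitimate by the uniform integrability above and monotone convergence on the $I_1^3$ term) gives the identity $m(t)=I_1^0+\gamma T_1 t-\gamma\int_0^t\mathbb{E}[I_1(s)^3]\,\mathrm{d}s$. Since $x\mapsto x^3$ is convex on $[0,\infty)$, Jensen yields the differential inequality
$$m'(t)\le \gamma T_1-\gamma\, m(t)^{3},$$
so $m(t)\le y(t)$ by the scalar comparison principle, where $y'=\gamma T_1-\gamma y^3$, $y(0)=I_1^0$. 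The decisive point is that this ODE forgets its initial datum in finite time: with $M_1:=(2T_1)^{1/3}$ one has $y'=-\gamma T_1<0$ at $y=M_1$, so $y$ stays below $M_1$ once it reaches it; and while $y>M_1$ one has $\gamma T_1\le\tfrac12\gamma y^3$, hence $y'\le-\tfrac12\gamma y^3$, which integrates to $y(t)^{-2}\ge y(0)^{-2}+\gamma t\ge\gamma t$. Thus $y(t)\le\max\bigl(M_1,(\gamma t)^{-1/2}\bigr)$ for all $t>0$, uniformly in the initial value.

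Fixing $t_0>0$ and choosing $M$ larger than $(2T_1)^{1/3}$, $(\gamma t_0)^{-1/2}$, and $(2T_1)^{1/3}t_0+2\sqrt{t_0/\gamma}$, the last bound gives $\mathbb{E}_{\mathbf{x}^b_0}[I_1(t_0)]=m(t_0)\le M$ and
$$\mathbb{E}_{\mathbf{x}^b_0}\!\left[\int_0^{t_0}\!I_1(s)\,\mathrm{d}s\right]=\int_0^{t_0}\!m(s)\,\mathrm{d}s\le\int_0^{t_0}\!\max\bigl(M_1,(\gamma s)^{-1/2}\bigr)\mathrm{d}s\le M_1 t_0+2\sqrt{t_0/\gamma}\le M,$$
both uniformly in $\mathbf{x}^b_0$; the case of $I_3$ is identical with $T_1$ replaced by $T_3$. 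I expect the only genuinely delicate step to be the limiting argument justifying the integral identity for $m(t)$ (equivalently, Dynkin's formula with the super-linear integrand $I_1^3$); it is precisely the super-\emph{cubic}, rather than linear, dissipation that both underpins this limit and—through the memory-loss of $y'=\gamma T_1-\gamma y^3$—produces a bound uniform in the initial condition, something no linear restoring force could achieve.
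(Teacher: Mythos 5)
Your proposal is correct and follows essentially the same route as the paper: apply the generator (Dynkin/It\^o) to $I_1$, use Jensen's inequality on the cubic term to get the first-moment differential inequality $m'\le \gamma T_1-\gamma m^3$, and exploit the fact that the cubic dissipation forgets the initial datum in finite time. The only cosmetic differences are that you integrate the inequality $y'\le -\tfrac12\gamma y^3$ directly (the paper cites a generalized Gronwall inequality to get $\phi(t)\le (\phi(0)^{-2}+t)^{-1/2}$) and that you spell out the localization/uniform-integrability step justifying Dynkin's formula, which the paper only sketches.
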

\begin{proof}
Define $\tilde{V}( \mathbf{x}^b ) \ = I_{1}$, and let $\mathcal{L}^{b}$ be the infinitesimal generator
of equation \eqref{broken}. Then by Jensen's inequality, we obtain
$$
  \mathcal{L}^{b} \tilde{V} = \gamma (T_{1} - \tilde{V}^{3}) \,.
$$

Hence by Dynkin's formula and Fubini's theorem (because
$\mathcal{L}^{b} \tilde{V} < \gamma T_{1}$, the expectation of
$V(\mathbf{x}^{b}_{t})$ is bounded for all $t$), 
\begin{align*}
  \mathbb{E}_{\mathbf{x}^{b}_{0}}[\tilde{V} (t)] &= \tilde{V} (0) +
  \mathbb{E}_{\mathbf{x}^{b}_{0}}\left[\int_{0}^{t} (\gamma T_1 - \tilde{V} ^{3}(s))
                                          \mathrm{d}s\right]\\
  &\leq \tilde{V} (0) + \int_{0}^{t} (\gamma T_{1} -
    (\mathbb{E}_{\mathbf{x}^{b}_{0}}[\tilde{V} (s)])^{3}) \mathrm{d}s \,.
\end{align*}
Let $\phi(t) := \mathbb{E}_{\mathbf{x}^{b}_{0}}[\tilde{V}(t)]$. When $\phi(t) >
\sqrt[3]{2 \gamma T_{1}}$, we have
$$
  \phi(t) \leq \phi(0) - \int_{0}^{t} \frac{1}{2} \phi^{3}(s) \mathrm{d}s \,.
  $$
  The generalized Gronwall's inequality in \cite{stachurska1971nonlinear} gives
  
$$
  \phi(t) \leq \phi(0)\left ( 1 + \phi(0)^{2} t \right)^{-1/2} =
  \frac{1}{\sqrt{\phi(0)^{-2} + t }} 
$$
for all $t > 0$ such that $\phi(t) > \sqrt[3]{2 \gamma T_1}$. Otherwise, we obtain the trivial bound $\phi(t) \leq \phi(0) + \gamma T_1 t$. Therefore, setting $t_0$ = 1, we obtain $\phi(t_0) \leq M$ for some constant $M$ independent of $\phi(0) = \tilde{V}( \mathbf{x}^b_0)$. 

The second bound follows from the fact that
$$
  \mathbb{E}_{\mathbf{x}^{b}_{0}}\left[ \int_{0}^{t} I_{1}(s) \mathrm{d}s\right]
  = \int_{0}^{t} \phi(s) \mathrm{d}s \,,
  $$
  which is bounded by 
  
$$
  \int_{0}^{t} \frac{1}{\sqrt{\phi(0)^{-2} + s }} \mathrm{d}s =
  2( \phi(0)^{-2} + t)^{1/2} - 2\phi(0)^{-1} \leq 2( \phi(0)^{-2} + t)^{1/2} 
$$
whenever $\phi(t) >  \sqrt[3]{2 \gamma T_{1}}$. Again, let $t_0 = 1$. It follows that the integral is uniformly bounded by some constant regardless of $\phi(0)$. Therefore, $\mathbb{E}_{\mathbf{x}^{b}_{0}}\left[ \int_{0}^{t_0} I_{1}(s) \mathrm{d}s\right]$ is also bounded by some constant $M$, independent of $\phi(0) = \tilde{V}( \mathbf{x}^b_0)$. 
\end{proof}

\subsection{Results for the reduced system Part II: Stochastic stability}
In this subsection, we will show (b) in the list at the end of Subsection \ref{lowenergy}, namely that system \eqref{broken} admits a
unique invariant probability measure $\pi^{b}$ and that the speed of
convergence to $\pi^{b}$ is exponential. By Theorem \ref{HMSthm25}, one needs
to check the Lyapunov condition and the minorization condition.

Let $\tilde{W}(I_{1}, I_{3}, \theta_{1}, \theta_{3}) = 1 + I_{1} + I_{3}$. Since
$\mathbb{T}^{2}$ is compact, the set $\{ \tilde{W} \leq c \}$ is also compact for any
finite $c$. In addition, the following lemma follows easily. 

\begin{lem}
$$
  \mathcal{L}^{b} \tilde{W} \leq - \frac{1}{2}\gamma \tilde{W} +
  (4\sqrt{2} + 1 + T_{1} + T_{3})\gamma \,.
$$
\end{lem}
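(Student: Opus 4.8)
The plan is to evaluate $\mathcal{L}^{b}$ on $\tilde W$ explicitly and then reduce the claim to a one-variable estimate. Since $\tilde W = 1 + I_1 + I_3$ is affine and involves neither $\theta_1$ nor $\theta_3$, applying the generator $\mathcal{L}^{b}$ of \eqref{broken} kills every second-order term and every $\theta$-drift term; only the $I_1$- and $I_3$-drifts contribute, and since $\partial_{I_1}\tilde W = \partial_{I_3}\tilde W = 1$ one obtains
$$
\mathcal{L}^{b}\tilde W = \gamma\big(T_1 - I_1^{3}\big) + \gamma\big(T_3 - I_3^{3}\big).
$$

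Next I would add $\tfrac12\gamma\tilde W$ to both sides and regroup the cubic dissipation against the linear growth:
$$
\mathcal{L}^{b}\tilde W + \tfrac12\gamma\tilde W = \Big(T_1 + T_3 + \tfrac12\Big)\gamma + \gamma\Big[\Big(\tfrac12 I_1 - I_1^{3}\Big) + \Big(\tfrac12 I_3 - I_3^{3}\Big)\Big].
$$
The state space is $\Omega^{b} = \mathbb{R}^{2}_{+}\times\mathbb{T}^{2}$, so $I_1, I_3 \ge 0$, and it remains only to bound $\varphi(x) := \tfrac12 x - x^{3}$ from above on $[0,\infty)$. A quick check — either compute the critical point $x = 1/\sqrt 6$ with value $\varphi(1/\sqrt6) = \tfrac{1}{3\sqrt6}$, or simply note $\varphi(x) \le \tfrac12$ for $x \le 1$ and $\varphi(x) < 0$ for $x > 1$ — shows $\varphi(x) \le 2\sqrt2$ for all $x \ge 0$ (a very crude bound, with much room to spare). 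Hence the bracketed sum is at most $4\sqrt2$, and
$$
\mathcal{L}^{b}\tilde W \le -\tfrac12\gamma\tilde W + \big(4\sqrt2 + 1 + T_1 + T_3\big)\gamma,
$$
which is the asserted inequality.

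There is essentially no obstacle to this argument; the only structural point worth flagging is that the coercivity comes entirely from the nonlinear damping $-\gamma I_i^{3}$ in \eqref{broken}, while the degenerate noise coefficients $\sqrt{\gamma T_i I_i/2}$ are invisible to $\tilde W$ because it is flat in the energy variables. Combined with the compactness of the sublevel sets $\{\tilde W \le c\}$ recorded above, this drift inequality is precisely the Lyapunov hypothesis that will be fed into Theorem \ref{HMSthm25} in the next subsection to obtain geometric ergodicity of \eqref{broken}.
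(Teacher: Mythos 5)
Your proof is correct, and its core step is the same as the paper's: since $\tilde W = 1 + I_1 + I_3$ is affine in the energies and independent of the angles, the generator reduces to the drift terms, $\mathcal{L}^{b}\tilde W = \gamma(T_1 - I_1^3) + \gamma(T_3 - I_3^3)$, and the Lyapunov inequality then rests on the cubic damping dominating the linear growth of $\tilde W$. The difference is only in how this elementary inequality is closed. The paper writes $I_1^3 + I_3^3 = (\tilde W - 1)(I_1^2 - I_1 I_3 + I_3^2)$ and splits into two cases, $I_1^2 + I_3^2 - I_1 I_3 \geq 4$ (where the dissipation already beats $\tfrac12\gamma\tilde W$) and $< 4$ (where $I_1, I_3 \leq 2\sqrt2$, hence $\tilde W \leq 4\sqrt2 + 1$ and the constant absorbs everything — this is the origin of the specific constant $4\sqrt2 + 1$ in the statement). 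You instead avoid any case split by maximizing $\varphi(x) = \tfrac12 x - x^3$ on $[0,\infty)$ componentwise, which gives a much smaller constant that you then deliberately relax to $4\sqrt2$ to match the stated bound; this is slightly cleaner and in fact sharper, while the paper's decomposition merely explains where its constant comes from. Your closing observation that the degenerate noise is invisible to $\tilde W$ and that this drift condition is exactly what is fed into Theorem \ref{HMSthm25} is consistent with how the paper uses the lemma.
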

\begin{proof}
  Applying It\^{o}'s formula, we have
 
$$
  \mathcal{L}^{b}\tilde{W} =  \gamma (T_{1} - I_{1}^{3}) + \gamma (T_{3} -
  I_{3}^{3}) = - \gamma (\tilde{W} -1) (I_{1}^{2} + I_{3}^{2} - I_{1}I_{3})  +
  \gamma (T_{1} + T_{3})\,.
  $$
If $I_{1}^{2} + I_{3}^{2} - I_{1}I_{3} \geq 4$ we have
$\mathcal{L}^{b} \tilde{W} \leq - \frac{1}{2}\gamma \tilde{W} + \gamma (T_{1} +
T_{3})$. Because $I_{1}^{2} + I_{3}^{2} - I_{1}I_{3} \geq 4$ means
$I_{1} + I_{3} \geq 2$, or $\tilde{W} - 1 \geq \tilde{W}/2$. 

Otherwise, we note that
$$
  I_{1}^{2} + I_{3}^{2} - I_{1}I_{3} = \frac{1}{2}I_{1}^{2} +
  \frac{1}{2}I_{3}^{2} + \frac{1}{2}(I_{1} - I_{3})^{2} \,,
  $$
  we must have $I_{1} \leq 2\sqrt{2}$ and $I_{3} \leq 2\sqrt{2}$. Hence,
  $\tilde{W}  \leq 4 \sqrt{2} + 1 $. Combining above two cases, we have
  
$$
  \mathcal{L}^{b} \tilde{W} \leq - \frac{1}{2}\gamma \tilde{W} + (4\sqrt{2} + 1 + T_{1} + T_{3})\gamma \,.
$$
  This completes the proof. 
\end{proof}

It remains to prove the minorization condition, i.e., Assumption {\bf
  (A1')} in the probability preliminary. 

\begin{lem}
System \eqref{broken} satisfies Assumption {\bf (A1')} for any compact
set $C$.
\end{lem}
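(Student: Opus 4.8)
The plan is to verify the two parts of assumption \textbf{(A1')} separately: (i) the reachability statement $P^{t_1}(\mathbf x^b,\mathcal B_\delta(\mathbf y^b))>0$ for all $\mathbf x^b\in C$, and (ii) the existence of a jointly continuous transition density on $C$. Both follow from the observation that on the interior $\{I_1>0,\,I_3>0\}$ of $\Omega^b$ the system \eqref{broken} is uniformly elliptic with smooth coefficients: its diffusion matrix is diagonal, $\sigma(\mathbf x^b)=\mathrm{diag}\big(\sqrt{\gamma T_1 I_1/2},\,\sqrt{\gamma T_3 I_3/2},\,\sqrt\gamma,\,\sqrt\gamma\big)$, and is invertible wherever $I_1,I_3>0$.

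For (ii), fix $t>0$. On the open interior the driving vector fields $V_1,\dots,V_4$ of \eqref{Vks} already span $\mathbb R^4$ at every point, so the bracket condition of Theorem \ref{hormander} holds trivially and \eqref{broken} possesses a smooth transition density $p(t,\mathbf x^b,\mathbf y^b)$ there; this already disposes of any compact $C\subset\{I_1,I_3>0\}$. If $C$ meets the boundary $\{I_1 I_3=0\}$, I would use that the scalar marginals $\mathrm dI_j=\gamma(T_j-I_j^3)\,\mathrm dt+\sqrt{\gamma T_j I_j/2}\,\mathrm dB_t$ are squared-Bessel--type diffusions for which $0$ is an entrance boundary, so that from a boundary state the process enters $\{I_1,I_3>0\}$ instantaneously and $P^{t/2}(\mathbf x^b,\cdot)$ is supported in the interior; then the Chapman--Kolmogorov identity $p(t,\mathbf x^b,\cdot)=\int p(t/2,\mathbf z,\cdot)\,P^{t/2}(\mathbf x^b,\mathrm d\mathbf z)$ exhibits a density at boundary starting points, and joint continuity on $C\times C$ follows from the Feller property of $P^{t/2}$ together with local boundedness and equicontinuity of $p(t/2,\cdot,\cdot)$ on compacts.

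For (i), fix a reference point $\mathbf y^b$ in the interior of $C$ with $I_1,I_3>0$ and take $t_1=1$. Given $\mathbf x^b\in C$ with $I_1,I_3>0$, choose a smooth curve $\zeta:[0,1]\to\{I_1,I_3>0\}$ from $\mathbf x^b$ to $\mathbf y^b$ and set $u(t)=\sigma(\zeta(t))^{-1}\big(\dot\zeta(t)-b(\zeta(t))\big)$, where $b$ is the drift of \eqref{broken}; the Stroock--Varadhan tube estimate, exactly as used in the proof of Theorem \ref{ergodicitytheta}, gives $\mathbb P\big[\sup_{0\le t\le1}\|\mathbf x^b_t-\zeta(t)\|<\delta\mid\mathbf x^b_0=\mathbf x^b\big]>0$, hence $P^1(\mathbf x^b,\mathcal B_\delta(\mathbf y^b))>0$. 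For $\mathbf x^b\in C$ on the boundary, the entrance property of the $I_1$- and $I_3$-marginals puts $\mathbf x^b_{s_0}$ in the interior with positive probability for some small $s_0>0$, and composing this with the interior estimate over $[s_0,1]$ and the Markov property yields $P^1(\mathbf x^b,\mathcal B_\delta(\mathbf y^b))>0$ again. Note that $t_1=1$ works for every $\delta$, as required.

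The main obstacle is the degeneracy of the noise on $\{I_1 I_3=0\}$: away from it everything is a routine consequence of uniform ellipticity, but at boundary starting points one must control both the instantaneous regularization of the process (the Feller classification of the scalar diffusions $\mathrm dI_j=\gamma(T_j-I_j^3)\,\mathrm dt+\sqrt{\gamma T_j I_j/2}\,\mathrm dB_t$ at $0$) and the propagation of the interior density's continuity up to $C\cap\partial\Omega^b$. If $\Omega^b$ is read as the open set $(0,\infty)^2\times\mathbb T^2$, every compact $C$ is bounded away from the degeneracy set and the statement reduces entirely to the uniformly elliptic case.
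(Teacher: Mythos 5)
Your proposal is correct and follows essentially the same route as the paper: for the density you invoke H\"ormander's theorem (trivially satisfied since the diagonal diffusion matrix already spans $\mathbb{R}^4$ where $I_1,I_3>0$), and for reachability you solve the control problem by inverting $\sigma$ along a path and apply the Wiener-measure tube estimate, exactly as in the paper's appeal to Lemma 3.4 of the Mattingly--Stuart--Higham argument. The only difference is that you explicitly treat the degeneracy at $\{I_1 I_3=0\}$ via the entrance-boundary behavior of the CIR-type marginals, a point the paper glosses over by simply calling the noise ``non-degenerate'' (which, as you note, is automatic once compact sets are taken inside the open orthant $(0,\infty)^2\times\mathbb{T}^2$).
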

\begin{proof}
The proof is similar to that of Lemma 3.4 of \cite{mattingly2002ergodicity}.
  
  Since the noise term in system \eqref{broken} is non-degenerate, it
  is easy to see that system \eqref{broken} satisfies the H\"ormander's
  condition. Thus, the transition kernel possess a jointly continuous
  density function by \cite{stroock2008partial}.

  In addition, let $\mathbf{x}_{0}$ and $\mathbf{x}_{1}$ be two points
  in the compact set. Let $\mathbf{z}(t)$ be a linear interpolation
  such that $\mathbf{z}(0) = \mathbf{x}_{0}$ and $\mathrm{z}(1) =
  \mathbf{x}_{1}$. Then we can find a smooth $\mathcal U(t) \in \mathbb{R}^{4}$
  solving the control problem 
$$
 \frac{ \mathrm{d} \mathbf{z}}{ \mathrm{d}t} = F( \mathbf{z}) + \Sigma
 \frac{ \mathrm{d} \mathcal U}{\mathrm{d}t}
$$
for all $t \in (0, 1)$, where $F$ and $\Sigma$ are the vector field
and the coefficient matrix of the noise term in system \eqref{broken}
respectively. Note that the Wiener measure of any
$\epsilon$-tube
$$
  \left\{ B(t) \, t \in [0, 1]\,|\, \sup_{0 \leq s \leq 1} \| B(s) - U(s)
  \| \leq \epsilon \right\}
$$
is strictly positive. Hence, Assumption {\bf (A1')} is
satisfied. We refer to the proof of Lemma 3.4 in \cite{mattingly2002ergodicity} for further
details. 
  
\end{proof}

Recall that Assumption {\bf (A1')} implies Assumption {\bf (A1)}. Therefore, by Theorem \ref{HMSthm25}, we have the geometric ergodicity of $\mathbf{x}^b_t$:

\begin{thm}
\label{xbergodicity}
    There exist constants $r > 0$ and $C_{\tilde{W}}< \infty$ such that for all measurable function $f$ with $|f| \leq \tilde{W}$, we have
    $$
    | \mathbb{E}_{\mathbf{x}^b_0} f( \mathbf{x}^b_t) - \pi^b ( f) | \leq C_{\tilde{W}} e^{-rt} \tilde{W}( \mathbf{x}^b_0 )
    $$
\end{thm}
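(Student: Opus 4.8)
The plan is to verify the hypotheses of Theorem \ref{HMSthm25} for the reduced system \eqref{broken} with Lyapunov function $V=\tilde{W}=1+I_{1}+I_{3}$, and then to upgrade the resulting discrete-time geometric ergodicity of the embedded chain to continuous time in the standard way.

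First, the Lyapunov lemma just established, $\mathcal{L}^{b}\tilde{W}\leq -\tfrac12\gamma\tilde{W}+(4\sqrt{2}+1+T_{1}+T_{3})\gamma$, is precisely condition {\bf (A2G)} with $a=\gamma/2$ and $d=(4\sqrt{2}+1+T_{1}+T_{3})\gamma$: indeed $\tilde{W}\geq 1$, and since the angular variables live on the compact torus $\mathbb{T}^{2}$ we have $\tilde{W}(\mathbf{x}^{b})\to\infty$ as $|\mathbf{x}^{b}|\to\infty$, so $\tilde{W}$ is an admissible Lyapunov function on $\Omega^{b}$. Next, fix $\gamma'\in(e^{-aT/2},1)$ and let $C=\{\tilde{W}\leq 2d/(a(\gamma'-e^{-aT}))\}$ be the small-set candidate of Theorem \ref{HMSthm25}; since $\tilde{W}$ has compact sublevel sets, $C$ is compact, so the minorization lemma above (which verified {\bf (A1')} for every compact set) together with Lemma 2.3 of \cite{mattingly2002ergodicity} yields {\bf (A1)} on $C$. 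Theorem \ref{HMSthm25} then produces a unique invariant probability measure $\pi^{b}$ and constants $r_{0}\in(0,1)$, $\kappa_{0}<\infty$ with $|\mathbb{E}_{\mathbf{x}^{b}_{0}}f(\mathbf{x}^{b}_{nT})-\pi^{b}(f)|\leq \kappa_{0}r_{0}^{n}\tilde{W}(\mathbf{x}^{b}_{0})$ for every measurable $f$ with $|f|\leq\tilde{W}$ (in particular $\pi^{b}(\tilde{W})<\infty$).

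It then remains to pass from the embedded chain $\hat{X}_{n}=\mathbf{x}^{b}_{nT}$ to the full-time process. Writing $t=nT+s$ with $0\leq s<T$, the bound $\mathcal{L}^{b}\tilde{W}\leq d$ together with Dynkin's formula and Fubini (justified exactly as in Lemma \ref{I1I3bounded}) gives the uniform moment estimate $\mathbb{E}_{\mathbf{y}}[\tilde{W}(\mathbf{x}^{b}_{s})]\leq \tilde{W}(\mathbf{y})+dT\leq (1+dT)\tilde{W}(\mathbf{y})$ for all $s\in[0,T]$. Hence for $|f|\leq\tilde{W}$ the function $g_{s}(\mathbf{y}):=\mathbb{E}_{\mathbf{y}}f(\mathbf{x}^{b}_{s})$ satisfies $|g_{s}|\leq(1+dT)\tilde{W}$, and $\pi^{b}(g_{s})=\pi^{b}(f)$ by invariance of $\pi^{b}$. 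Applying the Markov property, $\mathbb{E}_{\mathbf{x}^{b}_{0}}f(\mathbf{x}^{b}_{t})=\mathbb{E}_{\mathbf{x}^{b}_{0}}[g_{s}(\mathbf{x}^{b}_{nT})]$, and the discrete-time bound applied to $g_{s}/(1+dT)$ gives $|\mathbb{E}_{\mathbf{x}^{b}_{0}}f(\mathbf{x}^{b}_{t})-\pi^{b}(f)|\leq (1+dT)\kappa_{0}r_{0}^{n}\tilde{W}(\mathbf{x}^{b}_{0})$. Since $n\geq t/T-1$, this is at most $C_{\tilde{W}}e^{-rt}\tilde{W}(\mathbf{x}^{b}_{0})$ with $r=-\tfrac1T\ln r_{0}>0$ and $C_{\tilde{W}}=(1+dT)\kappa_{0}/r_{0}$, which is the claim. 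This is the discrete-to-continuous reduction carried out in the proof of Theorem 3.2 of \cite{mattingly2002ergodicity}, already invoked above in Theorem \ref{ergodicitytheta}.

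Most of this is a direct application of results already in hand, so I do not anticipate a serious obstacle; the step I expect to require the most care is the uniform-in-$s$ moment bound $\sup_{0\leq s\leq T}\mathbb{E}_{\mathbf{y}}[\tilde{W}(\mathbf{x}^{b}_{s})]\leq (1+dT)\tilde{W}(\mathbf{y})$, which is what legitimizes the interpolation step for the \emph{unbounded} class $|f|\leq\tilde{W}$. It follows from {\bf (A2G)} because $\mathcal{L}^{b}\tilde{W}\leq d$ everywhere, but, as in Lemma \ref{I1I3bounded}, one must first confirm that $\mathbb{E}_{\mathbf{y}}[\tilde{W}(\mathbf{x}^{b}_{t})]$ is finite for every $t$ before exchanging expectation and the time integral of the generator.
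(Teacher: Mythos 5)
Your proposal is correct and follows essentially the same route as the paper: verify the drift condition from the preceding Lyapunov lemma and the minorization condition from the preceding control-problem lemma, apply Theorem \ref{HMSthm25} to the embedded chain $\mathbf{x}^b_{nT}$, and then extend to continuous time as in Theorem 3.2 of \cite{mattingly2002ergodicity}. The only difference is that you spell out the discrete-to-continuous interpolation (via the uniform bound $\sup_{0\le s\le T}\mathbb{E}_{\mathbf{y}}[\tilde{W}(\mathbf{x}^b_s)]\le(1+dT)\tilde{W}(\mathbf{y})$), which the paper simply delegates to the cited reference.
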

\begin{proof}
Since conditions {\bf (A1)} and {\bf (A2)} are both satisfied, by Theorem \ref{HMSthm25} we have
$$
| \mathbb{E}_{\mathbf{x}^b_0} f( \mathbf{x}^b_{nT} ) - \pi^b (f) | \leq C_W r_0^n \tilde{W}( \mathbf{x}^b_0 )
$$
for some $r_0 < 1$. Since, in addition, $\mathbf{x}^b_t$ is a stochastic
differential equation, the same argument as in the proof of Theorem 3.2 in
\cite{mattingly2002ergodicity} extends the result from discrete times $nT$  to
continuous time and completes the proof. 
\end{proof}

\subsection{Results for the reduced system Part III: expectation and
  variance of $h$}
 The goal of this subsection is to prove (c) in the list at the end of Subsection \ref{lowenergy}. We start with  the following theorem.
\begin{thm}
  \label{thm31}
System \eqref{broken} admits a unique invariant probability measure
$\pi^{b}$. In addition, for any $\gamma > 0$ and $T_{1} > 0$, there
exists a sufficiently large $T_{3}^{*}$ such that 
$$
  \int_{\Omega^{b}} (I_{1} \sin \theta_{1} + I_{3} \sin \theta_{3})
  \pi^{b}(\mathrm{d} \mathbf{x} ) < -\frac{1}{4} \sqrt[3]{\frac{3T_{3}}{4}}\frac{\Gamma(5/3)}{\Gamma(4/3)}
  $$
for all $T_{3} > T_{3}^{*}$.
\end{thm}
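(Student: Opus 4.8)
The plan is to reduce the statement to a concentration property of $\theta_3$ near $4\pi/3$. Uniqueness of $\pi^b$ is already a consequence of the geometric ergodicity in Theorem~\ref{xbergodicity}. The essential structural fact is that in system \eqref{broken} the pair $(I_1,I_3)$ evolves autonomously: each $I_j$–equation involves only $I_j$ and its own Brownian motion, decoupled from $(\theta_1,\theta_3)$ and from the other $I$–coordinate. Solving the one–dimensional stationary Fokker–Planck equation for $I_j$ with zero flux at $I_j=0$ gives the marginal density $\rho_j(I)=Z_j^{-1}I^3\exp(-4I^3/(3T_j))$ on $(0,\infty)$, and the substitution $u=4I^3/(3T_j)$ evaluates every moment via Gamma functions; in particular
$$
\bar I_j:=\mathbb{E}_{\pi^b}[I_j]=\Big(\tfrac{3T_j}{4}\Big)^{1/3}\frac{\Gamma(5/3)}{\Gamma(4/3)}.
$$
Thus $\bar I_1$ depends only on $T_1$ and is fixed as $T_3\to\infty$, while $\bar I_3$ is comparable to $T_3^{1/3}\to\infty$, and the claimed bound is exactly $-\tfrac14\bar I_3$.

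Writing $\int h\,d\pi^b=\mathbb{E}_{\pi^b}[I_1\sin\theta_1]+\mathbb{E}_{\pi^b}[I_3\sin\theta_3]$ and using $|\sin\theta_1|\le1$, the first term is bounded in absolute value by $\bar I_1$. Hence it is enough to show that for all large $T_3$ one has $\mathbb{E}_{\pi^b}[I_3\sin\theta_3]\le-c\,\bar I_3$ for some absolute $c>1/4$ (we will get $c$ arbitrarily close to $\tfrac{\sqrt3}{2}$): then $\int h\,d\pi^b\le\bar I_1-c\bar I_3<-\tfrac14\bar I_3$ once $\bar I_1<(c-\tfrac14)\bar I_3$, which holds for all $T_3$ beyond some $T_3^*$ since $\bar I_3\to\infty$.

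The mechanism is that $4\pi/3$ is the stable equilibrium of the drift $-I_3(1+2\cos\theta_3)$ of $\theta_3$, with restoring rate $\propto I_3$, and for $T_3\gg T_1$ one has $I_3\gg I_1$ with high probability so the $-2I_1\cos\theta_1$ perturbation is lower order. Fixing small $\delta>0$ and using $\sin\theta_3\le-\tfrac{\sqrt3}{2}+|\theta_3-4\pi/3|$ for $|\theta_3-4\pi/3|\le\delta$ and $\sin\theta_3\le1$ otherwise,
$$
\mathbb{E}_{\pi^b}[I_3\sin\theta_3]\le\Big(-\tfrac{\sqrt3}{2}+\delta\Big)\bar I_3+2\,\mathbb{E}_{\pi^b}\big[I_3\,\mathbf{1}_{\{|\theta_3-4\pi/3|>\delta\}}\big],
$$
so everything reduces to showing the last expectation is $o(\bar I_3)$. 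The contribution of $\{I_3\le K\}$ is at most $K$, which is $o(\bar I_3)$ for fixed $K$. For $\{I_3>K\}$ I would use the stationarity identity $\mathbb{E}_{\pi^b}[\mathcal L^b\psi(\theta_3)]=0$, valid for every periodic $C^2$ function $\psi$, which reads
$$
\mathbb{E}_{\pi^b}\big[I_3(1+2\cos\theta_3)\psi'(\theta_3)\big]=-2\,\mathbb{E}_{\pi^b}[I_1\cos\theta_1\,\psi'(\theta_3)]+\tfrac{\gamma}{2}\mathbb{E}_{\pi^b}[\psi''(\theta_3)],
$$
whose right side is bounded by $2\bar I_1\|\psi'\|_\infty+\tfrac{\gamma}{2}\|\psi''\|_\infty$. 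Choosing $\psi$ with $-(1+2\cos\theta)\psi'(\theta)\ge\mathbf{1}_{\{|\theta-4\pi/3|>\delta\}}-\varepsilon-C_{\delta,\delta'}\mathbf{1}_{\{|\theta-2\pi/3|\le\delta'\}}$ — possible since $1/|1+2\cos\theta|$ is only logarithmically singular at the zeros $2\pi/3,4\pi/3$, so the required slopes are finite once those points are excluded — gives
$$
\mathbb{E}_{\pi^b}\big[I_3\,\mathbf{1}_{\{|\theta_3-4\pi/3|>\delta\}}\big]\le\varepsilon\,\bar I_3+C_{\delta,\delta'}\,\mathbb{E}_{\pi^b}\big[I_3\,\mathbf{1}_{\{|\theta_3-2\pi/3|\le\delta'\}}\big]+C(\delta,\delta',\gamma,\bar I_1).
$$

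The main obstacle is the residual term $\mathbb{E}_{\pi^b}[I_3\,\mathbf{1}_{\{|\theta_3-2\pi/3|\le\delta'\}}]$: the generator identity cannot produce an indicator of a neighborhood of the \emph{unstable} equilibrium $2\pi/3$, because the drift vanishes there. This must be treated through the fact that the drift is strictly repelling at $2\pi/3$ at rate $\propto I_3$ (the $-2I_1\cos\theta_1$ perturbation only affects the scale $(T_1/T_3)^{1/3}\to0$), so — exactly as in the large–deviation/concentration analysis of the frozen angular dynamics \eqref{eqntheta} in Lemmas~\ref{concentrationtheta}--\ref{vartheta} — the $I_3$–weighted stationary mass within $\delta'$ of $2\pi/3$ is negligible for large $T_3$ once $\delta'$ exceeds the perturbation scale. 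The delicate point is transferring such frozen-$I_3$ concentration statements to the joint stationary measure $\pi^b$, where $I_3$ fluctuates; this is legitimate because under $\pi^b$ one has $I_3\ge\tfrac12 T_3^{1/3}$ with probability tending to $1$ (from the explicit marginal $\rho_3$), so the confinement of $\theta_3$ to a neighborhood of $4\pi/3$ is uniformly strong on an event of overwhelming probability, and this can be made quantitative by the same Lyapunov/escape-time arguments. Sending first $T_3\to\infty$ and then $\delta,\delta',\varepsilon\to0$ yields $\mathbb{E}_{\pi^b}[I_3\sin\theta_3]\le-(\tfrac{\sqrt3}{2}-o(1))\bar I_3$, which, combined with the reduction above, completes the proof.
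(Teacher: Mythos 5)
Your reduction matches the paper's overall strategy: uniqueness from Theorem \ref{xbergodicity}, the explicit generalized-Gamma marginals of Lemma \ref{invI1I3} giving $\bar I_3 \sim T_3^{1/3}$, bounding the $I_1\sin\theta_1$ contribution by $\bar I_1$, and reducing everything to showing that, under the stationary measure, $\theta_3$ concentrates near the stable equilibrium $4\pi/3$ in an $I_3$-weighted sense. But the proposal stops exactly where the real work begins. Your test-function identity $\mathbb{E}_{\pi^b}[\mathcal L^b\psi(\theta_3)]=0$ can indeed control the $I_3$-weighted mass of $\{|\theta_3-4\pi/3|>\delta\}$ \emph{only up to} the residual term $C_{\delta,\delta'}\,\mathbb{E}_{\pi^b}[I_3\,\mathbf 1_{\{|\theta_3-2\pi/3|\le\delta'\}}]$, and for that term you offer no proof: you assert it is negligible ``by the same Lyapunov/escape-time arguments,'' appealing to the frozen-coefficient concentration results. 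Those frozen results (Lemma \ref{theta3inv} and the analysis around it) are for the one-dimensional dynamics with $I_1=0$ and fixed $I_3$; they do not by themselves control the mass near the \emph{unstable} equilibrium of the coupled system \eqref{theta13}, where the perturbation $-2I_1\cos\theta_1$ can in principle pin $\theta_3$ near $2\pi/3$. This is precisely the content of the paper's Lemma \ref{smallI1}, which constructs an explicit anti-Lyapunov function $W_3$ near $2\pi/3$ (together with a Lyapunov function $U_3$ elsewhere) and invokes the level-set estimates of \cite{huang2015integral}, under the quantitative hypotheses $I_1^2/I_3<\gamma/11$ and $I_3/\gamma>500$, to get $\mathbb{E}_{\pi^{I_1,I_3}}[I_1\sin\theta_1+I_3\sin\theta_3]\le-0.48\,I_3$. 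Without an argument of this type your chain of inequalities does not close.

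The second unresolved point is the transfer from frozen-$(I_1,I_3)$ statements to the joint stationary measure $\pi^b$, which you yourself flag as ``the delicate point'' but then dismiss on the grounds that $I_3\ge\tfrac12T_3^{1/3}$ with high probability. High probability of $I_3$ being large does not convert a concentration statement for the frozen angular dynamics into one for the conditional law of $(\theta_1,\theta_3)$ given $(I_1,I_3)$ under $\pi^b$; some structural input is needed. The paper handles this by disintegrating $\pi^b$ as $u_1(I_1)u_3(I_3)\,u_{I_1,I_3}(\theta_1,\theta_3)$, applying Lemma \ref{smallI1} on the good set $A=\{I_1\le I_1^*,\ I_3\ge C_{I_1^*}\}$ which carries at least $4/5$ of the $I_3$-weighted mass, and bounding the complement crudely via the explicit marginals; that bookkeeping is what produces the constant $-\tfrac14\langle I_3\rangle$. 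So the proposal is a correct skeleton with the same guiding mechanism as the paper, but the two central estimates --- the anti-Lyapunov control of the stationary mass near $2\pi/3$ in the presence of the $I_1$-coupling, and the passage from frozen to joint stationary measure --- are asserted rather than proved, and these are the heart of the paper's argument.
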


Without $I_{2}$, it is easy to see that $I_{1}$ and $I_{3}$ are now
independent variables. We can explicitly solve the marginal
distribution of $I_{1}$ and $I_{3}$.

\begin{lem}
  \label{invI1I3}
The marginal probability density function of system \eqref{broken} for
$I_{1}$ (resp. $I_{3}$) is
$$
  u_{1} = \frac{4 \sqrt[3]{4}}{\sqrt[3]{3} \Gamma(\frac{4}{3}) T_{1}^{4/3}}
  x^{3}e^{-4 x^{3}/3 T_{1}}
  $$
  (resp.
$$
 u_{3} =  \frac{4 \sqrt[3]{4}}{\sqrt[3]{3} \Gamma(\frac{4}{3}) T_{3}^{4/3}}
  x^{3}e^{-4 x^{3}/3 T_{3}}
  $$
  )
  
\end{lem}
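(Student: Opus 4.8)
The plan is to exploit the fact that in the broken system \eqref{broken} the pair $(I_1,I_3)$ forms an autonomous, fully decoupled subsystem: the $I_1$-equation depends only on $I_1$ and the independent Brownian motion $B^{(1)}$, the $I_3$-equation only on $I_3$ and $B^{(2)}$, and neither drags in the phases. Consequently the product of the two scalar invariant measures gives an invariant measure for the $(I_1,I_3)$-marginal dynamics, and by the uniqueness of $\pi^b$ (Theorem \ref{xbergodicity}) the $I_1$-marginal of $\pi^b$ must coincide with the unique invariant probability measure of the scalar SDE $\mathrm{d}I_1 = \gamma(T_1 - I_1^3)\,\mathrm{d}t + \sqrt{\gamma T_1 I_1/2}\,\mathrm{d}B^{(1)}_t$ on $(0,\infty)$, and similarly for $I_3$ with $T_1$ replaced by $T_3$. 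So the whole task reduces to computing the stationary density of this one-dimensional diffusion.

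First I would write down the stationary Fokker--Planck equation for the scalar generator $\mathcal{L}^{I} = \gamma(T_1 - x^3)\partial_x + \tfrac{\gamma T_1 x}{4}\partial_x^2$, namely $-\partial_x\big(\gamma(T_1 - x^3)u\big) + \partial_x^2\big(\tfrac{\gamma T_1 x}{4}u\big) = 0$. Integrating once and setting the probability current equal to zero at both ends of $(0,\infty)$ — legitimate by the standard Feller classification, since $0$ is an entrance boundary (the drift $\gamma T_1>0$ points inward and the diffusion degenerates only linearly) and $+\infty$ is unattainable because of the cubic restoring drift — yields the first-order linear ODE $\gamma(T_1-x^3)u = \partial_x\big(\tfrac{\gamma T_1 x}{4}u\big)$. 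Setting $w=xu$ converts this to the separable equation $w'/w = 4/x - 4x^2/T_1$, whose solution is $w = C x^4 e^{-4x^3/(3T_1)}$, hence $u_1(x) = C x^3 e^{-4x^3/(3T_1)}$, which already has the claimed shape.

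It then remains only to pin down the normalization. Substituting $s = 4x^3/(3T_1)$ in $\int_0^\infty x^3 e^{-4x^3/(3T_1)}\,\mathrm{d}x$ turns it into $\tfrac{T_1}{4}\big(\tfrac{3T_1}{4}\big)^{1/3}\Gamma(4/3) = \tfrac{3^{1/3}T_1^{4/3}}{4^{4/3}}\Gamma(4/3)$, and since $4^{4/3} = 4\sqrt[3]{4}$ the normalizing constant is exactly $\tfrac{4\sqrt[3]{4}}{\sqrt[3]{3}\,\Gamma(4/3)\,T_1^{4/3}}$, giving the stated formula for $u_1$; the $I_3$ computation is word for word identical with $T_1\mapsto T_3$. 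The only point requiring any care is the boundary behavior at $x=0$, to guarantee that the scalar diffusion has a \emph{unique} normalizable invariant density and that the zero-current reduction is valid; this I would dispatch either with a one-line Feller test or simply by observing that the exhibited $u_1$ is integrable near $0$ (the factor $x^3$) and super-exponentially small at $\infty$, so it is a genuine probability density and hence, by uniqueness, \emph{the} invariant density. Everything else is an elementary ODE integration and a Gamma-function substitution.
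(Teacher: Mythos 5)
Your proposal is correct and follows essentially the same route as the paper: exploit the decoupling of $I_1$ (resp.\ $I_3$) in the broken system, reduce to the scalar SDE $\mathrm{d}I_1=\gamma(T_1-I_1^3)\,\mathrm{d}t+\sqrt{\gamma T_1 I_1/2}\,\mathrm{d}B_t$, solve the stationary Fokker--Planck equation, and normalize via the generalized Gamma integral. The only cosmetic difference is that the paper verifies the ansatz $u\propto x^3 e^{-4x^3/3T_1}$ directly against the zero-current relation, whereas you derive it by integrating the current equation and separating variables, which amounts to the same computation.
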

\begin{proof}
Since $I_{1}$ is independent of other variables, it is sufficient to
consider the 1D stochastic differential equation
$$
  \mathrm{d} I_{1} = \gamma (T_{1} - I_{1}^{3}) \mathrm{d}t +
  \sqrt{\gamma T_{1} I_{1}/2} \mathrm{d} B_{t} \,.
  $$

  The invariant probability density function of this equation,
  denoted by $u(I_{1})$, must satisfy the stationary Fokker-Planck equation
  \begin{equation}
    \label{FPEI1}
 - \frac{\partial}{\partial I_{1}}( \gamma (T_{1} - I_{1}^{3}) u) +
 \frac{1}{4}\gamma T_{1} \frac{\partial ^{2}}{\partial
   I_{1}^{2}}(I_{1} u )  = 0\,. 
\end{equation}

Simplifying this equation, one obtains
$$
  \frac{\partial}{\partial I_{1}}[ - (T_{1} - I_{1}^{3}) u + \frac{1}{4} T_{1}
  \frac{\partial}{\partial I_{1}}(I_{1} u)] = 0 \,.
$$

Now let
$$
  u = I_{1}^{3} e^{-4 I_{1}^{3}/3 T_{1}} \,.
$$
We have
$$
  (T_{1} - I_{1}^{3}) u = T_{1}I_{1}^{3}e^{-4 I_{1}^{3}/3 T_{1}} - I_{1}^{6}e^{-4 I_{1}^{3}/3 T_{1}}
  $$
  and
  
$$
  \frac{1}{4} T_{1}\frac{\partial}{\partial I_{1}}(I_{1} u) =
  \frac{1}{4}T_{1}( 4 I_{1}^{3}e^{-4 I_{1}^{3}/3 T_{1}} -
  \frac{4}{T_{1}}I_{1}^{6}e^{-4 I_{1}^{3}/3 T_{1}}) = T_{1}I_{1}^{3}e^{-4 I_{1}^{3}/3 T_{1}} - I_{1}^{6}e^{-4 I_{1}^{3}/3 T_{1}}
$$
Hence $u$ solves the Fokker-Planck equation \eqref{FPEI1}. It is easy
to see that $u$ is proportional to a generalized Gamma density
function. Normalizing $u$, we have
$$
  u(x) = \frac{4 \sqrt[3]{4}}{\sqrt[3]{3} \Gamma(\frac{4}{3}) T_{1}^{4/3}}
  x^{3}e^{-4 x^{3}/3 T_{1}} \,.
  $$

  The calculation for the marginal distribution of $I_{3}$ is identical. 

\end{proof}

For each pair $(I_{1}, I_{3})$, the system
\begin{align}
  \label{theta13}
    \mathrm{d} \theta_{1}& = -[ I_{1} (1 +
  2 \cos \theta_{1}) + 2 I_{3} \cos \theta_{3}] \mathrm{d}t +
  \sqrt{\gamma} \mathrm{d}B^{(3)}_{t}\\\nonumber
  \mathrm{d} \theta
  _{3} &= -[ I_{3} (1 + 2 \cos
  \theta_{3}) + 2 I_{1}\cos \theta_{1}] \mathrm{d}t + \sqrt{\gamma} \mathrm{d}B^{(4)}_{t}\nonumber \,
\end{align}
is a 2D stochastic differential equation on $\mathbb{T}^{2}$. Since
the noise term is non-degenerate, system
\eqref{theta13} clearly admits an invariant probability measure, denoted by
$\pi^{I_{1}, I_{3}}$. Let the probability density function of
$\pi^{I_{1}, I_{3}}$ be $u_{I_{1}, I_{3}}( \theta_{1},
\theta_{3})$. Thus, the unique invariant probability density
function of system \eqref{broken} takes the form
$$
 u(I_{1}, I_{3}, \theta_{1}, \theta_{3}) =  u_{1}(I_{1})u_{3}(I_{3}) u_{I_{1}, I_{3}}( \theta_{1},
\theta_{3}) \,,
$$
where $u_{1}$ and $u_{3}$ are defined in Lemma \ref{invI1I3}. The
probability density of $\pi^{I_{1}I_{3}}$ cannot be explicitly expressed. However, when $I_{1} = 0$, the marginal probability density
function with respect to  $\theta_{3}$ can be explicitly calculated in the next lemma.

\begin{lem}
  \label{theta3inv}
  The invariant probability measure of equation
  
\begin{equation}
  \label{theta3}
  \mathrm{d} \theta
  _{3} = -I_{3} (1 + 2 \cos
  \theta_{3}) \mathrm{d}t + \sqrt{\gamma} \mathrm{d}B_{t}
\end{equation}
has a probability density function
$$
  u^{*}(\theta) = \frac{1}{Z} \left ( \frac{(e^{2\pi \alpha} -
      1)e^{-\alpha (\theta + 2 \sin \theta)}}{\int_{0}^{2\pi}
      e^{\alpha(r + 2 \sin r) }\mathrm{d}r}\int_{0}^{\theta}
    e^{\alpha(r + 2 \sin r)} \mathrm{d}s + e^{-\alpha(\theta + 2 \sin
      \theta)}\right ) \,,
  $$
  where $\alpha = \frac{2 I_{3}}{\gamma}$, and $Z$ is a normalizer. 
\end{lem}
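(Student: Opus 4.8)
The plan is to proceed exactly as in the proof of Lemma \ref{lem44}: the density $u^{*}$ of the invariant measure solves the stationary Fokker--Planck equation associated with \eqref{theta3}, which on the circle reduces, after one integration, to a first-order linear ODE that can be solved explicitly by an integrating factor, the remaining free constant being pinned down by periodicity.

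First I would note that the noise in \eqref{theta3} is non-degenerate and $\mathbb{S}^{1}$ is compact, so \eqref{theta3} admits a unique invariant probability measure, and its density $u^{*}(\theta)$ must solve the stationary Fokker--Planck equation
$$
\frac{\mathrm{d}}{\mathrm{d}\theta}\Big(I_{3}(1+2\cos\theta)\,u\Big) + \frac{\gamma}{2}\frac{\mathrm{d}^{2}}{\mathrm{d}\theta^{2}}u = 0 \,.
$$
Integrating once gives $\frac{\gamma}{2}u' + I_{3}(1+2\cos\theta)u = C_{0}$ for some constant $C_{0}$; setting $\alpha = 2I_{3}/\gamma$ this becomes $u' + \alpha(1+2\cos\theta)u = \tilde{C}_{0}$ with $\tilde{C}_{0} = 2C_{0}/\gamma$.

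Next I would solve this linear ODE: since $\frac{\mathrm{d}}{\mathrm{d}\theta}(\theta + 2\sin\theta) = 1+2\cos\theta$, the integrating factor is $e^{\alpha(\theta+2\sin\theta)}$, and integrating from $0$ to $\theta$ yields
$$
u(\theta) = \tilde{C}_{0}\, e^{-\alpha(\theta+2\sin\theta)}\int_{0}^{\theta} e^{\alpha(r+2\sin r)}\,\mathrm{d}r + C_{1}\, e^{-\alpha(\theta+2\sin\theta)}
$$
for a second free constant $C_{1}$. The periodic boundary condition $u(0)=u(2\pi)$ forces $C_{1}(1-e^{-2\pi\alpha}) = \tilde{C}_{0}\, e^{-2\pi\alpha}\int_{0}^{2\pi}e^{\alpha(r+2\sin r)}\,\mathrm{d}r$; taking $C_{1} = 1$ (the overall scale is fixed by normalization anyway) gives $\tilde{C}_{0} = (e^{2\pi\alpha}-1)\big/\int_{0}^{2\pi}e^{\alpha(r+2\sin r)}\,\mathrm{d}r$, which is exactly the prefactor appearing in the claimed formula. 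Finally I would divide by $Z = \int_{0}^{2\pi}u(\theta)\,\mathrm{d}\theta$ to normalize; since $\alpha>0$ and all integrands are positive, $u$ is strictly positive, so it is a genuine probability density.

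There is no real obstacle here: the computation is routine and parallels the proof of Lemma \ref{lem44} almost verbatim. The only care needed is the sign of the drift, which is $-I_{3}(1+2\cos\theta)$ here versus $+I_{2}(1+2\cos\theta)$ in \eqref{eqntheta}; this effectively replaces $\alpha$ by $-\alpha$ relative to that case and produces $e^{-\alpha(\theta+2\sin\theta)}$ rather than $e^{\alpha(\theta+2\sin\theta)}$, together with the bookkeeping of the two integration constants under the periodicity constraint.
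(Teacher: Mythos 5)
Your proposal is correct and follows essentially the same route as the paper's proof: write the stationary Fokker--Planck equation on $\mathbb{S}^1$, integrate once to a first-order linear ODE with a (generically nonzero) flux constant, solve via the integrating factor $e^{\alpha(\theta+2\sin\theta)}$, and fix the constants by periodicity before normalizing. Your bookkeeping of $\alpha=2I_3/\gamma$ and the constants is in fact cleaner than the paper's own write-up, which contains minor typos ($T_3$ in the diffusion term and $\alpha=4I_3/\gamma$) that do not affect the stated result.
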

\begin{proof}
  The stationary Fokker-Planck equation corresponding to equation \eqref{theta3} is
$$
 \frac{\mathrm{d}}{\mathrm{d} \theta} ( I_{3} (1 + 2 \cos \theta) u) +
 \frac{1}{2}\gamma T_{3} \frac{\mathrm{d}^{2}}{\mathrm{d} \theta^{2}} u =
 0 \,.
$$

Let $\alpha = \frac{4 I_{3}}{\gamma} $. After some simplifications, we obtain
$$
  \frac{\mathrm{d}}{\mathrm{d} \theta} ( (1 + 2 \cos \theta) u) +
 \alpha^{-1} \frac{\mathrm{d}^{2}}{\mathrm{d} \theta^{2}} u =
 0 \,.
$$
Or
$$
  \frac{\mathrm{d}}{\mathrm{d} \theta} ( (1 + 2 \cos \theta) u +
 \alpha^{-1} \frac{\mathrm{d}}{\mathrm{d} \theta} u) =
 0 \,,
$$
which implies
$$
  u' + \alpha(1 + 2 \cos \theta )u = C_{0}
 $$
  for some constant $C_{0}$.

  Together with the periodic boundary condition $u(0) = u(2 \pi)$, we have
  
$$
  u(\theta) = C_{0} e^{-\alpha (\theta + 2 \sin
    \theta)}\int_{0}^{\theta} e^{\alpha(r + 2 \sin r)} \mathrm{d}r +
  C_{1} e^{-\alpha (\theta + 2 \sin \theta)}
$$
for a free constant $C_{1}$. Without loss of generality, let $C_{1} =
1$. (Because $u$ will be renormalized later.) The periodic boundary
condition gives
$$
  C_{0} = \frac{e^{2 \pi \alpha} - 1}{\int_{0}^{2\pi} e^{\alpha ( r +
      2 \sin r) }\mathrm{d}r} \,.
$$
 The proof is completed by normalizing $u$.
\end{proof}

For all $I_{3} \gg 1$, $u^{*}(\theta)$ concentrates at the unique stable
equilibrium of $\theta' = - (1 + 2 \cos \theta)$, which is $\frac{4 \pi}{3}$. Therefore, we have the bound
$$
  \int_{0}^{2\pi} \sin \theta u^{*}(\theta) \mathrm{d} \theta < - \frac{3}{4}
$$
for all sufficiently large values of $I_{3}$.

Note that the concentration of $u^{*}(\theta)$ is similar to that in Lemmata \ref{concentrationtheta} and \ref{vartheta}. Most of
the probability density of $u^{*}$ is concentrated around $\frac{4
\pi}{3}$. Again, let $\Theta^{*}$ denote a random variable on
$\mathbb{S}^{1}$ with probability density function $u^{*}$. Using the same calculation as in Lemma \ref{vartheta}, we obtain the estimate
\begin{equation}
  \label{vartheta1}
  \mathrm{Var}[ \Theta^{*}] = O(I_{3}^{-1}) \,.
\end{equation}

Let $\mu^{*}$ denote the invariant probability measure corresponding to the probability density function $u(\theta)$. When $I_{1} \ll I_{3}$, the dynamics of equation \eqref{theta13} form only a small perturbation of those of \eqref{theta3}. Therefore, we apply the level set method introduced in \cite{huang2015integral} to estimate the concentration of the stationary distribution.

The basic idea of the level set method is that if the infinitesimal generator $\mathcal{L}$ of a stochastic differential equation admits a Lyapunov function $U$ such that $\mathcal{L} U \leq - \gamma < 0$ in a given set, then the probability density of the level set of $U$ must decay at least exponentially with respect to the value at the level set. Let $\rho$ be a constant. Denote the sub-level set of $U$ by $\Omega_{\rho} := \{ U(x) \leq \rho \}$. Then, the probability measure of $\Omega_\rho^c$ has an exponentially small upper bound. Similarly, if $\mathcal{L}$ admits an anti-Lyapunov function such that $\mathcal{L} U \geq \gamma > 0$, then the
probability of $\Omega_{\rho}$ has an exponentially small upper bound. We refer to Theorems A and B in \cite{huang2015integral} for full details.

\begin{lem}
  \label{smallI1}
  If $I_{1}^{2}/I_{3} < \gamma/11$ and $I_{3}/\gamma > 500$, then
  
$$
  \mathbb{E}_{\pi^{I_{1}, I_{3}}}[I_{3} \sin \theta_{3} + I_{1} \sin
  \theta_{1}] \leq - 0.48 I_{3}
$$
\end{lem}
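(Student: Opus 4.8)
The plan is to reduce the assertion to a concentration estimate for the $\theta_3$–marginal of $\pi^{I_1,I_3}$ near the stable equilibrium $4\pi/3$ of the $\theta_3$–dynamics, and to establish that estimate by the level set method of \cite{huang2015integral} applied to the generator $\mathcal{L}^{I_1,I_3}$ of the subsystem \eqref{theta13}. First, since $I_1^2/I_3<\gamma/11$ and $I_3/\gamma>500$ give $I_1^2<\gamma I_3/11<I_3^2/5500$, we have $|I_1\sin\theta_1|\le I_1<I_3/74$ pointwise, so it is enough to show $\mathbb{E}_{\pi^{I_1,I_3}}[\sin\theta_3]\le -0.494$; then $\mathbb{E}_{\pi^{I_1,I_3}}[I_3\sin\theta_3+I_1\sin\theta_1]\le(-0.494+1/74)I_3<-0.48\,I_3$. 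For the concentration I would fix, once and for all, a smooth Morse function $U$ on $\mathbb{S}^1$ with a unique nondegenerate global minimum at $4\pi/3$, a unique nondegenerate global maximum at $2\pi/3$, strictly monotone on each of the two arcs joining them, and with $\|U'\|_\infty$ and $\|U''\|_\infty$ of order one; in particular $U'(2\pi/3)=U'(4\pi/3)=0$ and $U'$ vanishes nowhere else.

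Writing $b_3(\theta_1,\theta_3)=-I_3(1+2\cos\theta_3)-2I_1\cos\theta_1$ for the drift of $\theta_3$ in \eqref{theta13}, we have $\mathcal{L}^{I_1,I_3}U=b_3U'+\tfrac\gamma2 U''$. Outside fixed small neighborhoods of $2\pi/3$ and $4\pi/3$, $|1+2\cos\theta_3|$ is bounded below by an absolute constant and on each arc $U'$ has the sign making $-I_3(1+2\cos\theta_3)U'<0$, so this term is $\le -cI_3$; since $I_1<I_3/74$ and $\gamma<I_3/500$, the $I_1$–perturbation and the diffusion term are negligible and $\mathcal{L}^{I_1,I_3}U\le -c'I_3$ there. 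Near the minimum $4\pi/3$ the linearization $1+2\cos\theta_3=\sqrt3(\theta_3-4\pi/3)+O((\theta_3-4\pi/3)^2)$ shows $\mathcal{L}^{I_1,I_3}U$ is positive only inside an $O(\sqrt{\gamma/I_3})$–neighborhood of $4\pi/3$ (there $\tfrac\gamma2U''>0$ dominates) and strictly negative just outside it.

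The delicate regime is the unstable equilibrium $2\pi/3$, where the principal drift $-I_3(1+2\cos\theta_3)$ degenerates and $U'(2\pi/3)=0$. Put $\psi_2=\theta_3-2\pi/3$ and $\kappa_2=-U''(2\pi/3)>0$; from $1+2\cos\theta_3=-\sqrt3\,\psi_2+O(\psi_2^2)$ and $U'(\theta_3)=-\kappa_2\psi_2+O(\psi_2^2)$ one gets, for small $|\psi_2|$,
\begin{equation*}
\mathcal{L}^{I_1,I_3}U=-\kappa_2\Bigl(\sqrt3\,I_3\,\psi_2^2-2I_1\cos\theta_1\,\psi_2+\tfrac\gamma2\Bigr)+(\mbox{lower order in }\psi_2).
\end{equation*}
The quadratic in $\psi_2$ has positive leading coefficient and discriminant $4I_1^2\cos^2\theta_1-2\sqrt3\,I_3\gamma$, which is strictly negative since $I_1^2\cos^2\theta_1\le I_1^2<\gamma I_3/11<(\sqrt3/2)\,\gamma I_3$; hence the parenthesis is uniformly positive (bounded below by about $0.44\,\gamma$) and $\mathcal{L}^{I_1,I_3}U\le -c''\gamma<0$ on a fixed neighborhood of $2\pi/3$. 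This is precisely the role of $I_1^2/I_3<\gamma/11$: it keeps the $I_1$–perturbation dominated by the negative diffusion term at the point where the dominant drift vanishes. Combining the three regimes, $\mathcal{L}^{I_1,I_3}U\le -c<0$ on all of $\mathbb{T}^2$ except an $O(\sqrt{\gamma/I_3})$–neighborhood $N$ of $\{\theta_3=4\pi/3\}$.

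Theorems~A and B of \cite{huang2015integral} then give exponential decay of $\pi^{I_1,I_3}(\{U(\theta_3)>\rho\})$ past the level $\max_NU$, with rates governed by the drift bounds above (which run from $\sim\gamma$ near $2\pi/3$ up to $\sim I_3$ elsewhere); because $I_3/\gamma>500$ the $I_1=0$ benchmark of Lemma~\ref{theta3inv} and \eqref{vartheta1} shows this decay is in fact very fast, and the estimate transfers to the perturbed measure, forcing $\pi^{I_1,I_3}(\{|\theta_3-4\pi/3|>\delta\})$ to be small for a fixed small $\delta$. On $\{|\theta_3-4\pi/3|\le\delta\}$ we have $\sin\theta_3=-\tfrac{\sqrt3}{2}\cos\psi-\tfrac12\sin\psi\le-\tfrac{\sqrt3}{2}+\tfrac{\sqrt3}{4}\delta^2+\tfrac\delta2$ with $\psi=\theta_3-4\pi/3$, and elsewhere $\sin\theta_3\le1$ on a set of small mass; taking $\delta$ small and using $I_3/\gamma>500$ to control the tail gives $\mathbb{E}_{\pi^{I_1,I_3}}[\sin\theta_3]\le-0.494$, which together with the first paragraph yields the lemma. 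The main obstacle is the construction and verification of the Lyapunov inequality for $U$, above all in the neighborhood of the unstable equilibrium $2\pi/3$, where only the interplay of the negative diffusion term with the smallness encoded in $I_1^2/I_3<\gamma/11$ makes $\mathcal{L}^{I_1,I_3}U$ negative; a secondary difficulty is extracting from the level set estimates a concentration bound strong enough to reach the explicit constant $0.48$.
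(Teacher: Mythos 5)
Your reduction (using $I_1<I_3/\sqrt{5500}$ to replace the target by $\mathbb{E}_{\pi^{I_1,I_3}}[\sin\theta_3]\le-0.494$) and your local computation at the unstable equilibrium are sound: the discriminant argument showing that $\sqrt3\,I_3\psi_2^2-2I_1\cos\theta_1\,\psi_2+\tfrac\gamma2\ge 0.44\gamma$ under $I_1^2/I_3<\gamma/11$ is, up to a sign flip, exactly the computation the paper performs for its anti-Lyapunov function $W_3$ near $2\pi/3$ (there $\mathcal{L}^{1,3}W_3\ge\gamma-1.35\,I_1^2/I_3\ge\gamma/2$). The structural difference is that you try to run the entire concentration argument with a single smooth function $U$ and Theorem A of \cite{huang2015integral}, whereas the paper deliberately uses two functions: a Lyapunov function $U_3$ whose drift bound $-0.063\,I_3$ holds only away from a neighborhood of $2\pi/3$, together with an anti-Lyapunov function at $2\pi/3$ fed into Theorem B.

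The genuine gap is the concentration step, which you flag but do not close. With your single $U$, the uniform drift constant on the region where $\mathcal{L}^{1,3}U<0$ holds is only $O(\gamma)$ (it is set by the neighborhood of $2\pi/3$), while the weight $H(t)\ge\gamma\,(\partial_{\theta_3}U)^2$ in the level-set estimate is also $O(\gamma)$ since $\|U'\|_\infty=O(1)$; the resulting exponent is therefore at best a small constant (a curvature ratio of $U$) times $\log(I_3\delta^2/\gamma)$, not of order $I_3/\gamma$, and under the sole hypothesis $I_3/\gamma>500$ nothing in the proposal shows it forces the tail mass outside the $\delta$-neighborhood of $4\pi/3$ below the roughly $0.15$--$0.2$ needed to reach $-0.494$, let alone the explicit $-0.48\,I_3$. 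The sentence claiming that the $I_1=0$ benchmark of Lemma \ref{theta3inv} and \eqref{vartheta1} shows the decay "is in fact very fast, and the estimate transfers to the perturbed measure" is not a proof step: controlling $\pi^{I_1,I_3}$, for which no explicit density exists and where $I_1$ can be as large as $\sqrt{\gamma I_3/11}$, is precisely the content of the lemma, and no perturbation mechanism is supplied. The paper avoids this trap by never asking the $O(\gamma)$ drift near $2\pi/3$ to produce exponential decay on its own: away from $2\pi/3$ it uses the strong constant $0.063\,I_3$, giving masses like $e^{-0.0148\,I_3/\gamma}$ and $e^{-0.025\,I_3/\gamma}$ for the sets $A_2$, $A_1$, and then Theorem B with the anti-Lyapunov function bounds the mass of the $2\pi/3$-neighborhood by $71.21\,\mu(A_1)$, i.e., by a constant multiple of a quantity already known to be exponentially small; summing gives mass at least $0.9999$ on $[7\pi/6,5\pi/3]$ and hence the constant $0.48$. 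To make your route work you would need either a level-dependent version of the level-set estimate (the refinement the paper attributes to \cite{li2016systematic}) or a two-function argument of the paper's type; as written, the key quantitative bound is missing.
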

\begin{proof}
Let $\eta$ be a small positive number such that $\eta \ll I_{1}$. Define a Lyapunov function $U_{3}$ on $\mathbb{T}^{2}$ such that  
$$
U_{3}(\theta_{1}, \theta_{3}) = \left \{ 
\begin{array}[tb]{lcl}
 (\theta_{3} - \frac{4\pi}{3})^{2}&\mbox{ if } & \theta_{3} \in [\frac{4
                                                 \pi}{3} + \eta, 2
                                                 \pi] \\
  (\theta_{3} + \frac{2 \pi }{3})^{2} &\mbox{ if } &\theta_{3} \in [0, \frac{2 \pi}{3}]\\
  4 (\theta_{3} - \frac{4 \pi }{3})^{2} &\mbox{ if } &\theta_{3} \in
                                          [\frac{2\pi}{3}, \frac{4\pi}{3} - \eta]\\
\phi( \theta_{3})& \mbox{ if }& \theta_{3} \in [\frac{4\pi}{3} - \eta,
  \frac{4\pi}{3} + \eta]  \,,
\end{array}
\right .
$$
where $\phi(\theta_{3})$ is an interpolation function ensuring that 
$U_{3}$ has continuous second order derivatives and satisfies
$\phi'' \leq 8$, $\phi'(\theta) \geq 0$ for $\theta \geq 4\pi/3$, and $\phi'(\theta)
\leq 0$ for $\theta \leq 4\pi/3$. 

Note that the assumption implies $I_{1}^{2}/I_{3}^{2} < 1/5500$, which implies
$I_{3} > 5500^{-1/2} I_{1}$. Therefore,  if $\theta_{3} \in [2
\pi /3 + 0.08, 4\pi/3 - 0.08]$, we have
$$
  I_{3}(1 + 2 \cos \theta_{3}) + 2 I_{1} \cos \theta_{1} < -0.1 I_{3}\,.
$$
If $\theta_{3} \in [4 \pi /3 + 0.16, 2\pi] \cup [0, 2 \pi/3 - 0.16]$,
we have
$$
  I_{3}(1 + 2 \cos \theta_{3}) + 2 I_{1} \cos \theta_{1} > 0.25 I_{3}\,.
  $$
Define $\mathcal{L}^{1,3}$ as the infinitesimal generator of equation
\eqref{theta13}.  
$$
  \mathcal{L}^{1,3} U_{3} = -2 [ I_{3} (1 + 2 \cos \theta_{3}) + 2 I_{1}
  \cos \theta_{1}] U_{3}' + \frac{1}{2} \gamma U_{3}'' \,.
$$
Hence
$$
  \mathcal{L}^{1.3} U_{3} \leq - 0.25 \times 2 \times 0.16
  I_{3} + \gamma < -0.08 I_{3} + \gamma
  $$
  for $\theta_{3} \in [4 \pi /3 + 0.16, 2\pi] \cup [0, 2 \pi/3 -
  0.16]$ and
$$
  \mathcal{L}^{1.3} U_{3} \leq -0.1 \times 8 \times 0.08
  I_{3} + \gamma < -0.064 I_{3} + \gamma
  $$  
  for $\theta_{3} \in [2 \pi /3 + 0.08, 4\pi/3 - 0.08]$. Since $I_{3}
  > 500 \gamma$, it follows that
  $\mathcal{L}^{1,3}U_{3} \leq - 0.063 I_{3}$ for $\theta_{3} \in [2
  \pi /3 + 0.08, 4\pi/3 - 0.08] \cup [4 \pi /3 + 0.16, 2\pi] \cup [0, 2 \pi/3 -
  0.16]$.

 Let $\mathcal{U} = [0, 2 \pi /3 - 0.08] \cup [ 2 \pi /3 + 0.16, 2
 \pi]$. Let $\mu$ be the invariant measure on $[0, 2\pi]^{2}$
 satisfying $\mu( \mathcal{U}) = 1$. Let $\rho_{m} = 4 \times 0.08^{2} =
 0.0256$ and $\rho_{M} = 4\times (2 \pi /3 - 0.08 - 4 \pi /3)^{2} = 16.2312$
 be the essential lower and upper bound of $U_{3}$ respectively. Then
 for each $\rho \in [\rho_{m}, \rho_{M}]$, Theorem A(b) of
 \cite{huang2015integral} implies that
 
\begin{equation}
  \label{levelset}
  \mu( \mathcal{U} \setminus \Omega_{\rho}) \leq \exp \{ - 0.063 I_{3}
  \int_{\rho_{m}}^{\rho}\frac{1}{H(t)} \mathrm{d}t\} \,,
\end{equation}
where $\Omega_{\rho} := \{ (\theta_{1}, \theta_{3})\,|\,
U_{3}(\theta_{1}, \theta_{3}) \leq \rho \}$ is the $\rho$-sublevel
set, and $H(t)$ satisfies
$$
  \gamma (\frac{\partial U_{3}}{\partial \theta_{3}}(\theta_{1}, \theta_{3}))^{2} \leq H(t)
$$
 for all $(\theta_{1}, \theta_{3}) \in U_{3}^{-1}(t), t \in [0,
 \rho_{M}]$. A straightforward calculation shows that $H(t) = 16 \gamma t$ satisfies
 this requirement. 

 Therefore, when $\rho = (4\pi/3 - 0.2)^{2} = 15.9104$, we have
 
\begin{align*}
 & \mu([2 \pi/3 + 0.08, 2 \pi /3 + 0.1] \cup [2 \pi /3 - 0.2, 2\pi/3 -
   0.16] ) := \mu(A_{1}) \\
  \leq & e^{-0.063 I_{3}\int_{0.0256}^{15.9104} (16
    \gamma t)^{-1} \mathrm{d}t} < e^{-0.025 I_{3}/\gamma} \,.
\end{align*}
When $\rho = 4\times (4 \pi /3 - 7 \pi /6)^{2} = \pi^{2}/9 \approx 1.0966$, we have
\begin{align*}
 & \mu([2\pi/3 + 0.08, 7\pi/6]\cup [5\pi/3, 2\pi] \cup [0, 2\pi/3 -
   0.16]):= \mu(A_{2})\\
  \leq & e^{-0.063 I_{3}\int_{0.0259}^{1.0966} (16
    \gamma t)^{-1} \mathrm{d}t} < e^{-0.0148 I_{3}/\gamma} \,.
 \end{align*}
  Since $I_{3}/\gamma > 500$, we have $\mu(A_{1}) < 3.8 \times
  10^{-6}$ and $\mu_{A_{2}} < 6.2 \times 10^{-4}$. We refer to Figure \ref{fig:circ} for an illustration of sets $A_1$, $A_2$ and the dynamics of equation \eqref{theta3} on $\mathbb{S}^1$.
  
  \medskip

\begin{figure}
    \centering
    \includegraphics[width=0.5\linewidth]{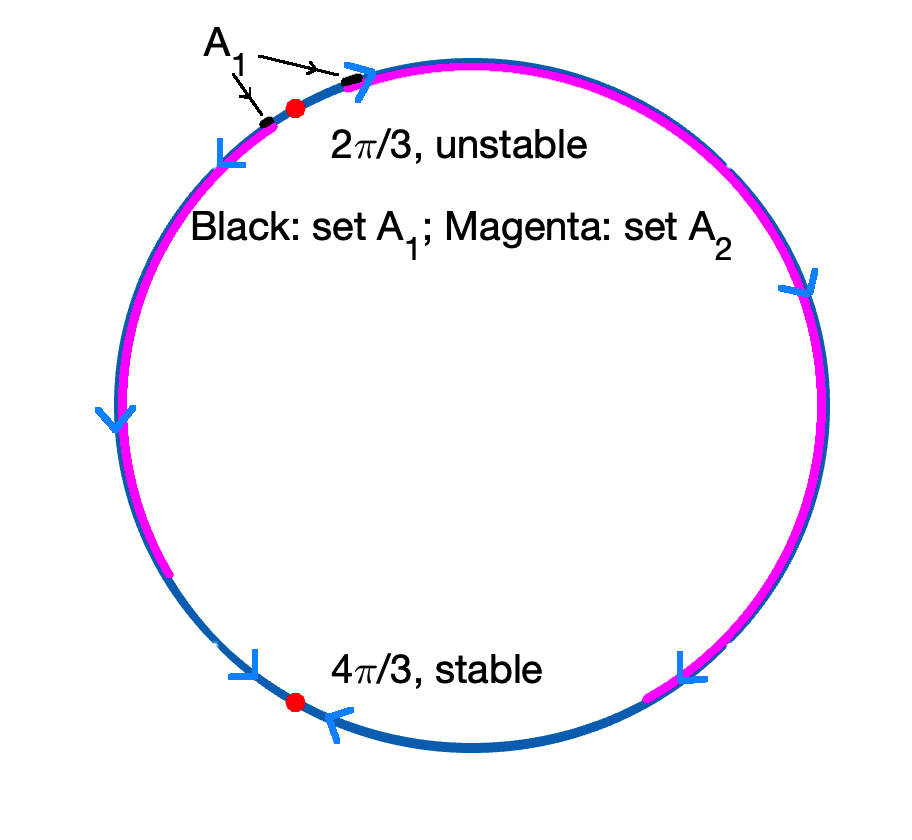}
    \caption{Illustration of the dynamics of equation \eqref{theta3} and sets $A_1$, $A_2$ used in the proof. }
    \label{fig:circ}
\end{figure}

It remains to estimate $\mu([2\pi/3 - 0.16, 2\pi/3 + 0.08])$. Since
$I_{1} \ll I_{3}$, our aim is to show that equation \eqref{theta13}
admits an anti-Lyapunov function in a small neighborhood of $2\pi/3$
so that the stationary distribution cannot concentrate at $2\pi/3$.

Let $\eta \ll I_{1}$ be a small number. Define an anti-Lyapunov function $W_{3}$ on $\mathbb{T}^{2}$ such that  
$$
W_{3}(\theta_{1}, \theta_{3}) = \left \{ 
\begin{array}[tb]{lcl}
 4(\theta_{3} - \frac{2\pi}{3})^{2}&\mbox{ if } & \theta_{3} \in [\frac{2
                                                 \pi}{3} + \eta,
                                                 \frac{2\pi}{3} + 0.1] \\
  (\theta_{3} - \frac{2 \pi }{3})^{2} &\mbox{ if } &\theta_{3} \in
                                                     [\frac{2
                                                     \pi}{3}-0.2,
                                                     \frac{2\pi}{3} - \eta]\\
\phi( \theta_{3})& \mbox{ if }& \theta_{3} \in [\frac{2\pi}{3} - \eta,
  \frac{2\pi}{3} + \eta]  \,,
\end{array}
\right .
$$
where $\phi(\theta_{3})$ is an interpolation function ensuring that
$W_{3}$ has continuous second order derivatives and satisfies
$\phi'' \leq 8$, $\phi'(\theta) \geq 0$ for $\theta \geq 2\pi/3$, and $\phi'(\theta)
\leq 0$ for $\theta \leq 2\pi/3$. Notice that the derivative of $-(1 + 2
\cos \theta_{3})$ is at least $1.49$ in the interval $[2\pi/3 - 0.2, 2\pi/3 + 0.1]$, we have
\begin{align*}
  \mathcal{L}^{1,3} W_{3} &= - [I_{3}(1 + 2 \cos \theta_{3}) + 2 I_{1}
  \cos \theta_{1}] \frac{\partial W_{3}}{\partial \theta_{3}} +
  \frac{1}{2}\gamma \frac{\partial^{2} W_{3}}{\partial
                            \theta_{3}^{2}}\\
  \geq&-4 I_{1}|\theta_{3} - 2\pi/3| + \gamma + 2.98 I_{3} |\theta_{3}
        - 2 \pi/3|^{2}\\
   \geq & \gamma - 1.35 I_{1}^{2}/I_{3}     
\end{align*}
for $\theta_{3} \in [2\pi/3 -0.2, 2\pi/3 - \eta]$, where the first
inequality follows from a Taylor expansion of $1 + 2 \cos \theta_{3}$
and the second inequality follows from the minimum of the quadratic
function $-8I_{1}x + 5.98I_{3} x^{2}$. Similar calculation in the
interval $[2\pi/3 + \eta, 2\pi/3 + 0.1]$ gives
$$
  \mathcal{L}^{1,3} W_{3}  \geq 4 (\gamma - 1.35 I_{1}^{2}/I_{3}) \,.
$$
Since $\eta \ll I_{1}$, in $[2\pi/3 - \eta, 2\pi/3 + \eta]$ we can
easily have $\mathcal{L}^{1,3} W_{3} \geq \gamma/2$. Therefore,
because $I_{1}^{2}/I_{3} < \gamma/11$, $W_{3}$ is an anti-Lyapunov
function with $\mathcal{L}^{1,3} W_{3} \geq \gamma/2$.

Let $\Omega^{w}_{\rho} := \{ (\theta_{1}, \theta_{3}) \,|\, W_{3} \leq
\rho \}$ be the sublevel set of $W_{3}$. The upper bound
$$
  \gamma (\frac{\partial W_{3}}{\partial \theta_{3}}(\theta_{1}, \theta_{3}))^{2} \leq H(t)
$$
is still $16 \gamma t$. Therefore, by Theorem B(a) of \cite{huang2015integral}, we
have
$$
  \mu( \Omega^{w}_{0.0256} )\leq \frac{1}{e^{\frac{1}{32}
      \int_{0.0256}^{0.04} t^{-1} \mathrm{d}t} - 1}
  \mu(\Omega^{w}_{0.04}\setminus \Omega^{w}_{0.0256} ) \leq 71.21
  \mu(\Omega^{w}_{0.04}\setminus \Omega^{w}_{0.0256} ) \,.
$$
Notice that $\Omega^{w}_{0.0256} = [2\pi/3 - 0.16, 2\pi/3 + 0.08]$ and
$\Omega^{w}_{0.04}\setminus \Omega^{w}_{0.0256}  = [2 \pi/3 + 0.08, 2 \pi /3 + 0.1] \cup [2 \pi /3 - 0.2, 2\pi/3 -
0.16] = A_{1}$. Therefore, we have
$$
  \mu( [2\pi/3 - 0.16, 2\pi/3 + 0.08]) \leq 71.21 \times \mu(A_{1})
  \leq 2.8\times 10^{-4} \,.
$$
In other words, $\mu([0, 2 \pi]^{2}) \leq 1 + 2.8 \times
10^{-4}$. Hence, the difference between $\mu$ and the normalized
probability measure, which is $\pi^{I_{1}, I_{3}}$, is negligible. A straightforward calculation shows that
$$
  \pi^{I_{1}, I_{3}}( [7\pi/6, 5\pi/3]) = 1 -
  \pi^{I_{1}, I_{3}}(\Omega^{w}_{0.0256}) - \pi^{I_{1}, I_{3}}(A_{2}) \geq 0.9999 \,.
  $$
Therefore,
 
\begin{align*}
  \mathbb{E}_{\pi^{I_{1}, I_{3}}}[ I_{1} \sin \theta_{1} + I_{3}\sin
  \theta_{3}] &\leq I_{1} + I_{3}(1 - \pi^{I_{1}, I_{3}}( [7\pi/6,
                5\pi/3])) - \frac{1}{2}I_{3} \pi^{I_{1}, I_{3}}([7\pi/6,
                5\pi/3])) \\
  &\leq I_{1} - 0.499 I_{3} < -0.48 I_{3} 
\end{align*} 
because the condition in the Lemma implies $I_{1} < 1/\sqrt{5500}
I_{3}$. This completes the proof.
\end{proof}

We note that the estimate in Lemma \ref{smallI1} is not optimal. To apply Theorem A and Theorem B in \cite{huang2015integral}, we use uniform Lyapunov and anti-Lyapunov constants. This condition can be significantly relaxed by using a more refined version of the level set method developed in \cite{li2016systematic}.

Theorem \ref{thm31} then follows from Lemma \ref{smallI1}.

\begin{proof}[Proof of Theorem \ref{thm31}]
  Let
$$
  \langle I_{3} \rangle := \int_{\mathbb{R}^{2}}  I_{3} u_{1}(I_{1})u_{3}(I_{3})
  \mathrm{d}I_{1} \mathrm{d}I_3 = \sqrt[3]{\frac{3T_{3}}{4}}\frac{\Gamma(5/3)}{\Gamma(4/3)} \,.
  $$
  It is easy to see that $\langle I_{3} \rangle$ increases with $T_{3}$.
  Since the marginal distribution of $I_{1}$ is explicitly given in
  Lemma \ref{invI1I3}, there exists a constant $I_{1}^{*}$ depending only on $T_{1}$ such that
$$
  \int_{\{ I_{1} > I_{1}^{*} \}}  I_{3} u_{1}(I_{1})u_{3}(I_{3})
  \mathrm{d}I_{1} \mathrm{d} I_{3}= \langle I_{3} \rangle \int_{I_{1}^{*}}^{\infty}
  u_{1}( I_{1}) \mathrm{d} I_{1} < 0.1 \langle I_{3} \rangle\,.
  $$
 Let $C_{I_{1}^{*}} := \max\{ 500 \gamma, 11 I_{1}^{2}/\gamma\}$. By making $T_{3}$ large enough, we obtain
$$
  \int_{0}^{C_{I_{1}^{*}}} I_{3} u_{3}(I_{3}) \mathrm{d} I_{3} < 0.1 \langle I_{3} \rangle \,.
$$
Therefore, let
$$
A = \{ (I_{1}, I_{3}) \,|\, I_{1} \leq I_{1}^{*}
\mbox{ and } I_{3} \geq C_{I_{1}^{*}}  \}
$$
and $\bar{A}$ be the complement of $A$. We have
\begin{align*}
  &\int_{\bar{A}}  I_{3} u_{1}(I_{1})u_{3}(I_{3})
  \mathrm{d}I_{1} \mathrm{d} I_{3} \\
  \leq& \int_{\{ I_{1} \geq I_{1}^{*} \}}  I_{3} u_{1}(I_{1})u_{3}(I_{3})
  \mathrm{d}I_{1} \mathrm{d} I_{3} + \int_{\{ I_{3} \leq  C_{I_{1}^{*}} \}}  I_{3} u_{1}(I_{1})u_{3}(I_{3})
    \mathrm{d}I_{1} \mathrm{d} I_{3}\\
  \leq & 0.1 \langle I_{3} \rangle + \int_{0}^{\infty} u_{1}(I_{1}) \mathrm{d} I_{1}
         \int_{0}^{C_{I_{1}^{*}}} I_{3} u_{3}(I_{3}) \mathrm{d} I_{3}
  \\
  \leq & \frac{1}{5} \langle I_{3} \rangle \,. 
\end{align*}
In other words, we have
$$
  \int_{A} I_{3} u_{1}(I_{1})u_{3}(I_{3}) \mathrm{d}I_{1} \mathrm{d}
  I_{3} \geq \frac{4}{5}\langle I_{3} \rangle\,.
$$

  Since the dynamics of $(I_{1}, I_{3})$ are independent of $(\theta_{1}, \theta_{3})$, we have
\begin{align*}
 & \int_{\Omega^{b}} (I_{1} \sin \theta_{1} + I_{3} \sin \theta_{3}) \pi^{b}( \mathrm{d}
  I_{1} \mathrm{d} I_{3} \mathrm{d}\theta_{1} \mathrm{d}\theta_{3} )
  \\
  = &  \int_{\mathbb{R}^{2}_{+}} \left (
      \int_{\mathbb{T}^{2}}I_{1}\sin \theta_{1} + I_{3}
         \sin \theta_{3} \pi^{I_{1}, I_{3}}( \mathrm{d} \theta_{1}
         \mathrm{d} \theta_{3})\right ) u_{1}(I_{1})u_{3}(I_{3})
      \mathrm{d} I_{1} \mathrm{d} I_{3}\\
 =& \int_{A} \mathbb{E}_{\pi^{I_{1}, I_{3}}}[ I_{1} \sin \theta_{1} +
    I_{3} \sin \theta_{3}] u_{1}(I_{1})u_{3}(I_{3})
      \mathrm{d} I_{1} \mathrm{d} I_{3} + \int_{\bar{A}} \mathbb{E}_{\pi^{I_{1}, I_{3}}}[ I_{1} \sin \theta_{1} +
    I_{3} \sin \theta_{3}] u_{1}(I_{1})u_{3}(I_{3})
      \mathrm{d} I_{1} \mathrm{d} I_{3} \,.
\end{align*}
Note that every $(I_{1}, I_{3}) \in A$ satisfies $I_{1}^{2}/I_{3} <
\gamma/11$ and $I_{3}/\gamma > 500$. By Lemma \ref{smallI1}, we have
$$
  \int_{A} \mathbb{E}_{\pi^{I_{1}, I_{3}}}[ I_{1} \sin \theta_{1} +
    I_{3} \sin \theta_{3}] u_{1}(I_{1})u_{3}(I_{3})
      \mathrm{d} I_{1} \mathrm{d} I_{3} \leq -\int_{A} 0.48 I_{3} u_{1}(I_{1})u_{3}(I_{3})
      \mathrm{d} I_{1} \mathrm{d} I_{3} \,.
      $$
      In addition, we have
      
$$
  \int_{\bar{A}} \mathbb{E}_{\pi^{I_{1}, I_{3}}}[ I_{1} \sin \theta_{1} +
    I_{3} \sin \theta_{3}] u_{1}(I_{1})u_{3}(I_{3})
      \mathrm{d} I_{1} \mathrm{d} I_{3}  \leq \int_{\bar{A}} I_{3} u_{1}(I_{1})u_{3}(I_{3})
      \mathrm{d} I_{1} \mathrm{d} I_{3}  + \int_{\Omega^{b}} I_{1}u_{1}(I_{1})u_{3}(I_{3})
      \mathrm{d} I_{1} \mathrm{d} I_{3} \,.
$$
The calculation above gives
$$
  \int_{\bar{A}} I_{3} u_{1}(I_{1})u_{3}(I_{3})
      \mathrm{d} I_{1} \mathrm{d} I_{3}  \leq \frac{1}{5} \langle
      I_{3} \rangle \,.
$$
In addition
$$
  \int_{\Omega^{b}} I_{1} u_{1}(I_{1})u_{3}(I_{3})
      \mathrm{d} I_{1} \mathrm{d} I_{3} = \int_{0}^{\infty} I_{1}
      u_{1}( \mathrm{d} I_{1})  =
      \sqrt[3]{\frac{3T_{1}}{4}}\frac{\Gamma(5/3)}{\Gamma(4/3)}  := \langle I_{1} \rangle  \,.
      $$
Therefore, one can make $T_{3}$ large enough such that $\langle I_{1}
\rangle \leq 0.03 \langle I_{3} \rangle$. This gives
$$
  \int_{\Omega^{b}} (I_{1} \sin \theta_{1} + I_{3} \sin \theta_{3}) \pi^{b}( \mathrm{d}
  I_{1} \mathrm{d} I_{3} \mathrm{d}\theta_{1} \mathrm{d}\theta_{3} )
  \leq 0.2 \langle I_{3} \rangle + 0.03 \langle I_{3} \rangle - 0.48
  \langle I_{3} \rangle = -\frac{1}{4}\langle I_{3} \rangle \,.
$$
This completes the proof. 
\end{proof}

We now calculate the variance of $h^{b}$ with respect to $\pi^{b}$ by proving the following theorem.

\begin{thm}
  \label{varh}
$$
  \mathrm{Var}_{\pi^b}[h] = \mathrm{Var}_{\pi^{b}}[ I_1 \sin \theta_1 + I_3 \sin \theta_3] < \frac{1}{10}T_{3}^{2/3}
  $$
  provided that $\gamma$ and $T_{1}$ are sufficiently small. 
\end{thm}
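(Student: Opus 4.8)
The plan is to exploit the fact that, under $\pi^{b}$, the observable $h=I_{1}\sin\theta_{1}+I_{3}\sin\theta_{3}$ is with overwhelming probability extremely close to $-\tfrac{\sqrt{3}}{2}I_{3}$ (because $\theta_{3}$ concentrates at $4\pi/3$ and $I_{1}\ll I_{3}$), so that $\mathrm{Var}_{\pi^{b}}[h]$ is essentially controlled by $\tfrac{3}{4}\mathrm{Var}_{\pi^{b}}[I_{3}]$, a quantity computable in closed form from Lemma \ref{invI1I3} and strictly below $\tfrac{1}{10}T_{3}^{2/3}$ with a positive margin. Concretely, writing $\langle I_{3}\rangle:=\mathbb{E}_{\pi^{b}}[I_{3}]$ and using $\mathrm{Var}_{\pi^{b}}[h]=\inf_{c\in\mathbb{R}}\mathbb{E}_{\pi^{b}}[(h-c)^{2}]$, I would center at $c=-\tfrac{\sqrt{3}}{2}\langle I_{3}\rangle$ and write
$$
h+\tfrac{\sqrt{3}}{2}\langle I_{3}\rangle=-\tfrac{\sqrt{3}}{2}(I_{3}-\langle I_{3}\rangle)+I_{3}\bigl(\sin\theta_{3}+\tfrac{\sqrt{3}}{2}\bigr)+I_{1}\sin\theta_{1},
$$
so that, applying $(a+b)^{2}\le(1+\delta)a^{2}+(1+\delta^{-1})b^{2}$ twice,
$$
\mathrm{Var}_{\pi^{b}}[h]\le(1+\delta)\tfrac{3}{4}\mathrm{Var}_{\pi^{b}}[I_{3}]+C_{\delta}\Bigl(\mathbb{E}_{\pi^{b}}\bigl[I_{3}^{2}(\sin\theta_{3}+\tfrac{\sqrt{3}}{2})^{2}\bigr]+\mathbb{E}_{\pi^{b}}[I_{1}^{2}]\Bigr)
$$
for any $\delta>0$, with $C_{\delta}$ depending only on $\delta$.

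For the main term I would use Lemma \ref{invI1I3}: the marginal of $I_{3}$ is a generalized Gamma density, whose moments are $\mathbb{E}_{\pi^{b}}[I_{3}^{k}]=(3T_{3}/4)^{k/3}\,\Gamma(\tfrac{k+4}{3})/\Gamma(\tfrac{4}{3})$, so $\mathrm{Var}_{\pi^{b}}[I_{3}]=(3T_{3}/4)^{2/3}\bigl(\Gamma(\tfrac{4}{3})^{-1}-\Gamma(\tfrac{5}{3})^{2}\Gamma(\tfrac{4}{3})^{-2}\bigr)\approx 0.081\,T_{3}^{2/3}$, hence $\tfrac{3}{4}\mathrm{Var}_{\pi^{b}}[I_{3}]\approx 0.061\,T_{3}^{2/3}$. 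Fixing $\delta$ small, say $\delta=\tfrac{1}{10}$, gives $(1+\delta)\tfrac{3}{4}\mathrm{Var}_{\pi^{b}}[I_{3}]<\tfrac{7}{100}T_{3}^{2/3}$, which leaves a margin of more than $\tfrac{3}{100}T_{3}^{2/3}$ for the rest. The $I_{1}$-contribution is harmless by the same moment formula, $\mathbb{E}_{\pi^{b}}[I_{1}^{2}]=(3T_{1}/4)^{2/3}/\Gamma(\tfrac{4}{3})\to0$ as $T_{1}\to0$.

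The remaining term $\mathbb{E}_{\pi^{b}}[I_{3}^{2}(\sin\theta_{3}+\tfrac{\sqrt{3}}{2})^{2}]$ I would handle by conditioning on $(I_{1},I_{3})$ and splitting according to the ``good'' set $G=\{I_{1}^{2}/I_{3}<\gamma/11,\ I_{3}/\gamma>500\}$, on which the hypotheses of Lemma \ref{smallI1} hold. On $G$ the point is to upgrade the coarse confinement $\pi^{I_{1},I_{3}}([7\pi/6,5\pi/3])\ge 1-o(1)$ from Lemma \ref{smallI1} to a sharp second-moment bound $\mathbb{E}_{\pi^{I_{1},I_{3}}}[(\theta_{3}-4\pi/3)^{2}]=O(\gamma/I_{3})+(\text{exponentially small in }I_{3}/\gamma)$, in the spirit of Lemma \ref{vartheta} and estimate \eqref{vartheta1}: using $U(\theta_{3})=(\theta_{3}-4\pi/3)^{2}$ as a local Lyapunov function and the expansion $1+2\cos\theta_{3}=\sqrt{3}(\theta_{3}-4\pi/3)+O((\theta_{3}-4\pi/3)^{2})$ one finds, near $4\pi/3$, $\mathcal{L}^{1,3}U\le -3\sqrt{3}\,I_{3}U+C\gamma+CI_{1}^{2}/I_{3}$, and on $G$ the last term is negligible so the estimate is genuinely dissipative. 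Since $(\sin\theta_{3}+\tfrac{\sqrt{3}}{2})^{2}\le(\theta_{3}-4\pi/3)^{2}$, this gives $\mathbb{E}_{\pi^{b}}[\mathbf{1}_{G}I_{3}^{2}(\sin\theta_{3}+\tfrac{\sqrt{3}}{2})^{2}]=O(\gamma)\mathbb{E}_{\pi^{b}}[I_{3}]+o(1)=O(\gamma T_{3}^{1/3})+o(1)$, which tends to $0$ as $\gamma\to0$. On $G^{c}$ I would use the trivial bound $(\sin\theta_{3}+\tfrac{\sqrt{3}}{2})^{2}\le4$ together with $G^{c}\subseteq\{I_{3}\le500\gamma\}\cup\{I_{3}\le 11I_{1}^{2}/\gamma\}$: the first piece contributes at most $4(500\gamma)^{2}$, and the second at most $\tfrac{4\cdot121}{\gamma^{2}}\mathbb{E}_{\pi^{b}}[I_{1}^{4}]=\tfrac{C}{\gamma^{2}}O(T_{1}^{4/3})$, both made arbitrarily small by taking first $\gamma$ and then $T_{1}$ small.

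Collecting the estimates, with $T_{3}$ fixed (and large, as assumed throughout Section \ref{lowenergy}), $\delta=\tfrac{1}{10}$, $\gamma$ sufficiently small, and then $T_{1}$ sufficiently small, the bound becomes $\mathrm{Var}_{\pi^{b}}[h]<\tfrac{7}{100}T_{3}^{2/3}+\tfrac{3}{100}T_{3}^{2/3}=\tfrac{1}{10}T_{3}^{2/3}$. The hard part will be the quantitative step on $G$: Lemma \ref{smallI1} controls only the probability that $\theta_{3}$ leaves an $O(1)$ neighborhood of $4\pi/3$, whereas here one needs $\mathbb{E}_{\pi^{I_{1},I_{3}}}[(\theta_{3}-4\pi/3)^{2}]=O(\gamma/I_{3})$ uniformly over the good region for the \emph{coupled} dynamics \eqref{theta13} with $I_{1}\neq0$. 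Controlling the coupling term $2I_{1}\cos\theta_{1}\,\partial_{\theta_{3}}$ by the dissipation $-3\sqrt{3}\,I_{3}U$ (using $I_{1}^{2}/I_{3}<\gamma/11$) and absorbing the noise $\tfrac{1}{2}\gamma\,\partial_{\theta_{3}}^{2}$ into the additive $C\gamma$ term is the technical heart of the argument; the rest is bookkeeping with the explicit Gamma moments of Lemma \ref{invI1I3}.
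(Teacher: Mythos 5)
Your proposal is correct in outline but follows a genuinely different route from the paper. The paper conditions on $(I_1,I_3)$ and uses the law of total variance for each of $I_1\sin\theta_1$ and $I_3\sin\theta_3$ separately, bounds the conditional angular variance by the WKB-type computation of Lemma \ref{vartheta} (claiming $\mathrm{Var}_{\pi^{I_1,I_3}}[\sin\theta_3]=O(\gamma I_3^{-1})$ for $I_3\gg\gamma$, worst-case $\pi^2$ otherwise), computes $\mathrm{Var}_{\pi^I}[I_3]\approx 0.08078\,T_3^{2/3}$ from Lemma \ref{invI1I3}, and controls the cross term by Cauchy--Schwarz; the final smallness conditions are $\gamma$ and $T_1$ small relative to $T_3$. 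You instead center $h$ at $-\tfrac{\sqrt3}{2}\langle I_3\rangle$ and use Young's inequality, so your main term carries the extra factor $\tfrac34=(\sqrt3/2)^2$, giving $\approx 0.061\,T_3^{2/3}$ and hence more margin than the paper's $0.081\,T_3^{2/3}$; and you restrict the angular concentration estimate to the good set $G=\{I_1^2/I_3<\gamma/11,\ I_3/\gamma>500\}$ of Lemma \ref{smallI1}, disposing of $G^c$ by trivial bounds and Gamma moments. This restriction is in fact more careful than the paper, which integrates the $O(\gamma/I_3)$ conditional bound over all $I_1$ without checking that the coupling term $2I_1\cos\theta_1$ in \eqref{theta13} stays subordinate; the price you pay is a nested smallness condition (your $G^c$ term forces roughly $T_1^{4/3}\ll\gamma^2$, which the paper avoids), still compatible with the statement. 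The one substantive piece you leave open --- the uniform stationary bound $\mathbb{E}_{\pi^{I_1,I_3}}[(\theta_3-4\pi/3)^2]=O(\gamma/I_3)$ on $G$, via a local Lyapunov function plus exponential tail bounds outside a neighborhood of $4\pi/3$ --- is exactly the ingredient the paper itself asserts by saying ``the same calculation as in Lemma \ref{vartheta}'' (cf.\ \eqref{vartheta1}), so your sketch is at least at the paper's level of detail there; note only the minor slip that linearizing $1+2\cos\theta_3$ at $4\pi/3$ gives dissipation rate $2\sqrt3\,I_3$ rather than $3\sqrt3\,I_3$, which does not affect the $O(\gamma/I_3)$ scaling.
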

\begin{proof}
Let $\pi^{I}$ denote the probability measure with probability density
function $u_{1}(I_{1})u_{3}(I_{3})$. Note that the conditional
distribution of $(\theta_{1}, \theta_{3})$ follows from equation
\eqref{theta13} for each given pair $(I_{1}, I_{3})$. By the law of total variance, we have
\begin{align*}
  \mathrm{Var}_{\pi^{b}}[ I_{3} \sin \theta_{3}]&
                                                   =\mathbb{E}_{\pi^{2}}[
                                                   \mathrm{Var}_{\pi^{I_{1},
                                                  I_{3}}}[I_{3}
                                                   \sin \theta_{3}
                                                   \,|\, I_{1}, I_{3}]]
  + \mathrm{Var}_{\pi^{I}}[ \mathbb{E}_{\pi^{I_{1}, I_{3}}}[ I_{3}\sin \theta_{3} \,|\,
                                                  I_{1}, I_{3}]] \\
  &:= V_{1} + V_{2} \,.
\end{align*}

It is easy to see that
$$
  V_{2} \leq \mathrm{Var}_{\pi^{I}}[I_{3}] = \left ( \frac{3
      T_{3}}{4}\right )^{2/3} \left ( \frac{\Gamma(2)}{\Gamma(4/3)} -
    \left ( \frac{\Gamma(5/3)}{\Gamma(4/3)}\right )^{2}\right )
  \approx 0.08078 T_{3}^{2/3}
  $$
  by the variance formula of the generalized Gamma distribution.

When $I_{3} \gg \gamma$, the same calculation in the proof of Lemma
\ref{vartheta} gives 
$$
  \mathrm{Var}_{\pi^{I_1, I_3}}[\sin \theta_{3}] = O(\gamma I_{3}^{-1} ) \,.
  $$
In addition, the variance of an angle is at most $\pi^{2}$. When
$I_{3} \geq \gamma^{1/2}$, we use the estimate $O(\gamma I_{3}^{-1} )
$. Otherwise, we bound the variance by the worst-case scenario. This gives
$$
  \mathrm{Var}_{\pi^{I_{1}, I_{3}}}[I_{3} \sin \theta_{3}] = I_{3}^{2}O(\gamma
  I_{3}^{-1})  = I_{3}O(\gamma)
$$
if $I_{3} \geq \gamma^{1/2}$ and
$$
  \mathrm{Var}_{\pi^{I_{1}, I_{3}}}[I_{3} \sin \theta_{3}] =  I_{3}^{2}\pi^{2}
$$
otherwise. 

Note that the integral of $I_{3}^{2}u_{3}(I_{3})$ up to $\gamma^{1/2}$ is of order
$O(\gamma^{3})$. We have
\begin{align*}
\mathbb{E}_{\pi^{I}}[\mathrm{Var}_{\pi^{I_{1},I_{3}}}[I_{3}\sin
  \theta_{3}\,|\, I_{1}, I_{3}]] &\leq
                                   \mathbb{E}_{\pi^{I}}[I_{3}]O(\gamma)
                                   + \mathbb{E}_{\pi^{I}}[ \pi^{2}I_{3}^{2}
                                   \mathbf{1}_{\{I_{3} \leq
                                   \gamma^{1/2}\}}]\\
  &= O(\gamma)T_{3}^{1/3} + O(\gamma^{3})
  \end{align*}
because the expectation of $I_{3}$ with respect to $u_{3}$ is
$O(T_{3}^{1/3})$. Therefore, we have
$$
  \mathrm{Var}_{\pi^{b}}[I_{3} \sin \theta_{3}] \leq 0.08078
  T_{3}^{2/3} + O(\gamma^{3}) + O(\gamma)T_{3}^{1/3} \,.
  $$

In addition we have  
\begin{align*}
  \mathrm{Var}_{\pi^{b}}[I_{1} \sin \theta_{1}] &=
 \mathbb{E}_{\pi^{I}} [\mathrm{Var}_{\pi^{I_{1}, I_{3}}}[I_{1}\sin
 \theta_{1} \,|\, I_{1}, I_{3}] ] + \mathrm{Var}_{\pi^{I}}[
 \mathbb{E}_{\pi^{I_{1}, I_{3}}}[I_{1} \sin \theta_{1} \,|\, I_{1},
                                                  I_{3}]] \\
  &:= V_{1}' + V_{2}' \,.
 \end{align*}
 A similar estimate holds for $V_{2}'$:
 
$$
  V'_{2} \leq \mathrm{Var}_{\pi^{2}}[I_{1}] \approx 0.08078 T_{1}^{2/3} \,.
$$
Consider the worst-case bound for the angular variance term $V_{1}'$, we have
$$
  V_{1}' \leq \mathbb{E}_{\pi^{2}}[I_{1}^{2}\pi^{2}] = O(T_{1}^{2/3}) \,.
  $$
  Hence
  
$$
  \mathrm{Var}_{\pi^{b}}[I_{1} \sin \theta_{1}]  = O(T_{1}^{2/3}) \,.
$$

  Finally, note that the covariance $\mathrm{COV}( I_{1} \sin
  \theta_{1}, I_{3}\sin \theta_{3})$ is bounded by
$$
  \sqrt{ \mathrm{Var}_{\pi^{b}}[I_{1} \sin \theta_{1}]
    \mathrm{Var}_{\pi^{b}}[I_{3} \sin \theta_{3}] } =
  \sqrt{O(T_{1}^{2/3}) \left (O(T_{3}^{2/3} )+ O(\gamma^{3}) +
      O(\gamma)T_{3}^{1/3}  \right )} \,.
$$

Recall that
$$
\mathrm{Var}_{\pi^b}[I_1\sin \theta_1 + I_3 \sin \theta_3] = \mathrm{Var}_{\pi^b}[I_1\sin \theta_1] +  \mathrm{Var}_{\pi^b}[I_3\sin \theta_3]  + 2 \mathrm{COV}(I_1 \sin \theta_a, I_3 \sin\theta_3) \,.
$$
Combining the estimate of all three terms in the equation above, when $\gamma \ll T_3^{2/9}$ and $T_{1} \ll T_3$ , the leading term
of $\mathrm{Var}_{\pi^{b}}[ I_1\sin \theta_1 + I_3 \sin \theta_3]$ is the variance of $I_{3}
\theta_{3}$. Thus, by making $\gamma$ and $T_{1}$ sufficiently small, we
have
$$
  \mathrm{Var}_{\pi^{b}}[ h] \leq \frac{1}{10}T_{3}^{2/3} \,.
$$
This completes the proof.   
\end{proof}

\subsection{Results for the reduced system Part IV: Proof of
  Assumption (L)}

Finally, we will prove assumption {\bf (L)} for suitable choices of parameters. 

\begin{thm}
  \label{pfL}
For any $\gamma >0, T_1 > 0$, if $T_3$ is sufficiently large, there exists a strictly positive pre-factor function $Q$ such
that
$$
 \left \{ 
\begin{array}[tb]{ll}
 \mathcal{L}^{b}Q + h( \mathbf{x}^{b}) Q \leq p(\mathbf{x}^{b})  &
                                                                   \mathbf{x}^{b}
                                                                   \in
                                                                   \Omega^{M}\\
  Q = q( \mathbf{x}_{b}) & I_{1} + I_{3} = M
\end{array}
\right .
$$
for all $C^{2}$ functions $p( \mathbf{x}^{b}) <0$ and $q( \mathbf{x}^{b})>0$, where
$\Omega^{M} = \{ I_{1} + I_{3} \leq M \}$.
\end{thm}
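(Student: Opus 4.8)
The plan is to produce $Q$ as the solution of the Cauchy--Dirichlet (Feynman--Kac) problem $\mathcal{L}^{b}Q+h(\mathbf{x}^{b})Q=p$ on $\Omega^{M}$ with $Q=q$ on $\{I_{1}+I_{3}=M\}$, following the scheme behind Theorem \ref{thm28} (i.e. Theorems \ref{nirenberg} and \ref{freidlinPDE}), but carrying the \emph{full} potential $h$ rather than a small multiple $\alpha h$. This is affordable precisely because $T_{3}$ is large: on the bounded set $\Omega^{M}$ the outward drift $\gamma(T_{3}-I_{3}^{3})>0$ of $I_{3}$ expels the process \eqref{broken} from $\Omega^{M}$ so fast that the bounded observable $h$ (with $|h|\le I_{1}+I_{3}\le M$ on $\Omega^{M}$) cannot accumulate, so that the principal eigenvalue $\lambda_{1}(\mathcal{L}^{b}+h,\Omega^{M})$ is strictly positive. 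Hence the first and main step is to exhibit a positive \emph{supersolution}: a function $\psi>0$ on $\Omega^{M}$ and a constant $\Lambda>0$ with $(\mathcal{L}^{b}+h+\Lambda)\psi\le0$. By the Berestycki--Nirenberg--Varadhan characterization of $\lambda_{1}$ recalled just before Theorem \ref{nirenberg}, this yields $\lambda_{1}(\mathcal{L}^{b}+h,\Omega^{M})\ge\Lambda>0$.

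I would take $\psi=e^{-(I_{1}+I_{3})/M}$. Since $\psi$ does not depend on $\theta_{1},\theta_{3}$ and the diffusion matrix of \eqref{broken} is diagonal, the angular terms in $\mathcal{L}^{b}\psi$ vanish and the $I_{1}$- and $I_{3}$-terms separate. On $\Omega^{M}$ one has $0\le I_{1},I_{3}\le M$, so a direct computation bounds the $I_{3}$-term by $\tfrac{\gamma}{M}\psi\big(M^{3}-\tfrac34 T_{3}\big)\le-\tfrac{\gamma T_{3}}{2M}\psi$ as soon as $T_{3}\ge4M^{3}$, and the $I_{1}$-term by $\tfrac{\gamma}{M}\psi\big(M^{3}+\tfrac14 T_{1}\big)$. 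Therefore $\mathcal{L}^{b}\psi\le-\tfrac{\gamma T_{3}}{4M}\psi$ once $T_{3}\ge4M^{3}+T_{1}$, and using $h\le M$ on $\Omega^{M}$,
\[
(\mathcal{L}^{b}+h)\psi\le\Big(M-\frac{\gamma T_{3}}{4M}\Big)\psi<0\qquad\text{whenever}\quad T_{3}>\frac{4M^{2}}{\gamma}.
\]
So for $T_{3}$ above the explicit threshold $\max\{4M^{3}+T_{1},\,4M^{2}/\gamma\}$, which depends only on $M,T_{1},\gamma$, the constant $\Lambda:=\tfrac{\gamma T_{3}}{4M}-M>0$ satisfies $(\mathcal{L}^{b}+h+\Lambda)\psi\le0$, hence $\lambda_{1}(\mathcal{L}^{b}+h,\Omega^{M})\ge\Lambda>0$.

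Given this, I would fix $C^{2}$ data $p<0$, $q>0$, extend $q$ to a $C^{2}$ function $\tilde q$ on $\overline{\Omega^{M}}$, and set $f:=p-(\mathcal{L}^{b}+h)\tilde q$, which is continuous on the compact set $\overline{\Omega^{M}}$ (the coefficients of $\mathcal{L}^{b}$ are bounded there since $I_{1},I_{3}\le M$) and hence in $L^{d}(\Omega^{M})$. By Theorem \ref{nirenberg}, the Dirichlet problem $\mathcal{L}^{b}u+hu=f$ on $\Omega^{M}$, $u=0$ on $\partial\Omega^{M}$, has a unique bounded solution $u\in W^{2,d}_{\mathrm{loc}}$, so $Q:=u+\tilde q$ is a bounded solution of $\mathcal{L}^{b}Q+hQ=p$ with $Q=q$ on $\{I_{1}+I_{3}=M\}$; interior and boundary Schauder estimates promote $Q$ to $C^{2}$ (in the application of Theorem \ref{thm51} the datum $q$ is constant, so this is automatic). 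To invoke Theorem \ref{freidlinPDE} one needs $\mathbb{E}_{x}\big[\int_{0}^{\tau_{x}^{\Omega^{M}}}\exp(\int_{0}^{t}h(X_{s})\,\mathrm{d}s)\,\mathrm{d}t\big]<\infty$, and this follows from the supersolution: $(\mathcal{L}^{b}+h+\Lambda)\psi\le0$ makes $t\mapsto e^{\Lambda t}\exp(\int_{0}^{t}h(X_{s})\,\mathrm{d}s)\,\psi(X_{t})$ a supermartingale up to $\tau_{x}^{\Omega^{M}}$, so $\mathbb{E}_{x}[\mathbf{1}_{\{t<\tau\}}\exp(\int_{0}^{t}h)]\le\psi(x)(\inf_{\overline{\Omega^{M}}}\psi)^{-1}e^{-\Lambda t}$ and the integral is at most $\psi(x)/(\Lambda\inf_{\overline{\Omega^{M}}}\psi)$. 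Theorem \ref{freidlinPDE} then gives
\[
Q(x)=-\mathbb{E}_{x}\Big[\int_{0}^{\tau_{x}^{\Omega^{M}}}p(X_{t})\,e^{\int_{0}^{t}h(X_{s})\mathrm{d}s}\,\mathrm{d}t\Big]+\mathbb{E}_{x}\Big[q(X_{\tau_{x}^{\Omega^{M}}})\,e^{\int_{0}^{\tau_{x}^{\Omega^{M}}}h(X_{s})\mathrm{d}s}\Big],
\]
and both terms are strictly positive since $p<0$ and $q>0$; hence $Q>0$ on $\overline{\Omega^{M}}$, which is the desired pre-factor (obtained with equality, so in particular $\mathcal{L}^{b}Q+hQ\le p$). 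Since $h\le M$, the same $\psi$ additionally gives $\lambda_{1}(\mathcal{L}^{b}+\alpha h,\Omega^{M})\ge\Lambda-M>0$ for every $0\le\alpha\le1$ once $T_{3}>8M^{2}/\gamma$, so Assumption \textbf{(L)} follows \emph{a fortiori}.

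The main obstacle will be the supersolution estimate itself — converting ``$T_{3}$ sufficiently large'' into a quantitative lower bound on $\lambda_{1}(\mathcal{L}^{b}+h,\Omega^{M})$ dominating $\sup_{\Omega^{M}}|h|\le M$ — but the exponential barrier $e^{-(I_{1}+I_{3})/M}$ handles it, with the largeness of $T_{3}$ entering only through the outward $I_{3}$-drift. Two routine technical points would need attention: (i) the diffusion of \eqref{broken} degenerates on $\{I_{1}I_{3}=0\}\subset\overline{\Omega^{M}}$, where $\partial\Omega^{M}$ also has corners, so the elliptic results should be applied on uniformly elliptic smoothly bounded domains exhausting $\Omega^{M}$ from inside, passing to the limit and using that the planes $\{I_{i}=0\}$ are non-attainable for \eqref{broken} by the Feller criterion $2\gamma T_{i}\ge\tfrac12\gamma T_{i}$; (ii) the $C^{2}$ regularity of $Q$ asks for the boundary datum marginally better than $C^{2}$ in general, which is immaterial where the result is used. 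Finally, the reduced-system results of the preceding subsections (items (c)--(d): $\pi^{b}(h)<0$, the bound on $\mathrm{Var}_{\pi^{b}}(h)$, and the multiplicative ergodicity of \eqref{broken}) underpin the alternative ``soft'' route through Theorem \ref{thm28} and supply the probabilistic interpretation of this mechanism.
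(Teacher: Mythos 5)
Your barrier computation is essentially correct as far as it goes: with $\psi=e^{-(I_1+I_3)/M}$ the angular terms drop out, the $I_3$-drift gives $(\mathcal{L}^{b}+h+\Lambda)\psi\le 0$ with $\Lambda=\tfrac{\gamma T_3}{4M}-M$ once $T_3\ge 4M^{3}+T_1$ and $T_3>4M^{2}/\gamma$, and the route through the Berestycki--Nirenberg--Varadhan characterization, Theorem \ref{nirenberg} and the Feynman--Kac representation of Theorem \ref{freidlinPDE} is a legitimate way to pass from $\lambda_1>0$ to a strictly positive solution. The problem is the regime in which this works: your thresholds force $M^{3}\le T_3/4$, i.e.\ ``$T_3$ sufficiently large'' \emph{depending on} $M$. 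The statement, as it must be read to serve Assumption {\bf (L)}, has the opposite quantifier order: the $T_3$-threshold depends only on $(\gamma,T_1)$ (as in Theorem \ref{thm31}), and the conclusion then holds for every sufficiently large $M$. This matters because downstream (the ``without loss of generality'' step in Case I of Theorem \ref{thm51}, and again in Lemma \ref{Wupbd} and the proof of the main theorem) $M$ must be chosen so large that $\tfrac{\gamma}{20}x^{3}$ dominates $\gamma(T_1+T_3)$ for $x\ge M$, i.e.\ $M^{3}\gtrsim T_3$; together with your constraint $4M^{3}\le T_3$ this is impossible, so the $Q$ you construct can never be fed into Theorem \ref{thm51}. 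In other words, you have proved a variant of the theorem that is unusable in the paper's logical chain.

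The deeper reason is that your mechanism is different in kind: the positivity of $\lambda_1$ comes from fast exit of \eqref{broken} from a domain that is small relative to the outward $I_3$-drift, and it makes no use of the sign of $\pi^{b}(h)$ --- it would go through even if the averaged coefficient were positive. The paper's proof instead establishes geometric ergodicity of the broken system with $\tilde W=I_1+I_3$ and $\pi^{b}(\tilde W^{2})<\infty$, uses Theorem \ref{thm31} to get $\pi^{b}(h)<0$ for $T_3$ large (hence $\pi^{b}(h\mathbf{1}_{\Omega^{M}})<0$ for all large $M$), checks the asymptotic-variance condition, and then invokes the multiplicative ergodic theorem and the eigenvalue-perturbation argument of Theorem \ref{thm28}; this is exactly what makes the $T_3$-threshold independent of $M$, at the price of the small factor $\alpha$ in front of $h$ in Assumption {\bf (L)} (your claim to obtain the full-$h$ version ``for free'' is again an artifact of the small-$M$ regime). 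The technical caveats you flag (degeneracy of the diffusion on $\{I_1I_3=0\}$, corners of $\Omega^{M}$, boundary regularity) are real but shared with the paper's own application of Theorem \ref{thm28}; the genuine gap is the quantifier order, and to repair it you would need an argument, like the paper's averaging one, whose eigenvalue bound does not deteriorate as $M\to\infty$.
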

\begin{proof}
The geometric ergodicity of $\mathbf{x}^{b}_{t}$ was proved in Theorem \ref{xbergodicity}
with respect to the Lyapunov function $\tilde{W} = I_{1} + I_{3}$. Since
the marginal distributions for $I_{1}$ and $I_{3}$ are explicitly
given in Lemma \ref{invI1I3}, it is easy to check $\pi^{b}( \tilde{W}^{2})$ is also
finite. Hence the first assumption of Theorem \ref{MET} is satisfied. 

Let $\bar{h}_M( \mathbf{x}^{b}) = c (h_{M}( \mathbf{x}^{b}) -
\pi^{b}(h_{M}))$ 
 for a constant $c$ that makes $| \bar{h}_M| < 1$. Note that $T_3$ is sufficiently large, from Theorem \ref{thm31}, we have $\pi^{b}(h_{M}) < 0$ for all sufficiently large $M$.  In addition,
the asymptotic variance of $\bar{h}_M$ is finite due to the
geometric ergodicity of $\mathbf{x}^{b}_{t}$ and the fact that $\pi^{b}( \tilde{W}^{2})
< \infty$ (Theorem 17.5.3 of
\cite{meyn2012markov}). Therefore, the second assumption of Theorem
\ref{MET} is also satisfied. And the multiplicative ergodic theorem holds for system \eqref{broken} with respect to a Lyapunov function $\tilde{W}$.

As a consequence, by Theorem \ref{thm28}, a strictly positive solution to the Feynman-Kac equation exists.
\end{proof}

\section{Proof of the main theorem}
\label{pfmainthm}

\subsection{Accessibility of the small set}
It remains to verify the minorization condition {\bf (A1')} for system
\eqref{bigsystem}. The main difficulty is that there is no noise at the $I_{2}$
term. Hence, we need to use the remaining four variables to move $I_{2}$ to
the desired place. The proof is divided into two steps.

\begin{lem}
  \label{access}
  For any compact set $C\subset \Omega$ satisfying
  $$
 \inf\{ I_2 \,|\, (I_{1}, I_{2}, I_{3}, \theta_{1}, \theta_{3}) \in C
  \} > 0 
  $$
   and for any small $\delta > 0$, we have
$$
  P^{1}(\mathbf{x}_{0}, \mathcal{B}_{\delta}(\mathbf{x}_{1}) ) > 0
  $$
  uniformly for all $\mathbf{x}_{0}, \mathbf{x}_{1} \in C$, where
  $P^{t}( \mathbf{x}, \cdot)$ is the transition
  kernel of equation \eqref{bigsystem} for $t \geq 0$ and $\mathbf{x}
  \in \Omega$, and $\mathcal{B}_{\delta}( \mathbf{x}_{1})$
  is the open ball of radius $\delta$ centered at $\mathbf{x}_{1}$.
\end{lem}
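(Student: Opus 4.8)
The plan is to establish this accessibility statement through the Stroock--Varadhan support theorem: it suffices to exhibit, for each pair $\mathbf{x}_0,\mathbf{x}_1\in C$, a smooth control $\mathbf{U}$ steering the associated deterministic control system (in which each $\mathrm{d}B^{(i)}_t$ in \eqref{bigsystem} is replaced by $u_i\,\mathrm{d}t$) from $\mathbf{x}_0$ to within $\delta/2$ of $\mathbf{x}_1$ in unit time, along a path confined to a fixed compact subset of the interior $\{I_1>0,\ I_3>0\}$. Granting such a control, the positivity of $P^{1}(\mathbf{x}_0,\mathcal B_\delta(\mathbf{x}_1))$ follows from the continuity of the It\^o solution map together with the fact that the Wiener measure of any $\epsilon$-tube around $\mathbf{U}$ is strictly positive, exactly as in the verifications of Assumption {\bf (A1')} carried out above (cf.\ the proof of Theorem \ref{ergodicitytheta}). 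Since along the path $I_1$ and $I_3$ stay positive, the square-root diffusion coefficients $\sqrt{\gamma T_1I_1/2}$ and $\sqrt{\gamma T_3I_3/2}$ are smooth and Lipschitz there, so no issue arises from their degeneracy at the boundary.

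The mechanism that makes the control construction possible, despite the absence of noise in the $I_2$-equation, is the linear and monotone structure of that equation. Because the diffusion matrix of \eqref{bigsystem} restricted to the coordinates $(I_1,I_3,\theta_1,\theta_3)$ is diagonal with nonvanishing entries on $\{I_1>0,I_3>0\}$, \emph{any} prescribed $C^1$ path $t\mapsto(I_1(t),I_3(t),\theta_1(t),\theta_3(t))$ taking values in $(0,\infty)^2\times\mathbb T^2$ is realized by an explicitly computable control $\mathbf{U}$, and the remaining coordinate is then slaved to it through
$$
  I_2(t)=I_2(0)\exp\!\Big(-2\!\int_0^t\!\big(I_1(s)\sin\theta_1(s)+I_3(s)\sin\theta_3(s)\big)\,\mathrm{d}s\Big).
$$
Thus the whole problem reduces to choosing the four-dimensional path so that the slaved value $I_2(1)$ lands near the target $I_2^{\ast}$ (the $I_2$-component of $\mathbf{x}_1$); here it is essential that $\inf_CI_2>0$, so that $I_2(0)$ and $I_2^{\ast}$ are both bounded below by a positive constant and the required value of the exponent stays finite.

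The construction then proceeds in two stages. On $[0,\tfrac12]$ I would hold $(I_1,I_3)$ at fixed $O(1)$ positive values and use $\theta_1$ as a virtual control on $I_2$: letting $\theta_1$ dwell near $-\pi/2$ (resp.\ $+\pi/2$) makes the integrand $I_1\sin\theta_1+I_3\sin\theta_3$ negative (resp.\ positive) of arbitrarily large magnitude, so by the intermediate value theorem one can arrange $\int_0^{1/2}(I_1\sin\theta_1+I_3\sin\theta_3)=-\tfrac12\log(I_2^{\ast}/I_2(0))$ and end this stage with $\theta_1=\theta_3=0$, hence $I_2(\tfrac12)=I_2^{\ast}$. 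On $[\tfrac12,1-\eta]$ I keep $\theta_1\equiv\theta_3\equiv0$ (so $\dot I_2=0$ and $I_2$ stays frozen at $I_2^{\ast}$) and move $(I_1,I_3)$ smoothly to $(I_1^{\ast},I_3^{\ast})$; on the final short interval $[1-\eta,1]$ I move $(\theta_1,\theta_3)$ to $(\theta_1^{\ast},\theta_3^{\ast})$, choosing $\eta$ small enough that the induced drift moves $I_2$ by less than $\delta/2$ (or, if one prefers exact interpolation, compensating for this drift during the middle stage). All choices above can be made continuously in the endpoints with the path confined to a fixed compact subset of $\{I_1>0,I_3>0\}$, using the compactness of $C$ and $\inf_CI_2>0$; this yields the claimed positivity uniformly over $\mathbf{x}_0,\mathbf{x}_1\in C$, and if a uniform \emph{positive lower bound} on $P^{1}$ is needed it follows from compactness of $C\times C$ together with the joint continuity of the transition density supplied by H\"ormander's Theorem \ref{hormander}.

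The main obstacle, and the only genuinely new point, is precisely the degeneracy of the noise in the $I_2$ direction; the observation that $I_2$ obeys a scalar linear ODE driven by $\sin\theta_1$ and $\sin\theta_3$, quantities whose sign and size the phases can set freely, is what resolves it. The remaining work is routine bookkeeping: keeping the control path inside the open region $\{I_1>0,I_3>0\}$ so the coefficients remain regular, and making the endpoint dependence of the construction continuous so the conclusion holds across all of $C$.
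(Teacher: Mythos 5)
Your proposal is correct and follows essentially the same route as the paper: both reduce the statement to a control problem for the four non-degenerate coordinates, exploit the fact that $I_2$ is slaved through $I_2(t)=I_2(0)\exp\bigl(-2\int_0^t h\,\mathrm{d}s\bigr)$, tune the integral by an intermediate-value argument, and conclude via positivity of the Wiener measure of $\epsilon$-tubes with uniformity from compactness of $C$. The only difference is cosmetic --- your staged path (dwell the angles near $\pm\pi/2$, freeze $I_2$ by zeroing the sines while moving $I_1,I_3$, then a short final angle adjustment) versus the paper's interpolation $\mathbf{z}^{\lambda}$ between two plateau paths overshooting and undershooting the required exponent --- though note that with $I_1,I_3$ held at fixed values the integrand is bounded by $I_1+I_3$ rather than ``arbitrarily large,'' so those values must be chosen large enough depending on $\sup_C I_2/\inf_C I_2$, which compactness of $C$ permits.
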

\begin{proof}
  Denote by $F$ and $\Sigma$  the drift vector fields and the diffusion coefficient matrix in equation \eqref{bigsystem}, respectively. Note that
  $\Sigma$ is a $5\times 4$ matrix. The goal is to find a $C^{1}$
  control function $\mathcal U(t)$ taking value in $\mathbb{R}^{4}$ solving the control problem
  
  \begin{equation}
    \label{accesscontrol}
  \frac{\mathrm{d} \mathbf{q}}{ \mathrm{d} t} = F( \mathbf{q}) +
  \Sigma \frac{ \mathrm{d}\mathcal U}{ \mathrm{d}t} \quad , \quad \mathbf{q}(0)
  = \mathbf{x}_{0},\quad \mathbf{q}(1) = \mathbf{x}_{1} \,. 
\end{equation}
Then, the lemma follows from the fact that the probability that the Wiener
measure of any $\epsilon$-tube
$$
  \left\{ W(t), t \in [0, 1] \,|\, \sup_{0 \leq s\leq 1} \| W(s) - U(s)\|
  \leq \epsilon \right\}
  $$
 is strictly positive, where $\| \cdot \|$ is the supremum norm in
 $C([0, 1])$.

  \medskip

  We denote
  
$$
  \mathbf{x}_{0} = (I_{1}^{0}, I_{2}^{0}, I_{3}^{0}, \theta_{1}^{0}, \theta_{2}^{0})
$$
and
$$
  \mathbf{x}_{1} = (I_{1}^{1}, I_{2}^{1}, I_{3}^{1}, \theta_{1}^{1},
  \theta_{2}^{1})=(I_{1}(1), I_{2}(1), I_{3}(1), \theta_{1}(1),
  \theta_{2}(1)) \,.
$$
Further we denote the $(I_{1}, I_{3}, \theta_{1},
\theta_{3})$-component of $\mathbf{x}_{i}$ by $\mathbf{y}_{i}$ for $i
= 0, 1$. Without loss of generality, assume $I_{2}^{0} \leq
I_{2}^{1}$. Observe that $I_{2}^{0} > 0$ because $\mathbf{x}_0 \in C$. In order
to move the $I_{2}$ term from $I_{2}^{0}$ to $I_{2}^{1}$, we should have
$$
  I_{2}^{1} = I_{2}^{0} \exp \left ( \int_{0}^{1} -2(I_{1} \sin
    \theta_{1} + I_{3} \sin \theta_{3}) \mathrm{d}t\right ) :=
  I_{2}^{0} \exp \left ( \int_{0}^{1} -2h( \mathbf{y}(t)) \mathrm{d}t\right )\,,
$$
where $\mathbf{y}(t)$ is the $(I_{1}, I_{3}, \theta_{1},
\theta_{3})$-component of $\mathbf{q}(t)$ 

Let $\xi = \log \frac{I_{2}^{1}}{I_{2}^{0}} \geq 0$. One can find two
points $\mathbf{z}^{0}$ and $\mathbf{z}^{1}$ in $\mathbb{R}^{2}_{+}
\times \mathbb{T}^{2}$ such that
$$
  -2h( \mathbf{z}^{0}) = e^{\xi} - 0.1 \quad , \quad -2h( \mathbf{z}^{1})
  = e^{\xi} + 0.1 \,.
$$
Since $C$ is compact and $I_{2}^{0}$ is uniformly bounded away from
zero, the choice of $\mathbf{z}^{0}$ and $\mathbf{z}^{1}$ can also be
uniformly bounded for each initial value $\mathbf{x}_0$ and the target $\mathbf{x}_1$ in $C$.

Now, let $0 < t_{\xi} < 1/2$ be a small parameter to be decided
later. For $\alpha = 0$ or $1$, we define two $C^{1}$ functions by
$$
  \mathbf{z}^{\alpha}(t) = \left \{ 
\begin{array}[tb]{ll}
 \mathbf{y}_{0} + \frac{t}{t_{\xi}}( \mathbf{z}^{\alpha} - \mathbf{y}_{0})
  & 0 < t < 0.99 t_{\xi}  \\
  \zeta_1(t) & 0.99 t_{\xi} \leq t < t_{\xi} \\
  \mathbf{z}^{\alpha} & t_{\xi} \leq t < 1 - t_{\xi}\\
  \zeta_2(t) & 1 - t_{\xi} \leq t < 1 - 0.99
                                 t_{\xi}\\
  \mathbf{z}^{\alpha} + \frac{t - (1 - t_{\xi})}{t_{\xi}}(
  \mathbf{y}_{1} - \mathbf{z}^{\alpha}) & 1 - 0.99 t_{\xi} \leq t \leq 1
\end{array}
\right . \,,
$$
where $\zeta_1$ and $\zeta_2$ are two spline interpolation functions ensuring $C^1$ continuity. Since all relevant quantities are uniformly bounded, one can
then make $t_{\xi}$ small enough such that the integral of $-2h(
\mathbf{z}^{\alpha}(t))$ from $0$ to $t_{\xi}$ and from $1 - t_{\xi}$
to $1$ are small enough, hence giving
$$
  \int_{0}^{1} -2h( \mathbf{z}^{0}(t)) \mathrm{d}t  < \xi
$$
and
$$
  \int_{0}^{1} -2h( \mathbf{z}^{1}(t)) \mathrm{d}t  > \xi \,.
$$
Next, we interpolate 
$$
  \mathbf{z}^{\lambda}(t) = \mathbf{z}^{0}(t) + \lambda (
  \mathbf{z}^{1}(t) - \mathbf{z}^{0}(t))
  $$
for $\lambda \in (0, 1)$. It is easy to see that
$\mathbf{z}^{\lambda}(t)$ is a family of continuous functions with initial value $\mathbf{x}_{0}$ and terminal value $\mathbf{x}_{1}$. In addition
$$
  R( \lambda) := \int_{0}^{1} -2h( \mathbf{z}^{\lambda}(t)) \mathrm{d}t
$$
is continuous in $\lambda$ with $R(0) < \xi < R(1)$. Therefore, by the
mean value theorem, there must be a $\lambda^{*} \in (0, 1)$ such that
$R(\lambda^{*}) = \xi$. Since the noise term is non-degenerate in
$I_{1}, I_{3}, \theta_{1}, \theta_{3}$, there exists a $C^{1}$
function $\mathcal U(t)$  such that the $I_{1}, I_{3}, \theta_{1},
\theta_{3}$-components of $\mathbf{q}(t)$ identical to that of
$z^{\lambda^{*}}(t)$. The function $\mathcal U(t)$ is uniformly bounded for any
pair of boundary points $\mathbf{x}_{0}$ and $\mathbf{x}_{1}$, because
$\mathbf{z}^{\lambda^*}(t)$ is uniformly bounded.

Finally, since
$$
  I_{2}^{1} = I_{2}^{0} \exp \left ( \int_{0}^{1} -2h(
    \mathbf{z}^{\lambda^{*}}(t)) \mathrm{d}t\right) \,,
$$
the function $\mathcal U(t)$ solves the control problem in equation
\eqref{accesscontrol}. This completes the proof.

\end{proof}

It remains to show that the transition kernel of system
\eqref{bigsystem} is jointly continuous. As discussed in Section 2, this is equivalent to checking
the H\"ormander's condition.

\begin{lem}
The transition kernel of system \eqref{bigsystem} is jointly continuous in both $t$ and $\mathbf{x}$.
\end{lem}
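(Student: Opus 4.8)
The plan is to verify H\"ormander's bracket condition and invoke Theorem \ref{hormander}, which then produces a smooth transition density $p(t,\mathbf{x},\mathbf{y})$; joint continuity of $P^{t}(\mathbf{x},\cdot)$ in $t$ and $\mathbf{x}$ is an immediate consequence, and is precisely the regularity demanded by Assumption {\bf (A1')}. I would work on the open set $\{I_{1},I_{2},I_{3}>0\}$, which contains every compact set $C$ to which Lemma \ref{access} is applied and on which all coefficients of \eqref{bigsystem} are smooth (replacing, if necessary, the merely $C^{2}$ interpolation piece of $g$ by a $C^{\infty}$ one satisfying the same bounds required in Assumption {\bf (H)}).

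The key computation is short. Reading off \eqref{bigsystem} in coordinates $(I_{1},I_{2},I_{3},\theta_{1},\theta_{3})$, the four diffusion vector fields are $V_{1}=\sqrt{\gamma T_{1}I_{1}/2}\,\partial_{I_{1}}$, $V_{2}=\sqrt{\gamma T_{3}I_{3}/2}\,\partial_{I_{3}}$, $V_{3}=\sqrt{\gamma}\,g(I_{2},\theta_{1})\,\partial_{\theta_{1}}$, $V_{4}=\sqrt{\gamma}\,g(I_{2},\theta_{3})\,\partial_{\theta_{3}}$, so on $\{I_{1},I_{3}>0\}$, since $g\ge 1$, their span is already $\mathrm{span}\{\partial_{I_{1}},\partial_{I_{3}},\partial_{\theta_{1}},\partial_{\theta_{3}}\}$. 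Only the $\partial_{I_{2}}$ direction is missing, and I would recover it from a single bracket with the drift. Since the $I_{2}$-equation carries no noise, the Stratonovich correction term in the definition of $V_{0}$ has vanishing $I_{2}$-component, so $(V_{0})_{I_{2}}=b_{I_{2}}(\mathbf{x})=-2I_{2}(I_{1}\sin\theta_{1}+I_{3}\sin\theta_{3})$. Because $V_{1}$ has only an $I_{1}$-component and $V_{3}$ only a $\theta_{1}$-component, the $I_{2}$-components of $[V_{0},V_{1}]$ and $[V_{0},V_{3}]$ are $\sqrt{\gamma T_{1}I_{1}/2}\,\partial_{I_{1}}b_{I_{2}}=-\sqrt{2\gamma T_{1}I_{1}}\,I_{2}\sin\theta_{1}$ and $\sqrt{\gamma}\,g(I_{2},\theta_{1})\,\partial_{\theta_{1}}b_{I_{2}}=-2\sqrt{\gamma}\,g(I_{2},\theta_{1})\,I_{1}I_{2}\cos\theta_{1}$ respectively. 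For every $\theta_{1}$ at least one of $\sin\theta_{1},\cos\theta_{1}$ is nonzero, and $I_{1},I_{2}>0$, $g\ge 1$; hence at least one of these two brackets has a nonzero $\partial_{I_{2}}$-component, and subtracting off its components along the already-available directions $\partial_{I_{1}},\partial_{I_{3}},\partial_{\theta_{1}},\partial_{\theta_{3}}$ produces $\partial_{I_{2}}$ inside the generated Lie algebra. Thus the bracket condition holds at every point of $\Omega$, and Theorem \ref{hormander} applies, giving a smooth (hence jointly continuous) transition density.

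I expect the only delicate part to be bookkeeping rather than a genuine obstacle: one must carry out the case split on $\theta_{1}$ (equivalently, check that the $\sin$- and $\cos$-channels feeding $\partial_{I_{2}}$ never degenerate simultaneously), confirm that the Stratonovich drift $V_{0}$ does not contribute along the $I_{2}$-line, and, if one wishes to cite Theorem \ref{hormander} verbatim, ensure the smoothness of $g$ via the minor modification noted above. None of these requires effort beyond what is sketched here, so the proof reduces essentially to the bracket computation just given.
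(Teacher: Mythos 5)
Your proposal is correct and takes essentially the same route as the paper: convert to Stratonovich form, note the Stratonovich correction has no $I_2$-component, verify H\"ormander's bracket condition on $\{I_1,I_2,I_3>0\}$, and invoke Theorem \ref{hormander} to get a smooth, hence jointly continuous, transition density. The only (harmless) difference is which brackets supply the $\partial_{I_2}$ direction: the paper uses $[{\bm v}_0,{\bm v}_1]$ together with the iterated bracket $[[{\bm v}_0,{\bm v}_1],{\bm v}_3]$, while you use the two first-order brackets $[V_0,V_1]$ and $[V_0,V_3]$, whose $I_2$-components are proportional to $\sin\theta_1$ and $\cos\theta_1$ and so never vanish simultaneously -- an equally valid (and slightly simpler) completion, with the added care of smoothing the $C^2$ interpolation piece of $g$ so the theorem's smoothness hypothesis is literally met.
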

\begin{proof}
First, we need to convert system \eqref{bigsystem} into a Stratonovich stochastic
differential equation of the form
$$
  \mathrm{d} \mathbf{x}_{t} = {\bm v}_{0}( \mathbf{x}_{t}) \mathrm{d}t +
  \sum_{i = 1}^{4} {\bm v}_{i}( \mathbf{x}_{t}) \circ \mathrm{d}B^{(i)}_{t}
  \,.
$$
This yields
$$
  {\bm v}_{0} = 
\begin{bmatrix}
  [2 I_{1} I_{2} (\sin \theta_{1} -
  \gamma) + \gamma(T_{1} - I_{1}^{3}) ] + \frac{1}{8}\gamma T_{1}\\
   - 2 I_{2} (I_{1} \sin \theta_{1} + I_{3} \sin
  \theta_{3}) \\
  [2 I_{2}I_{3}(\sin \theta_{3} - \gamma) +
  \gamma(T_{3} - I_{3}^{3})] + \frac{1}{8}\gamma T_{3}\\
   I_{2}(1 + 2 \cos \theta_{1}) - I_{1} (1 +
  2 \cos \theta_{1}) - 2 I_{3} \cos \theta_{3} + \frac{1}{2}\gamma
  T_{1} g_{\theta_{1}} g\\
 I_{2} (1 + 2 \cos \theta_{3}) - I_{3} (1 + 2 \cos
  \theta_{3}) - 2 I_{1}\cos \theta_{1} + \frac{1}{2} \gamma T_{3}
  g_{\theta_{3}} g \,,
\end{bmatrix}
$$
$$
  {\bm v}_{1} = 
\begin{bmatrix}
\sqrt{\gamma T_{1} I_{1}/2}\\0\\0\\0\\0
\end{bmatrix}
, \quad
  {\bm v}_{2} = 
\begin{bmatrix}
0\\0\\ \sqrt{\gamma T_{3} I_{3}/2}\\0\\0
\end{bmatrix}
,\quad
  {\bm v}_{3} = 
\begin{bmatrix}
0\\0\\0\\ \sqrt{\gamma T_{1}} g(I_{2}, \theta_{1})\\0
\end{bmatrix}
, \quad
  {\bm v}_{4} = 
\begin{bmatrix}
0\\0\\0\\0\\ \sqrt{\gamma T_{3}} g(I_{2}, \theta_{3}) \,.
\end{bmatrix}
$$

Since ${\bm v}_{1}, {\bm v}_{2}, {\bm v}_{3}, {\bm v}_{4}$ already have nonzero components in entries $1, 3, 4$, and $5$, the goal is to use  the Lie brackets to create new
vector fields that have linearly independent second entries. After that, the lemma follows immediately because the H\"ormander's condition is satisfied (Theorem \ref{hormander}).

To do so, it is sufficient to calculate 
$$
  {\bm v}_{5} = [{\bm v}_{0} , {\bm v}_{1}] = J_{{\bm v}_{0}} {\bm v}_{1} - J_{{\bm v}_{1}} {\bm v}_{0}
$$
and
$$
  {\bm v}_{6} = [{\bm v}_{5}, {\bm v}_{3}] =  J_{{\bm v}_{5}} {\bm v}_{3} - J_{{\bm v}_{3}} {\bm v}_{5} \,,
  $$
where $J_{{\bm v}_i}$ is the Jacobian matrix of the vector field ${\bm v}_i$.

  A straightforward computation yields
$$
  {\bm v}_{5} = 
\begin{bmatrix}
\omega( \mathbf{x}) \\ - 2 I_2\sqrt{\gamma T_{1} I_1/2} \sin \theta_{1}\\0\\-\sqrt{\gamma T_{1} I_{1}/2}(1 + 2 \cos
\theta_{1})\\-2 \sqrt{\gamma T_{1} I_{1}/2} \cos \theta_{3}
\end{bmatrix}
$$
with
$$
  \omega( \mathbf{x}) = \sqrt{\gamma T_{1} I_{1}/2}[2 I_{2} (\sin
  \theta_{1} - \gamma) - 3 \gamma I_{1}^{2}] - \frac{1}{2}\sqrt{\gamma
  T_{1}/2} I_{1}^{-1/2} [2 I_{1}I_{2}(\sin \theta_{1} - \gamma) +
\gamma (T_{1} - I_{1}^{3}) + \frac{1}{8} \gamma T_{1}] \,.
$$
Comparing with ${\bm v}_{i}$, for  $i=1,2,3,4$, we have that ${\bm v}_{5}$ now has nonzero second
entry. However, the second component of ${\bm v}_{5}$ becomes zero
when $\sin \theta_{1} = 0$. Vector fields $[{\bm v}_{0}, {\bm v}_{2}]$ and
$[{\bm v}_{0}, {\bm v}_{3}]$ have similar problems. Therefore, we need to
calculate ${\bm v}_{6}$ using ${\bm v}_{3}$ and ${\bm v}_{5}$. The second entry of ${\bm v}_{6}$ is
$$
  - \sqrt{2}\gamma T_{1} I_{2} I_{1}^{1/2} g(I_{2}, \theta_{1}) \cos
  \theta_{1} \,,
$$
  which is linearly independent of the $2$nd entry of ${\bm v}_{5}$. Hence,
  the vectors $\{ {\bm v}_{1}, \cdots, {\bm v}_{6} \}$ span $\mathbb{R}^{5}$ for all
  $\mathbf{x}$ with $I_{1}, I_{2}, I_{3} \neq 0$. This completes the
  proof. 
\end{proof}

\subsection{Proof of the main theorem}
\begin{proof}[Proof of Theorem 1]
  Now let 
  
$$
  \mathcal{V} = V + W \,,
$$
where
$$
V = (I_1 + I_2 + I_3)^{\beta_0} + I_1^{-\beta_1} f(\theta_1) + I_3^{-\beta_1}f(\theta_3)
$$
and
$$
W = \psi(I_2) U \,.
$$

Then Theorem 4.3 implies
$$
  \mathcal{L}V \leq -c V^{1-1/\beta_{0}}
  $$
  for all $\mathbf{x}$ with $V( \mathbf{x}) > C_{V}$. In addition, since the set $\{ \mathbf{x} \in \Omega \,|\, V( \mathbf{x} ) \leq C_V\}$ is compact, it is easy to see that $\mathcal{L} V$ is bounded in $\{ \mathbf{x} \in \Omega \,|\, V( \mathbf{x} ) \leq C_V\}$. For simplicity, we can relax the estimate and obtain
  $$
  \mathcal{L}V \leq -c V^{1-1/\beta_{0}} + M_V
  $$
for all $\mathbf{x} \in \Omega$, where $M_V < \infty$ is a constant.
  
 For the Lyapunov function $W$, Theorem 5.1 implies
$$
  \mathcal{L} W \leq - \delta W
$$  
for all $\mathbf{x}$ with $I_2$-component less than $\zeta\leq 1$. If $I_2 > 1$, then $W$ and $\mathcal{L} W$ are both zero. Hence we have 
$$
\mathcal{L}(V+W) \leq -c V^{1 - 1/\beta_0} - \delta W + \mbox{constant}
$$
outside the region $\zeta \leq I_2 \leq 1$.

It remains to bound $\mathcal{L}(V+W)$ in the strip $\zeta \leq I_2 \leq 1$. Lemma \ref{Wupbd} gives
$$
\mathcal{L}W \leq C_\zeta [1 + (I_1 + I_3)^{2} ] \,.
$$
Therefore, if $I_1 + I_3$ is bounded by a constant $C$, then we have $\mathcal{L}W \leq C_\zeta(1 +C)^{2}$. Otherwise, since $C_\zeta [1 + (I_1 + I_3)^{2} ]$ is unbounded as $I_1 + I_3 \rightarrow \infty$, $\mathcal{L}W$ in this strip needs to be offset by $\mathcal{L}V$. When $\zeta \leq I_2 \leq 1$ and $I_1 + I_3$ is sufficiently large, the dominant term in $\mathcal{L}V$ is
\begin{align*}
    &\mathcal{L}V \leq\\
    & \gamma \beta_{0}(I_{1} + I_{2} + I_{3})^{\beta_{0} - 1} \left (
     -4 I_{2}( I_{1} + I_{3}) + (T_{1} - I_{1}^{3}) + (T_{3} -
     I_{3}^{3}) + \frac{T_{1}I_{1} + T_{3}I_{3}}{4(I_{1} + I_{2} +
     I_{3})}(\beta_{0} - 1) 
     \right ) \\
     & \leq - \frac{1}{4}\gamma \beta_0 (I_{1} + I_{2} + I_{3})^{\beta_{0} - 1} (I_1 + I_3)^3
\end{align*}
for all values of $I_1 + I_3$ that are large enough such that
$$
-4 \zeta (I_1 + I_3) + (T_1 + T_3) + \frac{T_{1}I_{1} + T_{3}I_{3}}{4(I_{1} + I_{2} +
     I_{3})}(\beta_{0} - 1) \leq 0 \,.
$$
(Recall that $(I_1 + I_3)^3 \leq 4 (I_1^3 + I_3^3)$.) Therefore, for all large $I_1 + I_3$, we have
$$
\mathcal{L}(V + W) \leq -c_{VW}(I_{1} + I_{2} + I_{3})^{\beta_{0} - 1} (I_1 + I_3)^3 
$$
for a small constant $c_{VW} > 0$. Recall that $W = (I_1 + I_3)I_2^{-\alpha}$ for all sufficiently large $I_1 + I_3$. Thus, the bound $-c_{VW}(I_{1} + I_{2} + I_{3})^{\beta_{0} - 1} (I_1 + I_3)^3$ dominates both $V$ and $W$. In other words. we have
$$
\mathcal{L}(V+W) \leq - (V + W)
$$
in the strip for all sufficiently large values of $(I_1 + I_3)$.

Therefore, by further relaxing the constant $M_V$ to a finite constant $M_{\mathcal{V}}$, we have
$$
\mathcal{L}(V+W) \leq -c V^{1 - 1/\beta_0} - \delta W  + M_{\mathcal{V}} 
$$
for all $\mathbf{x} \in \Omega$. Thus, there exists a constant $c_1 > 0$ that makes
$$
  \mathcal{L} \mathcal{V} \leq - c (V^{1 - 1/\beta_{0}} + W^{1 -
    1/\beta_{0}} ) + M_{\mathcal{V}}  \leq -c_{1} \mathcal{V}^{1 - 1/\beta_{0}} + M_{\mathcal{V}} 
$$
for all $\mathbf{x} \in \Omega$, where the last
inequality follows follows from norm equivalence on finite-dimensional spaces. Hence assumption {\bf (A2P)} holds. 

In addition, all sublevel sets $\{\mathcal{V} \leq C\}$ satisfy the
condition of Lemma \eqref{access}. Hence, assumption {\bf (A1')} is
satisfied. This means that assumption {\bf (A1)} also holds for all sublevel sets of $\mathcal{V}$. The proof is thus complete by Theorem \ref{hairernote41}.

\end{proof}

\section{Discussion and future work}
In this paper, we connect a 3-mode oscillator chain, embedded in the nonlinear Schrödinger equation, to damping and heat baths at different temperatures. We prove that if the temperatures at the two ends are significantly different, then the 3-mode chain admits a unique nonequilibrium steady state (NESS). In addition, the rate of convergence to the NESS is at least polynomial. Compared to
anharmonic chains that model heat conduction problems, the 3-mode chain
is significantly more challenging because the energy can only be
transferred in a desired way when the phases are properly aligned. Therefore, one must
show that when facing the ``overheating'' or ``overcooling'' problems,
the phase angles are properly aligned with high probability. Then we
use a novel Feynman-Kac-Lyapunov function method to construct a
Lyapunov function. The proof is completed by combining two Lyapunov
functions, each representing the ``overheating'' and ``overcooling''
scenarios.

As recalled in the introduction,  recently it has been proved  that the wave kinetic equation can be rigorously derived from a
dispersive equation. It is also known that the wave kinetic equation admits formal stationary solutions corresponding to nonequilibrium energy cascades, called the Kolmogorov–Zakharov (KZ) spectra.
Therefore, a mathematical justification of energy cascade system
embedded in dispersive equations is greatly needed. To the best of our
knowledge, this result is the first of its kind. It opens the door to
further investigations. For example, after establishing the stochastic stability of the NESS, one can study the energy distribution and flux of the NESS, and compare it with the Kolmogorov–Zakharov (KZ) spectra. Our next step is to write a follow-up paper demonstrating these
properties numerically.

To complete this challenging proof, we simplified the manner in which the 3-mode chain is connected to heat baths. As a result, the Gibbs measure is not invariant for this system, even when the two heat baths have the same temperature. Since the main challenge is always to
understand what stabilizes the system in the out-of-equilibrium
setting, this does not affect the main theme of the paper. 

For the sake of completeness, we plan to present a follow-up paper demonstrating that a more natural coupling to heat baths does not change the results obtained here. This analysis will rely on certain assumptions that can be verified numerically, such as the existence of an invariant probability measure for the modified (“broken”) system. Additionally, while we anticipate that the convergence to the nonequilibrium steady state (NESS) occurs at an exponential rate, current technical limitations only allow us to rigorously prove a polynomial rate of convergence. These issues will be addressed in detail in the subsequent work.

\appendix
\section{Proof of Theorem \eqref{thm28}}
\begin{proof}

The proof of Theorem \eqref{thm28} requires the eigenvalue
perturbation theory in \cite{kato2013perturbation}. the following definitions and notation are needed.

Let $T$ be a closed linear operator in space $X$. Let $R(\zeta) = (\zeta I - T)^{-1}$ be the resolvent of $T$. The {\it eigenprojection} of $T$ with respect to a semisimple isolated eigenvalue $\lambda$ is denoted by 
\begin{equation}
    \label{eigenproj}
    P= -\frac{1}{2\pi i} \oint R(\zeta) \mathrm{d}\zeta \,.
  \end{equation}

  Consider the perturbation $T(\kappa) = T + \kappa T_{1}$. Let
  $R_{\alpha}$ and $P_{\alpha}$ denote the resolvent of $T(\kappa)$
  and the projection with respect to $\lambda$, respectively. A semisimple isolated eigenvalue $\lambda$ is said to be {\it
    stable} under the perturbation given above if (i) the region of convergence
  in which $R_{\alpha}(\zeta) \rightarrow R(\zeta)$ as $\alpha
  \rightarrow 0$ in the
  generalized sense (see Section 4.2 of \cite{kato2013perturbation})
  contains a neighborhood of $\lambda$ with the exception of
  $\lambda$, and (ii) $\mathrm{Dim} \,P_{\alpha} \leq \mathrm{Dim} \,P$
  for all sufficiently small $\alpha$. 

  \medskip

  We have the following theorem regarding the perturbation of an isolated eigenvalue of $T$. 

\begin{thm}[Theorem 8.2.6 of \cite{kato2013perturbation}]
  \label{kato}
  Let $\lambda$ be an isolated, semisimple eigenvalue of a linear
  operator $T$ with the eigenprojection $P$, $\mbox{dim} \,P = m <
  \infty$. Denote $D(T_1)$ the domain of $T_1$. Let $\lambda$ be stable and let $PX \subset D( T_{1})$. Then the
  eigenvalues of $T(\kappa)$ admit asymptotic expansions
  
$$
  \mu_{j}(\kappa) = \lambda + \kappa \mu_{j}^{(1)} + o( \kappa) ,
  \quad j = 1, \cdots, m \,,
$$
where $\mu_{j}^{(1)}$ are the repeated eigenvalues of the operator $P
T_{1} P$ considered in the m-dimensional space $P X$.
\end{thm}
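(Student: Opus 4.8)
The plan is to follow the standard Kato argument that reduces the perturbation of an isolated semisimple eigenvalue to a differentiable finite-dimensional matrix problem on a \emph{fixed} subspace. First I would fix a positively oriented circle $\Gamma$ in the resolvent set of $T$ enclosing $\lambda$ and no other point of $\sigma(T)$, so that the eigenprojection of \eqref{eigenproj} is $P=-\frac{1}{2\pi i}\oint_{\Gamma}R(\zeta)\,\mathrm{d}\zeta$ with range $PX$ and $\dim PX=m$. Because $\lambda$ is stable, the region in which the perturbed resolvents $R(\kappa,\zeta):=(\zeta-T(\kappa))^{-1}$ converge in the generalized sense to $R(\zeta)$ as $\kappa\to0$ contains the punctured neighbourhood of $\lambda$ traced by $\Gamma$, and the convergence is uniform on the compact curve $\Gamma$; hence $P(\kappa):=-\frac{1}{2\pi i}\oint_{\Gamma}R(\kappa,\zeta)\,\mathrm{d}\zeta$ converges to $P$ in operator norm. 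Since norm-close projections have equal rank, $\dim P(\kappa)=m$ for all small $\kappa$, the range $M(\kappa):=P(\kappa)X$ is $T(\kappa)$-invariant (and, being finite dimensional, is contained in $D(T(\kappa))$), and the spectrum of the part of $T(\kappa)$ in $M(\kappa)$ is exactly the group $\mu_{1}(\kappa),\dots,\mu_{m}(\kappa)$ of eigenvalues of $T(\kappa)$ lying inside $\Gamma$, counted with algebraic multiplicity, which cluster near $\lambda$.

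Next I would pass to the \emph{fixed} model space $M:=PX$ via Kato's transformation function $U(\kappa)=\bigl(I-(P(\kappa)-P)^{2}\bigr)^{-1/2}\bigl(P(\kappa)P+(I-P(\kappa))(I-P)\bigr)$, which for $\|P(\kappa)-P\|<1$ is bounded with bounded inverse, satisfies $U(0)=I$ and $U(\kappa)M=M(\kappa)$. Because $\lambda$ is semisimple, the part of $T$ in $M$ has no nilpotent component, i.e. $TP=PT=\lambda P$ as bounded finite-rank operators. Using the resolvent-integral form $T(\kappa)P(\kappa)=\frac{1}{2\pi i}\oint_{\Gamma}\zeta\,R(\kappa,\zeta)\,\mathrm{d}\zeta$ ---which is a \emph{bounded} finite-rank operator--- the reduced operator $\widehat{T}(\kappa):=U(\kappa)^{-1}\bigl(\tfrac{1}{2\pi i}\oint_{\Gamma}\zeta\,R(\kappa,\zeta)\,\mathrm{d}\zeta\bigr)U(\kappa)$ leaves $M$ invariant, its restriction $\widehat{T}(\kappa)|_{M}$ is an $m\times m$ matrix with eigenvalues $\mu_{1}(\kappa),\dots,\mu_{m}(\kappa)$, and $\widehat{T}(0)|_{M}=TP|_{M}=\lambda I_{M}$. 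This step replaces the (possibly infinite-dimensional, unbounded) problem by a single curve of $m\times m$ matrices through $\lambda I_{M}$, with every quantity bounded and of finite rank.

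Finally I would differentiate this curve at $\kappa=0$. Using $\partial_{\kappa}R(\kappa,\zeta)|_{0}=R(\zeta)T_{1}R(\zeta)$, the commutation $R(\zeta)P=PR(\zeta)=(\zeta-\lambda)^{-1}P$ (valid since $P$ commutes with the resolvent of $T$ and $T$ acts as $\lambda$ on $PX$), the cancellation of the two $U'(0)$-terms produced by differentiating the conjugation (legitimate with no domain issue because $TP=\lambda P$ is bounded), and the residue computation $\frac{1}{2\pi i}\oint_{\Gamma}\frac{\zeta}{(\zeta-\lambda)^{2}}\,\mathrm{d}\zeta=1$, one obtains
\[
 \frac{\mathrm{d}}{\mathrm{d}\kappa}\Big|_{0}\widehat{T}(\kappa)\Big|_{M}
 = \left(\frac{1}{2\pi i}\oint_{\Gamma}\frac{\zeta}{(\zeta-\lambda)^{2}}\,\mathrm{d}\zeta\right)(PT_{1}P)\Big|_{M}
 = (PT_{1}P)\Big|_{M},
\]
the operator $PT_{1}P$ being well defined precisely because $PX\subset D(T_{1})$. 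Hence $\widehat{T}(\kappa)|_{M}=\lambda I_{M}+\kappa\,(PT_{1}P)|_{M}+o(\kappa)$ in the finite-dimensional operator norm, and, since the multiset of eigenvalues of a matrix depends continuously on the matrix, $\mu_{j}(\kappa)=\lambda+\kappa\,\mu_{j}^{(1)}+o(\kappa)$ for $j=1,\dots,m$ with $\mu_{1}^{(1)},\dots,\mu_{m}^{(1)}$ the repeated eigenvalues of $PT_{1}P$ on $PX$, which is the claim of the theorem. The main obstacle is the justification of $\partial_{\kappa}R(\kappa,\zeta)|_{0}=R(\zeta)T_{1}R(\zeta)$, together with the boundedness of $T_{1}R(\zeta)$ and the norm-differentiability of $\kappa\mapsto R(\kappa,\zeta)$ uniformly on $\Gamma$, for an \emph{unbounded} $T_{1}$; this is exactly the point where $T_{1}$ must be suitably subordinate to $T$ (the content of Kato's notion of a holomorphic family of type~(A), implicit in the stability hypothesis), and it is what makes the contour estimates ---rather than the elementary matrix calculus of the last step--- the technical heart of the argument.
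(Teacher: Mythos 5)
First, a point of reference: the paper does not prove this statement at all --- it is quoted as Theorem 8.2.6 of \cite{kato2013perturbation} and used as a black box --- so there is no internal proof to compare against, and your proposal has to be judged as a reconstruction of Kato's argument. Your skeleton is the right one (contour eigenprojections, stability giving $P(\kappa)\to P$ in norm with preserved rank, conjugation by the transformation function onto the fixed $m$-dimensional space $PX$, identification of the first-order coefficient with $PT_1P$), and in the situation where this paper actually applies the theorem ($T_1$ = multiplication by the bounded function $F$) your route would go through.

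However, for the theorem as stated there is a genuine gap, and it is exactly at the step you flag as the technical heart. You differentiate the resolvent, $\partial_\kappa R(\kappa,\zeta)\big|_{\kappa=0}=R(\zeta)T_1R(\zeta)$, which presupposes that $T_1R(\zeta)$ is defined (i.e.\ $R(\zeta)X=D(T)\subset D(T_1)$) and bounded uniformly on $\Gamma$, and that $\kappa\mapsto R(\kappa,\zeta)$ is norm-differentiable. The hypothesis of the theorem is only $PX\subset D(T_1)$: $T_1$ need not be defined on $D(T)$, let alone be $T$-bounded, so $T_1R(\zeta)$ can be meaningless. Your proposed remedy --- that the needed subordination is ``implicit in the stability hypothesis'' as a type-(A) condition --- is not correct: stability yields only locally uniform convergence $R(\kappa,\zeta)\to R(\zeta)$ on $\Gamma$ (plus the rank condition), not differentiability, and type (A) is neither assumed nor implied. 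The standard (Kato) proof avoids this by letting $T_1$ act only on vectors of $PX$, where it is defined: for $u\in PX\subset D(T)\cap D(T_1)$ one has $(T(\kappa)-\zeta)u=(\lambda-\zeta)u+\kappa T_1u$, hence $R(\kappa,\zeta)u=(\lambda-\zeta)^{-1}\left(u-\kappa R(\kappa,\zeta)T_1u\right)$; integrating this identity (and its $\zeta$-weighted version) over $\Gamma$ and using only the uniform convergence of $R(\kappa,\zeta)$ on $\Gamma$, together with the boundedness of $T_1P$ (automatic since $P$ has finite rank and range in $D(T_1)$), gives $P(\kappa)P=P+O(\kappa)$ and $T(\kappa)P(\kappa)P=\lambda P+\kappa\,PT_1P+o(\kappa)$ after projecting, so the finite-dimensional conjugation and the continuity of eigenvalues finish the argument without ever differentiating the resolvent. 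Replacing your differentiation step by this identity-on-$PX$ argument is what is needed to make the proof valid in the stated generality.
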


Going back to the proof of Theorem \eqref{thm28}, let $F = c_{0} \mathbf{1}_{D} - \pi(c_{0} \mathbf{1}_{D})$,  let $T =
\mathcal{L}$, the generator of $X_{t}$, let $T_{1}$ be the simple
operator with $T_{1} u = F u$. Since $X_{t}$ is geometrically ergodic with a unique invariant probability
measure $\pi$, we know that $0$ is an eigenvalue of
$\mathcal{L}$.  Since $c_{0} \mathbf{1}_{D}$ obviously satisfies
$c_{0}^{2} \mathbf{1}_{D} \leq V$, by Theorem 17.0.1 of
\cite{meyn2012markov} $c_{0} \mathbf{1}_{D}$ must have finite asymptotic
variance. Hence $F$ satisfies Assumption (b) of Theorem \ref{MET}. Theorem \ref{MET} shows that the maximal eigenvalue of
$\mathcal{L} + \alpha F$, denoted by $\Lambda( \alpha)$, is isolated
for all sufficiently small $|\alpha|$. Therefore, $0$ must be an
isolated maximal eigenvalue of $\mathcal{L}$.

Since $F$ is bounded, $T_{1}$ is a bounded linear
operator. Therefore, by Theorem 4.3.11 of \cite{kato2013perturbation},
the resolvent $R_{\alpha}$ is continuous under small
perturbations. By Theorem 4.3.16 of \cite{kato2013perturbation}, the
spectrum is upper semicontinuous with respect to small perturbation
(meaning that $\mathrm{Dim} \, P_{\alpha} = \mathrm{Dim} \, P$ for all small
$\alpha$). Hence $0$ is a stable eigenvalue by the definition of stable
eigenvalue given above.

Without loss of generality, let $P$ be the eigenprojection of $T$ with
respect to eigenvalue $0$. It remains to study the property of
$P$. Since $\pi$ is unique (up to constant multiplication), the
multiplicity of the eigenvalue $0$ must be $1$. Thus, $\mathrm{dim} \, P =
1$. Therefore, the eigenfunction of $T$ with respect to $0$ is a
constant function $1$, denoted by $v$. The eigenfunction of $T^*$ with
respect to $0$ is the probability density function of $\pi$, denoted
by $u$. In addition, denote the resolvent of $T^*$ by $R^*$, it follows that $R^*(\bar{\zeta}) = (R(\zeta))^*$. Therefore, since
$0$ is a real number, the eigenprojection of $T^*$ with respect to $0$
must be $P^*$. 

Note that $P$ is the projection into the set of constant functions, for any function $w$ we have $P w = C_w w$. Integrating $Pw$ with $u$, we have
$$
\langle u, Pw \rangle = C_w \langle u, v \rangle 
$$
because $\langle u, v \rangle = 1$. Therefore, we have
$$
C_w = \frac{\langle Pw, u \rangle}{\langle u, v \rangle} = \frac{\langle w, P^* u \rangle}{\langle u, v\rangle} = \langle w, u\rangle
$$
because $P^*$ projects $u$ into itself. This gives $P u = \pi (u)$. Therefore, we have $P T_1 = 0$ because $T_1 u = F u$ and $\pi(F) = 0$. Hence, by Theorem
\ref{kato}, we have
$$
  \Lambda(\alpha) = o(\alpha) \,.
  $$

  By Theorem \ref{MET}, we have
  
$$
  \mathbb{E}_{x}[ \exp \left ( \alpha \int_{0}^{t} F(X_{s})
    \mathrm{d}s\right )] \leq e^{t \Lambda (\alpha)}[
  \hat{f}_{\alpha}( x) + B_{0}|\alpha| W(x) e^{-b_{0}t}] \leq C e^{t
    \Lambda(\alpha)} \,.
  $$
  
  For $c_{0}(x) \mathbf{1}_{D}$, we have

  $$
  \mathbb{E}_{x}[ \exp \left ( \alpha \int_{0}^{t} c_{0}(X_{s}) \mathbf{1}_{D}
    \mathrm{d}s\right )] \leq e^{t \Lambda (\alpha)}[
  \hat{f}_{\alpha}( x) + B_{0}|\alpha| W(x) e^{-b_{0}t}] \leq C e^{t
    (\alpha \pi(c_{0} \mathbf{1}_{D}) + \Lambda(\alpha))} \,.
  $$
  
Since
$$
\lim_{\alpha \rightarrow0} \frac{\Lambda(\alpha)}{\alpha} = 0 \,,
$$
notice that $D$ is a bounded set, for all sufficiently small $\alpha >
0$, we have
$$
  \mathbb{E}_{x}[ \exp \left ( \alpha \int_{0}^{t} c_{0}(X_{s}) \mathbf{1}_{D}
    \mathrm{d}s\right )]\leq C_{M} e^{-b_{1} \alpha t}
$$
uniformly for some $b_{1} > 0$ and $C_{M} < \infty$. Therefore, let $c = \alpha c_{0}
\mathbf{1}_{D}$.

Now we are ready to move to $X_{t}$ on a bounded domain $D$. Let $\tau^{D}$ be the first exit time of $X_{t}$ from
$D$. For any $x \in D$ we have
$$
\mathbb{E}_{x}[ \mathbf{1}_{t < \tau^{D}} \exp \left ( \int_{0}^{t}
  c(X_{s}) \mathrm{d}s\right )] \leq \mathbb{E}_{x}[ \exp \left ( \alpha \int_{0}^{t} c(X_{s})
    \mathrm{d}s\right )]\leq C_{M} e^{-b_{1} \alpha t} \,.
$$
Let $t \rightarrow \infty$ and take the logarithm, by Theorem \ref{liu}, we have $\lambda_{1}(\mathcal{L} + \alpha c_0, D) \geq b_{1} \alpha > 0$, where $\lambda_{1}(
\mathcal{L} + \alpha c_{0}, D)$ is the principal eigenvalue defined in \cite{berestycki1994principal}. Thus, by theorem \ref{nirenberg},
for any function $\hat{f} \in L^{p}(D)$, equation
\begin{align*}
  & \mathcal{L} u(x) + \alpha c_{0}(x) u(x)= \hat{f}(x) , \quad x \in D\\
  &u(x) = 0, \quad x \in \partial D
\end{align*}
admits a unique solution in $W^{2, d}_{loc}(D)$.

Let $\omega$ be a $C^{2}$ function in $D$ satisfying
$\omega = \psi$ on $\partial D$. Then equation \eqref{cauchySDE} can
be converted to
\begin{align*}
  & \mathcal{L} \tilde{u} + \alpha c_{0}  \tilde{u} = f(x) -  (\mathcal{L} + \alpha
    c_{0}) \omega, \quad  x \in D\\\nonumber
  & \tilde{u}(x_{0}) = 0, \quad  x_{0} \in \partial D \nonumber\,.
\end{align*}
for $\tilde{u} = u - \omega$. Since $f(x) -  (\mathcal{L} + \alpha
    c_{0}) \omega$ is both bounded and continuous, by Theorem
    \ref{nirenberg}, equation
\eqref{cauchySDE} also admits a unique bounded solution in $W^{2,
  p}_{loc}(D)$ for any $p > 1$. 
  
  Notice that the operator $\mathcal{L}$ actually has $C^3$ coefficient and the domain $D$ has $C^4$ boundary. By higher regularity  existence results of solution to elliptic equations (see for example Chapter 6, Theorem 5 of \cite{evans2022partial}), the solution $u$ belongs to $W^{4, p}(D)$ for any $p > 1$. Therefore, by general Sobolev inequality (see for example Chapter 5, Theorem 6 of \cite{evans2022partial}), the solution $u$ belongs $C^2(D)$. Therefore, by the stochastic representation in Theorem \ref{freidlinPDE}, the Feynman-Kac equation
\eqref{cauchySDE} admits a strictly positive solution $u(x)$.

\end{proof}

\bibliography{myref_arxiv}{}
\bibliographystyle{amsplain}
\end{document}